\pdfoutput=1

\documentclass[10pt]{article}

\usepackage{xspace}
\usepackage{url}
\usepackage{mathtools}
\usepackage{amssymb}
\usepackage{amsthm}
\usepackage{empheq}
\usepackage{latexsym}
\usepackage{enumitem}
\usepackage{eurosym}
\usepackage{dsfont}
\usepackage{appendix}
\usepackage{color} 
\usepackage{frcursive}
\usepackage[utf8]{inputenc}
\usepackage[T1]{fontenc}
\usepackage{geometry}
\usepackage{multirow}
\usepackage{todonotes}
\usepackage{lmodern}
\usepackage{anyfontsize}
\usepackage{stmaryrd}
\usepackage[english]{babel}
\usepackage[english=british]{csquotes}
\usepackage{mathtools}
\usepackage{accents}
\usepackage{color}
\usepackage{comment}
\usepackage{bm}

\allowdisplaybreaks

\usepackage{natbib}
\bibliographystyle{plainnat}
\setcitestyle{numbers,open={[},close={]}}

\makeatletter \@addtoreset{equation}{section}

\newtheorem{theorem}{Theorem}[section]
\newtheorem{assumption}[theorem]{Assumption}

\newtheorem{lemma}[theorem]{Lemma}
\newtheorem{proposition}[theorem]{Proposition}

\newtheorem{definition}[theorem]{Definition}
\newtheorem{remark}[theorem]{Remark}

\usepackage[unicode]{hyperref}

\usepackage[capitalize,nameinlink,noabbrev]{cleveref}

\crefname{assumption}   {assumption}   {assumptions}
\Crefname{assumption}   {Assumption}   {Assumptions}

\crefname{assumptio}    {assumption}   {assumptions}   
\Crefname{assumptio}    {Assumption}   {Assumptions}

\crefname{assumptionalt}{assumption}   {assumptions}
\Crefname{assumptionalt}{Assumption}   {Assumptions}

\crefname{lemma}        {lemma}        {lemmas}
\Crefname{lemma}        {Lemma}        {Lemmas}

\crefname{proposition}  {proposition}  {propositions}
\Crefname{proposition}  {Proposition}  {Propositions}

\crefname{definition}   {definition}   {definitions}
\Crefname{definition}   {Definition}   {Definitions}

\AddToHook{env/assumption/begin}{\crefalias{theorem}{assumption}}
\AddToHook{env/lemma/begin}{\crefalias{theorem}{lemma}}
\AddToHook{env/proposition/begin}{\crefalias{theorem}{proposition}}
\AddToHook{env/definition/begin}{\crefalias{theorem}{definition}}
\definecolor{red}{rgb}{0.7,0.15,0.15}
\definecolor{green}{rgb}{0,0.5,0}
\definecolor{blue}{rgb}{0,0,0.7}
\hypersetup{colorlinks, linkcolor={red}, citecolor={green}, urlcolor={blue}}

\setlength{\parindent}{0pt}
\geometry{hmargin=1.6cm,vmargin=2cm}
\DeclareUnicodeCharacter{014D}{\=o}
\setcounter{secnumdepth}{4}


\newcommand{\smallertext}[1]{\text{\fontsize{5}{5}\selectfont$#1$}}
\newcommand{\smalltext}[1]{\text{\fontsize{4}{4}\selectfont$#1$}}
\newcommand{\tinytext}[1]{\text{\fontsize{3}{3}\selectfont$#1$}}



\def\balpha{\bm{\alpha}}

\newcommand\cA{\mathcal A}
\newcommand\cB{\mathcal B}
\newcommand\cC{\mathcal C}

\newcommand\cE{\mathcal E}
\newcommand\cF{\mathcal F}
\newcommand\cG{\mathcal G}

\newcommand\cI{\mathcal I}

\newcommand\cL{\mathcal L}
\newcommand\cM{\mathcal M}

\newcommand\cO{\mathcal O}
\newcommand\cP{\mathcal P}

\newcommand\cT{\mathcal T}

\newcommand\cW{\mathcal W}


\newcommand\PP{\mathbb P}



\def \E{\mathbb{E}}
\def \F{\mathbb{F}}
\def \G{\mathbb{G}}
\def \H{\mathbb{H}}

\def \L{\mathbb{L}}
\def \M{\mathbb{M}}
\def \N{\mathbb{N}}
\def \P{\mathbb{P}}
\def \Q{\mathbb{Q}}
\def \R{\mathbb{R}}

\def \X{\mathbb{X}}
\def \Y{\mathbb{Y}}
\def \Z{\mathbb{Z}}


\newcommand{\x}{\mathbf{x}}




\def\eps{\varepsilon}
\def\d{\mathrm{d}}
\DeclareMathOperator*{\argmax}{arg\,max}

\DeclareMathOperator*{\esssup}{ess\,sup}

\usepackage{graphicx}
\usepackage{subfig}


\typeout{<MASTER> cleveref knows assumption? \csname cref@assumption@name\endcsname}
\begin{document}

\title{Variance strikes back: sub-game--perfect Nash equilibria in time‑inconsistent $N$‑player games, and their mean‑field sequel}

\author{Dylan {\sc Possama\"{i}} \footnote{ETH Z\"urich, Department of Mathematics, Switzerland, dylan.possamai@math.ethz.ch. This author gratefully acknowledges support from the SNF project MINT 205121-21981.}\and Chiara {\sc Rossato} \footnote{ETH Z\"urich, Department of Mathematics, Switzerland, chiara.rossato@math.ethz.ch.} }

\date{\today}

\maketitle

\begin{abstract}
We investigate a time-inconsistent, non-Markovian finite-player game in continuous time, where each player's objective functional depends non-linearly on the expected value of the state process. As a result, the classical Bellman optimality principle no longer applies. To address this, we adopt a two-layer game-theoretic framework and seek sub-game--perfect Nash equilibria both at the intra-personal level, which accounts for time inconsistency, and at the inter-personal level, which captures strategic interactions among players. We first characterise sub-game--perfect Nash equilibria and the corresponding value processes of all players through a system of coupled backward stochastic differential equations. We then analyse the mean-field counterpart and its sub-game--perfect mean-field equilibria, described by a system of McKean-Vlasov backward stochastic differential equations. Building on this representation, we finally prove the convergence of sub-game--perfect Nash equilibria and their corresponding value processes in the $N$-player game to their mean-field counterparts.

\bigskip
\noindent{\bf Key words:} time inconsistency, mean-variance, non-Markovian stochastic games, mean-field games, sub-game--perfect equilibria. \bigskip
\end{abstract}

\allowdisplaybreaks
\section{Introduction}\label{introTI}

This paper studies time-inconsistent stochastic differential games in which each player's objective is characterised by a non-linear function of the expected value of their outcome. Such non-linearity captures risk-sensitive behaviour toward uncertain outcomes, implying that each player exhibits mean--variance-type preferences. Since the seminal work of \citeauthor*{markowitz1959portfolio} \cite{markowitz1959portfolio}, mean--variance preferences have played a central role in the economics and finance literature and have seen renewed attention over the past two decades.

\medskip
A central challenge in handling preferences characterised by a non-linear function of the expectation of future outcomes, is that the classical dynamic programming approach cannot be applied directly, as the iterated-expectation property (or the so-called Bellman optimality principle) no longer holds when one insists on optimising the objective functional. Consequently, this leads to dynamic inconsistency, since the optimal action may depend on the point in time at which the decision is made, and the agent may therefore have an incentive to deviate from their initial plan. \citeauthor*{strotz1955myopia} \cite{strotz1955myopia} was the first to formulate the conceptual framework for analysing time inconsistency, emphasising in his seminal work on time-consistent planning that an agent should select `the best plan among those that (they) will actually follow.' When the agent recognises that their present self and future selves may have conflicting preferences, then \cite{strotz1955myopia} describes two different approaches that can be followed: the strategy of pre-commitment and the strategy of consistent planning. In the former, the agent makes a decision that is optimal today and commits to it, simply disregarding the fact that, at a later point in time, such a choice could no longer be optimal. In the latter, the agent compromises by choosing the current action that is optimal in light of the inter-temporal conflict, adopting a game-theoretic perspective on time inconsistency. We can therefore interpret the problem as a non-cooperative game in which the agent's selves at different points in time are considered as players, each seeking what is referred to in the literature as an intra-personal equilibrium, or equivalently, a sub-game--perfect equilibrium, a term first introduced by \citeauthor*{selten1965spieltheoretische} \cite{selten1965spieltheoretische}.

\medskip
If \citeauthor*{strotz1955myopia} pioneered the analysis of time-inconsistent behaviour, this strand of research was subsequently developed by \citeauthor*{pollak1968consistent} \cite{pollak1968consistent} and \citeauthor*{peleg1973existence} \cite{peleg1973existence}, who formalised the idea by modelling time-inconsistent problems as non-cooperative games among an agent's successive selves. In this framework, each temporal self determines the control at their corresponding time, optimising their own objective while anticipating future re-optimisation. This perspective has been adopted in many recent works on time-inconsistency arising from non-exponential discounting. Notably, \citeauthor*{ekeland2006being} \cite{ekeland2006being,ekeland2008equilibrium,ekeland2010golden} introduced the first rigorous definition of sub-game--perfect equilibrium for deterministic control problems. \citeauthor*{ekeland2008investment} \cite{ekeland2008investment} extended these ideas to continuous-time stochastic models analysing how non-exponential discounting affects investment--consumption policies in a Merton-like problem. Regarding the time-inconsistency associated with mean--variance preferences, \citeauthor*{basak2010dynamic} \cite{basak2010dynamic} were among the first to apply and extend the consistent planning approach to mean--variance portfolio optimisation, a research direction further advanced by \citeauthor*{wang2011continuous} \cite{wang2011continuous}, \citeauthor*{hu2012time} \cite{hu2012time,hu2017time}, \citeauthor*{wei2013markowitz} \cite{wei2013markowitz}, \citeauthor*{he2013optimal} \cite{he2013optimal}, \citeauthor*{czichowsky2013time} \cite{czichowsky2013time}, \citeauthor*{bensoussan2014time} \cite{bensoussan2014time}, \citeauthor*{bjork2014mean} \cite{bjork2014mean}, \citeauthor*{kronborg2015inconsistent} \cite{kronborg2015inconsistent} and \citeauthor*{djehiche2016characterization} \cite{djehiche2016characterization}. Mean--variance criteria have also been studied in insurance-related problems, as in \citeauthor*{li2015time} \cite{li2015time} and \citeauthor*{zeng2016robust} \cite{zeng2016robust}. It is also worth noting the works of \citeauthor*{bjork2017time} \cite{bjork2016time2,bjork2017time,bjork2021time} (see also \citeauthor*{lindensjo2019regular} \cite{lindensjo2019regular} and the survey by \citeauthor*{he2022who} \cite{he2022who}), who developed a comprehensive framework for addressing a broad class of time-inconsistent stochastic control problems in continuous time within the Markovian setting. In the non-Markovian setting, \citeauthor*{hernandez2023me} \cite{hernandez2023me} provided a rigorous proof of an extended dynamic programming principle and fully characterised the time-inconsistent problem through a system of backward stochastic differential equations (BSDEs).

\medskip
In a consistent-planning perspective, an agent optimises their decisions while accounting for intra-personal conflict, and therefore by correctly anticipating the actions of their selves in the future. A strategy that resolves this internal dynamic is called an intra-personal equilibrium and it has been extensively studied in the literature. In discrete time, the notion of intra-personal equilibrium is widely agreed upon and provides a mathematical formulation of \citeauthor*{strotz1955myopia}'s ideas. However, in continuous time, several alternative definitions have been proposed to capture the subtleties of temporal consistency. The most widely adopted formulation is the first-order approximation approach, known as weak intra-personal equilibrium and pioneered by \cite{ekeland2006being}. However, this definition does not guarantee that the equilibrium corresponds to an optimum of the payoff function, as it may merely identify a stationary point, and consequently, the agent may still have an incentive to deviate from a given weak equilibrium strategy. To overcome this limitation, \citeauthor*{huang2021strong} \cite{huang2021strong} introduced the notion of strong intra-personal equilibrium in the context of an infinite-time stochastic control problem, where an agent controls the generator of a time-homogeneous, continuous-time, finite-state Markov chain. \citeauthor*{he2021equilibrium} \cite{he2021equilibrium} showed that strong equilibrium strategies do not always exist. Motivated by this non-existence result, they suggested the concept of regular intra-personal equilibrium, which they showed to be stronger than the weak intra-personal equilibrium, and provided a sufficient condition under which these two notions coincide. The notion of intra-personal equilibrium is extended to the non-Markovian setting in \cite{hernandez2023me}, where it is defined as a strategy from which the agent has no incentive to deviate over a short period of time unless such a deviation yields an incremental reward positively proportional to the duration of that period, resembling the definition of weak equilibrium in the Markovian setting.

\medskip
While the works discussed so far focus on time-inconsistent control problems involving a single agent, we are particularly interested in extending the analysis to multiple interacting players, and ultimately in studying the continuum limit with mean-field interactions. When several players exhibit time-inconsistent preferences, the resulting analysis involves two interdependent levels of strategic interaction. At the inter-personal level, each agent's control affects the objectives of the others, leading to the classical notion of Nash equilibrium among players. At the intra-personal level, each agent faces a dynamic game against their future selves, induced by their time-inconsistent objectives. Each sophisticated agent therefore seeks a sub-game--perfect Nash equilibrium, that is, a strategy that constitutes an intra-personal equilibrium internally and a Nash equilibrium externally. Equivalently, a sub-game--perfect Nash equilibrium is a Nash equilibrium across both levels simultaneously: no agent has an incentive to deviate given the strategies of the others (inter-personal equilibrium), and no temporal self of any agent wishes to deviate given the continuation of their own strategy (intra-personal equilibrium). Despite its relevance, existing literature on time-inconsistent problems has primarily focused on the single-agent case. Only a few works consider the multi-agent setting, where two intertwined levels of strategic interaction arise. In the context of time-inconsistent contract theory, \citeauthor*{cetemen2021renegotiation} \cite{cetemen2021renegotiation} studied a contracting problem in which the principal exhibits non-exponential discounting, while \citeauthor*{hernandez2024time} \cite{hernandez2024time} analysed the case of a time-inconsistent sophisticated agent whose reward is determined by the solution of a backward stochastic Volterra integral equation. Focusing on non-cooperative interactions, \citeauthor*{huang2022time} \cite{huang2022time} investigated a non--zero-sum Dynkin game in discrete time under non-exponential discounting, while \citeauthor*{lazrak2023present} \cite{lazrak2023present} analysed a linear--quadratic zero-sum game in which the two players discount performance at a non-constant rate when lobbying for investment in a wind turbine farm. \citeauthor*{huang2023partial} \cite{huang2023partial} studied a mean--variance portfolio optimisation game in which a finite number of investors determine their strategies under both full and partial information. To the best of our knowledge, only two works have explored this direction for large-population systems. \citeauthor*{wang2023time} \cite{wang2023time} considered a time-inconsistent linear--quadratic mean-field game, while \citeauthor*{bayraktar2025time} \cite{bayraktar2025time} analysed the convergence of equilibria in $N$-player games toward a mean-field game equilibrium in a discrete-time Markov decision game with non-exponential discounting.

\medskip
In this paper, we develop a general framework for non-cooperative stochastic games with finitely many players, formulated under the weak formulation, in which the drift of each player's state process depends on the states and controls of all agents. Each player faces a non-Markovian stochastic control problem that is time-inconsistent due to the presence of a non-linear function of the expected value of future outcomes in their objective functional. We adopt the perspective of sophisticated agents, who are aware of the time-inconsistent nature of their preferences and anticipate future re-optimisation. Within this setting, we introduce a notion of sub-game--perfect Nash equilibrium, adapting the definition in \cite{hernandez2023me} to our problem, which involves two intertwined layers of strategic interaction: the intra-personal equilibrium, ensuring consistency among an agent's temporal selves, and the inter-personal equilibrium, capturing the mutual influence among different agents. Extending the results of \cite[Section 7]{hernandez2023me} and \cite[Section 2.4]{hernandez2021general}, our equilibrium notion allows us to prove an extended dynamic programming principle. Consequently, each equilibrium strategy constitutes a Nash equilibrium across all players and is time-consistent, in the sense that neither players nor players' future selves have an incentive to deviate from the strategy given its continuation. Leveraging this extended dynamic programming principle, we provide a rigorous BSDE characterisation of the sub-game--perfect Nash equilibria and the associated value processes in the $N$-player game. Specifically, the equilibria correspond to the fixed-points of a vector-valued Hamiltonian, and the resulting system is a $3N$-dimensional system of BSDEs with quadratic growth, whose well-posedness is both necessary and sufficient to characterise the time-inconsistent multi-agent problem. 

\medskip
In the case of a symmetric game, as the number of players increases, the dimension of the BSDE system associated with the $N$-player game grows accordingly, making it increasingly challenging to find a solution. For this reason, our second objective is to study the mean-field game and the corresponding sub-game--perfect mean-field equilibria, and to analyse the convergence problem, in the spirit of \citeauthor*{lauriere2022backward} \cite{lauriere2022backward} and \citeauthor*{possamai2025non} \cite{possamai2025non}. We adapt the equilibrium notion from the multi-agent problem to the mean-field setting and, by leveraging once again the extended dynamic programming principle, we characterise the sub-game--perfect mean-field equilibria and the associated value processes through a three-dimensional system of coupled McKean--Vlasov BSDEs with quadratic growth. Building on this representation, and under the assumption of uniqueness of the sub-game--perfect mean-field equilibrium, we establish that the BSDE system describing the $N$-player game converges to the McKean-Vlasov BSDEs associated with the mean-field game, by relying on propagation of chaos results for forward--backward stochastic differential equations (FBSDEs). To the best of our knowledge, this is the first result in the literature establishing such a convergence in a time-inconsistent setting. Crucially, the extended version of Bellman's optimality principle is what makes this possible. It enables us to carry out the entire convergence analysis directly at the BSDE level, providing a natural route that remains fully compatible with the weak formulation and, moreover, one that yields explicit non-asymptotic convergence rates. In contrast, the two other standard approaches to convergence in time-consistent mean-field theory face obstacles in this time-inconsistent setting. Analytic methods based on the master equation, such as the work of \citeauthor*{cardaliaguet2019master} \cite{cardaliaguet2019master} in the time-consistent case, would require a full PDE analysis of a master equation, which has not yet been derived for time-inconsistent problems tackled under the consistent planning approach. On the other hand, the probabilistic, compactness-based method pioneered by \citeauthor*{lacker2020convergence} \cite{lacker2020convergence} relies on a relaxed notion of equilibrium, which also has not been fully elucidated so far, and, in addition, would not allow one to derive explicit rates.

\medskip
The rest of the paper is organised as follows. \Cref{section:ProbSetting} introduces the probabilistic framework and establishes the notation used throughout the paper for both the multi-agent and mean-field games. \Cref{section:gameFinitePlayer} formulates the multi-agent game, defines the notion of sub-game--perfect Nash equilibrium, and provides the corresponding BSDE characterisation in \Cref{necessity_toBSDE} and \Cref{sufficiency_toBSDE}, which relies on the extended dynamic programming principle. \Cref{section:only2Players_sameResulT} presents a particular example with only two players in which the solution coincides with that of the corresponding McKean--Vlasov differential game, while \Cref{subsection:zeroSumGame} examines the special case of a two-player zero-sum game, highlighting the resulting simplifications of the associated BSDE system in this setting. \Cref{meanFieldGame} provides a complete description of the mean-field problem, establishing the BSDE characterisation by presenting the necessary condition in \Cref{necessity_toBSDE_meanFieldGame} and the sufficient condition in \Cref{sufficiency_toBSDE_meanFieldGame}. Finally, \Cref{section:convRes_mainResult} presents the convergence of the sub-game--perfect Nash equilibria and the associated value processes to their mean-field counterparts, as stated in \Cref{theorem:convergenceTheorem}, and includes a representative example to illustrate the proof in a simple setting.

\medskip
{\small\textbf{Notation.} 
Let $\N$ be the set of non-negative integers, $\N^\star$ the set of positive integers, $\R_\smallertext{+}$ the non-negative real line, and $\R^\star_\smallertext{+}$ the positive real line. For $(a,b) \in [-\infty, +\infty]^2$, we write $a \vee b \coloneqq \max\{a,b\}$ and $a \wedge b \coloneqq \min\{a,b\}$. Fix $p \in \N^\star$; for $(a,b) \in \R^p \times \R^p$, let $a \cdot b$ denote the inner product with the associated Euclidean norm $\|\cdot\|$. When $p=1$, we use $|\cdot|$ to denote the modulus. Given a Polish space $(E,d_E)$, for every vector $e \in E^p$, we define $e^{\smallertext{-}i} \in E^{p\smallertext{-}1}$ as the vector obtained by removing the $i$-th coordinate of $e$, and $\tilde{e} \otimes_i e^{\smallertext{-}i} \in E^p$ as the vector whose $i$-th coordinate is equal to $\tilde{e}$, for any $(i,\tilde{e}) \in \{1,\ldots,p\} \times E$. These notations extend to matrices as well. When considering elements with an upper index $N \in \N$, we write $e^{i,N}$ (resp. $e^{N,\smallertext{-}i}$) instead of $(e^N)^i$ (resp. $(e^N)^{\smallertext{-}i}$). For $(m,n) \in \N^\star \times \N^\star$, let $E^{m \times n}$ be the space of $m \times n$ matrices with $E$-valued entries. For $M \in E^{m \times n}$, we denote its transpose by $M^\top$, and if $M \in E^{m \times m}$, we denote its trace by $\mathrm{Tr}[M]$. The spectral norm of $M$ is denoted by $\|M\|$.

\medskip
We denote by $\cP(E)$ the set of all probability measures on the measurable
space $(E,\cB(E))$, where $\cB(E)$ is the Borel $\sigma$-algebra of $E$. The set $\cP(E)$ is endowed with the topology induced by the weak convergence of measures. For any $k \in \N^\star$, we denote by $\cP_k(E)$ the subset of $\cP(E)$ consisting of probability measures with finite $k$-th moment. The set $\cP_k(E)$ is equipped with the $k$-Wasserstein distance, denoted by $\cW_k$. Given a vector $e \in E^p$, we define the empirical measure associated to $e$ as $L^p(e) \coloneqq (1/p) \sum_{\ell=1}^p \delta_{e^\smalltext{\ell}}$, where $\delta_{e^\smalltext{\ell}}$ denotes the Dirac measure at the coordinate $e^\ell$.

\medskip
For $(p,q) \in \N^\star \times \N$, we set $\cC_p^q(E)$ as the space of functions from $E$ to $\R^p$ which are at least $q$ times continuously differentiable. If $q=1$, we simplify the notation to $\cC_p(E) \coloneqq \cC_p^1(E)$ and $\cC_{p,b}(E) \coloneqq \cC_{p,b}^1(E)$. For a given time horizon $T \in \R^\star_\smallertext{+}$, we suppress the dependence on $E$ when $E = [0, T]$. For any $f \in \cC_{p}$, we define $\|f\|_\infty \coloneqq \sup_{t \in [0,T]} \|f(t)\|$. Given another Polish space $(A,d_A)$, for the product space $\tilde{E} = \cC_p \times A$, we define the metric $d_{\tilde{E}}$ as $d_{\tilde{E}}((f,a),(\tilde{f},\tilde{a})) \coloneqq (\|f-\tilde{f}\|_\infty^2+d^2_A(a,\tilde{a}))^{1/2}$.

\medskip
Consider a filtered probability space $(\Omega, \cF, \F \coloneqq (\cF_t)_{t \in [0,T]}, \P)$. Given a random variable $\xi$, we define $\E^\P[\xi] \coloneqq \E^\P[\xi \vee 0] - \E^\P[(-\xi) \vee 0]$, with the convention that $+\infty - \infty = -\infty$. We denote by $\F^{\P_{\smalltext{+}}}$ the $\P$-augmentation of $\F$, and by $\rm{Prog}(\F)$ the progressive $\sigma$-algebra on $\Omega\times[0,T]$. For $s \in [0,T]$ and $t\in [s,T]$, we write $\cT_{s,t}(\F)$ for the set of $[s,t]$-valued $\F$--stopping times. Given two processes $\alpha$ and $\beta$, valued in the same space, we denote their concatenation $\alpha \otimes_t \beta \coloneqq \alpha \mathbf{1}_{[0,t)} + \beta \mathbf{1}_{[t,T]}$, where $t \in [0,T]$. Let $M$ be an $(\F,\P)$--local-martingale in the sense of \citeauthor*{jacod2003limit} \cite[Definition I.1.45]{jacod2003limit} with continuous trajectories $\P$--a.s., we denote its stochastic exponential by $\cE(M) \coloneqq \exp(M- [M]/2)$. For $p \in \N^\star$, we introduce the spaces $\H^2(\F,\P,\R^p)$ and $\H^2_{\mathrm{loc}}(\F,\P,\R^p)$ of, respectively, $\R^p$-valued, $\F$-predictable processes such that
\begin{align*}
\E^\P \bigg[ \int_0^T \|Z_t\|^2 \d t \bigg]  < +\infty,\; \text{respectively,}\; \int_0^T \|Z_t\|^2 \d t<+\infty,\; \P\text{\rm--a.s.}
\end{align*}

}
\section{Probabilistic setting}\label{section:ProbSetting}

Before introducing the stochastic basis on which we define the stochastic differential game, we fix $(N,m,d) \in \N^\star \times \N^\star \times \N^\star$, where $N$ represents the number of players, $m$ is the dimension of the state process of each player, and $d$ is the dimension of the Brownian motions driving these state processes. We work on a fixed probability space $(\Omega,\cF,\P)$, where $\Omega$ is a Polish space and $\cF \coloneqq \cB(\Omega)$ is its Borel $\sigma$-algebra. We consider a sequence of $\P$-independent, $\R^d$-valued Brownian motions $(W^i)_{i \in \N^{\smalltext{\star}}} = ((W^i_t)_{t \in [0,T]})_{i \in \N^{\smalltext{\star}}}$. For each $i \in \N^\star$, let $\G^i = (\cG^i_t)_{t \in [0,T]}$ denote the natural filtration generated by $W^i$, where $\cG^i_t \coloneqq \sigma(W^i_s: s \in [0,t])$. In addition, we introduce a family of $\R^m$-valued random variables $(X^i_0)_{i \in \N^{\smalltext{\star}}}$, $\P$-independent of the family of Brownian motions $(W^i)_{i \in \N^{\smalltext{\star}}}$. For each $i \in \N^\star$, we define the enlarged filtration $\F^i = (\cF^i_t)_{t \in [0,T]}$ by $\cF^i_t \coloneqq \sigma(\cG^i_{t} \cup \sigma(X^i_0))$. Finally, we define the joint filtration $\F_N = (\cF_{N,t})_{t \in [0,T]}$ by $\cF_{N,t} \coloneqq \bigvee_{i = 1}^N \cF^i_t$.

\medskip
A natural question is whether each $W^i$ remains a Brownian motion with respect to the enlarged filtrations $\F^i$ and $\F_N$. The answer is affirmative. This follows from L\'evy's characterisation of Brownian motion (see, for instance, \citeauthor*{weizsaecker1990stochastic} \cite[Theorem 9.1.1]{weizsaecker1990stochastic}) since the process $W^i$ remains a martingale with respect to the enlarged filtrations $\F^i$ and $\F_N$, as established by \citeauthor*{grigorian2023enlargement} \cite[Theorem 2]{grigorian2023enlargement}, and its quadratic variation process does not depend on the filtration. A further question is whether the martingale representation property is preserved under such initial enlargement. The answer is again affirmative. 

\begin{lemma}\label{lemma:martingaleRepresentation}
Let $M$ be an $(\F^i,\P)$-martingale $($respectively, an $(\F_N,\P)$-martingale$)$. There exists a unique process $Z \in \H^2_{\mathrm{loc}}(\F^i,\P,\R^d)$ $($respectively, $(Z^\ell)_{\ell \in \{1,\ldots,N\}}$ with each $Z^\ell \in \H^2_{\mathrm{loc}}(\F_N,\P,\R^d))$ such that
\begin{equation*}
M_{t} = M_0 + \int_0^t Z_s \cdot \d W^i_s \; \bigg(\text{\rm respectively,} \; M_{t} = M_0 + \int_0^t \sum_{\ell =1}^N Z^\ell_s \cdot \d W^\ell_s\bigg),\; t \in [0,T],\; \P \text{\rm --a.s.}
\end{equation*}
Moreover, this property is preserved under an equivalent change of measure, see for instance {\rm\cite[Theorem III.5.24]{jacod2003limit}}.
\end{lemma}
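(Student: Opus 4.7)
\medskip
The plan is to exploit the $\P$-independence of $X^i_0$ (respectively of $(X^\ell_0)_{\ell=1}^N$) from the Brownian filtration $\G^i$ (respectively from $(W^\ell)_{\ell=1}^N$): under this independence, Jacod's criterion for initial enlargement is trivially satisfied, since the regular conditional law of $X^i_0$ given $\cG^i_t$ coincides with its unconditional law for every $t \in [0,T]$. This already ensures preservation of the martingale representation property via classical results on initial enlargement; I would nevertheless carry out a self-contained disintegration-based derivation. By localisation along stopping times reducing $M$ to a bounded martingale, together with a monotone-class / $L^2$-density argument, it suffices to establish the representation for closed martingales of the form $M_t = \E^\P[\xi \mid \cF^i_t]$ with $\xi \in L^2(\cF^i_T,\P)$; the local statement then follows by pasting together the $\H^2$-integrands via uniqueness.

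\medskip
Since $\cF^i_T = \sigma(W^i_s : s\le T) \vee \sigma(X^i_0)$, Doob--Dynkin yields a jointly Borel-measurable functional $\phi$, defined on the path space of $W^i$ and on $\R^m$, such that $\xi = \phi(W^i_\cdot, X^i_0)$. Letting $\mu$ denote the law of $X^i_0$, Fubini's theorem combined with independence gives $\int_{\R^m} \E^\P[\phi(W^i_\cdot,x)^2] \, \mu(\d x) < +\infty$. For $\mu$-almost every $x$, the process $N^x_t \coloneqq \E^\P[\phi(W^i_\cdot,x) \mid \cG^i_t]$ is a square-integrable $(\G^i,\P)$-martingale, and It\^o's representation theorem for the Brownian motion $W^i$ furnishes $Z^x \in \H^2(\G^i,\P,\R^d)$ with $N^x_t = N^x_0 + \int_0^t Z^x_s \cdot \d W^i_s$ for all $t \in [0,T]$, $\P$-a.s.

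\medskip
The assembly step consists in realising $(t,\omega,x) \mapsto Z^x_t(\omega)$ as a single jointly measurable map, progressive in $(t,\omega)$ and Borel in $x$, so that $Z_t(\omega) \coloneqq Z^{X^i_0(\omega)}_t(\omega)$ defines an $\F^i$-predictable process in $\H^2_{\mathrm{loc}}(\F^i,\P,\R^d)$. I would achieve this by approximating $\phi(\cdot,x)$ in $L^2$ uniformly in $x$ by simple cylindrical functionals of finitely many Brownian increments --- for which the representing integrand is explicit and manifestly Borel in $x$ --- and passing to the $L^2$-limit using the uniform control from the previous step. Independence of $X^i_0$ and $W^i$ combined with Fubini then yields $M_t = N^{X^i_0}_t$ $\P$-a.s., closing the argument; uniqueness is immediate via identification of quadratic variations of the difference of two candidate integrands. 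The $\F_N$-statement follows verbatim upon regarding $(W^1,\ldots,W^N)$ as an $Nd$-dimensional Brownian motion, still $\P$-independent of $\sigma(X^1_0,\ldots,X^N_0)$. The main obstacle is precisely this measurable-selection step, which is classical but the most technical part of the argument; a shorter alternative would be to invoke Jacod's theorem on initial enlargement directly, packaging exactly this construction.
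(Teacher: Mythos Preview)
Your proposal is correct in spirit and shares the paper's core idea of exploiting the $\P$-independence of $X^i_0$ from $W^i$, but it takes a more technical route than the paper and glosses over one subtlety that the paper handles explicitly.

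The paper avoids your measurable-selection step entirely. Rather than disintegrating a general $\xi=\phi(W^i_\cdot,X^i_0)$ and assembling a parametrised family $(Z^x)_x$ into a jointly measurable map, the paper starts from the \emph{tensor} random variables $\eta\,f(X^i_0)$ with $\eta$ bounded $\cG^i_T$-measurable and $f$ bounded Borel. For these, independence gives $M_t=f(X^i_0)\,\E^\P[\eta\mid\cG^i_t]$ directly, so the integrand is simply $f(X^i_0)Z$ with $Z$ the classical It\^o integrand for $\eta$; no selection issue arises. A monotone-class argument on $\cF^i_T=\sigma\bigl(\eta f(X^i_0)\bigr)$ then yields the representation for all bounded $(\F^i,\P)$-martingales. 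Your approach reaches the same conclusion but at the cost of the joint-measurability construction you yourself flag as ``the most technical part''; the paper's route buys simplicity by deferring the density step to the monotone class rather than to an $L^2$-approximation of $\phi(\cdot,x)$.

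The subtlety you miss is the filtration. The paper works with the \emph{raw} filtration $\F^i$ (no completion, no right-continuity), and this is the whole point of the lemma as stated in the text surrounding it. Your localisation/reduction to bounded martingales presupposes some path regularity or stopping-time machinery that is not automatic without the usual conditions. The paper handles this by a detour: once the representation is known for bounded $(\F^i,\P)$-martingales, it passes to bounded $((\F^i)^{\P_+},\P)$-martingales via backward martingale convergence, then to general $((\F^i)^{\P_+},\P)$-martingales by standard results, and finally \emph{returns} to a general $(\F^i,\P)$-martingale by taking an $\F^i$-predictable version of the integrand obtained in the completed filtration. Your proposal would need an analogous mechanism to bridge the raw and augmented filtrations.
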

To the best of our knowledge, this property has not been shown without assuming the usual conditions on the filtration. For completeness, we therefore provide a proof within our framework, which we defer to the \Cref{appendix:martingaleRep} for readability. In particular, \Cref{lemma:martingaleRepresentation} implies that every $(\F^i,\P)$-martingale (respectively, every $(\F_N,\P)$-martingale) admits a $\P$-modification that is right-continuous and $\P\text{--a.s.}$ continuous.

\medskip
For each $i \in \N^\star$, we introduce a Borel-measurable function $\sigma^i:[0,T] \times \cC_m \longrightarrow \R^{m \times d}$. We assume that the stochastic differential equation ({\rm SDE})
\begin{equation}\label{eq:XstrongSolSDE}
X^i_t = X^i_0 + \int_0^t \sigma^i_s(X^i_{\cdot \land s}) \d W^i_s, \; t \in [0,T], \; \P\text{\rm--a.s.},
\end{equation}
admits a unique strong solution on $(\Omega,\cF,\F^i,\P)$. It is well known that existence and strong uniqueness hold, for example, when the function $\sigma^i$ is locally Lipschitz-continuous in its spatial (path) variable, uniformly in time, see \cite[Theorem 11.1.1]{weizsaecker1990stochastic}. Furthermore, the family $(X^i)_{i \in \N^{\smalltext{\star}}}$ consists of mutually $\P$-independent processes provided that the initial conditions $(X^i_0)_{i \in \N^{\smalltext{\star}}}$ are $\P$-independent, and each $X^i$ is right-continuous with $\P$--a.s. continuous paths, see \cite[Lemma 4.3.5]{weizsaecker1990stochastic}.

\medskip
In the context of the $N$-player game, we write $\X^N \coloneqq (X^1,\ldots,X^N)$. Let $(A,d_A)$ be a non-empty compact Polish space. We define the set of admissible strategies $\cA^N_N$ as the collection of $\F_N$-predictable, $A^N$-valued processes $\balpha \coloneqq (\alpha^{1}, \ldots, \alpha^{N})$. Similarly, let $\cA_N$ and $\cA_N^{N\smallertext{-}1}$ denote the sets of $\F_N$-predictable, $A$-valued and $A^{N-1}$-valued processes, respectively. For each $i \in \{1,\ldots,N\}$, we introduce the drift function
\begin{align*}
b^i:[0,T] \times \cC_m \times \cP_2(\cC_{m} \times A) \times A \longrightarrow \R^d,
\end{align*}
which is assumed to be bounded and Borel-measurable in all its arguments. For each $\balpha \in \cA^N_N$, we then define the probability measure $\P^{\balpha,N}$ on $(\Omega,\cF)$ by
\begin{equation}\label{eq:changeMeasureNplayerGame}
\frac{\d \P^{\balpha,N}}{\d \P} \coloneqq \cE\Bigg( \int_0^\cdot \sum_{\ell =1}^N b^\ell_t\big(X^\ell_{\cdot \land t},L^N\big(\X^N_{\cdot \land t},\balpha_t\big),\alpha^{\ell}_t\big) \cdot \d W^\ell_t \Bigg)_T.
\end{equation}
We recall the notation $L^N(\X^N,\balpha)$ for the empirical measure associated with the $(\R^{d} \times A)^N$-valued process $(X^N,\balpha)$, as introduced at end of the introduction. Particularly, for each $i\in\{1,\ldots,N\}$, we have
\begin{equation}\label{eq:dinXchangedMeasureNplayerGame}
X^i_t=X^i_0+\int_0^t \sigma^i_s(X^i_{\cdot \land s}) b^i_s\big(X^i_{\cdot \land s},L^N\big(\X^N_{\cdot \land s},\balpha_s\big),\alpha^i_s\big) \d s + \int_0^t \sigma^i_s(X^i_{\cdot \land s}) \d \big(W_s^{\balpha,N}\big)^{i}, \; t\in[0,T], \; \P\text{\rm--a.s.},
\end{equation}
where, by Girsanov's theorem, the process
\begin{align*}
\big(W^{\balpha,N}_t\big)^{i} \coloneqq W^i_t-\int_0^t b^i_s\big(X^i_{\cdot \land s},L^N\big(\X^N_{\cdot \land s},\balpha_s\big),\alpha^i_s\big) \d s, \; t \in [0,T],
\end{align*}
is an $(\F_N,\P^{\balpha,N})$--Brownian motion.

\medskip
When describing the mean-field game, our main objective is to analyse the convergence of the $N$-player game to its mean-field counterpart. To this end, we consider a symmetric $N$-player game. Specifically, we assume that the functions characterising the $N$-player game are independent of the player index $i \in \{1,\ldots, N\}$, meaning that they are identical for all players. Finally, we give special attention to player $1$, whom we refer to as the representative agent. We introduce the set $\mathfrak{P}$ of Borel-measurable functions $[0,T] \ni t \longmapsto \xi_t \in \cP_2(\cC_m \times A)$ and the set $\cA_1$ of admissible strategies, \emph{i.e.}, the collection of $\F^1$-predictable, $A$-valued processes, to specify the probability measure $\P^{\alpha,\xi}$ on $(\Omega,\cF)$ by
\begin{equation}\label{eq:changeMeasureMeanField}
\frac{\d \P^{\alpha,\xi}}{\d \P} \coloneqq \cE\bigg( \int_0^\cdot b_t\big(X^1_{\cdot \land t},\xi_t,\alpha_t\big) \cdot \d W^1_t \bigg)_T, \; (\alpha,\xi) \in \cA_1 \times \mathfrak{P}.
\end{equation}
We have that
\begin{align*}
X^1_t &= X^1_0+\int_0^t \sigma_s(X^1_{\cdot \land s}) b_s\big(X^1_{\cdot \land s},\xi_s,\alpha_s\big) \d s + \int_0^t \sigma_s(X^1_{\cdot \land s}) \d \big(W_s^{\alpha,\xi}\big)^1 \\
&\coloneqq X^1_0+\int_0^t \sigma_s(X^1_{\cdot \land s}) b_s\big(X^1_{\cdot \land s},\xi_s,\alpha_s\big) \d s + \int_0^t \sigma_s(X^1_{\cdot \land s}) \d \bigg( W^1_s - \int_0^s b_r\big(X^1_{\cdot \land r},\xi_r,\alpha_r\big) \d r \bigg), \; t\in[0,T], \; \P\text{\rm--a.s.}
\end{align*}

\subsection{Regular conditional probability distributions}\label{section:rcpd}

Let $\cM(\Omega)$ denote the set of all probability measures on the measurable space $(\Omega,\cF)$, which is also a Polish space under the weak convergence topology. We consider the probability space $(\Omega, \cF,\M)$ for some $\M \in \cM(\Omega)$, together with a filtration $\F \coloneqq (\cF_t)_{t \in [0,T]}$. In the following, $\M$ will denote an arbitrary probability measure introduced above, either specified by $\balpha \in \cA_N^N$ or by $(\alpha,\xi) \in \cA_1 \times \mathfrak{P}$, and we will work with the filtrations $\F_N$ and $\F^1$. Since $\cF$ is the Borel $\sigma$-algebra of $\Omega$, there exists a family of regular conditional probability distributions (r.c.p.d.s) $(\M_\omega^\tau)_{\omega \in \Omega}$ of $\M$ with respect to $\cF_\tau$, for any stopping time $\tau \in \cT_{0,T}(\F)$ (see \citeauthor*{stroock1997multidimensional} \cite[Theorem 1.1.6 and Theorem 1.1.8]{stroock1997multidimensional}). That is
\begin{enumerate}[label={$(\roman*)$}]
\item\label{firstrcpd} for each $A \in \cF_\tau$ and $B \in \cF$, the function $\omega \longmapsto \M_\omega^\tau(B)$ is $\cF_\tau$-measurable, and $\M[A \cap B] = \int_A \M_\omega^\tau(B) \M(\d \omega)$;
\item\label{secondrcpd} $\M_\omega^\tau\big[[\omega]_\tau\big] = 1$, $\text{\rm for}\;\M\text{\rm--a.e.}\;\omega\in\Omega$, where $[\omega]_\tau \coloneqq \bigcap \{ A \in \cF: A \in \cF_\tau \; \text{and} \; \omega \in A\}$.
\end{enumerate}
Moreover, by \cite[Corollary 1.1.7]{stroock1997multidimensional}, for any $\M$-integrable random variable $X$ on $(\Omega, \cF,\M)$ it holds that
\begin{align*}
\E^{\M_\smalltext{\omega}^\smalltext{\tau}}[X] = \E^\M[X|\cF_\tau](\omega), \; \text{\rm for}\;\M\text{\rm--a.e.}\;\omega\in\Omega.
\end{align*}
Note that for each stopping time $\tau\in \cT_{0,T}(\F)$, the family $(\M_\omega^\tau)_{\omega\in\Omega}$ is uniquely determined by \ref{firstrcpd}-\ref{firstrcpd} only up to a $\P$-null set.

\medskip
Given a stopping time $\tau \in \cT_{0,T}(\F_N)$ and the probability measure $\P^{\balpha,N}$ on $(\Omega,\cF)$ introduced in \Cref{eq:changeMeasureNplayerGame}, we denote by $(\P^{\balpha,N,\tau}_\omega)_{\omega \in \Omega}$ the family of r.c.p.d.s of $\P^{\balpha,N}$ given $\cF_{N,\tau}$. By \cite[Theorem 1.2.10]{stroock1997multidimensional}, it follows that for any $i \in \{1,\ldots,N\}$, and for $\P^{\balpha,N}\text{\rm--a.e.} \; \omega\in\Omega$
\begin{align*}
\begin{split}
X^i_t &= X^i_{\tau(\omega)}(\omega) + \int_{\tau(\omega)}^t \sigma^i_s(X^i_{\cdot \land s}) b^i_s\big(X^i_{\cdot \land s},L^N(\X^N_{\cdot \land s},\balpha_s),\alpha^i_s\big) \d s + \int_{\tau(\omega)}^t \sigma^i_s(X^i_{\cdot \land s}) \d \big(W_s^{\balpha,N,\tau,\omega}\big)^i, \; t\in[\tau(\omega),T], \; \P^{\balpha,N,\tau}_\omega\text{\rm--a.s.} ,
\end{split}
\end{align*}
where 
\begin{align*}
W^{\balpha,N,\tau,\omega}_t \coloneqq W^{\balpha,N}_t - W^{\balpha,N}_{t \land \tau(\omega)}, \; t \in [\tau(\omega),T],
\end{align*}
is an $(\F_N,\P^{\balpha,N,\tau}_\omega)$--Brownian motion. Similarly, for a stopping time $\tau \in \cT_{0,T}(\F^1)$ we define $(\P^{\alpha,1,\tau}_\omega)_{\omega \in \Omega}$ as the family of r.c.p.d.s of $\P^{\alpha,\xi}$, introduced in \Cref{eq:changeMeasureMeanField}, given $\cF^1_{\tau}$. Consequently, we have for $\P^{\alpha}\text{\rm--a.e.} \; \omega\in\Omega$
\begin{align*}
\begin{split}
X^1_t = X^1_{\tau(\omega)}(\omega) + \int_{\tau(\omega)}^t \sigma_s(X^1_{\cdot \land s}) b_s\big(X^1_{\cdot \land s},\xi_s,\alpha_s\big) \d s + \int_{\tau(\omega)}^t \sigma_s(X^1_{\cdot \land s}) \d W_s^{\alpha,1,\tau,\omega}, \; t\in[\tau(\omega),T], \; \P^{\alpha,1,\tau}_\omega\text{\rm--a.s.},
\end{split}
\end{align*}
where 
\begin{align*}
W^{\alpha,1,\tau,\omega}_t \coloneqq \big(W^{\alpha,\xi}_t\big)^1 - \big(W^{\alpha,\xi}_{t \land \tau(\omega)}\big)^1, \; t \in [\tau(\omega),T],
\end{align*}
is an $\big(\F^1,\P^{\alpha,1,\tau}_\omega\big)$--Brownian motion.

\section{The finitely many player game}\label{section:gameFinitePlayer}

After introducing the underlying probability space and the regular conditional probabilities, we now describe the game under consideration. We study an $N$-player game in which the payoff of player $i$, $i \in \{1,\ldots ,N\}$, given that the other players follow the strategy $\balpha^{N,\smallertext{-}i} \in \cA^{N-1}_N$, is defined as
\begin{align}\label{align:criteriumNplayerGame}
\notag J^i\big(t,\omega,\alpha;\balpha^{N,\smallertext{-}i}\big) &\coloneqq \E^{\P^{\smalltext{\alpha} \smalltext{\otimes}_\tinytext{i} \smalltext{\balpha}^{\tinytext{N}\tinytext{,}\tinytext{-}\tinytext{i}}\smalltext{,}\smalltext{N}\smalltext{,}\smalltext{t}}_\smalltext{\omega}} \bigg[ \int_t^T f^i_s\big(X^i_{\cdot \land s}, L^N\big(\X^N_{\cdot \land s},(\alpha \otimes_i \balpha^{N,\smallertext{-}i})_s\big), \alpha_s\big) \d s + g^i\big(X^i_{\cdot \land T}, L^N\big(\X^N_{\cdot \land T}\big)\big) \bigg] \\
&\; \quad + G^i\Big( \E^{\P^{\smalltext{\alpha} \smalltext{\otimes}_\tinytext{i} \smalltext{\balpha}^{\tinytext{N}\tinytext{,}\tinytext{-}\tinytext{i}}\smalltext{,}\smalltext{N}\smalltext{,}\smalltext{t}}_\smalltext{\omega}} \big[ \varphi^i_1(X^i_{\cdot \land T}) \big] ,  \E^{\P^{\smalltext{\alpha} \smalltext{\otimes}_\tinytext{i} \smalltext{\balpha}^{\tinytext{N}\tinytext{,}\tinytext{-}\tinytext{i}}\smalltext{,}\smalltext{N}\smalltext{,}\smalltext{t}}_\smalltext{\omega}} \big[ \varphi^i_2\big(L^N(\X^N_{\cdot \land T})\big) \big] \Big), \; (t,\omega,\alpha) \in [0,T] \times \Omega \times \cA_N,
\end{align}
where the functions $f^i: [0,T] \times \cC_m \times \cP_2(\cC_{m} \times A) \times A \longrightarrow \R$, $g^i: \cC_m \times \cP_2(\cC_{m}) \longrightarrow \R$, $G^i: \R \times \R \longrightarrow \R$, $\varphi^i_1: \cC_m \longrightarrow \R$, and $\varphi^i_2: \cP_2(\cC_{m}) \longrightarrow \R$ are all assumed to be Borel-measurable with respect to all their arguments. 

\medskip
Our objective is to characterise the Nash equilibria of the stochastic differential game just introduced, that is, to identify admissible strategies $\balpha^N \in \cA_N^N$ such that no player $i \in \{1,\ldots,N\}$ can improve their outcome by unilaterally deviating. Owing to the non-linear dependence on the mean in the payoff function $J^i$, as given in \eqref{align:criteriumNplayerGame}, each agent faces a time-inconsistent control problem. We assume that every agent is what is known in the literature as sophisticated---or ‘thrifty,’ as originally defined in \cite{strotz1955myopia}---and accordingly, we assume they adopt consistent planning strategies following the game-theoretic approach introduced by \cite{strotz1955myopia} and later formalised by \cite{ekeland2006being}, anticipating the behaviours of their future selves. As a consequence, each player faces an internal inter-temporal conflict and seeks a strategy that all of their future selves would consistently implement over time. Thus, each player competes not only against the other $N-1$ players but also against a continuum of their own future selves. This naturally leads to the notion of `games embedded in an $N$-player game', a complexity we address using the two-level game-theoretic framework presented by \cite{huang2022time}. At the intra-personal game level, each agent searches for time-consistent strategies, while at the inter-personal game level, each agent selects the best such strategy in response to the strategies of the other players. To formalise this, we introduce the concept of a \emph{sub-game--perfect Nash equilibrium}, extending the definition in \cite[Definition 2.6]{hernandez2023me} to the multi-agent setting.

\begin{definition}\label{def:NashEquilibrium}
Let $\hat\balpha^N \in \cA^N_N$, and $\varepsilon >0$. We define
\begin{align*}
&\ell_\varepsilon \coloneqq \inf \Big\{ \ell >0 : \exists (i,t,\alpha) \in \{1,\ldots,N\} \times [0,T] \times \cA_N, \;
\P \big[ J^i\big(t,\cdot,\hat\alpha^{i,N};\hat\balpha^{N,\smallertext{-}i}\big) < J^i\big(t,\cdot,\alpha\otimes_{t+\ell}\hat\alpha^{i,N};\hat\balpha^{N,\smallertext{-}i}\big) - \varepsilon \ell \big] >0 \Big\}.
\end{align*}
We say that $\hat\balpha^N$ is a {\rm sub-game--perfect Nash equilibrium} if for any $\varepsilon >0$, it holds that $\ell_\varepsilon >0$. We denote by $\mathcal{N}\!\mathcal{A}_{s,N}$ the set of all {\rm sub-game--perfect Nash equilibria}.
\end{definition}

\begin{remark}
\begin{enumerate}[label={$(\roman*)$}]
\item By construction, for any $\alpha \in \cA_N$ and $\ell > 0$, we have that $\alpha\otimes_{t+\ell}\hat\alpha^{i,N} \in \cA_N$. Moreover, for every $\cF_{N,T}$-measurable random variable $\eta$, we observe that
\begin{align*}
\E^{\Q^{\smalltext{t}}_\smalltext{\omega}} [ \eta ] = \E^{\P^{\smalltext{t}}_\smalltext{\omega}} \bigg[ \cE\bigg( \int_t^\cdot h_s \cdot \d W_s^i \bigg)_T \eta \bigg],
\end{align*}
where $\Q$ is the probability measure defined via the stochastic exponential $\d \Q / \d \P \coloneqq \cE\big( \int_0^\cdot h_s \cdot \d W^i_s \big)_T$. As a consequence, by the definition of the payoff function, we observe that $J^i(t,\omega,\alpha;\hat\balpha^{N,\smallertext{-}i})$ depends on the strategy $\alpha \in \cA_N$ only through its values on the interval $[t,T]$. In particular, if we define $\alpha^{i,t,\ell} \coloneqq \alpha \mathbf{1}_{[t,t+\ell)} + \hat\alpha^{i,N} \mathbf{1}_{[t+\ell, T]}$, it holds that $J^i(t,\omega,\alpha\otimes_{t+\ell}\hat\alpha^{i,N};\hat\balpha^{N,\smallertext{-}i}) = J^i(t,\omega,\alpha^{i,t,\ell};\hat\balpha^{N,\smallertext{-}i})$ for $\P\text{\rm --a.e.}\;\omega \in \Omega$;
\item As already discussed in the introduction, there are several definitions of \emph{intra-personal equilibrium} in the time-inconsistent literature. Firstly, we note we cannot directly apply the notion of \emph{strong intra-personal equilibrium} from {\rm\cite[Definition 2.2]{huang2021strong}} to our multi-agent setting, as {\rm\cite[Proposition 4.9]{he2021equilibrium}} provides a mean--variance problem for which no \emph{strong intra-personal equilibrium} exists. Instead, we adopt an extension of {\rm\cite[Definition 2.6]{hernandez2023me}} because it is well suited to establishing an extended dynamic programming principle. As the authors themselves explain in {\rm\cite[Section 3.1]{hernandez2023me}}, following the consistent planning approach, each sophisticated player must select a strategy that coordinates with their future selves, thereby yielding a time-consistent game under equilibrium, from which a dynamic programming principle naturally follows. It is important to highlight that we require each agent to choose a strategy that reconciles with all their future selves. Consequently, if a {\rm sub-game--perfect Nash equilibrium} $\hat\balpha^N \in \cA^N_N$ exists, then for every $\varepsilon >0$ there exists $\ell \in  (0,\ell_\varepsilon)$ such that, for all $(i,t,\alpha) \in \{1,\ldots,N\} \times [0,T] \times \cA_N$, it holds that 
\begin{align}\label{align:condEllEquilibrium}
J^i\big(t,\omega,\hat\alpha^{i,N};\hat\balpha^{N,\smallertext{-}i}\big) - J^i\big(t,\omega,\alpha\otimes_{t+\ell}\hat\alpha^{i,N};\hat\balpha^{N,\smallertext{-}i}\big) \geq - \varepsilon \ell, \; \P \text{\rm--a.e.} \; \omega \in \Omega.
\end{align}
It is important that the above condition holds for all $\ell < \ell_\varepsilon$, rather than merely along a sequence as is the case for a {\rm weak intra-personal equilibrium}. As shown in {\rm \Cref{appendix:martingaleRep}}, this local property is fundamental for proving the extended dynamic programming principle; see {\rm\cite[Section 3.1]{hernandez2023me}} for further discussion.
\item It is worth noting that, in contrast to {\rm\cite[Definition 2.6]{hernandez2023me}}, the quantifier $`$there exists $t \in [0,T]$' in our definition appears outside the probability. This plays a crucial role in the characterisation of both the value functions and the equilibria through the \emph{BSDE} system \eqref{BSDE_NplayerGame}, or equivalently \eqref{align:BSDEalphaMaxHam}. In particular, as shown in the proof of {\rm\Cref{sufficiency_toBSDE}}, this is essential when demonstrating that the well-posedness of the {\rm BSDE} system is sufficient to ensure the existence of a {\rm sub-game--perfect Nash equilibrium} $\hat\balpha^N$ as a maximiser of the Hamiltonian. Given any $\varepsilon >0$, we can construct $\ell_\varepsilon >0$ such that for any $(\ell,i,t,\alpha) \in (0,\ell_\varepsilon) \times \{1,\ldots,N\} \times [0,T] \times \cA_N$ the condition stated in {\rm\Cref{align:condEllEquilibrium}} is satisfied. However, this does not imply that for every $(\ell,i,\alpha) \in (0,\ell_\varepsilon) \times \{1,\ldots,N\} \times \cA_N$, the following holds
\begin{align*}
J^i\big(t,\omega,\hat\alpha^{i,N};\hat\balpha^{N,\smallertext{-}i}\big) - J^i\big(t,\omega,\alpha\otimes_{t+\ell}\hat\alpha^{i,N};\hat\balpha^{N,\smallertext{-}i}\big) \geq - \varepsilon \ell, \; \text{for any} \; t \in [0,T], \; \P \text{\rm--a.e.} \; \omega \in \Omega.
\end{align*}
This is due to the fact that the payoff function $J^i$, for each $i \in \{1,\ldots,N\}$, is defined via {\rm r.c.p.d.s,} as in \eqref{align:criteriumNplayerGame}, and each {\rm r.c.p.d.} is uniquely defined up to $\P$--null sets. As a result, there is no a priori guarantee that it possesses any regularity with respect to the time $t \in [0,T]$.
\end{enumerate}

\end{remark}

\subsection{BSDE characterisation of sub-game--perfect Nash equilibria}\label{section:BSDEchar_Nplayer}

Since the problem \eqref{align:criteriumNplayerGame} is time-inconsistent, Bellman’s optimality principle does not apply in this setting. Nevertheless, following \cite{hernandez2023me}, one can formulate what we refer to as the `extended dynamic programming principle' to overcome this difficulty. Throughout this section, we assume that there exists a sub-game--perfect Nash equilibrium $\hat\balpha^N \in \mathcal{N}\!\mathcal{A}_{s,N}$, although it may not be unique. We then consider the processes
\begin{align}\label{align:MiNi_starDef}
M_t^{i,\star,N} \coloneqq \E^{\P^{\smalltext{\hat\balpha}^\tinytext{N}\smalltext{,}\smalltext{N}\smalltext{,}\smalltext{t}}_\smalltext{\cdot}} \big[ \varphi^i_1(X^i_{\cdot \land T}) \big], \; \text{and} \; N_t^{i,\star,N} \coloneqq \E^{\P^{\smalltext{\hat\balpha}^\tinytext{N}\smalltext{,}\smalltext{N}\smalltext{,}\smalltext{t}}_\smalltext{\cdot}} \big[ \varphi^i_2\big(L^N\big(\X^N_{\cdot \land T}\big)\big) \big], \; t \in [0,T].
\end{align}

\begin{remark}
\begin{enumerate}[label={$(\roman*)$}]
\item As previously mentioned, we do not assume the existence of a unique sub-game--perfect Nash equilibrium $\hat\balpha^N$. Consequently, the processes just defined should be understood as depending on each specific choice of $\hat\balpha^N$, although we omit this dependence to simplify the notation.
\item When the $N$-player game is symmetric, meaning that the data of the game are identical across all players, the second process in \eqref{align:MiNi_starDef} becomes independent of the index $i \in \{1,\ldots,N\}$. In this case, we denote it by
\begin{align*}
N_t^{\star,N} \coloneqq \E^{\P^{\smalltext{\hat\balpha}^\tinytext{N}\smalltext{,}\smalltext{N}\smalltext{,}\smalltext{t}}_\smalltext{\cdot}} \big[ \varphi_2\big(L^N(\X^N_{\cdot \land T})\big) \big], \; t \in [0,T].
\end{align*}
\end{enumerate}
\end{remark}

Before stating the extended dynamic programming principle, we introduce the following assumptions.
\begin{assumption}\label{assumpDPP_NplayerGame}
Let $i \in \{1,\ldots,N\}$. 
\begin{enumerate}[label={$(\roman*)$}]
\item\label{boundedPhisMart_NplayerGame} The functions $\cC_m \ni x \longmapsto \varphi^i_1(x)$ and $\cP_2(\cC_{m \times N}) \ni \xi \longmapsto \varphi^i_2(\xi)$ are bounded$;$
\item\label{Glip_NplayerGame} the function $\R \times \R \ni (m^{\star},n^{\star}) \longmapsto G^i(m^{\star},n^{\star})$ is twice continuously differentiable with Lipschitz-continuous derivatives $\partial_{m}G^i$, $\partial_{n}G^i$, $\partial^2_{m,m}G^i$, $\partial^2_{m,n}G^i$, and $\partial^2_{n,n}G^i;$
\item\label{modulusCondExpMart_NplayerGame} there exists a constant $c>0$ and a modulus of continuity $\rho$ such that, for any $(\balpha,t,\tilde{t},t^\prime) \in \cA^N_N \times [0,T] \times [t,T] \times [\tilde{t},T]$, we have
\begin{align*}
\E^{\P^{\smalltext{\balpha}\smalltext{,}\smalltext{N}\smalltext{,}\smalltext{t}}_{\smalltext{\cdot}}} \Big[ \big| \E^{\P^{\smalltext{\balpha}\smalltext{,}\smalltext{N}\smalltext{,}\smalltext{\tilde{t}}}_\smalltext{\cdot}} \big[ M^{i,\star,N}_{t^\prime} \big] - M^{i,\star,N}_{\tilde{t}} \big|^2 +  \big| \E^{\P^{\smalltext{\balpha}\smalltext{,}\smalltext{N}\smalltext{,}\smalltext{\tilde{t}}}_\smalltext{\cdot}} \big[ N^{i,\star,N}_{{t}^\prime} \big] - N^{i,\star,N}_{\tilde{t}} \big|^2 \Big] \leq c |t^\prime-\tilde{t}| \rho \big(|t^\prime-\tilde{t}|\big), \; \P\text{\rm--a.s.}
\end{align*}
\end{enumerate}
\end{assumption}

\begin{remark}
Let $i \in \{1,\ldots,N\}$. As stated in {\rm\cite[Lemma 7.2]{hernandez2023me}}, and equivalently in {\rm\cite[Lemma 2.4.2]{hernandez2021general}}, the inequality concerning the first term involving $M^{i,\star,N}$ in {\rm\Cref{assumpDPP_NplayerGame}.\ref{modulusCondExpMart_NplayerGame}} holds provided that $\varphi^i_1$ admits bounded first-order $\nabla_{x} \varphi^i_1$ and bounded second-order $\nabla^2_{x} \varphi^i_1$ vertical derivatives, in the sense of {\rm\citeauthor*{cont2010change} \cite[Definition 8]{cont2010change}}. Additionally, we must require that the process $\mathfrak{A}^{i,\balpha}$ defined by 
\begin{align*}
\mathfrak{A}^{i,\balpha}_t \coloneqq \nabla_{x} \varphi^i_1(X^i_{\cdot \land t}) \Big(\sigma^i_t(X^i_{\cdot \land t}) b^i_t\big(X^i_{\cdot \land t},L^N\big(\X^N_{\cdot \land t},\balpha_t\big),\alpha^{i}_t\big)\Big)^\top + \frac{1}{2} \mathrm{Tr}\big[\sigma^i_t(X^i_{\cdot \land t}) (\sigma^i_t(X^i_{\cdot \land t}))^\top \big] \nabla^2_{x} \varphi^i_1(X^i_{\cdot \land t}), \; t \in [0,T],
\end{align*}
is $\P$--square-integrable, for any $\balpha \in \cA^N_N$. A similar conclusion holds for the second term $N^{i,\star,N}$, assuming analogous conditions on the composed function $\cC_{m \times N} \ni \mathbf{x} \longmapsto \widetilde{\varphi}^i_2 \coloneqq \varphi^i_2(L^N(\mathbf{x}))$. For brevity, we omit the explicit formulation of these conditions, as the notation would become excessively heavy.
\end{remark}

\begin{theorem}\label{thm:DPP_NplayerGame}
Let {\rm\Cref{assumpDPP_NplayerGame}} hold, and let $\hat\balpha^N \in \mathcal{N}\!\mathcal{A}_{s,N}$ be a sub-game--perfect Nash equilibrium. Then, for any $i \in \{1,\ldots,N\}$ and any $(t,\tilde{t}) \in [0,T] \times [t,T]$, the value process $V^{i,N} \coloneqq J^i(\cdot,\cdot,\hat\alpha^{i,N};\hat\balpha^{N,\smallertext{-}i})$ satisfies
\begin{align*}
V^{i,N}_t = \esssup_{\alpha \in {\cA}_\smalltext{N}} \E^{\P^{\smalltext{\alpha} \smalltext{\otimes}_\tinytext{i} \smalltext{\hat\balpha}^{\tinytext{N}\tinytext{,}\tinytext{-}\tinytext{i}}\smalltext{,}\smalltext{N}\smalltext{,}\smalltext{t}}_\smalltext{\cdot}} \bigg[ &V^{i,N}_{\tilde{t}} + \int_t^{\tilde{t}} f^i_s\big(X^i_{\cdot \land s}, L^N\big(\X^N_{\cdot \land s},(\alpha \otimes_i \hat\balpha^{N,\smallertext{-}i})_s\big), \alpha_s\big) \d s\\
& - \frac{1}{2} \int_t^{\tilde{t}} \partial^2_{m,m} G^i\big(M^{i,\star,N}_s, N^{i,\star,N}_s\big) \d \big[M^{i,\star,N}\big]_s - \frac{1}{2} \int_t^{\tilde{t}} \partial^2_{n,n} G^i\big(M^{i,\star,N}_s, N^{i,\star,N}_s\big) \d \big[N^{i,\star,N}\big]_s \\
& - \int_t^{\tilde{t}} \partial^2_{m,n} G^i\big(M^{i,\star,N}_s, N^{i,\star,N}_s\big) \d \big[M^{i,\star,N},N^{i,\star,N}\big]_s \bigg], \; \P\text{\rm--a.s.}
\end{align*}
\end{theorem}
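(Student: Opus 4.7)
The plan is to establish the two inequalities defining the essential supremum separately, following the template developed in \cite[Section 7]{hernandez2023me} for a single time-inconsistent agent, suitably extended to the multi-player structure.

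\textbf{Attainment at $\alpha = \hat\alpha^{i,N}$.} Here I would exhibit $\hat\alpha^{i,N}$ as a maximiser of the right-hand side, thereby yielding the $\geq V^{i,N}_t$ bound on the essential supremum. The processes $M^{i,\star,N}$ and $N^{i,\star,N}$ are, by construction as conditional expectations under $\P^{\hat\balpha^N,N}$ of bounded $\cF_{N,T}$-measurable random variables, $(\F_N, \P^{\hat\balpha^N,N})$-martingales with continuous modifications thanks to \Cref{lemma:martingaleRepresentation}. Starting from
\begin{align*}
V^{i,N}_t = \E^{\P^{\hat\balpha^N,N,t}_\cdot}\bigg[\int_t^T f^i_s\,\d s + g^i\bigg] + G^i\big(M^{i,\star,N}_t, N^{i,\star,N}_t\big),
\end{align*}
I would split the time integral at $\tilde{t}$ via the tower property and apply It\^o's formula to $G^i(M^{i,\star,N}, N^{i,\star,N})$ on $[t, \tilde{t}]$; the $\mathcal{C}^2$-regularity of $G^i$ in \Cref{assumpDPP_NplayerGame}.\ref{Glip_NplayerGame} justifies this step, and the martingale contributions vanish under the conditional expectation, leaving precisely the three quadratic-variation terms appearing in the statement. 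A further application of the tower property inside the r.c.p.d.\ of \Cref{section:rcpd} then reassembles $\E^{\P^{\hat\balpha^N,N,t}_\cdot}[V^{i,N}_{\tilde{t}}]$, yielding the desired equality.

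\textbf{Upper bound for arbitrary $\alpha \in \cA_N$.} For the reverse inequality, I would exploit the local equilibrium property \eqref{align:condEllEquilibrium}, which, given $\varepsilon > 0$, bounds the gain from any deviation of length $\ell < \ell_\varepsilon$ by $\varepsilon \ell$. Fix $\alpha \in \cA_N$ and a partition $t = t_0 < \cdots < t_n = \tilde{t}$ of mesh below $\ell_\varepsilon$, and introduce the interpolants $\beta^k \coloneqq \alpha \mathbf{1}_{[0, t_k)} + \hat\alpha^{i,N} \mathbf{1}_{[t_k, T]}$, for $k = 0, \ldots, n$. The telescoping decomposition
\begin{align*}
J^i\big(t, \cdot, \alpha \otimes_{\tilde{t}} \hat\alpha^{i,N}; \hat\balpha^{N,\smallertext{-}i}\big) - V^{i,N}_t = \sum_{k=0}^{n-1} \Big( J^i\big(t, \cdot, \beta^{k+1}; \hat\balpha^{N,\smallertext{-}i}\big) - J^i\big(t, \cdot, \beta^k; \hat\balpha^{N,\smallertext{-}i}\big)\Big)
\end{align*}
reduces, by conditioning at $t_k$ through the r.c.p.d.\ of $\P^{\beta^k \otimes_i \hat\balpha^{N,\smallertext{-}i},N}$ and applying the attainment identity just proved, each summand to the gain of substituting $\hat\alpha^{i,N}$ by $\alpha$ on the short interval $[t_k, t_{k+1}]$, which is bounded by $\varepsilon(t_{k+1} - t_k)$ up to an It\^o correction. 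Telescoping and letting the mesh and then $\varepsilon$ tend to zero then gives the claimed upper bound.

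\textbf{Main obstacle.} The most delicate step is the passage to the limit in the telescoping: while the cumulative $\varepsilon(\tilde{t} - t)$ error is harmless, the It\^o corrections generated along the partition, involving the processes $M^{i,\star,N}$ and $N^{i,\star,N}$ under the changing family of measures $\P^{\beta^k \otimes_i \hat\balpha^{N,\smallertext{-}i},N}$, must be shown to converge in $L^1$ to the claimed integrals against $\d[M^{i,\star,N}]_s$, $\d[N^{i,\star,N}]_s$, and $\d[M^{i,\star,N}, N^{i,\star,N}]_s$. This is exactly where the modulus-of-continuity bound in \Cref{assumpDPP_NplayerGame}.\ref{modulusCondExpMart_NplayerGame} enters crucially: it delivers uniform $L^2$-control of the conditional expectations of $M^{i,\star,N}$ and $N^{i,\star,N}$ along all such measures, which, combined with the global Lipschitz regularity of the second derivatives of $G^i$ from \Cref{assumpDPP_NplayerGame}.\ref{Glip_NplayerGame}, yields the required Riemann-sum convergence uniformly in the interpolating strategies $\beta^k$.
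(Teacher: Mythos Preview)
Your proposal is correct and follows essentially the same strategy as the paper. The paper likewise proves the two inequalities separately: the direction $\widetilde V_t \leq \esssup(\cdots)$ via direct It\^o's formula at $\alpha=\hat\alpha^{i,N}$ (your ``attainment'' step), and the direction $\widetilde V_t \geq \esssup(\cdots)$ by iterating the local $\varepsilon$-equilibrium bound along a fine partition and passing to the limit.

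The only organisational difference is that the paper iterates the one-step inequality
\[
V_{t_k}\geq \E^{\P^{\alpha\otimes_i\hat\balpha^{N,-i},N,t_k}_\cdot}\Big[V_{t_{k+1}}+\textstyle\int_{t_k}^{t_{k+1}}f^i\,\d s+G^i\big(\E[\cdot]\big)-G^i(M^{i,\star,N}_{t_{k+1}},N^{i,\star,N}_{t_{k+1}})\Big]-\varepsilon\ell
\]
from inside out (Lemma~B.3), whereas you telescope $J^i(t,\cdot,\beta^n)-J^i(t,\cdot,\beta^0)$ at the fixed time $t$. Once you condition each summand at $t_k$, the $\cF_{N,t_k}$-measurable correction terms collapse to expectations under the single measure $\P^{\alpha\otimes_i\hat\balpha^{N,-i},N}$ (since all your $\beta^k$ coincide with $\alpha$ on $[t,t_k)$), and you recover exactly the paper's intermediate sum $\sum_k\big(G^i(\E^{\cdot,t_k}[\M^\star_{t_{k+1}}])-G^i(\M^\star_{t_{k+1}})\big)$. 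So the ``changing family of measures $\P^{\beta^k\otimes_i\cdots}$'' you mention is not where the difficulty lies; the limit is carried out under one fixed $\alpha$, and \Cref{assumpDPP_NplayerGame}.\ref{modulusCondExpMart_NplayerGame} is used precisely to control the second-order Taylor remainder in that sum (the paper's term $J^2$), not to obtain uniformity over strategies.
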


Although the result is a reformulation of \cite[Lemma 7.2]{hernandez2023me}, we provide a complete proof in our setting. This is necessary not only because our definition of equilibrium differs slightly, but also because the proof in \cite{hernandez2023me} is formulated on the canonical function space, whereas we work on a general Polish space. The proof is deferred to \Cref{section:DPP} for readability and is carried out for player 1; the argument for the remaining players $i \in \{1,\ldots,N\}\setminus\{1\}$ follow analogously. In the proof of the extended dynamic programming principle for the value process $V^1$, we introduce the auxiliary probability measure $\Q$, defined by
\begin{align*}
\frac{\d \Q}{\d \P} \coloneqq \cE\Bigg( \int_0^\cdot \sum_{\ell \in \{1,\ldots,N\}\setminus\{1\}} b^\ell_t\big(X^\ell_{\cdot \land t},L^N\big(\X^N_{\cdot \land t},\hat\balpha^N_t\big),\hat\alpha^{\ell,N}_t\big) \cdot \d W^\ell_t \Bigg)_T.
\end{align*}

\medskip
The extended dynamic programming principle allows us to relate each sub-game--perfect Nash equilibria $\hat\balpha^N \in \mathcal{N}\!\mathcal{A}_{s,N}$ and the corresponding value processes $(V^{i,N})_{i \in \{1,\ldots,N\}}$ in the $N$-player game to a fully coupled system of FBSDEs, as detailed in \Cref{necessity_toBSDE}. In particular, sub-game--perfect Nash equilibria correspond to fixed points of the associated vector-valued Hamiltonian, which we introduce below. Before doing so, we establish some preliminary notation. For each $i \in \{1,\ldots,N\}$, we define the function $H^{i,N}: [0,T] \times \cC_{m \times N} \times \R^{d \times N} \times \R \times \R \times \R^{d \times N} \times \R^{d \times N} \times A \times A^N \longrightarrow \R$ as follows
\begin{align*}
H^{i,N}_t\big(\x,\mathbf{z},m^\star,n^\star,\mathsf{z}^{m,\star},\mathsf{z}^{n,\star},a,\mathbf{e}^N\big) &\coloneqq h^i_t\big(\x,z^{i},a,\mathbf{e}^N\big) + \sum_{\ell\in\{1,\dots,N\}\setminus\{i\}} z^{\ell} \cdot b^\ell_t\big(x^\ell, L^N(\x,a \otimes_i \mathbf{e}^{N,\smallertext{-}i}), e^{N,\ell}\big) \\
&\;\quad-\frac{1}{2} \partial^2_{m,m} G^i(m^\star,n^\star) \sum_{\ell =1}^N \|z^{m,\ell,\star}\|^2 - \partial^2_{m,n} G^i(m^\star,n^\star) \sum_{\ell =1}^N z^{m,\ell,\star} \cdot z^{n,\ell,\star} \\
&\;\quad  -\frac{1}{2} \partial^2_{n,n} G^i(m^\star,n^\star) \sum_{\ell =1}^N \|z^{n,\ell,\star}\|^2,
\end{align*}
where for any $(t,\x,z,\mathbf{e}^N,a) \in [0,T] \times \cC_{m \times N} \times \R^d \times A^N \times A$
\begin{align*}
h^i_t\big(\x,z,a,\mathbf{e}^N\big) \coloneqq f^i_t\big(x^i, L^N(\x,a \otimes_i \mathbf{e}^{N,\smallertext{-}i}), a\big) +  z \cdot b^i_t\big(x^i, L^N(\x,a \otimes_i \mathbf{e}^{N,\smallertext{-}i}), a\big).
\end{align*}
The function just introduced is typically referred to as the Hamiltonian associated with the problem faced by the player $i$. However, in the context of the $N$-player game, the appropriate notion must account for the simultaneous optimisation performed by all players. For this reason, we introduce the vector-valued function $H^N: [0,T] \times \cC_{m \times N} \times (\R^{d \times N})^N \times \R^N \times \R^N \times (\R^{d \times N})^N \times (\R^{d \times N})^N \times A^N \times A^N \longrightarrow \R^N$ defined by
\begin{align*}
    H^N_t\big(\mathbf{x}, \mathbf{z}, \mathsf{m}^\star, \mathsf{n}^\star, \mathsf{z}^{m,\star}, \mathsf{z}^{n,\star}, \mathbf{a}^N, \mathbf{e}^N\big) \coloneqq
    \begin{pmatrix}
    H^{1,N}_t\big(\x,\mathbf{z}^1,{m}^{1,\star},{n}^{1,\star},\mathsf{z}^{1,m,\star},\mathsf{z}^{1,n,\star},{a}^{1,N},\mathbf{e}^N\big) \\
    \vdots \\
    H^{N,N}_t\big(\x,\mathbf{z}^N,{m}^{N,\star},{n}^{N,\star},\mathsf{z}^{N,m,\star},\mathsf{z}^{N,n,\star},a^{N,N},\mathbf{e}^N\big)
    \end{pmatrix}.
\end{align*}

\begin{definition}\label{def:fixedPointHamiltonian}
Let $(t, \mathbf{x}, \mathbf{z}, \mathsf{m}^\star, \mathsf{n}^\star, \mathsf{z}^{m,\star}, \mathsf{z}^{n,\star}) \in [0,T] \times \cC_{m \times N} \times (\R^{d \times N})^N \times \R^N \times \R^N \times (\R^{d \times N})^N \times (\R^{d \times N})^N$. A vector $\mathbf{a}^N \coloneqq (a^{1,N},\ldots,a^{N,N})\in A^N$ is said to be a fixed-point of the Hamiltonian $H^N$ if, for any $i \in \{1,\ldots,N\}$, it holds that
\begin{align*}
a^{i,N} \in \argmax_{a \in A} \big\{H^{i,N}_t\big(\mathbf{x},\mathbf{z}^i,\mathsf{m}^{i,\star},\mathsf{n}^{i,\star},\mathsf{z}^{i,m,\star},\mathsf{z}^{i,n,\star},a,\mathbf{a}^N\big)\big\}.
\end{align*}
We denote the set of such fixed-points by $\cO_{N}(t, \mathbf{x}, \mathbf{z}, \mathsf{m}^\star, \mathsf{n}^\star, \mathsf{z}^{m,\star}, \mathsf{z}^{n,\star})$.
\end{definition}

\begin{remark}
Every fixed-point $\mathbf{a}^N$ of the Hamiltonian $H^N$ is a function of the form $\mathbf{a}^N(t, \mathbf{x}, \mathbf{z}, \mathsf{m}^\star, \mathsf{n}^\star, \mathsf{z}^{m,\star}, \mathsf{z}^{n,\star})$, as it is clear from the definition. At this stage, we do not impose any regularity assumptions on such functions. However, we will later require additional regularity when we aim to fully characterise the set of sub-game--perfect Nash equilibria in terms of fixed-points of $H^N$.
\end{remark}

It is natural at this point to introduce the system of BSDEs associated with a sub-game--perfect Nash equilibrium $\hat\balpha^N \in \mathcal{N}\!\mathcal{A}_{s,N}$. We write the system under the probability measure $\P^{\hat\balpha^{\smalltext{N}}}$; this formulation is particularly convenient for the subsequent analysis aimed at proving the convergence of the $N$-player game to its mean-field limit.
\begin{align}\label{BSDE_NplayerGame}
\notag Y^{i,N}_t &= g^i\big(X^i_{\cdot \land T}, L^N\big(\X^N_{\cdot \land T}\big)\big) + G^i\big(\varphi^i_1(X^i_{\cdot \land T}),\varphi^i_2\big(L^N(\X^N_{\cdot \land T})\big)\big) \\
\notag &\quad+ \int_t^T f^i_s\big(X^i_{\cdot \land s},L^N\big(\X^N_{\cdot \land s},\hat\balpha^N_s\big),\hat\alpha^{i,N}_s\big) \d s - \int_t^T \partial^2_{m,n}G^i\big(M^{i,\star,N}_s,N^{i,\star,N}_s\big) \sum_{\ell =1}^N Z^{i,m,\ell,\star,N}_s \cdot Z^{i,n,\ell,\star,N}_s \d s \\
\notag &\quad- \frac{1}{2} \int_t^T \Bigg(\partial^2_{m,m}G^i\big(M^{i,\star,N}_s,N^{i,\star,N}_s\big) \sum_{\ell =1}^N \big\|Z^{i,m,\ell,\star,N}_s\big\|^2 + \partial^2_{n,n}G^i\big(M^{i,\star,N}_s,N^{i,\star,N}_s\big) \sum_{\ell =1}^N \big\|Z^{i,n,\ell,\star,N}_s\big\|^2\Bigg) \d s \\
\notag &\quad- \int_t^T \sum_{\ell =1}^N Z^{i,\ell,N}_s \cdot \d \big(W_s^{\hat\balpha^\smalltext{N},N}\big)^\ell, \; t \in [0,T], \; \P \text{\rm--a.s.}, \\
\notag M^{i,\star,N}_t &= \varphi^i_1(X^i_{\cdot \land T}) - \int_t^T \sum_{\ell =1}^N Z^{i,m,\ell,\star,N}_s \cdot \d \big(W_s^{\hat\balpha^\smalltext{N},N}\big)^\ell, \; t \in [0,T], \; \P \text{\rm--a.s.}, \\
N^{i,\star,N}_t &= \varphi^i_2\big(L^N\big(\X^N_{\cdot \land T}\big)\big) - \int_t^T \sum_{\ell =1}^N Z^{i,n,\ell,\star,N}_s \cdot \d \big(W_s^{\hat\balpha^\smalltext{N},N}\big)^\ell, \; t \in [0,T], \; \P \text{\rm--a.s.}
\end{align}

We can now state the characterisation result, which consists of two separate parts, whose proof is postponed to \Cref{section:characterisation}. The first addresses the necessity of the system: given a sub-game--perfect Nash equilibrium and the corresponding value processes $(V^{i,N})_{i \in \{1,\ldots,N\}}$ of the game, one can construct a solution to the BSDE system \eqref{BSDE_NplayerGame}. The second result is a verification result: it shows that any sufficiently integrable solution to \eqref{BSDE_NplayerGame}, where $\hat\balpha^N$ is given as a suitable fixed point of the Hamiltonian defined in \Cref{def:fixedPointHamiltonian}, allows one to construct a sub-game--perfect Nash equilibrium.

\begin{assumption}\label{assumption:regularityForNecessity}
Let $i \in \{1,\ldots,N\}$. There exists some $p \geq 1$ and a vector $\mathbf{a}_0^N \in A^N$ such that 
\begin{align*}
\sup_{\balpha \in \cA^\smalltext{N}_\smalltext{N}} \E^{\P^{\smalltext{\balpha}}} \bigg[ \big|g^i\big(X^i_{\cdot \land T}, L^N\big(\X^N_{\cdot \land T}\big)\big)\big|^p + \int_0^T \big|H^{i,N}_t\big(\X^N_{\cdot \land t},\mathbf{0},0,0,\mathbf{0},\mathbf{0},a^{i,N}_0,\mathbf{a}^N_0\big)\big|^p \d t \bigg] <+\infty.
\end{align*}
\end{assumption}

\begin{proposition}\label{necessity_toBSDE}
Let {\rm\Cref{assumpDPP_NplayerGame}} and {\rm\Cref{assumption:regularityForNecessity}} hold, and let $\hat\balpha^N \in \mathcal{N}\!\mathcal{A}_{s,N}$ be a sub-game--perfect Nash equilibrium. Then, one can construct a tuple 
\[
(\Y^N,\Z^N,\M^{\star,N},\N^{\star,N},\Z^{m,\star,N},\Z^{n,\star,N}) \coloneqq (Y^{i,N}, \Z^{i,N}, M^{i,\star,N}, N^{i,\star,N}, \Z^{i,m,\star,N},\Z^{i,n,\star,N})_{i\in\{1,\ldots,N\}},
\]
that satisfies the system \eqref{BSDE_NplayerGame} and, for some $p \geq 1$, the integrability condition
\begin{align*}
&\sup_{\balpha \in \cA^\smalltext{N}_\smalltext{N}} \E^{\P^{\smalltext{\balpha}}} \Bigg[ \sup_{t \in [0,T]} \big|Y^{i,N}_t\big|^p + \bigg(\int_0^T \sum_{\ell =1}^N \big\|{Z}_t^{i,\ell,N}\big\|^2 \d t\bigg)^\frac{p}{2} \Bigg] \\
&+\esssup_{\balpha \in \cA^\smalltext{N}_\smalltext{N}} \Bigg\{\sup_{t \in [0,T]} \Big\{ \big| M^{i,\star,N}_t\big| + \big| N^{i,\star,N}_t \big| \Big\} + \sup_{\tau \in \cT_{0,T}} \E^{\P^{\smalltext{\balpha}\smalltext{,}\smalltext{N}\smalltext{,}\smalltext{\tau}}_{\smalltext{\cdot}}} \Bigg[ \int_\tau^T \sum_{\ell =1}^N \Big( \big\|Z^{i,m,\ell,\star,N}_t\big\|^2 + \big\|Z^{i,n,\ell,\star,N}_t\big\|^2 \Big) \d t \Bigg] \Bigg\} < +\infty.
\end{align*}
It holds that $V^{i,N}_t = Y^{i,N}_t$, $\P\text{\rm--a.s.}$, for every $t \in [0,T]$. Moreover,
\begin{align}\label{condOpFixedPoints}
\hat\balpha^N_t \in \cO_{N}\big(t, \X^N_{\cdot \land t}, \Z^N_t, \M^{\star,N}_t, \N^{\star,N}_t, \Z^{m,\star,N}_t, \Z^{n,\star,N}_t\big), \; \text{\rm for}\; \d t \otimes \d \P\text{\rm--a.e.} \; (t,\omega) \in [0,T] \times \Omega.
\end{align}
\end{proposition}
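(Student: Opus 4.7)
The approach I would take is a three-part construction leveraging the extended dynamic programming principle of \Cref{thm:DPP_NplayerGame}: first build the two families of conditional-expectation martingales and their representation processes, then identify the value process with $Y^{i,N}$ via Itô's formula applied to $G^i(M^{i,\star,N},N^{i,\star,N})$, and finally perform a driver comparison to extract the fixed-point condition \eqref{condOpFixedPoints}.

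First, I would take $M^{i,\star,N}$ and $N^{i,\star,N}$ as defined in \eqref{align:MiNi_starDef}. By the tower property of regular conditional probability distributions (\Cref{section:rcpd}), these are $(\F_N,\P^{\hat\balpha^N,N})$--martingales, and \Cref{assumpDPP_NplayerGame}.\ref{boundedPhisMart_NplayerGame} renders them bounded. Applying \Cref{lemma:martingaleRepresentation} under $\P^{\hat\balpha^N,N}$ (equivalent to $\P$) produces predictable integrands $Z^{i,m,\ell,\star,N}$ and $Z^{i,n,\ell,\star,N}$ realising these martingales as stochastic integrals against $W^{\hat\balpha^N,N}$. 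Setting $Y^{i,N}_t \coloneqq V^{i,N}_t$ and specialising the extended dynamic programming principle to $\tilde t = T$, the process
\begin{equation*}
Y^{i,N}_t - G^i\big(M^{i,\star,N}_t,N^{i,\star,N}_t\big) + \int_0^t f^i_s\big(X^i_{\cdot\land s},L^N\big(\X^N_{\cdot\land s},\hat\balpha^N_s\big),\hat\alpha^{i,N}_s\big)\d s
\end{equation*}
coincides with the $\P^{\hat\balpha^N,N}$-conditional expectation at time $t$ of $g^i(X^i_{\cdot\land T},L^N(\X^N_{\cdot\land T})) + \int_0^T f^i_s\d s$, and is therefore a $\P^{\hat\balpha^N,N}$--martingale. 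A second application of \Cref{lemma:martingaleRepresentation} delivers the integrand $Z^{i,\ell,N}$, and Itô's formula applied to $G^i(M^{i,\star,N},N^{i,\star,N})$ (using the $\cC^2$-regularity of \Cref{assumpDPP_NplayerGame}.\ref{Glip_NplayerGame}) generates precisely the quadratic-variation correction terms featuring in the driver of \eqref{BSDE_NplayerGame}. The terminal condition $Y^{i,N}_T = g^i + G^i(\varphi^i_1,\varphi^i_2)$ holds by direct evaluation.

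For the integrability estimates, the sup-norm bounds on $M^{i,\star,N}$ and $N^{i,\star,N}$ are immediate from \Cref{assumpDPP_NplayerGame}.\ref{boundedPhisMart_NplayerGame}, while the conditional BMO-type bound on $\int_\tau^T\sum_\ell(\|Z^{i,m,\ell,\star,N}_t\|^2 + \|Z^{i,n,\ell,\star,N}_t\|^2)\d t$ under $\P^{\hat\balpha^N,N}$ follows from Itô's formula applied to the squares of these bounded martingales; the transfer to an arbitrary $\P^\balpha$ with $\balpha\in\cA^N_N$ is then a standard Kazamaki/Girsanov argument based on the boundedness of the drifts $b^\ell$. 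The $L^p$ bound on $\sup_t |Y^{i,N}_t|$ combines \Cref{assumption:regularityForNecessity} with the boundedness of $G^i$ on the bounded range of $(M^{i,\star,N},N^{i,\star,N})$, while the $L^{p/2}$ control on $(\int\sum_\ell\|Z^{i,\ell,N}_t\|^2\d t)^{p/2}$ is obtained from a Burkholder--Davis--Gundy argument on the martingale part of the BSDE, after absorbing the quadratic part of the driver via the BMO estimates just established.

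Finally, to obtain the fixed-point condition \eqref{condOpFixedPoints}, I would fix $i\in\{1,\ldots,N\}$ and an arbitrary deviation $\alpha\in\cA_N$, and apply the extended dynamic programming principle to the perturbed strategy $\alpha\otimes_{t+\ell}\hat\alpha^{i,N}$. A Girsanov change of measure from $\P^{\alpha\otimes_i\hat\balpha^{N,\smallertext{-}i},N}$ back to $\P^{\hat\balpha^N,N}$ yields a driver comparison in which the three $\partial^2 G^i$ correction terms cancel identically, as they depend only on $\hat\balpha^N$ and not on the deviation; what remains is precisely the Hamiltonian $H^{i,N}_t$ evaluated at $\hat\alpha^{i,N}_t$ versus at $\alpha_t$. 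The principal obstacle lies in this step: the equilibrium notion in \Cref{def:NashEquilibrium} is genuinely local in $\ell$, so one must carefully estimate the $o(\ell)$ remainders arising from the time-regularity of the r.c.p.d.s (quantified by \Cref{assumpDPP_NplayerGame}.\ref{modulusCondExpMart_NplayerGame}) and from the quadratic-variation corrections, before dividing by $\ell$ and passing to the limit $\ell\downarrow 0$ to recover the pointwise Hamiltonian maximality defining $\cO_N$, $\d t\otimes\d\P$--a.e.
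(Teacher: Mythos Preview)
Your proposal is correct and follows essentially the same route as the paper's proof. Both arguments construct $M^{i,\star,N}$ and $N^{i,\star,N}$ as bounded $(\F_N,\P^{\hat\balpha^N,N})$-martingales via \Cref{assumpDPP_NplayerGame}.\ref{boundedPhisMart_NplayerGame}, obtain their integrands by martingale representation, derive the BMO-type bound on $Z^{i,m,\ell,\star,N}$ and $Z^{i,n,\ell,\star,N}$ from the boundedness of the terminal data and the drifts, and then use the extended DPP together with It\^o's formula on $G^i(M^{i,\star,N},N^{i,\star,N})$ to build the BSDE for $Y^{i,N}$ and extract the Hamiltonian fixed-point condition; your sketch is in fact more explicit than the paper's, which at the key step simply invokes an argument analogous to \cite[Proposition 2.6]{possamai2025non} and appeals to \cite[Theorem 7.2.1]{zhang2017backward} and \cite[Theorem 4.2]{briand2003p} for the respective integrability estimates.
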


Before stating the sufficiency of the BSDE system, we introduce the following condition imposed on the function that identifies the maximisers of the Hamiltonian.

\begin{assumption}\label{assumption:borelMeasurability_Lip}
For every fixed point $\mathbf{a}^N$ of the Hamiltonian $H^N$, there is a Borel-measurable function $\mathfrak{a}^N: [0,T] \times \cC_{m \times N} \times (\R^{d \times N})^N \times \R^N \times \R^N \times (\R^{d \times N})^N \times (\R^{d \times N})^N \longrightarrow A^N$ such that $\mathfrak{a}^N(t, \mathbf{x}, \mathbf{z}, \mathsf{m}^\star, \mathsf{n}^\star, \mathsf{z}^{m,\star}, \mathsf{z}^{n,\star}) = \mathbf{a}^N$.
\end{assumption}

\begin{proposition}\label{sufficiency_toBSDE}
Let {\rm\Cref{assumpDPP_NplayerGame}.\ref{Glip_NplayerGame}} and {\rm\Cref{assumption:borelMeasurability_Lip}} hold. Let $(\Y^N,\Z^N,\M^{\star,N},\N^{\star,N},\Z^{m,\star,N},\Z^{n,\star,N})$ be a solution to the following system of {\rm BSDEs}
\begin{align}\label{align:BSDEalphaMaxHam}
\notag Y^{i,N}_t &= g^i\big(X^i_{\cdot \land T}, L^N(\X^N_{\cdot \land T})\big) + G^i\big(\varphi^i_1(X^i_{\cdot \land T}),\varphi^i_2\big(L^N(\X^N_{\cdot \land T})\big)\big) \\
\notag &\quad+ \int_t^T f^i_s\big(X^i_{\cdot \land s},L^N(\X^N_{\cdot \land s},\hat\balpha^N_s),\hat\alpha^{i,N}_s\big) \d s - \int_t^T \partial^2_{m,n}G^i\big(M^{i,\star,N}_s,N^{i,\star,N}_s\big) \sum_{\ell =1}^N Z^{i,m,\ell,\star,N}_s \cdot Z^{i,n,\ell,\star,N}_s \d s \\
\notag &\quad- \frac{1}{2} \int_t^T \Bigg(\partial^2_{m,m}G^i\big(M^{i,\star,N}_s,N^{i,\star,N}_s\big) \sum_{\ell =1}^N \big\|Z^{i,m,\ell,\star,N}_s\big\|^2 +\partial^2_{n,n}G^i\big(M^{i,\star,N}_s,N^{i,\star,N}_s\big) \sum_{\ell=1}^N \big\|Z^{i,n,\ell,\star,N}_s\big\|^2 \Bigg)\d s \\
\notag &\quad- \int_t^T \sum_{\ell = 1}^N Z^{i,\ell,N}_s \cdot \d \big(W_s^{\hat\balpha^\smalltext{N},N}\big)^\ell, \; t \in [0,T], \; \P \text{\rm--a.s.}, \\
\notag M^{i,\star,N}_t &= \varphi^i_1(X^i_{\cdot \land T}) - \int_t^T \sum_{\ell =1}^N Z^{i,m,\ell,\star,N}_s \cdot \d \big(W_s^{\hat\balpha^\smalltext{N},N}\big)^\ell, \; t \in [0,T], \; \P \text{\rm--a.s.}, \\
\notag N^{i,\star,N}_t &= \varphi^i_2\big(L^N\big(\X^N_{\cdot \land T}\big)\big) - \int_t^T \sum_{\ell=1}^N Z^{i,m,\ell,\star,N}_s \cdot \d \big(W_s^{\hat\balpha^\smalltext{N},N}\big)^\ell, \; t \in [0,T], \; \P \text{\rm--a.s.} \\
\notag \hat\balpha^N_t &\coloneqq \mathfrak{a}^N\big(t,\X^N_{\cdot \land t},\Z^N_t,\M^{\star,N}_t,\N^{\star,N}_t,\Z^{m,\star,N}_t,\Z^{n,\star,N}_t\big),  \\
\frac{\d \P^{\hat\balpha^\smalltext{N},N}}{\d \P} &\coloneqq \cE\Bigg( \int_0^\cdot \sum_{\ell =1}^N b^\ell_t\big(X^\ell_{\cdot \land t},L^N\big(\X^N_{\cdot \land t},\hat\balpha^N_t\big),\hat\alpha^{\ell,N}_t\big) \cdot \d W^\ell_t \Bigg)_T.
\end{align}
In addition, for each $i \in \{1,\ldots,N\}$, there exists $p \geq 1$ such that 
\begin{align}\label{align:estimatesBSDEsupSuff}
\notag &\sup_{\balpha \in \cA^\smalltext{N}_\smalltext{N}} \E^{\P^{\smalltext{\balpha}}} \Bigg[ \sup_{t \in [0,T]} \big|Y^{i,N}_t\big|^p + \bigg(\int_0^T \sum_{\ell =1}^N \big\|{Z}_t^{i,\ell,N}\big\|^2 \d t\bigg)^\frac{p}{2} \Bigg] \\
&+\esssup_{\balpha \in \cA^\smalltext{N}_\smalltext{N}} \Bigg\{\sup_{t \in [0,T]} \Big\{ \big| M^{i,\star,N}_t\big| + \big| N^{i,\star,N}_t \big| \Big\} + \sup_{\tau \in \cT_{0,T}} \E^{\P^{\smalltext{\balpha}\smalltext{,}\smalltext{N}\smalltext{,}\smalltext{\tau}}_{\smalltext{\cdot}}} \Bigg[ \int_\tau^T \sum_{\ell =1}^N \Big( \big\|Z^{i,m,\ell,\star,N}_t\big\|^2 + \big\|Z^{i,n,\ell,\star,N}_t\big\|^2 \Big) \d t \Bigg] \Bigg\} <+ \infty.
\end{align}
Consequently, it holds that $\hat\balpha^N \in \mathcal{N}\!\mathcal{A}_{s,N}$, and for any player index $i \in \{1,\ldots,N\}$, $J^i(t,\cdot,\hat\alpha_t^{i,N};\hat\balpha_t^{N,\smallertext{-}i}) = Y^{i,N}_t$, $\P\text{\rm--a.s.}$, for every $t\in [0,T]$.
\end{proposition}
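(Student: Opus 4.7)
The strategy is a verification argument against \Cref{def:NashEquilibrium}. Fix $i\in\{1,\ldots,N\}$, $t\in[0,T]$, a deviation $\alpha\in\cA_N$, a small $\ell>0$, and set $\bar\balpha\coloneqq(\alpha\otimes_{t+\ell}\hat\alpha^{i,N})\otimes_i\hat\balpha^{N,\smallertext{-}i}$, which coincides with $\hat\balpha^N$ outside $[t,t+\ell)$ and outside player $i$'s coordinate. Abbreviating
\[\mathrm{H}_s(a)\coloneqq H^{i,N}_s\big(\X^N_{\cdot\land s},\Z^{i,N}_s,M^{i,\star,N}_s,N^{i,\star,N}_s,\Z^{i,m,\star,N}_s,\Z^{i,n,\star,N}_s,a,\hat\balpha^N_s\big),\]
the core identity I would establish is
\begin{align*}
J^i\big(t,\omega,\alpha\otimes_{t+\ell}\hat\alpha^{i,N};\hat\balpha^{N,\smallertext{-}i}\big)-Y^{i,N}_t=\E^{\P^{\smalltext{\bar\balpha}\smalltext{,}\smalltext{N}\smalltext{,}\smalltext{t}}_\smalltext{\omega}}\bigg[\int_t^{t+\ell}\big(\mathrm{H}_s(\alpha_s)-\mathrm{H}_s(\hat\alpha^{i,N}_s)\big)\,\d s\bigg]+o(\ell).
\end{align*}
By \Cref{assumption:borelMeasurability_Lip}, $\hat\alpha^{i,N}_s\in\argmax_a\mathrm{H}_s(a)$, so the integrand is non-positive; combined with uniform control of the $o(\ell)$ remainder in $(i,t,\alpha,\omega)$, this is exactly the condition \eqref{align:condEllEquilibrium}.

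To derive the identity, set $\Delta^j_s\coloneqq b^j_s(X^j_{\cdot\land s},L^N(\X^N_{\cdot\land s},\bar\balpha_s),\bar\alpha^j_s)-b^j_s(X^j_{\cdot\land s},L^N(\X^N_{\cdot\land s},\hat\balpha^N_s),\hat\alpha^{j,N}_s)$, supported in $[t,t+\ell]$. Girsanov yields $\d(W^{\hat\balpha^\smalltext{N},N})^j=\d(W^{\bar\balpha,N})^j+\Delta^j_s\,\d s$, so rewriting the three BSDEs of \eqref{align:BSDEalphaMaxHam} under $\P^{\bar\balpha,N}$ and taking $\E^{\P^{\smalltext{\bar\balpha}\smalltext{,}\smalltext{N}\smalltext{,}\smalltext{t}}_\smalltext{\omega}}$ gives
\begin{align*}
Y^{i,N}_t&=\E^{\P^{\smalltext{\bar\balpha}\smalltext{,}\smalltext{N}\smalltext{,}\smalltext{t}}_\smalltext{\omega}}\big[Y^{i,N}_{t+\ell}\big]+\E^{\P^{\smalltext{\bar\balpha}\smalltext{,}\smalltext{N}\smalltext{,}\smalltext{t}}_\smalltext{\omega}}\int_t^{t+\ell}\bigg(F^i_s-\sum_{j=1}^{N}Z^{i,j,N}_s\cdot\Delta^j_s\bigg)\d s,\\
\mathfrak{M}_t&\coloneqq\E^{\P^{\smalltext{\bar\balpha}\smalltext{,}\smalltext{N}\smalltext{,}\smalltext{t}}_\smalltext{\omega}}\big[\varphi^i_1(X^i_T)\big]=\E^{\P^{\smalltext{\bar\balpha}\smalltext{,}\smalltext{N}\smalltext{,}\smalltext{t}}_\smalltext{\omega}}\big[M^{i,\star,N}_{t+\ell}\big]=M^{i,\star,N}_t+\E^{\P^{\smalltext{\bar\balpha}\smalltext{,}\smalltext{N}\smalltext{,}\smalltext{t}}_\smalltext{\omega}}\!\!\int_t^{t+\ell}\sum_{j=1}^{N}Z^{i,m,j,\star,N}_s\cdot\Delta^j_s\,\d s,
\end{align*}
and analogously for $\mathfrak{N}_t$; here $F^i_s$ is the $Y^{i,N}$-driver equal to $f^i_s$ minus a term $Q^i_s$ collecting the three quadratic $G^i$-terms. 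Applying Itô to $G^i(M^{i,\star,N},N^{i,\star,N})$ under $\P^{\bar\balpha,N}$ on $[t,t+\ell]$ and Taylor-expanding $G^i(\mathfrak{M}_t,\mathfrak{N}_t)$ around $(M^{i,\star,N}_t,N^{i,\star,N}_t)$, the first-order drift corrections from the two mechanisms cancel to order $o(\ell)$ by the Lipschitz continuity of $\partial_m G^i,\partial_n G^i$ in \Cref{assumpDPP_NplayerGame}.\ref{Glip_NplayerGame}, while the quadratic-variation integrals match $\E\int_t^{t+\ell}Q^i_s\,\d s$, leaving $G^i(\mathfrak{M}_t,\mathfrak{N}_t)-\E[G^i(M^{i,\star,N}_{t+\ell},N^{i,\star,N}_{t+\ell})]=-\E\int_t^{t+\ell}Q^i_s\,\d s+o(\ell)$. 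Specialising to $\alpha=\hat\alpha^{i,N},\ \ell=T-t$, for which $\bar\balpha\equiv\hat\balpha^N$, $\Delta^j\equiv 0$, and the calculation is exact, yields $Y^{i,N}_t=\E^{\P^{\smalltext{\hat\balpha}^{\tinytext{N}}\smalltext{,}\smalltext{N}\smalltext{,}\smalltext{t}}_\smalltext{\omega}}[g^i+\int_t^T f^i_s\,\d s]+G^i(M^{i,\star,N}_t,N^{i,\star,N}_t)=J^i(t,\omega,\hat\alpha^{i,N};\hat\balpha^{N,\smallertext{-}i})$, which is the required identification. In the general case, applying the tower property of r.c.p.d.s to $\E^{\P^{\smalltext{\bar\balpha}\smalltext{,}\smalltext{N}\smalltext{,}\smalltext{t}}_\smalltext{\omega}}[Y^{i,N}_{t+\ell}]$ and then adding and subtracting $f^i_s(X^i_{\cdot\land s},L^N(\X^N_{\cdot\land s},\hat\balpha^N_s),\hat\alpha^{i,N}_s)$ inside the integral to recognise $\mathrm{H}_s(\alpha_s)$ and $\mathrm{H}_s(\hat\alpha^{i,N}_s)$ delivers the core identity.

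The decisive point is the uniformity of the $o(\ell)$ remainder in $(i,t,\alpha,\omega)$, as emphasised in the third item of the remark after \Cref{def:NashEquilibrium}. This is exactly where the BMO-type bound in \eqref{align:estimatesBSDEsupSuff}, namely $\esssup_{\balpha\in\cA^N_N}\sup_{\tau\in\cT_{0,T}}\E^{\P^{\smalltext{\balpha}\smalltext{,}\smalltext{N}\smalltext{,}\smalltext{\tau}}_\smalltext{\cdot}}\int_\tau^T\sum_{j}(\|Z^{i,m,j,\star,N}_s\|^2+\|Z^{i,n,j,\star,N}_s\|^2)\,\d s<+\infty$, becomes essential: it provides $\E^{\P^{\smalltext{\bar\balpha}\smalltext{,}\smalltext{N}\smalltext{,}\smalltext{t}}_\smalltext{\omega}}\int_t^{t+\ell}\sum_{j}(\|Z^{i,m,j,\star,N}_s\|^2+\|Z^{i,n,j,\star,N}_s\|^2)\,\d s=O(\ell)$ uniformly in $(\omega,\bar\balpha,t)$. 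Together with boundedness of the $b^j$, essential boundedness of $M^{i,\star,N},N^{i,\star,N}$, essential boundedness of $\partial^2 G^i$ granted by the Lipschitz assumption, and Cauchy--Schwarz, all Taylor and Itô remainders are dominated by $C\ell^{3/2}$ with $C$ deterministic and independent of $(i,t,\alpha,\omega)$. Selecting $\ell_\varepsilon\coloneqq(\varepsilon/C)^2$ therefore enforces the remainder to be dominated by $\varepsilon\ell$ on $(0,\ell_\varepsilon)$ uniformly in these parameters, giving $\hat\balpha^N\in\mathcal{N}\!\mathcal{A}_{s,N}$ through \Cref{def:NashEquilibrium}.
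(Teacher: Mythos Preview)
Your overall verification strategy coincides with the paper's: establish $Y^{i,N}_t=J^i(t,\cdot,\hat\alpha^{i,N};\hat\balpha^{N,\smallertext{-}i})$ via It\^o applied to $G^i(M^{i,\star,N},N^{i,\star,N})$, then switch to the perturbed measure and invoke the Hamiltonian fixed-point to make the leading integrand non-positive. The treatment of the $G^i$-remainder is where the two routes diverge. The paper does \emph{not} Taylor-expand. It introduces auxiliary martingales $M^{i,\ell,N}_s$ and $N^{i,\ell,N}_s$, defined as conditional expectations of $\varphi^i_1(X^i_T)$ and $\varphi^i_2(L^N(\X^N_T))$ under the \emph{perturbed} measure, applies It\^o to $G^i(M^{i,\ell,N},N^{i,\ell,N})$ exactly, and uses that $\bar\balpha=\hat\balpha^N$ on $[t+\ell,T]$ forces $(M^{i,\ell,N}_s,N^{i,\ell,N}_s)=(M^{i,\star,N}_s,N^{i,\star,N}_s)$ there. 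The second-order integrals on $[t+\ell,T]$ then cancel identically, and the remainder is a single $\int_t^{t+\ell}$ of $\partial^2G^i$-weighted quadratics in the $Z^{i,\cdot,\ell,N}$ and $Z^{i,\cdot,\star,N}$ processes; no Taylor error enters.

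Your Taylor route has a concrete gap. The claim that the BMO-type bound in \eqref{align:estimatesBSDEsupSuff} yields
\[
\E^{\P^{\bar\balpha,N,t}_\omega}\int_t^{t+\ell}\sum_{j=1}^{N}\big(\|Z^{i,m,j,\star,N}_s\|^2+\|Z^{i,n,j,\star,N}_s\|^2\big)\,\d s=O(\ell)\quad\text{uniformly in }(\omega,\bar\balpha,t)
\]
is not correct: \eqref{align:estimatesBSDEsupSuff} only bounds $\sup_\tau\E^{\P^{\balpha,N,\tau}_\cdot}\int_\tau^{T}(\cdot)\,\d s$, with no rate on short windows. With merely a uniform bound on this integral, Cauchy--Schwarz gives $|\mathfrak{M}_t-M^{i,\star,N}_t|=O(\sqrt{\ell})$, so the second-order Taylor remainder is only $O(\ell)$, not $o(\ell)$; your first-order ``cancellation to $o(\ell)$'' likewise yields, via Lipschitz continuity of $\partial G^i$ and Cauchy--Schwarz, a bound of order $\sqrt{\ell}$. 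Neither the advertised uniform $C\ell^{3/2}$ estimate nor the explicit choice $\ell_\varepsilon=(\varepsilon/C)^2$ follows from the stated hypotheses. The paper's construction via perturbed-measure martingales is precisely what sidesteps these Taylor-type errors.
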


\subsection{Illustrative examples: two players}\label{section:only2Players_sameResulT}

In this section, we present two illustrative examples for which the sub-game--perfect Nash equilibria, together with the associated value processes, can be computed explicitly using the BSDE formulation of the game, in the spirit of \Cref{necessity_toBSDE} and \Cref{sufficiency_toBSDE}. Throughout, we adopt the notation introduced in \Cref{section:gameFinitePlayer} and restrict to the one-dimensional setting $d=m=1$. We introduce a constant $\sigma > 0$ and assume that $A \coloneqq (-\bar{a},\bar{a})$, where $\bar{a}>0$ is chosen sufficiently large. Moreover, we consider coefficients of the form
\begin{align*}
\sigma_t(x^i) = \sigma, \; (t,x^i) \in [0,T] \times \cC, \; \text{and} \; b_t(x^i,a^i) = a^i, \; (t, x^i,a^i) \in [0,T] \times \cC \times A, \; \text{for} \; i \in \{1,\ldots,N\}.
\end{align*}

We focus on the case of two players, $N = 2$, and fix a risk-aversion parameter $\gamma >0$. Let $i \in \{1,2\}$ be a given player.

\subsubsection{First example}
When the other player $j \in \{1,2\}\setminus \{i\}$ follows the strategy $\alpha^{j} \in \cA_2$, the criterion of player $i$ is
\begin{equation}\label{equation:payoffExtwoplayers}
J^i(t,\alpha^i;\alpha^{j}) \coloneqq \E^{\P^{\smalltext{\alpha}^\tinytext{i} \smalltext{\otimes}_\tinytext{i} \smalltext{\alpha}^{\tinytext{j}}\smalltext{,}\smalltext{2}\smalltext{,}\smalltext{t}}_\smalltext{\omega}} \bigg[ \int_t^T \big((\alpha^j_s)^2-(\alpha^i_s)^2\big) \d s + X^i_T - \frac{\gamma}{2}(X^i_T)^2 \bigg] + \frac{\gamma}{2} \Big( \E^{\P^{\smalltext{\alpha}^\tinytext{i} \smalltext{\otimes}_\tinytext{i} \smalltext{\alpha}^{\tinytext{j}}\smalltext{,}\smalltext{2}\smalltext{,}\smalltext{t}}_\smalltext{\omega}} \big[ X^i_T \big| \cF_t \big]\Big)^2, \; (t, \alpha^i) \in [0,T] \times \cA_2.
\end{equation}

By the BSDE characterisation of the game, $\hat{\balpha}^{2} = (\hat{\alpha}^{1,2},\hat{\alpha}^{2,2}) \in \cA^2_2$ is a sub-game--perfect Nash equilibrium if and only if, for each $i \in \{1,2\}$,
\begin{align*}
\hat{\alpha}^{i,2}_t = \frac{Z^{i,i,2}_t}{2}, \; \d t \otimes \d \P\text{\rm--a.e.} \; (t,\omega) \in [0,T] \times \Omega,
\end{align*}
where $(Y^{i,2},(Z^{i,i,2},Z^{i,j,2}),M^{i,\star,2},(Z^{i,m,i,\star,2},Z^{i,m,j,\star,2}))$ solves the BSDE system
\begin{align*}
Y^{i,2}_t &= X^i_T + \frac{1}{4} \int_t^T \bigg( (Z^{i,i,2}_s)^2 + (Z^{j,j,2}_s)^2 + 2 Z^{i,j,2}_s Z^{j,j,2}_s - 2 \gamma \Big( \big(Z^{i,m,i,\star,2}_s\big)^2 + \big(Z^{i,m,j,\star,2}_s\big)^2 \Big)  \bigg) \d s\\
& \quad  - \int_t^T \frac{Z^{i,i,2}_s}{\sigma} \d X^i_s - \int_t^T \frac{Z^{i,j,2}_s}{\sigma} \d X^j_s, \; t \in [0,T], \; \P\text{\rm--a.s.} \\
M^{i,\star,2}_t &= X^i_T + \frac{1}{2} \int_t^T \big( Z^{i,m,i,\star,2}_s Z^{i,i,2}_s + Z^{i,m,j,\star,2}_s Z^{j,j,2}_s \big) \d s  - \int_t^T \frac{Z^{i,m,i,\star,2}_s}{\sigma} \d X^i_s - \int_t^T \frac{Z^{i,m,j,\star,2}_s}{\sigma} \d X^j_s, \; t \in [0,T], \; \P\text{\rm--a.s.}
\end{align*}
The quadratic structure of this system guarantees the existence of a unique solution (see \cite[Theorem 2.2]{cheridito2014bsdes}), namely
\begin{align*}
Y^{i,2}_t = X^i_t + \frac{\sigma^2}{2}(1-\gamma)(T-t), \; Z^{i,i,2}_t = \sigma, \; Z^{i,j,2}_t = 0, \; M^{i,\star,2}_t = X^i_t + \frac{\sigma^2}{2}(T-t), \; {Z}^{i,m,i,\star,2}_t = \sigma, \; {Z}^{i,m,j,\star,2}_t = 0.
\end{align*}
Consequently, there exists a unique sub-game--perfect Nash equilibrium, given by
\[
\hat{\balpha}^{2}_t = (\hat{\alpha}^{1,2}_t,\hat{\alpha}^{2,2}_t) = \Big(\frac{\sigma}{2},\frac{\sigma}{2}\Big), \; t\in [0,T].
\]

In contrast to the approach taken in the paper, and therefore also in the example above, which resolves the time inconsistency in the payoff \eqref{equation:payoffExtwoplayers} by adopting a game-theoretic interpretation that views the problem as a strategic interaction between the successive temporal versions of each player's preferences, we may instead rewrite the payoff as a functional of the law of the state processes. This provides a different perspective, under which the problem becomes a McKean--Vlasov control problem. In this formulation, the objective that player $i \in \{1,2\}$ aims to maximise at the initial time is
\begin{equation*}
J^i(0,\alpha^i;\alpha^{j}) \coloneqq \E^{\P^{\alpha^i \otimes_i \alpha^j}} \bigg[ \int_0^T \big((\alpha^i_s)^2 -(\alpha^j_s)^2\big) \d s + X^i_T - \frac{\gamma}{2}(X^i_T)^2 \bigg] + \frac{\gamma}{2} \bigg( \int_{\R} x^i \mathcal{L}^{\alpha^i \otimes_i \alpha^j}_{X^i_T}(\d x^i)\bigg)^2, \; \alpha^i \in \cA_2.
\end{equation*}
Our goal is to describe the Nash equilibria, that is, all $\tilde{\balpha}^2 = (\tilde{\alpha}^{1,2},\tilde{\alpha}^{2,2}) \in \mathcal{A}^2_2$ such that $J^i(0,\tilde{\alpha}^i;\tilde{\alpha}^{j}) \geq J^i(0,\alpha^i;\tilde{\alpha}^{j})$ for every $\alpha^i \in \mathcal{A}_2$. We then define the associated value function
\begin{equation*}
v^{i,2}(t,\mu^i) \coloneqq J^i(t,\tilde{\alpha}^{i,2};\tilde{\alpha}^{j,2}), \; (t,\mu^i) \in [0,T] \times \cP_2(\R),
\end{equation*}
where $\mu^i = \mathcal{L}^{\tilde{\balpha}^{\smalltext{2}}}_{X^i_t}$. Using the notion of differentiability with respect to probability measures (see, for instance, \citeauthor*{cardaliaguet2010notes} \cite[Section 6]{cardaliaguet2010notes}), this problem can be associated in a standard way with a system of second-order Hamilton--Jacobi--Bellman equations on the Wasserstein space (see \citeauthor*{bayraktar2018randomized} \cite[Remark 5.3]{bayraktar2018randomized})
\begin{align}\label{align:HJBequationViWassMc}
\notag &\partial_t v^i(t,\mu^i) + \int_\R \widetilde{H}^{i,2}_t\big(\partial_{\mu^i} v^i(t,\mu^i)(x^i),\partial_{x^i \mu^i} v^i(t,\mu^i)(x^i)\big) \mu^i(\d x^i) = 0, \; (t,\mu^i) \in [0,T) \times \mathcal{P}_2(\R), \\
&v^i(T,\mu^i) = \int_\R \bigg(x^i-\frac{\gamma}{2} (x^i)^2\bigg) \mu^i(\d x^i) + \frac{\gamma}{2} \bigg( \int_\R x^i \mu^i(\d x^i) \bigg)^2.
\end{align}
Here, the vector-valued Hamiltonian is given by
\begin{align*}
\widetilde{H}^2_t(p,q) \coloneqq 
\begin{pmatrix}
\widetilde{H}^{1,2}_t(p,q) \\
\widetilde{H}^{2,2}_t(p,q)
\end{pmatrix} \coloneqq
\begin{pmatrix}
-(\tilde{a}^{1,2})^2 + (\tilde{a}^{2,2})^2  + \sigma p \tilde{a}^{1,2} + \sigma^2 q/2 \\
-(\tilde{a}^{2,2})^2 + (\tilde{a}^{1,2})^2  + \sigma p \tilde{a}^{2,2} + \sigma^2 q/2
\end{pmatrix}, \; (t,p,q) \in [0,T] \times \R \times \R,
\end{align*}
where, analogously to \Cref{def:fixedPointHamiltonian}, $(\tilde{a}^{1,2},\tilde{a}^{2,2})$ denotes a fixed point of $\widetilde{H}^2_t(p,q)$ in the appropriate sense. By \citeauthor*{cheung2025viscosity} \cite[Theorem 5.2]{cheung2025viscosity}, the value function $v^{i,2}$ defined above is a viscosity solution of \eqref{align:HJBequationViWassMc}. Moreover, uniqueness of the viscosity solution is established in \cite[Theorem 5.3]{cheung2025viscosity}, and the unique solution admits the explicit form
\begin{align*}
v^{i,2}(t,\mu^i) = \int_\R \bigg(x^i-\frac{\gamma}{2} (x^i)^2\bigg) \mu^i(\d x^i) + \frac{\gamma}{2} \bigg( \int_\R x^i \mu^i(\d x^i) \bigg)^2 + \frac{\sigma^2}{2} (1-\gamma) (T-t).
\end{align*}
From this expression, we deduce that the unique Nash equilibrium satisfies 
\[
\tilde{\balpha}^2_t = (\hat{\alpha}^{1,2}_t,\hat{\alpha}^{2,2}_t) = \Big(\frac{\sigma}{2},\frac{\sigma}{2}\Big), \; t\in [0,T].
\]
Hence, $\tilde{\balpha}^2 = \hat{\balpha}^2$. In general, the game-theoretic formulation and the McKean--Vlasov formulation need not yield the same equilibrium. In this specific example, however, the $Z$-component $(Z^{i,i,2},Z^{i,j,2})$ appearing in the BSDE characterisation of the sub-game--perfect equilibrium is deterministic, and it coincides exactly with the measure derivative that determines the Nash equilibrium.

\subsubsection{Second example}
Continuing in the same setting, we now present another example of a payoff function for which the sub-game--perfect equilibrium coincides with the Nash equilibrium. We suppose that the other player $j \in \{1,2\}\setminus \{i\}$ follows a strategy $\alpha^{j} \in \cA_2$, the objective of player $i \in \{1,2\}$ is then given by
\begin{equation*}
J^i(t,\alpha^i;\alpha^{j}) \coloneqq \E^{\P^{\smalltext{\alpha}^\tinytext{i} \smalltext{\otimes}_\tinytext{i} \smalltext{\alpha}^{\tinytext{j}}\smalltext{,}\smalltext{2}\smalltext{,}\smalltext{t}}_\smalltext{\omega}} \bigg[\int_t^T \bigg( \frac{(\alpha^i_s)^2}{2} - \big(\alpha^i_s - \alpha^j_s\big)^2 \bigg) \d s + X^i_T - \frac{\gamma}{2}(X^i_T)^2 \bigg] +\frac{\gamma}{2} \Big( \E^{\P^{\smalltext{\alpha}^\tinytext{i} \smalltext{\otimes}_\tinytext{i} \smalltext{\alpha}^{\tinytext{j}}\smalltext{,}\smalltext{2}\smalltext{,}\smalltext{t}}_\smalltext{\omega}} \big[ X^i_T \big]\Big)^2, \; (t, \alpha^i) \in [0,T] \times \cA_2.
\end{equation*}
Analogously to the previous example, the BSDE characterisation shows that $\hat{\balpha}^{2} = (\hat{\alpha}^{1,2},\hat{\alpha}^{2,2}) \in \cA^2_2$ is a sub-game--perfect Nash equilibrium if and only if, for each $i \in \{1,2\}$,
\begin{align*}
\hat{\alpha}^{i,2}_t = -\frac{2 Z^{i,i,2}_t + Z^{j,j,2}_t}{3}, \; \d t \otimes \d \P\text{\rm--a.e.} \; (t,\omega) \in [0,T] \times \Omega,
\end{align*}
because the Hamiltonian maximisation condition now yields $\hat{\alpha}^{i,2}_t = 2 \hat{\alpha}^{j,2}_t + Z^{i,i,2}_t$, for $\d t \otimes \d \P\text{\rm--a.e.} \; (t,\omega) \in [0,T] \times \Omega$. The associated BSDE system admits the unique solution
\begin{align*}
Y^{i,2}_t = X^i_t-\frac{\sigma^2}{2}(1+\gamma)(T-t), \; Z^{i,i,2}_t = \sigma, \; Z^{i,j,2}_t = 0, \; M^{i,\star,2}_t = X^i_t - \sigma^2 (T-t), \; {Z}^{i,m,i,\star,2}_t = \sigma, \; {Z}^{i,m,j,\star,2}_t = 0, \; t\in [0,T].
\end{align*}
Hence, the sub-game--perfect equilibrium is uniquely determined by 
\[
\hat{\balpha}^{2}_t = (\hat{\alpha}^{1,2}_t,\hat{\alpha}^{2,2}_t) = (-\sigma,-\sigma), \; t\in [0,T].
\]
Similarly to the previous case, the value function associated with the corresponding McKean--Vlasov control problem is
\begin{align*}
v^i(t,\mu^i) = \int_\R \bigg(x^i-\frac{\gamma}{2} (x^i)^2\bigg) \mu^i(\d x^i) + \frac{\gamma}{2} \bigg( \int_\R x^i \mu^i(\d x^i) \bigg)^2 + \frac{\sigma^2}{2} (1+\gamma) (T-t), \; (t,\mu^i) \in [0,T] \times \cP_2(\R),
\end{align*}
and therefore $\tilde{\balpha}^2 = \hat{\balpha}^2$.

\subsection{The zero--sum game}\label{subsection:zeroSumGame}

When the game is restricted to two players, and hence $N=2$, and the sum of their payoffs is fixed at a constant, the setting corresponds to a zero-sum stochastic differential game. In this framework, our objective is to investigate how \Cref{necessity_toBSDE} and \Cref{sufficiency_toBSDE} can be reformulated, and in particular, to identify conditions under which the BSDE system characterising the game can be simplified by one dimension, being described by a single process $Y$ instead of the pair $(Y^{1,2},Y^{2,2})$. Before addressing this reduction, we present the formulation of the stochastic differential game.

\medskip
Unlike the $N$-player game described in \eqref{align:criteriumNplayerGame}, where each player has a distinct state process, here we assume that both players share the same state process $X^1$, which is the unique strong solution to \eqref{eq:XstrongSolSDE} on the filtered probability space $(\Omega,\cF,\mathbb{F}^1,\P)$. Moreover, for each $\balpha \coloneqq (\alpha^1,\alpha^2) \in \cA_1^2$, we define the probability measure $\P^{\balpha}$ on $(\Omega,\cF)$, whose density with respect to $\PP$ is given by
\begin{equation*}
\frac{\d \P^{\balpha}}{\d \P} \coloneqq \cE\bigg( \int_0^\cdot b_t\big(X^1_{\cdot \land t},\alpha^1_t,\alpha^2_t\big) \cdot \d W^1_t \bigg)_T,
\end{equation*}
where $b:[0,T] \times \cC_m \times A \times A \longrightarrow \R^d$ is assumed to be bounded and Borel-measurable. We note that this definition slightly abuses notation, since the function $b$ introduced here differs from the function $b:[0,T] \times \cC_m \times \cP_2(\cC_{m} \times A) \times A \longrightarrow \R^d$ considered in \Cref{section:ProbSetting}, which characterises the change of measure in both the symmetric $N$-player game and the mean-field game. Nevertheless, its role is essentially the same.

\medskip
We introduce the payoff
\begin{align}\label{align:criteriumZeroSum}
J(t,\omega,\balpha) \coloneqq \E^{\P^{\smalltext{\balpha}\smalltext{,}\smalltext{t}}_\smalltext{\omega}} \bigg[ \int_t^T f_s\big(X^1_{\cdot \land s}, \alpha^1_s,\alpha^2_s\big) \d s + g\big(X^1_{\cdot \land T}\big) \bigg] + G\Big( \E^{\P^{\smalltext{\balpha}\smalltext{,}\smalltext{t}}_\smalltext{\omega}} \big[ \varphi\big(X^1_{\cdot \land T}\big) \big] \Big), \; (t,\omega,\balpha) \in [0,T] \times \Omega \times \cA_1^2,
\end{align}
where the functions $f: [0,T] \times \cC_m \times A^2 \longrightarrow \R$, $g: \cC_m \longrightarrow \R$, $G: \R \longrightarrow \R$, and $\varphi: \cC_m \longrightarrow \R$ are assumed to be Borel-measurable with respect to all their arguments. Analogously to the $N$-player game, the goal is to find an admissible strategy $\hat\balpha \in \cA_1^2$ that constitutes a zero-sum sub-game--perfect Nash equilibrium, according to the following definition.

\begin{definition}\label{def:NashEquilibriumZS}
Let $\hat\balpha \in \cA_1^2$, and $\varepsilon >0$. We define
\begin{align*}
\ell_\varepsilon \coloneqq \inf \Big\{ \ell >0 : \exists (i,t,\alpha) \in \{1,2\} \times [0,T] \times \cA_1, \;
\P \big[ \omega \in \Omega: \big(J(t,\omega,\hat\balpha) - J\big(t,\omega,(\alpha\otimes_{t+\ell}\hat\alpha^{i}) \otimes_i \hat\alpha^{\smallertext{-}i}\big) \big) \cI^i < - \varepsilon \ell \big] >0 \Big\},
\end{align*}
where $\cI^i \coloneqq -\mathbf{1}_{\{i=1\}}+\mathbf{1}_{\{i=2\}}$. We say that $\hat\balpha$ is a {\rm zero-sum sub-game--perfect Nash equilibrium} if for any $\varepsilon >0$, it holds that $\ell_\varepsilon >0$. The set containing all {\rm zero-sum sub-game--perfect Nash equilibria} is denoted by $\mathcal{N}\!\mathcal{A}_{s,0}$.
\end{definition}

\begin{remark}
\begin{enumerate}[label={$(\roman*)$}]
\item Player 1 aims to minimise the previously introduced payoff, while player 2 seeks to maximise it. Accordingly, this zero-sum game reduces to studying the existence of local $\varepsilon$-saddle points, which naturally reflects the time-inconsistent nature of the problem;
\item the framework of the game is symmetric in the sense that each player chooses admissible controls, and thus we adopt what in the time-consistent literature is referred to as a control-against-control formulation. In particular, no player can choose a non-anticipative mapping from the other player’s set of controls to their own. Beyond being a direct sub-case of the $N$-player game introduced earlier, this choice is motivated by the practical fact that players rarely share their strategies with competitors.
\end{enumerate}
\end{remark}

This setting still falls within the framework of time-inconsistent problems due to the definition of the payoff \eqref{align:criteriumZeroSum}. Consequently, the extended dynamic programming approach presented in the previous section remains the fundamental tool for addressing this problem and characterising sub-game--perfect Nash equilibria. In what follows, we assume the existence of a zero-sum sub-game--perfect Nash equilibrium $\hat\balpha\in\cA_1^2$, without imposing uniqueness. Analogously to \Cref{align:MiNi_starDef}, we introduce the process
\begin{align*}
M_t^{\star} \coloneqq \E^{\P^{\smalltext{\hat\balpha}\smalltext{,}\smalltext{t}}_\smalltext{\cdot}} \big[ \varphi(X^1_{\cdot \land T}) \big], \; t \in [0,T].
\end{align*}

\begin{assumption}\label{assumption:regularityForNecessityZero}
\begin{enumerate}[label={$(\roman*)$}]
\item\label{boundedV} The function $\cC_m \ni x \longmapsto \varphi(x)$ is bounded$;$
\item\label{differentiableG} the function $\R \ni m^{\star} \longmapsto G(m^{\star})$ is twice continuously differentiable with Lipschitz-continuous derivatives $G^\prime$ and $G^{\prime\prime};$
\item there exists a constant $c>0$ and a modulus of continuity $\rho$ such that, for any $(\balpha,t,\tilde{t},t^\prime) \in \cA^2_1 \times [0,T] \times [t,T] \times [\tilde{t},T]$, we have
\begin{align*}
\E^{\P^{\smalltext{\balpha}\smalltext{,}\smalltext{t}}_{\smalltext{\cdot}}} \Big[ \big| \E^{\P^{\smalltext{\balpha}\smalltext{,}\smalltext{\tilde{t}}}_\smalltext{\cdot}} \big[ M^{\star}_{t^\prime} \big] - M^{\star}_{\tilde{t}} \big|^2 \Big] \leq c |t^\prime-\tilde{t}| \rho \big(|t^\prime-\tilde{t}|\big), \; \P\text{\rm--a.s.};
\end{align*}
\item\label{IsaacsA} there exists some $p \geq 1$ such that 
\begin{align*}
\sup_{\balpha \in \cA^\smalltext{2}_\smalltext{1}} \E^{\P^{\smalltext{\balpha}}} \bigg[ \big|g\big(X^1_{\cdot \land T}\big)\big|^p + \int_0^T \big|H^{0}_t(X^1_{\cdot \land t},0,0,0)\big|^p \d t \bigg] <+\infty,
\end{align*}
where 
\begin{align*}
H^{0}_t(x,z,\mathsf{m},z^{\star}) \coloneqq \inf_{a \in A} \sup_{\tilde{a} \in A} \big\{f_t\big(x, a, \tilde{a} \big) +  z \cdot b_t(x,a,\tilde{a}) \big\} - \frac{1}{2} G^{\prime\prime}(\mathsf{m}) \|z^{\star}\|^2, \; (t,x,z,\mathsf{m},z^{\star}) \in [0,T] \times \cC_m \times \R^d \times \R \times \R.
\end{align*}
\end{enumerate}
\end{assumption}

\begin{proposition}\label{prop: from0subTOBSDE}
Let {\rm\Cref{assumption:regularityForNecessityZero}} hold, and let $\hat\balpha \in \mathcal{NA}_{s,0}$ be a zero-sum sub-game--perfect Nash equilibrium. Then, one can construct a quadruple $(Y,Z,M^{\star},Z^{m,\star})$ that satisfies the following system $\P \text{\rm--a.s.}$
\begin{align}\label{align:BSDEzeroSum}
\notag Y_t &= g\big(X^1_{\cdot \land T}\big) + G\big(\varphi(X^1_{\cdot \land T})\big)\big) + \int_t^T \bigg( f_s\big(X^1_{\cdot \land s},\hat\alpha^1_s,\hat\alpha^2_s\big) + Z_s \cdot b_s\big(X^1_{\cdot \land s},\hat\alpha^1_s,\hat\alpha^2_s\big) - \frac{G^{\prime\prime}\big(M^{\star}_s\big)}{2} \|Z^{m,\star}_s\|^2 \bigg) \d s\\
&\quad - \int_t^T Z_s \cdot \d W_s^1, \; t \in [0,T], \\
M^{\star}_t &= \varphi\big(X^1_{\cdot \land T}\big) + \int_t^T Z^{m,\star}_s \cdot b_s\big(X^1_{\cdot \land s},\hat\alpha^1_s,\hat\alpha^2_s\big) \d s - \int_t^T Z^{m,\star}_s \cdot \d W_s^1, \; t \in [0,T].
\end{align}
It holds that $J(t,\cdot,\hat\balpha) = Y_t$, $\P\text{\rm--a.s.}$, for every $t \in [0,T]$. Moreover, for some $p \geq 1$, the following integrability condition holds true
\begin{align*}
&\sup_{\balpha \in \cA^\smalltext{2}_\smalltext{1}} \E^{\P^{\smalltext{\balpha}}} \Bigg[ \sup_{t \in [0,T]} |Y_t|^p + \bigg(\int_0^T \|{Z}_t\|^2 \d t\bigg)^\frac{p}{2} \Bigg] +\esssup_{\balpha \in \cA^\smalltext{2}_\smalltext{1}} \Bigg\{\sup_{t \in [0,T]} | M^{\star}_t| + \sup_{\tau \in \cT_{0,T}} \E^{\P^{\smalltext{\balpha}\smalltext{,}\smalltext{\tau}}_{\smalltext{\cdot}}} \bigg[ \int_\tau^T \|Z^{m,\star}_t\|^2 \d t \bigg] \Bigg\} < +\infty.
\end{align*}
Finally, we have, $\d t \otimes \d \P\text{\rm--a.e.} \; (t,\omega) \in [0,T] \times \Omega$,
\begin{align}\label{align:iCond}
\notag \sup_{\tilde{a} \in A} \inf_{a \in A} \big\{f_t(X^1_{\cdot \land t}, a, \tilde{a}) +  Z_t \cdot b_t(X^1_{\cdot \land t},a,\tilde{a}) \big\} &= \inf_{a \in A} \sup_{\tilde{a} \in A}\big\{f_t(X^1_{\cdot \land t}, a, \tilde{a}) + Z_t \cdot b_t(X^1_{\cdot \land t},a,\tilde{a}) \big\} \\
&=f_t\big(X^1_{\cdot \land t}, \hat\alpha^1_t, \hat\alpha^2_t \big) +  Z_t \cdot b_t\big(X^1_{\cdot \land t},\hat\alpha^1_t, \hat\alpha^2_t \big).
\end{align}
\end{proposition}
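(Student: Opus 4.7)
The plan is to transport the arguments behind the proof of \Cref{necessity_toBSDE} to the zero-sum two-player setting, taking advantage of the fact that the game admits a single common payoff process rather than two distinct ones, and that all dynamics are driven by the single Brownian motion $W^1$. The key novelty, relative to the $N$-player situation, is the derivation of the saddle-point equality \eqref{align:iCond}, which requires exploiting simultaneously the local no-deviation property of both players. First, I would establish the backward equation for $M^\star$: setting $M^\star_t \coloneqq \E^{\P^{\hat\balpha,t}_\cdot}[\varphi(X^1_{\cdot \land T})]$, the tower property makes this an $(\F^1,\P^{\hat\balpha})$--martingale, bounded thanks to \Cref{assumption:regularityForNecessityZero}.\ref{boundedV}. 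Invoking \Cref{lemma:martingaleRepresentation} under $\P^{\hat\balpha}$ produces a predictable $Z^{m,\star} \in \H^2_{\mathrm{loc}}(\F^1,\P^{\hat\balpha},\R^d)$ such that $M^\star_t=\varphi(X^1_{\cdot\land T})-\int_t^T Z^{m,\star}_s\cdot \d (W^{\hat\balpha})^1_s$; a Girsanov transformation restores the second line of \eqref{align:BSDEzeroSum} under $\P$, and the uniform integrability bound for $Z^{m,\star}$ follows from a BMO--type estimate identical to the one used for $Z^{i,m,\ell,\star,N}$ in the $N$-player proof.

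Next, I would adapt \Cref{thm:DPP_NplayerGame} to the zero-sum setting. Setting $Y_t \coloneqq J(t,\cdot,\hat\balpha)$, the definition of zero-sum SGP equilibrium, with $\cI^1=-1$ and $\cI^2=+1$, yields two simultaneous extended DPPs: for every $(t,\tilde t)\in[0,T]\times[t,T]$,
\begin{align*}
Y_t &= \essinf_{\alpha \in \cA_\smalltext{1}} \E^{\P^{(\smalltext{\alpha}\otimes_\tinytext{1}\smalltext{\hat\alpha}^{\tinytext{2}})\smalltext{,}\smalltext{t}}_\smalltext{\cdot}}\bigg[Y_{\tilde t}+\int_t^{\tilde t} f_s(X^1_{\cdot\land s},\alpha_s,\hat\alpha^2_s)\,\d s-\frac12\int_t^{\tilde t} G^{\prime\prime}(M^\star_s)\,\d [M^\star]_s\bigg]\\
&= \esssup_{\alpha \in \cA_\smalltext{1}} \E^{\P^{(\smalltext{\hat\alpha}^{\tinytext{1}}\otimes_\tinytext{2}\smalltext{\alpha})\smalltext{,}\smalltext{t}}_\smalltext{\cdot}}\bigg[Y_{\tilde t}+\int_t^{\tilde t} f_s(X^1_{\cdot\land s},\hat\alpha^1_s,\alpha_s)\,\d s-\frac12\int_t^{\tilde t} G^{\prime\prime}(M^\star_s)\,\d [M^\star]_s\bigg],
\end{align*}
both attained at $\hat\balpha$. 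This implies that $Y_t+\tfrac12\int_0^t G^{\prime\prime}(M^\star_s)\,\d[M^\star]_s+\int_0^t f_s(X^1_{\cdot\land s},\hat\alpha^1_s,\hat\alpha^2_s)\,\d s$ is an $(\F^1,\P^{\hat\balpha})$--martingale. A second application of \Cref{lemma:martingaleRepresentation} delivers a predictable $Z$ for which the first equation of \eqref{align:BSDEzeroSum} follows after reverting to $\P$ via Girsanov. The identity $J(t,\cdot,\hat\balpha)=Y_t$ is built into the construction, and the uniform integrability estimates follow from \Cref{assumption:regularityForNecessityZero}.\ref{IsaacsA} combined with standard a priori bounds for quadratic BSDEs.

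Finally, I would derive the Isaacs equality \eqref{align:iCond}, which I expect to be the main obstacle of the proof. For an arbitrary $\alpha \in \cA_1$ regarded as a player-1 deviation on $[t,t+\ell]$, combining the essential-infimum DPP with the SGP property yields
\begin{align*}
0 \le \E^{\P^{((\smalltext{\alpha}\otimes_{\tinytext{t}\tinytext{+}\tinytext{\ell}}\smalltext{\hat\alpha}^{\tinytext{1}})\otimes_\tinytext{1}\smalltext{\hat\alpha}^{\tinytext{2}})\smalltext{,}\smalltext{t}}_\smalltext{\omega}}\bigg[\int_t^{t+\ell}\!\!\big(f_s(X^1_{\cdot \land s},\alpha_s,\hat\alpha^2_s)-f_s(X^1_{\cdot \land s},\hat\alpha^1_s,\hat\alpha^2_s) + Z_s\cdot (b_s(X^1_{\cdot \land s},\alpha_s,\hat\alpha^2_s)-b_s(X^1_{\cdot \land s},\hat\alpha^1_s,\hat\alpha^2_s))\big)\,\d s\bigg]+\varepsilon\ell+o(\ell),
\end{align*}
where the $o(\ell)$ captures the contribution of the $[M^\star]$--term under the perturbed measure, controlled via \Cref{assumption:regularityForNecessityZero}.(iii) and the BMO bound on $Z^{m,\star}$. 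Dividing by $\ell$, using Lebesgue differentiation along a countable dense family of $\F^1$--predictable deviations taking values in a countable dense subset of $A$, and then letting $\varepsilon\downarrow 0$, yields $f_t(X^1_{\cdot \land t},\hat\alpha^1_t,\hat\alpha^2_t)+Z_t\cdot b_t(X^1_{\cdot \land t},\hat\alpha^1_t,\hat\alpha^2_t)\le f_t(X^1_{\cdot \land t},a,\hat\alpha^2_t)+Z_t\cdot b_t(X^1_{\cdot \land t},a,\hat\alpha^2_t)$ for all $a \in A$, $\d t \otimes \d\P$--a.e. A symmetric argument based on the essential-supremum DPP and the SGP property of player 2 provides the corresponding reverse inequality in the $\tilde{a}$ variable, and the combination of the two pointwise inequalities is exactly the claimed saddle-point identity \eqref{align:iCond}. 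The delicate point is the uniform $o(\ell)$ control on the quadratic variation of $M^\star$ across the one-parameter family of measures indexed by the deviation, which is why condition \Cref{assumption:regularityForNecessityZero}.(iii) is essential.
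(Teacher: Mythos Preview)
Your approach is correct, but it takes a materially different route from the paper's. You work directly with a single value process $Y$, derive two mirror DPPs (an $\essinf$ for player~1 and an $\esssup$ for player~2), obtain $Z$ via a single martingale representation, and then establish the saddle-point identity \eqref{align:iCond} by a deviation/Lebesgue-differentiation argument applied to each player separately---correctly flagging the uniform $o(\ell)$ control of the $G''$/quadratic-variation term across the perturbed measures as the delicate step.

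The paper instead applies \Cref{necessity_toBSDE} (the $N$-player necessity result) as a black box to the two-player setting, obtaining \emph{two} BSDEs with solutions $(Y^{1,0},Z^{1,0})$ and $(Y^{2,0},Z^{2,0})$, where $Y^{1,0}$ corresponds to player~1's problem of maximising $-J$ and $Y^{2,0}$ to player~2's problem of maximising $J$. The key observation is then purely algebraic: since $Y^{1,0}_t = -J(t,\cdot,\hat\balpha) = -Y^{2,0}_t$, taking the bracket with $W^1$ forces $Z^{1,0}=-Z^{2,0}$. One sets $(Y,Z)\coloneqq(Y^{2,0},Z^{2,0})$, and the Isaacs equality follows by combining the two Hamiltonian fixed-point conditions delivered by \Cref{necessity_toBSDE}: player~1's condition becomes $\hat\alpha^1_t \in \arg\min_a\{f_t+Z_t\cdot b_t\}(a,\hat\alpha^2_t)$ and player~2's becomes $\hat\alpha^2_t \in \arg\max_{\tilde a}\{f_t+Z_t\cdot b_t\}(\hat\alpha^1_t,\tilde a)$; chaining these with the trivial min-max inequality gives \eqref{align:iCond} in a few lines, with no fresh $o(\ell)$ analysis whatsoever. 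So the paper's route is shorter and completely avoids the step you identified as the main obstacle, at the cost of invoking the $N$-player machinery; your route is more self-contained but essentially reproves a zero-sum version of \Cref{necessity_toBSDE} from scratch.
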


\begin{remark}
In contrast to the {\rm BSDE} system \eqref{BSDE_NplayerGame} that characterises the $N$-player game, {\rm\Cref{prop: from0subTOBSDE}} shows that to the zero-sum game we can associate a two-dimensional {\rm BSDE}, rather than a three-dimensional one. This seems intuitive since player 1 aims to minimise exactly what player 2 seeks to maximise. Hence, if a zero-sum sub-game--perfect Nash equilibrium exists, the value of the game is determined only by the process $(Y,M^\star)$.
\end{remark}

\begin{proof}
Let some $\hat{\balpha} \in \mathcal{N}\!\mathcal{A}_{s,0}$ be fixed. Given that \Cref{assumption:regularityForNecessityZero} is simply the counterpart of \Cref{assumpDPP_NplayerGame} and \Cref{assumption:regularityForNecessity} in the zero-sum framework, \Cref{necessity_toBSDE} ensures the existence of $(Y^{1,0},Y^{2,0},Z^{1,0},Z^{2,0},M^{\star},Z^{m,\star})$ such that
\begin{align}
\notag Y_t^{1,0} &= -g\big(X^1_{\cdot \land T}\big) - G\big(\varphi(X^1_{\cdot \land T})\big)\big) + \int_t^T \bigg( -f_s\big(X^1_{\cdot \land s},\hat\alpha^1_s,\hat\alpha^2_s\big) + Z^{1,0}_s \cdot b_s\big(X^1_{\cdot \land s},\hat\alpha^1_s,\hat\alpha^2_s\big) + \frac{G^{\prime\prime}\big(M^{\star}_s\big)}{2} \|Z^{m,\star}_s\|^2 \bigg) \d s \\
\notag &\quad - \int_t^T Z^{1,0}_s \cdot \d W_s^1, \; t \in [0,T], \; \P\text{\rm--a.s.} \\
\notag Y_t^{2,0} &= g\big(X^1_{\cdot \land T}\big) + G\big(\varphi(X^1_{\cdot \land T})\big)\big) + \int_t^T \bigg( f_s\big(X^1_{\cdot \land s},\hat\alpha^1_s,\hat\alpha^2_s\big) + Z^{2,0}_s \cdot b_s\big(X^1_{\cdot \land s},\hat\alpha^1_s,\hat\alpha^2_s\big) - \frac{G^{\prime\prime}\big(M^{\star}_s\big)}{2} \|Z^{m,\star}_s\|^2 \bigg) \d s \\
\notag &\quad - \int_t^T Z^{1,0}_s \cdot \d W_s^1, \; t \in [0,T], \; \P\text{\rm--a.s.}, \\
M^{\star}_t &= \varphi\big(X^1_{\cdot \land T}\big) + \int_t^T Z^{m,\star}_s \cdot b_s\big(X^1_{\cdot \land s},\hat\alpha^1_s,\hat\alpha^2_s\big) \d s - \int_t^T Z^{m,\star}_s \cdot \d W_s^1, \; t \in [0,T], \; \P\text{\rm--a.s.}
\end{align}
Furthermore, the condition expressed in \eqref{condOpFixedPoints} can be equivalently expressed, for $\d t \otimes \d \P\text{\rm--a.e.} \; (t,\omega) \in [0,T] \times \Omega$, as
\begin{align*}
\sup_{a \in A} \big\{-f_t\big(X^1_{\cdot \land t}, a, \hat\alpha^2_t \big) +  Z^{1,0}_t \cdot b_t\big(X^1_{\cdot \land t},a,\hat\alpha^2_t\big) \big\} = & -f_t\big(X^1_{\cdot \land t}, \hat\alpha^1_t, \hat\alpha^2_t \big) +  Z^{1,0}_t \cdot b_t\big(X^1_{\cdot \land t},\hat\alpha^1_t,\hat\alpha^2_t\big)  \\
= & -f_t\big(X^1_{\cdot \land t}, \hat\alpha^1_t, \hat\alpha^2_t \big) - Z^{2,0}_t \cdot b_t\big(X^1_{\cdot \land t},\hat\alpha^1_t,\hat\alpha^2_t\big)  \\
=&- \sup_{\tilde{a} \in A}\big\{f_t\big(X^1_{\cdot \land t}, \hat\alpha^1_t, \tilde{a} \big) + Z^{2,0}_t \cdot b_t\big(X^1_{\cdot \land t},\hat\alpha^1_t,\tilde{a}\big) \big\}.
\end{align*}

We also have that $Y^{1,0}_t = J(t,\cdot,\hat\balpha_t) = -Y^{2,0}_t$, $\P\text{\rm--a.s.}$, $t \in [0,T]$, which implies that 
\begin{equation*}
Z^{1,0}_t = \big[Y^{1,0}_t,W^1\big]_t = \big[-Y^{2,0},W^1\big]_t = -Z^{2,0}_t, \; \P\text{\rm--a.s.}
\end{equation*}
Then, for $\d t \otimes \d \P\text{\rm--a.e.} \; (t,\omega) \in [0,T] \times \Omega$, it holds that
\begin{align*}
\inf_{\tilde{a} \in A} \sup_{a \in A} \big\{-f_t\big(X^1_{\cdot \land t}, a, \tilde{a} \big) +  Z^{1,0}_t \cdot b_t\big(X^1_{\cdot \land t},a,\tilde{a}\big) \big\} &\leq \sup_{a \in A} \big\{ -f_t\big(X^1_{\cdot \land t}, a, \hat\alpha^2_t \big) +  Z^{1,0}_t \cdot b_t\big(X^1_{\cdot \land t},a,\hat\alpha^2_t\big) \big\} \\
& =- \sup_{\tilde{a} \in A}\big\{f_t\big(X^1_{\cdot \land t}, \hat\alpha^1_t, \tilde{a} \big) - Z^{1,0}_t \cdot b_t\big(X^1_{\cdot \land t},\hat\alpha^1_t,\tilde{a}\big) \big\} \\
&\leq - \inf_{a \in A} \sup_{\tilde{a} \in A} \big\{f_t\big(X^1_{\cdot \land t}, a, \tilde{a} \big) -  Z^{1,0}_t \cdot b_t\big(X^1_{\cdot \land t},a,\tilde{a}\big) \big\} \\
&= \sup_{a \in A} \inf_{\tilde{a} \in A}\big\{-f_t\big(X^1_{\cdot \land t}, a, \tilde{a} \big) + Z^{1,0}_t \cdot b_t\big(X^1_{\cdot \land t},a,\tilde{a}\big) \big\},
\end{align*}
which in turn yields \Cref{align:iCond}.
\end{proof}

Before presenting the result that the two-dimensional BSDE system in \eqref{align:BSDEzeroSum} also provides a sufficient condition for characterising equilibria in the zero-sum setting, we first introduce the following assumption.

\begin{assumption}\label{assumption:genIsaacsZero}
There exist two Borel-measurable functions $\mathfrak{a}^{i,0}: [0,T] \times \cC_{m} \times \R^d \longrightarrow A$, $i \in \{1,2\}$, such that, for all $(t,x,z,a,\tilde{a}) \in [0,T] \times \cC_m \times \R^d \times A \times A$, it holds that
\begin{gather*}
f_t(x, \mathfrak{a}^{1,0}(t,x,z), \mathfrak{a}^{2,0}(t,x,z) ) +  z \cdot b_t(x,\mathfrak{a}^{1,0}(t,x,z),\mathfrak{a}^{2,0}(t,x,z)) \leq f_t(x, a, \mathfrak{a}^{2,0}(t,x,z) ) +  z \cdot b_t(x,a,\mathfrak{a}^{2,0}(t,x,z)),\\
f_t(x, \mathfrak{a}^{1,0}(t,x,z),\tilde{a}) +  z \cdot b_t(x,\mathfrak{a}^{1,0}(t,x,z),\tilde{a}) \leq f_t(x, \mathfrak{a}^{1,0}(t,x,z), \mathfrak{a}^{2,0}(t,x,z) ) +  z \cdot b_t(x,\mathfrak{a}^{1,0}(t,x,z),\mathfrak{a}^{2,0}(t,x,z)).
\end{gather*}
\end{assumption}

We refer to the previous assumption as the generalised Isaacs condition, as it extends the classical Isaacs condition:
\begin{equation*}
\inf_{a \in A} \sup_{\tilde{a} \in A} \big\{ f_t(x, a, \tilde{a} ) +  z \cdot b_t(x,a,\tilde{a}) \big\} = \sup_{\tilde{a} \in A} \inf_{a \in A} \big\{ f_t(x, a, \tilde{a} ) +  z \cdot b_t(x,a,\tilde{a}) \big\}, \; (t,x,z) \in [0,T] \times \cC_m \times \R^d.
\end{equation*}
Consequently, under \Cref{assumption:genIsaacsZero}, the Hamiltonian function $H^0:[0,T] \times \cC_m \times \R^d \times \R \times \R \longrightarrow \R$, introduced in \Cref{assumption:regularityForNecessityZero}.\ref{IsaacsA}, can be rewritten as
\begin{equation*}
H^{0}_t(x,z,\mathsf{m},z^{\star}) = \sup_{\tilde{a} \in A} \inf_{a \in A} \big\{ f_t(x, a, \tilde{a} ) +  z \cdot b_t(x,a,\tilde{a}) \big\} - \frac{1}{2} G^{\prime\prime}(\mathsf{m}) \|z^{\star}\|^2.
\end{equation*}

\begin{proposition}\label{prop: BSDETOzero}
We suppose that {\rm\Cref{assumption:regularityForNecessityZero}.\ref{differentiableG}} and {\rm\Cref{assumption:genIsaacsZero}} are satisfied. Let $(Y,Z,M^{\star},Z^{m,\star})$ be a solution to the following system of {\rm BSDEs}
\begin{align*}
\notag Y_t &= g\big(X^1_{\cdot \land T}\big) + G\big(\varphi(X^1_{\cdot \land T})\big)\big) + \int_t^T H^0_s\big(X^1_{\cdot \land s},Z_t,M^{\star}_s,Z^{m,\star}_s\big) \d s - \int_t^T Z_s \cdot \d W_s^1, \; t \in [0,T], \; \P\text{\rm--a.s.}, \\
M^{\star}_t &= \varphi\big(X^1_{\cdot \land T}\big) + \int_t^T Z^{m,\star}_s \cdot b_s\big(X^1_{\cdot \land s},\hat\alpha^1_s,\hat\alpha^2_s\big) \d s - \int_t^T Z^{m,\star}_s \cdot \d W_s^1, \; t \in [0,T], \; \P\text{\rm--a.s.},
\end{align*}
and satisfy, for some $p \geq 1$, the following integrability condition
\begin{align*}
&\sup_{\balpha \in \cA^\smalltext{2}_\smalltext{1}} \E^{\P^{\smalltext{\balpha}}} \Bigg[ \sup_{t \in [0,T]} |Y_t|^p + \bigg(\int_0^T \|{Z}_t\|^2 \d t\bigg)^\frac{p}{2} \Bigg] +\esssup_{\balpha \in \cA^\smalltext{2}_\smalltext{1}} \Bigg\{\sup_{t \in [0,T]} | M^{\star}_t| + \sup_{\tau \in \cT_{0,T}} \E^{\P^{\smalltext{\balpha}\smalltext{,}\smalltext{\tau}}_{\smalltext{\cdot}}} \bigg[ \int_\tau^T \|Z^{m,\star}_t\|^2 \d t \bigg] \Bigg\} < +\infty.
\end{align*}
If we define $\hat\balpha_t \coloneqq \big(\mathfrak{a}^{1,0}(t,X^1_{\cdot \land t},Z_t),\mathfrak{a}^{2,0}(t,X^1_{\cdot \land t},Z_t)\big)$, $t \in [0,T]$, it holds that $\hat\balpha \in \mathcal{N}\!\mathcal{A}_{s,0}$, and $J(t,\cdot,\hat\balpha) = Y_t$, $\P\text{\rm--a.s.}$, $t\in [0,T]$.
\end{proposition}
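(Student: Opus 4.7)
The plan is to adapt the sufficiency argument of \Cref{sufficiency_toBSDE} to the zero-sum setting, with the two-dimensional system \eqref{align:BSDEzeroSum} replacing the three-dimensional one and with the generalised Isaacs condition \Cref{assumption:genIsaacsZero} playing the role of the Hamiltonian fixed-point property. Concretely, I would first identify $Y_t$ with $J(t,\cdot,\hat\balpha)$, and then estimate the change in payoff generated by a one-sided perturbation of $\hat\balpha$ on a short interval $[t,t+\ell]$; the sign of this change, dictated by \Cref{assumption:genIsaacsZero}, yields the inequality required by \Cref{def:NashEquilibriumZS}.

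For the first step, the BSDE for $M^{\star}$ shows, after the Girsanov change of measure to $\P^{\hat\balpha}$, that $M^{\star}$ is an $(\F^1,\P^{\hat\balpha})$-martingale on $[0,T]$, so $M^{\star}_t=\E^{\P^{\hat\balpha,t}_\cdot}[\varphi(X^1_{\cdot\wedge T})]$, $\P\text{\rm--a.s.}$ Applying Itô's formula to $G(M^{\star})$ between $t$ and $T$, the quadratic-variation term $\tfrac{1}{2}\int_t^T G''(M^{\star}_s)\|Z^{m,\star}_s\|^2\,\d s$ cancels its counterpart in the BSDE for $Y$; rewriting the remaining stochastic integrals under $\P^{\hat\balpha}$ and taking $\P^{\hat\balpha,t}_\cdot$-conditional expectations, legitimate thanks to the integrability hypothesis, gives $Y_t=\E^{\P^{\hat\balpha,t}_\cdot}\big[\int_t^T f_s(X^1_{\cdot\wedge s},\hat\alpha^1_s,\hat\alpha^2_s)\,\d s+g(X^1_{\cdot\wedge T})\big]+G(M^{\star}_t)=J(t,\cdot,\hat\balpha)$, $\P\text{\rm--a.s.}$

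For the equilibrium property, fix $i\in\{1,2\}$, $\alpha\in\cA_1$, $t\in[0,T)$ and a small $\ell>0$, and set $\tilde\balpha\coloneqq(\alpha\otimes_{t+\ell}\hat\alpha^{i})\otimes_i\hat\alpha^{\smallertext{-}i}$. Because $\tilde\balpha\equiv\hat\balpha$ on $[t+\ell,T]$, both $Y_{t+\ell}$ and $M^{\star}_{t+\ell}$ are unaffected by the perturbation. Writing the BSDE for $Y$ on $[t,t+\ell]$ under $\P^{\tilde\balpha}$ produces an additional Girsanov drift $Z_s\cdot(b_s(\hat\balpha)-b_s(\tilde\balpha))$, whereas applying Itô to $G(M^{\star})$ under $\P^{\tilde\balpha}$ generates the drift $G'(M^{\star}_s)Z^{m,\star}_s\cdot(b_s(\tilde\balpha)-b_s(\hat\balpha))$. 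Introducing $\check M^{\star}_t\coloneqq\E^{\P^{\tilde\balpha,t}_\cdot}[\varphi(X^1_{\cdot\wedge T})]$ and Taylor-expanding $G$ at $M^{\star}_t$, the second-order $G''/2$-contributions cancel and one obtains
\begin{align*}
J(t,\cdot,\tilde\balpha)-Y_t=\E^{\P^{\tilde\balpha,t}_\cdot}\bigg[\int_t^{t+\ell}\Big(f_s(\tilde\balpha)-f_s(\hat\balpha)+Z_s\cdot\big(b_s(\tilde\balpha)-b_s(\hat\balpha)\big)\Big)\d s\bigg]+R_\ell,
\end{align*}
where $R_\ell$ collects the second-order Taylor remainder and the error from replacing $G'(M^{\star}_s)$ by $G'(M^{\star}_t)$ on $[t,t+\ell]$. \Cref{assumption:genIsaacsZero}, applied pointwise with $z=Z_s(\omega)$, then fixes the sign of the leading integrand: non-negative for $i=1$ and non-positive for $i=2$. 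Hence $(J(t,\cdot,\hat\balpha)-J(t,\cdot,\tilde\balpha))\cI^i\geq -R_\ell\,\cI^i$, $\P\text{\rm--a.s.}$

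The principal technical obstacle is to show that $R_\ell/\ell\to 0$ as $\ell\downarrow 0$ uniformly in $(\omega,i,t,\alpha)$, so that $\ell_\varepsilon$ in \Cref{def:NashEquilibriumZS} can be chosen independently of these parameters. This reduces to a modulus-of-continuity estimate of the form $\E^{\P^{\tilde\balpha,t}_\cdot}\big[\int_t^{t+\ell}|M^{\star}_s-M^{\star}_t|^2\d s\big]\leq \ell\,\rho(\ell)$, which itself follows from the martingale representation of $M^{\star}$, combined with a Cauchy-Schwarz argument using the uniform bounds on $\|Z^{m,\star}\|^2$ supplied by the $\esssup_\balpha$-part of \eqref{align:estimatesBSDEsupSuff}, the Lipschitz-continuity of $G'$ and $G''$ granted by \Cref{assumption:regularityForNecessityZero}.\ref{differentiableG}, and the boundedness of $b$. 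Once $R_\ell=o(\ell)$ uniformly has been established, choosing $\ell$ sufficiently small so that $|R_\ell|\leq\varepsilon\ell$ delivers exactly the inequality required by \Cref{def:NashEquilibriumZS}, and therefore $\hat\balpha\in\mathcal{N}\!\mathcal{A}_{s,0}$.
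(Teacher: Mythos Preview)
Your proposal is correct and follows essentially the same route as the paper, which simply states that the proof ``follows exactly the same steps as the proof of \Cref{sufficiency_toBSDE}'' and omits all details. Your identification of $Y_t$ with $J(t,\cdot,\hat\balpha)$ via It\^o on $G(M^\star)$, and the subsequent one-sided perturbation argument with the generalised Isaacs condition supplying the sign of the leading $f+Z\cdot b$ term, are exactly the zero-sum adaptations the paper has in mind.

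There is one minor organisational difference worth noting. In the proof of \Cref{sufficiency_toBSDE} the paper handles the remainder by introducing the auxiliary martingale $\check M^\star$ (your notation) as a genuine $(\F^1,\P^{\tilde\balpha})$-martingale, applies It\^o to $G(\check M^\star)$ directly, and observes that on $[t+\ell,T]$ the processes $\check M^\star$ and $M^\star$ coincide, so the remainder reduces to $\E^{\P^{\tilde\balpha,t}_\cdot}\big[\int_t^{t+\ell}\big(G''(\check M^\star_s)\|\check Z^{m,\star}_s\|^2-G''(M^\star_s)\|Z^{m,\star}_s\|^2\big)\,\d s\big]$, bounded in one line by $c_{G''}\E^{\P^{\tilde\balpha,t}_\cdot}[\int_t^{t+\ell}(\|\check Z^{m,\star}_s\|^2+\|Z^{m,\star}_s\|^2)\,\d s]$. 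Your route---applying It\^o to $G(M^\star)$ under $\P^{\tilde\balpha}$, which is \emph{not} a martingale there, and then Taylor-expanding $G(\check M^\star_t)$ at $M^\star_t$---works but generates two remainder pieces (the second-order Taylor term and the $G'(M^\star_s)-G'(M^\star_t)$ error) that you then have to recombine via a modulus-of-continuity estimate on $M^\star$. The paper's organisation is slightly more economical because it avoids the Taylor step altogether, but the content is the same and both reductions rely on the same integrability hypothesis to control the short-interval $\int_t^{t+\ell}\|Z^{m,\star}\|^2$.
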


We omit the proof, as it follows exactly the same steps as the proof of \Cref{sufficiency_toBSDE}.

\begin{remark}
Thus, {\rm\Cref{prop: BSDETOzero}}, together with {\rm\Cref{prop: from0subTOBSDE}}, shows that a two-dimensional {\rm BSDE} fully describes the zero-sum game under the assumption that both agents are sophisticated. If the two agents are of the pre-committed type, meaning they do not revise their initially chosen strategies even if this leads to time-inconsistency, a similar result is given by {\rm\cite[Proposition 5.2]{djehiche2020optimal}}.
\end{remark}

\section{The mean-field game}\label{meanFieldGame}

In this section, we describe the mean-field game that is formally associated with the stochastic differential game in which the reward of each player is given by \Cref{align:criteriumNplayerGame}. This construction is carried out on the same probability space introduced in \Cref{section:ProbSetting}, where the $N$-player game is also defined. To begin, let $\xi \in \mathfrak{P}$ be a Borel-measurable function $[0,T] \ni t \longmapsto \xi_t \in \cP_2(\cC_m \times A)$, as in the notation introduced in \Cref{section:rcpd}. The criterion for the representative agent is then defined, for any $(t,\omega,\alpha) \in [0,T] \times \Omega \times \cA_1$, by
\begin{align}\label{align:payoffMeanField}
J^1(t,\omega,\alpha;\xi) \coloneqq \E^{\P^{\smalltext{\alpha}\smalltext{,}\smalltext{1}\smalltext{,}\smalltext{t}}_\smalltext{\omega}} \bigg[ \int_t^T f_s\big(X^1_{\cdot \land s}, \xi_{s}, \alpha_s\big) \d s + g\big(X^1_{\cdot \land T}, \xi_{T}^x\big) \bigg] + G\Big(\E^{\P^{\smalltext{\alpha}\smalltext{,}\smalltext{1}\smalltext{,}\smalltext{t}}_\smalltext{\omega}} \big[ \varphi_1(X^1_{\cdot \land T}) \big],\varphi_2(\xi_{T}^x)\Big),
\end{align}
where $\xi_{T}^x \in \cP_2(\cC_m)$ denotes the first marginal of $\xi_{T}$. Intuitively, we can think of the mean-field game problem as consisting of two steps. First, for a given function $\xi \in \mathfrak{P}$, one solves a family of sub-games determined by the future selves of the representative agent. Then, one searches for a fixed point, namely a flow $\xi$ such that, when all the versions of the representative agents play optimally in response to it, their collective behaviour is consistent with $\xi$ itself. This idea is formalised by the notion of a sub-game--perfect mean-field equilibrium, which coincides with the standard equilibrium concept studied in the time-consistent mean-field literature.

\begin{definition}\label{def:meanFieldEquilibrium}
Let $\hat\alpha \in \cA_1$, and $\varepsilon >0$. We define
\begin{align*}
\ell_\varepsilon \coloneqq \inf \Big\{ \ell >0 : \exists (t,\alpha) \in [0,T] \times \cA_1, \; \P \big[\big\{\omega \in \Omega: J^1\big(t,\omega,\hat\alpha;\xi\big) < J^1\big(t,\omega,\alpha\otimes_{t+\ell}\hat\alpha;\xi\big) - \varepsilon \ell \big\}\big] >0 \Big\}.
\end{align*}
We say that $\hat\alpha$ is a {\rm sub-game--perfect mean-field equilibrium} for this mean-field game if it holds that $\ell_\varepsilon >0$ for any $\varepsilon >0$, and $\P^{\hat\alpha} \circ (X^1_{\cdot \land t}, \hat\alpha_t)^{-1} = \xi_t$, for $\d t\text{\rm--a.e.} \; t \in [0,T]$.
\end{definition}

\subsection{The characterising BSDE system}\label{section:BSDEsystemMeanField}

As in the case of the $N$-player game, our goal is to reduce the mean-field game to a system of BSDEs, with the ultimate aim of showing that the BSDEs derived in \Cref{BSDE_NplayerGame} converge to those characterising the mean-field game. This approach is inspired by \cite{possamai2025non}, which builds on the backward propagation of chaos techniques developed by \citeauthor*{lauriere2022backward} \cite{lauriere2022backward}. Before proceeding, we introduce the Hamiltonian $H: [0,T] \times \cC_{m} \times \cP_2(\cC_m \times A) \times \R^d \times \R \times \R \times \R^d \times \R^d \longrightarrow \R$, which characterises the control problem faced by the representative player. It is given by
\begin{align*}
H_t(x,\xi,z,m^\star,n^\star,z^{m,\star},z^{n,\star}) &\coloneqq \sup_{a \in A} \big\{h_t(x,\xi,z,a)\big\} - \partial^2_{m,n} G(m^\star,n^\star) z^{m,\star} \cdot z^{n,\star}\\
&\quad -\frac{1}{2} \big( \partial^2_{m,m} G(m^\star,n^\star) \|z^{m,\star}\|^2 + \partial^2_{n,n} G(m^\star,n^\star) \|z^{n,\star}\|^2 \big),
\end{align*}
where 
\begin{align*}
h_t(x,\xi,z,a) \coloneqq f_t(x, \xi, a) +  z \cdot b_t(x, \xi, a), \; (t,x,\xi,z,a) \in [0,T] \times \cC_m \times \cP_2(\cC_m \times A) \times \R^d \times A.
\end{align*}
Assuming the existence of a sub-game--perfect mean-field equilibrium $\hat\alpha \in \cA_1$, we define the processes 
\begin{align*}
M_t^{1,\star} \coloneqq \E^{\P^{\smalltext{\hat\alpha}\smalltext{,}\smalltext{1}\smalltext{,}\smalltext{t}}_\smalltext{\cdot}} \big[ \varphi_1(X^1_{\cdot \land T}) \big] \; \text{and} \; N_t^{\star} \coloneqq \varphi_2(\cL_{\hat\alpha}(X^1_{\cdot \land T})), \; t \in [0,T].
\end{align*}
We also define the value process associated with $\hat\alpha$ by
\begin{align*}
V^1_t \coloneqq J^1\big(t,\cdot,\hat\alpha;(\cL_{\hat\alpha}(X^1_{\cdot \land s}, \hat\alpha_s))_{s\in[0,T]}\big), \; t \in [0,T].
\end{align*}
For notational convenience, we write $\cL_{\hat\alpha}(\eta)$ to denote the law of any random variable $\eta$ under the probability measure $\P^{{\hat\alpha}}$.

\begin{remark}
The processes $(M^{1,\star},N^\star)$ defined above are the analogues of those introduced in \eqref{align:MiNi_starDef} for the $N$-player game. Since the empirical measure $L^N(\X^N_{\cdot \land T})$ converges to the law of $X^1_{\cdot \land T}$, which is deterministic, the corresponding counterpart of $N^{\star,N}$ is the constant process $N^{\star}$. Consequently, it can be represented as the solution of a {\rm BSDE} with zero generator, and with a constant $Y$-component and identically zero $Z$-component. In light of this simplification, the Hamiltonian introduced previously reduces to $H: [0,T] \times \cC_{m} \times \cP_2(\cC_m \times A) \times \R^d \times \R \times \R \times \R^d \longrightarrow \R$ defined by
\begin{align}\label{eq:hamiltonian_meanField}
H_t\big(x,\xi,z,m^\star,n^\star,z^{m,\star}\big) &\coloneqq \sup_{a \in A} \big\{h_t(x, \xi, z, a)\big\} - \frac{1}{2} \partial^2_{m,m} G(m^\star,n^\star) \|z^{m,\star}\|^2.
\end{align}
We adopt this reduced form in what follows and continue to refer to it as the Hamiltonian associated with the mean-field game, using the same notation $H$ for simplicity.
\end{remark}

\begin{assumption}\label{assumpNecessity_meanField}
\begin{enumerate}[label={$(\roman*)$}]
\item\label{boundedPhisMart_meanField} The functions $\cC_m \ni x \longmapsto \varphi_1(x)$ and $\cP_2(\cC_{m \times N}) \ni \xi \longmapsto \varphi_2(\xi)$ are bounded$;$
\item\label{Glip_meanField} the function $\R \times \R \ni (m^{\star},n^{\star}) \longmapsto G(m^{\star},n^{\star})$ is twice continuously differentiable with Lipschitz-continuous derivatives $\partial_{m}G(m^{\star},n^{\star})$, $\partial_{n}G(m^{\star},n^{\star})$, $\partial^2_{m,m}G(m^{\star},n^{\star})$, $\partial^2_{m,n}G(m^{\star},n^{\star})$, $\partial^2_{n,n}G(m^{\star},n^{\star});$
\item\label{modulusCondExpMart_meanField} there exists a constant $c>0$ and a modulus of continuity $\rho$ such that, for any $(\alpha,t,\tilde{t},t^\prime) \in \cA_1 \times [0,T] \times [t,T] \times [\tilde{t},T]$, we have
\begin{align*}
\E^{\P^{\smalltext{\alpha}\smalltext{,}\smalltext{t}}_{\smalltext{\cdot}}} \Big[ \big| \E^{\P^{\smalltext{\balpha}\smalltext{,}\smalltext{N}\smalltext{,}\smalltext{\tilde{t}}}_\smalltext{\cdot}} \big[ M^{1,\star}_{t^\prime} \big] - M^{1,\star}_{\tilde{t}} \big|^2 +  \big| \E^{\P^{\smalltext{\alpha}\smalltext{,}\smalltext{\tilde{t}}}_\smalltext{\cdot}} \big[ N^{\star}_{{t}^\prime} \big] - N^{\star}_{\tilde{t}} \big|^2 \Big] \leq c |t^\prime-\tilde{t}| \rho \big(|t^\prime-\tilde{t}|\big), \; \P\text{\rm--a.s.};
\end{align*}
\item there exists some $p \geq 1$ and an element $a_0 \in A$ such that 
\begin{align*}
\sup_{\alpha \in \cA^\smalltext{1}} \E^{\P^{\smalltext{\alpha}}} \bigg[ \big|g\big(X^1_{\cdot \land T},\cL_{\alpha}(X^1_{\cdot \land T})\big)\big|^p + \int_0^T \big|h^{1}_s\big(X^1_{\cdot \land s},\cL_{\alpha}(X^1_{\cdot \land s},\alpha_s),\mathbf{0},a_0\big)\big|^p \d s \bigg] <+\infty.
\end{align*}
\end{enumerate}
\end{assumption}

\begin{proposition}\label{necessity_toBSDE_meanFieldGame}
Let {\rm\Cref{assumpNecessity_meanField}} hold, and let $\hat\alpha$ be a sub-game--perfect mean-field equilibrium. Then, there exists a quadruple $(Y^{1}, Z^{1,1}, M^{1,\star}, \Z^{1,m,1,\star,})$ satisfying the following {\rm BSDE} system
\begin{align}\label{BSDE_eanField_fromEquilibrium}
\notag Y^1_t &= g\big(X^1_{\cdot \land T}, \cL_{\hat\alpha}(X^1_{\cdot \land T})\big) + G\big(\varphi_1(X^1_{\cdot \land T}),\varphi_2(\cL_{\hat\alpha}(X^1_{\cdot \land T}))\big)+ \int_t^T f_s\big(X^1_{\cdot \land s},\cL_{\hat\alpha}(X^1_{\cdot \land s},\hat\alpha_s),\hat\alpha_s\big) \d s \\
\notag &\quad  - \frac{1}{2} \int_t^T \partial^2_{m,m}G\big(M^{1,\star}_s,N^{\star}_s\big) \big\|Z^{1,m,1,\star}_s\big\|^2 \d s - \int_t^T Z^{1,1}_s \cdot \d \big(W_s^{\hat\alpha}\big)^1, \; t \in [0,T], \; \P \text{\rm--a.s.}, \\
\notag M^{1,\star}_t &\;= \varphi_1(X^1_{\cdot \land T}) - \int_t^T Z^{1,m,1,\star}_s \cdot \d \big(W_s^{\hat\alpha}\big)^1, \; t \in [0,T], \; \P \text{\rm--a.s.},\\
N^{\star}_t &\coloneqq \varphi_2(\cL_{\hat\alpha}(X^1_{\cdot \land T})), \; t \in [0,T],\; \P\text{\rm--a.s.},
\end{align}
where, for notational convenience, we write $W^{\hat\alpha} \coloneqq W_s^{\hat\alpha,\xi}$ with $\xi_t \coloneqq \cL_{\hat\alpha}(X^1_{\cdot \land t}, \hat\alpha_t)$ for each $t \in [0,T]$. Furthermore, there exists some $p \geq 1$ such that
\begin{align*}
&\sup_{\alpha \in \cA^\smalltext{1}} \E^{\P^{\smalltext{\alpha}}} \Bigg[ \sup_{t \in [0,T]} \big|Y^{1}_t\big|^p + \bigg(\int_0^T \big\|{Z}_t^{1,1}\big\|^2 \d t\bigg)^\frac{p}{2} \Bigg] +\esssup_{\alpha \in \cA^\smalltext{1}} \Bigg\{\sup_{t \in [0,T]}  \big| M^{1,\star}_t\big| + \sup_{\tau \in \cT_{0,T}} \E^{\P^{\smalltext{\alpha}\smalltext{,}\smalltext{1}\smalltext{,}\smalltext{\tau}}_{\smalltext{\cdot}}} \bigg[ \int_\tau^T \big\|Z^{1,m,1,\star}_t\big\|^2 \d t \bigg] \Bigg\} < +\infty.
\end{align*}
The value process satisfies $V^1_t = Y^{1}_t$, $\P\text{\rm--a.s.}$, for any $t \in [0,T]$, and the sub-game--perfect mean-field equilibrium $\hat\alpha$ satisfies
\begin{align*}
\hat\alpha_t \in \argmax_{a \in A} \big\{h_t\big(X^1_{\cdot \land t}, \cL_{\hat\alpha}(X^1_{\cdot \land t},\hat\alpha_t), Z^{1,1}_t, a\big)\big\}, \; \text{\rm for}\; \d t \otimes \d \P\text{\rm--a.e.} \; (t,\omega) \in [0,T] \times \Omega.
\end{align*}
\end{proposition}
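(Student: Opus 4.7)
The strategy is to transpose the proof of \Cref{necessity_toBSDE} to the single-agent mean-field setting, exploiting the fact that once the equilibrium flow $\xi_t=\cL_{\hat\alpha}(X^1_{\cdot\land t},\hat\alpha_t)$ is frozen by the fixed-point condition, the representative agent faces a one-person time-inconsistent problem driven only by $W^1$ under the filtration $\F^1$. A crucial simplification is that $N^\star_t=\varphi_2(\cL_{\hat\alpha}(X^1_{\cdot\land T}))$ is deterministic and constant in $t$, so $[N^\star]\equiv 0$ and $[M^{1,\star},N^\star]\equiv 0$; in the Hamiltonian \eqref{eq:hamiltonian_meanField} only the $\partial^2_{m,m}G$ term survives, and the last line of \eqref{BSDE_eanField_fromEquilibrium} simply records the definition of $N^\star$, with no BSDE dynamics to verify.

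\medskip
First, I would observe that $M^{1,\star}$ is a bounded $(\F^1,\P^{\hat\alpha})$--martingale by the tower property applied to the r.c.p.d.s $(\P^{\hat\alpha,1,t}_\omega)_{\omega\in\Omega}$ constructed in \Cref{section:rcpd}, combined with \Cref{assumpNecessity_meanField}.\ref{boundedPhisMart_meanField}. \Cref{lemma:martingaleRepresentation}, applied on $(\Omega,\cF,\F^1,\P^{\hat\alpha})$ via the Girsanov invariance recorded therein, then yields a unique $Z^{1,m,1,\star}\in\H^2_{\mathrm{loc}}(\F^1,\P^{\hat\alpha},\R^d)$ producing the $M^{1,\star}$--equation. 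A standard BMO estimate, obtained by applying Itô's formula to $(M^{1,\star})^2$ under any $\P^{\alpha,1,\tau}_\cdot$ together with Girsanov, upgrades this to the announced essential-supremum control of $Z^{1,m,1,\star}$.

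\medskip
Next, I would establish the extended dynamic programming principle for $V^1$, the mean-field analog of \Cref{thm:DPP_NplayerGame}. The argument proceeds verbatim as in \Cref{section:DPP}, with \Cref{assumpNecessity_meanField}.\ref{modulusCondExpMart_meanField} controlling the second-order linearisation error of $G$ along $(M^{1,\star},N^\star)$ via Itô's formula; since $[N^\star]\equiv 0$, only the term involving $\partial^2_{m,m}G(M^{1,\star},N^\star)\,\d[M^{1,\star}]$ appears. Setting $Y^1_t\coloneqq V^1_t$, the principle implies that
\begin{align*}
Y^1_{\cdot}+\int_0^{\cdot} f_s\bigl(X^1_{\cdot\land s},\xi_s,\hat\alpha_s\bigr)\,\d s-\frac{1}{2}\int_0^{\cdot}\partial^2_{m,m}G\bigl(M^{1,\star}_s,N^\star_s\bigr)\bigl\|Z^{1,m,1,\star}_s\bigr\|^2\,\d s
\end{align*}
is an $(\F^1,\P^{\hat\alpha})$--martingale, so a further application of \Cref{lemma:martingaleRepresentation} supplies $Z^{1,1}\in\H^2_{\mathrm{loc}}(\F^1,\P^{\hat\alpha},\R^d)$ and produces the first equation in \eqref{BSDE_eanField_fromEquilibrium}. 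The identity $V^1_t=Y^1_t$ holds $\P$--a.s. for every $t\in[0,T]$ by construction, and $L^p$-a priori estimates for BSDEs with quadratic growth as in \cite{cheridito2014bsdes} yield the required integrability for $(Y^1,Z^{1,1})$ exactly as in the $N$-player case.

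\medskip
The main obstacle, as in \Cref{necessity_toBSDE}, is the characterisation of $\hat\alpha$ as a pointwise maximiser of $h$. Fix an admissible $\alpha\in\cA_1$ and a small $\ell>0$; inserting the perturbation $\alpha\otimes_{t+\ell}\hat\alpha$ into the DPP over $[t,t+\ell]$, subtracting $Y^1_t$, dividing by $\ell$, and sending $\ell\downarrow 0$, Lebesgue differentiation converts the integral average into the pointwise Hamiltonian evaluated at $(X^1_{\cdot\land t},\xi_t,Z^{1,1}_t)$, while the $\varepsilon\ell$ slack from \Cref{def:meanFieldEquilibrium} vanishes in the limit and then $\varepsilon\downarrow 0$ is taken. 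This yields
\begin{align*}
h_t\bigl(X^1_{\cdot\land t},\xi_t,Z^{1,1}_t,\alpha_t\bigr)\le h_t\bigl(X^1_{\cdot\land t},\xi_t,Z^{1,1}_t,\hat\alpha_t\bigr),\; \d t\otimes\d\P\text{--a.e.}\; (t,\omega)\in[0,T]\times\Omega,
\end{align*}
for each fixed $\alpha\in\cA_1$. The delicate point, exactly as flagged in the remark following \Cref{def:NashEquilibrium}, is exchanging the $\d t\otimes\d\P$--null set with the quantifier `for every $a\in A$'; this is handled by a countable-dense-subset argument combined with the Borel-measurability of $h$ and the compactness of $A$, via a measurable selection argument as in the corresponding step of \Cref{necessity_toBSDE}.
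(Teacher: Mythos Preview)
Your proposal is correct and takes essentially the same approach as the paper, which simply states that the proof mirrors that of \Cref{necessity_toBSDE}; you have correctly identified the key simplification that $N^\star$ is a deterministic constant so only the $\partial^2_{m,m}G$ term survives. One minor point: for the $L^p$-estimates on $(Y^1,Z^{1,1})$, the paper's proof of \Cref{necessity_toBSDE} invokes \cite[Theorem 4.2]{briand2003p} rather than \cite{cheridito2014bsdes}, since once the auxiliary processes $(M^{1,\star},Z^{1,m,1,\star})$ are constructed with BMO control, the driver of the $Y^1$-equation is no longer genuinely quadratic in the unknowns and standard Lipschitz/monotone BSDE estimates apply.
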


The proof is omitted, as it mirrors the argument given in \Cref{necessity_toBSDE}. Similarly, the sufficiency result follows along the lines of \Cref{sufficiency_toBSDE}, so its proof is also omitted. We first present the assumptions.

\begin{assumption}\label{assumption:borelMeasurability_Lip2}
Let $(t,x,\xi,z,m^\star,n^\star,z^{m,\star}) \in [0,T] \times \cC_{m} \times \cP_2(\cC_m \times A) \times \R^d \times \R \times \R \times \R^d$. For any maximiser $\hat{a}$ of the Hamiltonian $H_t(x, \xi, z, m^\star, n^\star, z^{m,\star})$ associated with the mean-field game and described in {\rm\Cref{eq:hamiltonian_meanField}}, there exists a Borel-measurable function $\mathfrak{a}: [0,T] \times \cC_{m} \times \cP_2(\cC_m \times A) \times \R^d \times \R \times \R \times \R^d \longrightarrow A$ satisfying $\mathfrak{a}(t,x, \xi, z, m^\star, n^\star, z^{m,\star}) = \hat{a}$.
\end{assumption}

\begin{proposition}\label{sufficiency_toBSDE_meanFieldGame}
Let {\rm\Cref{assumpNecessity_meanField}}.\ref{Glip_meanField} and {\rm\Cref{assumption:borelMeasurability_Lip2}} hold. We consider $(Y^{1}, Z^{1,1}, M^{1,\star}, \Z^{1,m,1,\star,})$ solving the following system of {\rm BSDEs}
\begin{align}\label{BSDE_eanField_toEquilibrium}
\notag Y^1_t &= g\big(X^1_{\cdot \land T}, \cL_{\hat\alpha}(X^1_{\cdot \land T})\big) + G\big(\varphi_1(X^1_{\cdot \land T}),\varphi_2(\cL_{\hat\alpha}(X^1_{\cdot \land T}))\big) + \int_t^T f_s\big(X^1_{\cdot \land s},\cL_{\hat\alpha}(X^1_{\cdot \land s},\hat\alpha_s),\hat\alpha_s\big) \d s \\
\notag &\quad - \frac{1}{2} \int_t^T \partial^2_{m,m}G\big(M^{1,\star}_s,N^{\star}_s\big) \big\|Z^{1,m,1,\star}_s\big\|^2 \d s - \int_t^T Z^{1,1}_s \cdot \d \big(W_s^{\hat\alpha}\big)^1, \; t \in [0,T], \; \P \text{\rm--a.s.}, \\
\notag M^{1,\star}_t &\;= \varphi_1(X^1_{\cdot \land T}) - \int_t^T Z^{1,m,1,\star}_s \cdot \d \big(W_s^{\hat\alpha}\big)^1, \; t \in [0,T], \; \P \text{\rm--a.s.}, \\
\notag N^{\star}_t &\coloneqq \varphi_2(\cL_{\hat\alpha}(X^1_{\cdot \land T})), \; t \in [0,T], \; \P\text{\rm--a.s.},\\
\notag \hat\alpha_t &\coloneqq \mathfrak{a}\big(t,X^1_{\cdot \land t},\cL_{\hat\alpha}(X^1_{\cdot \land t},\hat\alpha_t),Z^{1,1}_t,M^{1,\star}_t,Z^{1,m,1,\star}_t\big), \;  \mathrm{d}t\otimes\P\text{\rm--a.e.},\\
\frac{\d \P^{\hat\alpha}}{\d \P} &\coloneqq \cE\bigg( \int_0^\cdot b_s\big(X^1_{\cdot \land s},\cL_{\hat\alpha}(X^1_{\cdot \land s},\hat\alpha_s),\hat\alpha_s\big) \cdot \d W^1_s \bigg)_T,
\end{align}
where the $(\F^1,\P^{\hat\alpha})$--Brownian motion $(W^{\hat\alpha})^1$ is defined by
\begin{align*}
(W^{\hat\alpha}_t)^1 \coloneqq W^1_t-\int_0^t b_s\big(X^1_{\cdot \land s},\cL_{\hat\alpha}(X^1_{\cdot \land s},\hat\alpha_s),\hat\alpha_s\big) \d s, \; t \in [0,T].
\end{align*}
We additionally assume the existence of some $p \geq 1$ such that 
\begin{align*}
&\sup_{\alpha \in \cA^\smalltext{1}} \E^{\P^{\smalltext{\alpha}}} \Bigg[ \sup_{t \in [0,T]} \big|Y^{1}_t\big|^p + \bigg(\int_0^T \big\|{Z}_t^{1,1}\big\|^2 \d t\bigg)^\frac{p}{2} \Bigg] +\esssup_{\alpha \in \cA^\smalltext{1}} \Bigg(\sup_{t \in [0,T]} \big| M^{1,\star}_t\big| + \sup_{\tau \in \cT_{0,T}} \E^{\P^{\smalltext{\alpha}\smalltext{,}\smalltext{1}\smalltext{,}\smalltext{\tau}}_{\smalltext{\cdot}}} \bigg[ \int_\tau^T \big\|Z^{1,m,1,\star}_t\big\|^2 \d t \bigg] \Bigg) <+\infty.
\end{align*}
It then holds that $\hat\alpha$ is a sub-game--perfect mean-field equilibrium, and $J^1\big(t,\cdot,\hat\alpha;(\cL_{\hat\alpha}(X^1_{\cdot \land s}, \hat\alpha_s))_{s\in[0,T]}\big) = Y^{1}_t$, $\P\text{\rm--a.s.}$, for any $t\in [0,T]$.
\end{proposition}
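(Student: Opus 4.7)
The plan is to adapt the proof of \Cref{sufficiency_toBSDE} from the $N$-player setting to the mean-field one, exploiting two genuine simplifications: the consistency relation $\P^{\hat\alpha}\circ(X^1_{\cdot\land t},\hat\alpha_t)^{-1}=\xi_t$ is already built into the fixed-point structure of \eqref{BSDE_eanField_toEquilibrium}, and the process $N^\star$ is now a deterministic constant, so only the auxiliary BSDE for $M^{1,\star}$ needs to be tracked along the computation.

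First, I would fix an arbitrary $\alpha\in\cA_1$, $t\in[0,T]$ and $\ell>0$, and introduce the deviation $\alpha^{t,\ell}\coloneqq\alpha\otimes_{t+\ell}\hat\alpha$, together with the measure $\P^{\alpha^{\smalltext{t}\smalltext{,}\smalltext{\ell}},\xi}$ from \eqref{eq:changeMeasureMeanField} with the same flow $\xi_s=\cL_{\hat\alpha}(X^1_{\cdot\land s},\hat\alpha_s)$. Under $\P^{\alpha^{\smalltext{t}\smalltext{,}\smalltext{\ell}},\xi}$, the $Z^{1,1}$-integral in the BSDE for $Y^1$ gains a Girsanov drift
$Z^{1,1}_s\cdot\bigl(b_s(X^1_{\cdot\land s},\xi_s,\alpha^{t,\ell}_s)-b_s(X^1_{\cdot\land s},\xi_s,\hat\alpha_s)\bigr).$
Taking the $\P^{\alpha^{\smalltext{t}\smalltext{,}\smalltext{\ell}},1,t}_\omega$-conditional expectation on $[t,t+\ell]$ and then using that $\alpha^{t,\ell}=\hat\alpha$ on $[t+\ell,T]$ to re-glue the tail, I would obtain
\begin{align*}
Y^1_t-J^1\bigl(t,\omega,\alpha^{t,\ell};\xi\bigr)=\E^{\P^{\smalltext{\alpha}^{\tinytext{t}\tinytext{,}\tinytext{\ell}}\smalltext{,}\smalltext{1}\smalltext{,}\smalltext{t}}_\smalltext{\omega}}\!\!\bigg[\int_t^{t+\ell}\!\!\bigl(h_s(X^1_{\cdot\land s},\xi_s,Z^{1,1}_s,\hat\alpha_s)-h_s(X^1_{\cdot\land s},\xi_s,Z^{1,1}_s,\alpha_s)\bigr)\diff s+\mathcal{R}_\ell\bigg],\; \P\text{--a.s.},
\end{align*}
where $\mathcal{R}_\ell$ collects the correction produced by the quadratic term $\tfrac{1}{2}\partial^2_{m,m}G(M^{1,\star}_s,N^\star_s)\|Z^{1,m,1,\star}_s\|^2$. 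By \Cref{assumption:borelMeasurability_Lip2} and the very definition of $\hat\alpha$ as the pointwise maximiser of $h$, the first integrand is non-negative, so the entire argument reduces to controlling $|\mathcal{R}_\ell|$.

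The main technical obstacle is precisely this correction, since $(M^{1,\star},Z^{1,m,1,\star})$ was constructed as the $(\F^1,\P^{\hat\alpha})$--martingale representation of $\varphi_1(X^1_{\cdot\land T})$, and one now has to re-interpret it under $\P^{\alpha^{\smalltext{t}\smalltext{,}\smalltext{\ell}},\xi}$ where it is no longer a martingale. Following the template of \Cref{sufficiency_toBSDE}, the plan is to insert and subtract the quantity $\E^{\P^{\smalltext{\alpha}^{\tinytext{t}\tinytext{,}\tinytext{\ell}}\smalltext{,}\smalltext{1}\smalltext{,}\smalltext{s}}_\smalltext{\cdot}}[\varphi_1(X^1_{\cdot\land T})]$ next to $M^{1,\star}_s$ in the second derivative $\partial^2_{m,m}G$, then invoke the Lipschitz regularity furnished by \Cref{assumpNecessity_meanField}.\ref{Glip_meanField} together with the modulus-of-continuity estimate of \Cref{assumpNecessity_meanField}.\ref{modulusCondExpMart_meanField} to bound the mismatch. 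Combined with the boundedness of $\varphi_1$ and $\varphi_2$ from \Cref{assumpNecessity_meanField}.\ref{boundedPhisMart_meanField}, H\"older's inequality, and the integrability of $Z^{1,m,1,\star}$ provided by the hypothesis, this yields $|\mathcal{R}_\ell|\leq c\,\ell\,\rho(\ell)$, $\P$--a.s., for some constant $c>0$ and a modulus of continuity $\rho$.

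Putting the two bounds together, $Y^1_t-J^1(t,\omega,\alpha\otimes_{t+\ell}\hat\alpha;\xi)\geq -c\,\ell\,\rho(\ell)$, $\P$--a.s., so for every $\varepsilon>0$ there exists $\ell_\varepsilon>0$ such that $c\rho(\ell)\leq\varepsilon$ for all $\ell\in(0,\ell_\varepsilon)$, which is exactly the condition of \Cref{def:meanFieldEquilibrium}; the consistency requirement $\P^{\hat\alpha}\circ(X^1_{\cdot\land t},\hat\alpha_t)^{-1}=\xi_t$ holds by construction of the fixed-point system. Finally, setting $\alpha=\hat\alpha$ in the same It\^o computation on the full interval $[t,T]$ makes the Hamiltonian gap and the entire correction vanish identically, so $Y^1_t=J^1(t,\cdot,\hat\alpha;\xi)$ $\P$--a.s., which identifies $Y^1$ with the value process and completes the proof. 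As anticipated, the genuinely delicate ingredient is the estimate on $\mathcal{R}_\ell$, where the intrinsic non-linearity in the expectation through $G$ meets the change of measure.
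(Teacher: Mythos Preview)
Your overall architecture is exactly the one the paper has in mind: the proof is omitted there precisely because it reproduces \Cref{sufficiency_toBSDE} line by line, and your decomposition into the non-negative Hamiltonian gap plus a correction $\mathcal{R}_\ell$, together with the identification $Y^1_t=J^1(t,\cdot,\hat\alpha;\xi)$ by taking $\alpha=\hat\alpha$, is correct.

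The gap is in how you propose to control $\mathcal{R}_\ell$. You invoke \Cref{assumpNecessity_meanField}.\ref{modulusCondExpMart_meanField} (and also \ref{boundedPhisMart_meanField}), but the proposition only assumes \Cref{assumpNecessity_meanField}.\ref{Glip_meanField} and \Cref{assumption:borelMeasurability_Lip2}; the modulus-of-continuity estimate is simply not available here. The route taken in \Cref{sufficiency_toBSDE}, which is what the paper intends you to transplant, avoids this entirely: one introduces the auxiliary $(\F^1,\P^{\alpha^{t,\ell},\xi})$--martingale $M^{1,\ell}_s\coloneqq\E^{\P^{\alpha^{t,\ell},1,s}_\cdot}[\varphi_1(X^1_{\cdot\land T})]$ with representation process $Z^{1,m,\ell}$, applies It\^o's formula to $G(M^{1,\ell}_s,N^\star)$ under the deviation measure, and uses that $M^{1,\ell}$ and $M^{1,\star}$ (and their $Z$-processes) coincide on $[t+\ell,T]$ to localise
\[
\mathcal{R}_\ell=\tfrac12\,\E^{\P^{\alpha^{t,\ell},1,t}_\cdot}\!\bigg[\int_t^{t+\ell}\!\!\Big(\partial^2_{m,m}G\big(M^{1,\ell}_s,N^\star\big)\|Z^{1,m,\ell}_s\|^2-\partial^2_{m,m}G\big(M^{1,\star}_s,N^\star\big)\|Z^{1,m,1,\star}_s\|^2\Big)\diff s\bigg].
\]
The boundedness of $\partial^2_{m,m}G$ on the range of $(M^{1,\star},N^\star)$ and $(M^{1,\ell},N^\star)$ follows from \ref{Glip_meanField} together with the hypothesis $\esssup\sup_t|M^{1,\star}_t|<\infty$, and the smallness in $\ell$ then comes from the integrability hypothesis on $Z^{1,m,1,\star}$ (and the analogous bound for $Z^{1,m,\ell}$), not from any modulus of continuity. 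Replacing your insert-and-subtract argument by this It\^o step closes the gap.
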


\section{The convergence result}\label{section:convRes_mainResult}

In this section, we discuss the convergence to the mean-field game limit. Our analysis leverages the {\rm BSDE} characterisation of both the $N$-player game and its mean-field counterpart, building on the methodology developed in the time-consistent framework of \cite[Theorem 2.10]{possamai2025non}, where the authors prove the convergence of the $Y$-component of the {\rm BSDE} system associated with the $N$-player game to the corresponding component in the mean-field game at the initial time. In contrast, the problems we consider are inherently time-inconsistent. Consequently, the convergence analysis must be carried out over the entire time interval $[0,T]$, rather than being limited to the initial time. This is because, by the definitions of equilibrium in \Cref{def:NashEquilibrium} and \Cref{def:meanFieldEquilibrium}, one must account for the value of the problem from the perspective of each incarnation of each player's evolving preferences over time.

\begin{assumption}\label{assumpConvThm}
The following conditions are verufied
\begin{enumerate}[label={$(\roman*)$},leftmargin=1em]
\item\label{previousAssum}  {\rm\Cref{assumpDPP_NplayerGame}} and {\rm\Cref{assumption:regularityForNecessity}} hold$;$
\item\label{LambdaFunc} let $(t, \mathbf{x}, \mathbf{z}, \mathsf{m}^\star, \mathsf{n}^\star, \mathsf{z}^{m,\star}, \mathsf{z}^{n,\star}) \in [0,T] \times \cC_{m \times N} \times (\R^{d \times N})^N \times \R^N \times \R^N \times (\R^{d \times N})^N \times (\R^{d \times N})^N$. For every fixed point $\mathbf{a}^N \coloneqq (a^{1,N},\ldots,a^{N,N}) \in \cO_{N}(t, \mathbf{x}, \mathbf{z}, \mathsf{m}^\star, \mathsf{n}^\star, \mathsf{z}^{m,\star}, \mathsf{z}^{n,\star})$, there exits a Borel-measurable map $\Lambda: [0,T] \times \cC_m \times \cP_2(\cC_{m}) \times \R^d \times \R^d \times \R^d \times \R \longrightarrow A$ and a collection of functions $(\aleph^{i,N})_{i \in \{1,\ldots,N\}}$, where each $\aleph^{i,N}: [0,T] \times \cC_{m \times N} \times (\R^{d \times N})^N \times (\R^{d \times N})^N \times (\R^{d \times N})^N$, such that, for any $i\in\{1,\ldots,N\}$
\begin{align*}
a^{i,N}(t, \mathbf{x}, \mathbf{z}, \mathsf{m}^\star, \mathsf{n}^\star, \mathsf{z}^{m,\star}, \mathsf{z}^{n,\star}) = \Lambda_t\big(x^i,L^N(\mathbf{x}),z^{i,i},z^{i,m,i,\star},z^{i,n,i,\star},\aleph^{i,N}_t(\mathbf{x},\mathbf{z},\mathsf{z}^{m,\star},\mathsf{z}^{n,\star})\big).
\end{align*}
Moreover, for any $(t,x,\xi,m^\star,n^\star,z^{m,\star}) \in [0,T] \times \cC_{m} \times \cP_2(\cC_m \times A) \times \R^d \times \R \times \R \times \R^d$, we have
\begin{align*}
\Lambda_t\big(x,\xi^x,z,z^{m,\star},\mathbf{0},0\big) \in \argmax_{a \in A} \big\{h_t(x,\xi,z,a)\big\},
\end{align*}
where $\xi^x \in \cP_2(\cC_m)$ denotes the first marginal of $\xi;$
\item\label{lipLambda_growthAleph} the function $\Lambda: [0,T] \times \cC_m \times \cP_2(\cC_{m}) \times \R^d \times \R^d \times \R^d \times \R \longrightarrow A$, introduced in \ref{LambdaFunc}, is assumed to be Lipschitz-continuous with respect to all its arguments, with Lipschitz-constant $\ell_\Lambda >0$, and each function $\aleph^{i,N}: [0,T] \times \cC_{m \times N} \times (\R^{d \times N})^N \times (\R^{d \times N})^N \times (\R^{d \times N})^N$ satisfies that there exists a sequence $(R_N)_{N \in \N^\smalltext{\star}}$ with values in $\R_\smallertext{+}$ such that
\begin{align*}
\big|\aleph^{i,N}_t(\mathbf{x},\mathbf{z},\mathsf{z}^{m,\star},\mathsf{z}^{n,\star})\big| \leq R_N \Bigg( 1 + \|x^i\|_\infty + \sum_{\ell =1}^N \|z^{i,\ell}\| + \sum_{\ell =1}^N \|z^{i,m,\ell,\star}\| + \sum_{\ell=1}^N \|z^{n,\ell,\star}\| \Bigg),
\end{align*}
for any $(t,\mathbf{x},\mathbf{z},\mathsf{z}^{m,\star},\mathsf{z}^{n,\star}) \in [0,T] \times \cC_{m \times N} \times (\R^{d \times N})^N \times (\R^{d \times N})^N \times (\R^{d \times N})^N$. Moreover, $\lim_{N \rightarrow \infty} N R^2_N = 0$ and $\lim_{N \rightarrow \infty} N^2 R^2_N = h$, for some constant $h \in \R_\smallertext{+}^\star;$
\item\label{uniqueMeanField} the mean-field game admits a unique sub-game--perfect mean-field equilibrium $\hat\alpha \in\cA_1;$
\item\label{auxiliarySystem} for any $N \in \N^\star$, any filtered probability space $(\Omega^\prime,\cF^\prime,\mathbb{F}^\prime=(\cF^\prime_t)_{t \in [0,T]},\P^\prime)$, any family of $\R^m$-valued, $\cF^\prime_0$-measurable random variables $(X^i_0)_{i \in \N^{\smalltext{\star}}}$, and any family of $\P^\prime$-independent, $\R^d$-valued $(\mathbb{F}^\prime,\P^\prime)$--Brownian motions $(B^i)_{i \in \{1,\ldots,N\}}$ that are $\P^\prime$-independent of $(X^i_0)_{i \in \N^{\smalltext{\star}}}$, the {\rm FBSDE} admits exactly one solution $(\X^N,\Y^N,\Z^N,\M^{\star,N},\N^{\star,N},\Z^{m,\star,N},\Z^{n,\star,N})$
\begin{align*}
X^{i}_t &= X^i_0 + \int_0^t \sigma_s(X^{i}_{\cdot \land s}) b_s\big(X^{i}_{\cdot \land s},L^N\big(\X^N_{\cdot \land s},\balpha^N_s\big),\alpha^{i,N}_s\big) \d s + \int_0^t \sigma_s(X^i_{\cdot \land s}) \d B_s^i, \; t \in [0,T], \; \P^\prime \text{\rm--a.s.}, \\
\notag Y^i_t &= g\big(X^{i}_{\cdot \land T}, L^N\big(\X^N_{\cdot \land T}\big)\big) + G\big(\varphi_1(X^{i}_{\cdot \land T}),\varphi_2(L^N\big(\X^N_{\cdot \land T}\big)\big) + \int_t^T f_s\big(X^{i}_{\cdot \land s},L^N\big(\X^N_{\cdot \land s},\balpha^N_s\big),\alpha^{i,N}_s\big) \d s\\
\notag &\quad - \int_t^T \Bigg(\partial^2_{m,n}G\big(M^{i,\star,N}_s,N^{\star,N}_s\big) \sum_{\ell =1}^N Z^{i,m,\ell,\star,N}_s \cdot Z^{n,\ell,\star,N}_s +\partial^2_{m,m}G\big(M^{i,\star,N}_s,N^{\star,N}_s\big) \sum_{\ell =1}^N \big\|Z^{i,m,\ell,\star,N}_s\big\|^2 \Bigg)\d s\\
\notag&\quad - \frac{1}{2} \int_t^T \partial^2_{n,n}G\big(M^{i,\star,N}_s,N^{\star,N}_s\big) \sum_{\ell =1}^N \big\|Z^{n,\ell,\star,N}_s\big\|^2 \d s \\
\notag &\quad - \int_t^T \sum_{\ell =1}^N Z^{i,\ell,N}_s \cdot \d B_s^\ell, \; t \in [0,T], \; \P^\prime\text{\rm--a.s.}, \\
\notag M^{i,\star,N}_t &= \varphi_1\big(X^{i}_{\cdot \land T}\big) - \int_t^T \sum_{\ell=1}^N Z^{i,m,\ell,\star,N}_s \cdot \d B_s^\ell, \; t \in [0,T], \; \P^\prime\text{\rm--a.s.}, \\
\notag N^{\star,N}_t &= \varphi_2\big(L^N\big(\X^N_{\cdot \land T}\big)\big) - \int_t^T \sum_{\ell =1}^N Z^{n,\ell,\star,N}_s \cdot \d B_s^\ell, \; t \in [0,T], \; \P^\prime \text{\rm--a.s.}, \\
\alpha^{i,N}_t &= \Lambda_t\big(X^{i}_{\cdot \land t}, L^N\big(\X^N_{\cdot \land t}\big), Z^{i,i,N}_t, Z^{i,m,i,\star,N}_t,Z^{i,n,i,\star,N}_t,\mathbf{0}\big),\; \mathrm{d}t\otimes\P^\prime\text{\rm--a.e.},
\end{align*}
such that, for some $ p \geq 1$, 
\begin{align*}
&\E^{\P^\prime} \Bigg[ \sup_{t \in [0,T]} \Big\{ \big|Y^{i,N}_t\big|^p + \big| M^{i,\star,N}_t\big|^p + \big| N^{i,\star,N}_t \big|^p \Big\} +\Bigg(\int_0^T \sum_{\ell =1}^N \Big( \big\|{Z}_t^{i,\ell,N}\big\|^2 + \big\|Z^{i,m,\ell,\star,N}_t\big\|^2 + \big\|Z^{i,n,\ell,\star,N}_t\big\|^2 \d t\Bigg)^\frac{p}{2} \Bigg] <+\infty;
\end{align*}
\item\label{lip_gGf} the functions $(g+G)(\varphi_1,\varphi_2): \cC_m \times \cP_2(\cC_m) \longrightarrow \R$, defined by the composition $(x,\xi) \longmapsto (g+G)(\varphi_1,\varphi_2)(x,\xi)) \coloneqq g(x,\xi) + G(\varphi_1
(x), \varphi_2(\xi))$, $f: [0,T] \times \cC_m \times \cP_2(\cC_{m} \times A) \times A \longrightarrow \R$, $\varphi_1: \cC_m \longrightarrow \R$, and $\varphi_2: \cP_2(\cC_{m}) \longrightarrow \R$ are assumed to be Lipschitz-continuous, with Lipschitz-constants $\ell_{g+G,\varphi_\smalltext{1},\varphi_\smalltext{2}} >0$, $\ell_f >0$, $\ell_{\varphi_\smalltext{1}} >0$ and $\ell_{\varphi_\smalltext{2}} >0$, respectively$;$
\item\label{boundedSecondDerivativeG} the functions $\partial^2_{m,m}G$, $\partial^2_{m,n}G$, $\partial^2_{n,n}G: \R \times \R \longrightarrow \R$ are assumed to be Lipschitz-continuous, as already stated in \ref{previousAssum}, specifically in {\rm\Cref{assumpDPP_NplayerGame}}.\ref{Glip_NplayerGame}, with a common Lipschitz-constant $\ell_{\partial^\smalltext{2}G}>0;$
\item\label{boundedPhi_12} the functions $\varphi_1: \cC_m \longrightarrow \R$, and $\varphi_2: \cP_2(\cC_m) \longrightarrow \R$ are assumed to be bounded by constants $c_{\varphi_\smalltext{1}}$ and $c_{\varphi_\smalltext{2}}$, respectively, as specified in \ref{previousAssum}, particularly in {\rm\Cref{assumpDPP_NplayerGame}}.\ref{boundedPhisMart_NplayerGame}$;$
\item\label{lipSigma} the function $\sigma:[0,T] \times \cC_m \longrightarrow \R^{m \times d}$ satisfies the following growth condition
\begin{align*}
\|\sigma_t(x)\| \leq \ell_{\sigma} \big( 1 + \|x\|_\infty \big), \; (t,x) \in [0,T] \times \cC_m,
\end{align*}
and the following Lipschitz-continuity condition, with Lipschitz-constant $\ell_\sigma >0$
\begin{align*}
\sqrt{{\rm{Tr}}\big[(\sigma_t(x)-\sigma_t(\tilde{x}))(\sigma_t(x)-\sigma_t(\tilde{x}))^\top\big]} \leq \ell_\sigma \|x - \tilde{x}\|_\infty, \; (x,\tilde{x}) \in \cC_m \times \cC_m;
\end{align*}
\item\label{diss} for any $(t,x,\xi,a) \in [0,T] \times \cC_m \times \cP_2(\cC_m) \times A$, the function $\cP_2(\cC_m) \times A \ni (\xi,a) \longmapsto \sigma_t(x) b_t(x,\xi,a)$ is Lipschitz-continuous with Lipschitz-constant $\ell_{\sigma b}>0$, and the function $\cC_m \ni x \longmapsto \sigma_t(x) b_t(x,\xi,a)$ is dissipative, meaning that there exists a constant $K_{\sigma b} >0$ such that
\begin{align*}
(x_t - \tilde{x}_t) \cdot (\sigma_t(x) b_t(x,\xi,a) - \sigma_t(\tilde{x}) b_t(\tilde{x},\xi,a)) \leq - K_{\sigma b} \|x - \tilde{x}\|^2_\infty, \; (x,\tilde{x}) \in \cC_m \times \cC_m;
\end{align*}
\item\label{growth_f_gG} the initial conditions $(X^i_0)_{i \in \N^{\smalltext{\star}}}$ introduced in {\rm\Cref{section:ProbSetting}} are $\P$--{\rm i.i.d.}, and for some $\bar{p} \geq 1$, they verify
\begin{align*}
\E^\P\big[\|X^i_0\|^{2\bar{p}}\big] <+\infty, \; i \in \N^\star.
\end{align*}

Moreover, the function $f$ is such that there exists a constant $\ell_f>0$ and some $a_0 \in A$ with, for any $(t,x,\xi,a) \in [0,T] \times \cC_m \times \cP_2(\cC_m \times A) \times A$
\begin{align*}
|f_t(x,\xi,a)| \leq \ell_f \bigg( 1 + d_A^{\bar{p}}(a,a_0) + \|x\|^{\bar{p}}_\infty + \int_{\cC_\smalltext{m} \times A} \big(\|\tilde{x}\|_\infty^{\bar{p}} + d^{\bar{p}}_A(\tilde{a},a_0)\big) \xi(\d \tilde{x},\d \tilde{a}) \bigg).
\end{align*}
The function $g$ satisfies that there exists a constant $\ell_g>0$ such that
\begin{align*}
|g(x,\xi)| \leq \ell_g \bigg( 1 + \|x\|^{\bar{p}}_\infty + \int_{\cC_\smalltext{m}} \|\tilde{x}\|_\infty^{\bar{p}} \xi(\d \tilde{x}) \bigg), \; (x,\xi) \in \cC_m \times \cP_2(\cC_m).
\end{align*}
\end{enumerate}
\end{assumption}

\begin{remark}
\begin{enumerate}[label={$(\roman*)$}]\label{ass:comments}
\item\label{nocontrolzerofunctions} If there is no interaction through the strategies---namely, if the function $b$ in \eqref{eq:changeMeasureNplayerGame}, which defines the change of measure, and the function $f$ in the criterion definition \eqref{align:criteriumNplayerGame} do not depend on the strategies of the other players, and equivalently, if $b$ in \eqref{eq:changeMeasureMeanField} and $f$ in \eqref{align:payoffMeanField} depend only on the first marginal of $\xi$---then, the functions $(\aleph^{i,N})_{i \in\{1,\ldots,N\}}$ introduced in \emph{\Cref{assumpConvThm}.\ref{LambdaFunc}} vanish, and standard measurable selection arguments allow the construction of a Borel-measurable function $\Lambda$. In this case, each $R_N$ introduced in \Cref{assumpConvThm}.\ref{lipLambda_growthAleph} is equal to zero. On the other hand, in the presence of interaction through strategies, we refer to \emph{\cite[Section 2.4.1.1]{possamai2025non}} for a detailed discussion$;$

\item \emph{\Cref{assumpConvThm}.\ref{uniqueMeanField}} requires the uniqueness of the sub-game--perfect mean-field equilibrium, since this is equivalent to the uniqueness of the solution to the mean-field \emph{BSDE} system, as established in \emph{\Cref{necessity_toBSDE_meanFieldGame,sufficiency_toBSDE_meanFieldGame}}. In the proof, we rely on the uniqueness of the solution to the mean-field system; without it, we could only show that the $N$-player {\rm BSDE} system converges to some solution of the mean-field {\rm BSDE} system, which would not necessarily coincide with the value process of the mean-field game. It is in any case an expected assumption when one wants to prove full convergence of equilibria$;$

\item comparing \emph{\Cref{assumpConvThm}.\ref{auxiliarySystem}} with \emph{\cite[Assumption 2.9.(vi)]{possamai2025non}}, we note that our assumption is stronger. This is required for the Yamada--Watanabe result in \eqref{YW_lawEq}, since its proof relies on lifting the solutions of \eqref{align:intermediateSystemNplayerGame} and \eqref{align:intermediateSystemNplayerGame_rcpdMeanField} to a common probability space and applying pathwise uniqueness to construct a strong solution, which in turn ensures the law equality stated in \eqref{YW_lawEq}$;$

\item the $\P$--{\rm i.i.d.} assumption on the initial conditions $(X^i_0)_{i \in \N^{\smalltext{\star}}}$ in \emph{\Cref{assumpConvThm}.\ref{growth_f_gG}} ensures that the processes themselves $(X^i)_{i \in \N^{\smalltext{\star}}}$ are $\P$--{\rm i.i.d.}, which is necessary both to apply the strong law of large numbers and to construct independent copies of the mean-field game used in the estimates of \emph{\Cref{convTOmeanFieldGames}}.
\end{enumerate}
\end{remark}

\begin{theorem}\label{theorem:convergenceTheorem}
Let {\rm\Cref{assumpConvThm}} hold. In addition, assume that $K_{\sigma b} \geq \delta$, where $\delta >0$ is a constant depending on $\ell_b$, $\ell_\sigma$, $\ell_{\sigma b}$, $\ell_f$, $\ell_{\varphi_\smalltext{1}}$, $\ell_{\varphi_\smalltext{2}}$, $\ell_{g+G,\varphi_\smalltext{1},\varphi_\smalltext{2}}$, $\ell_{\partial^\smalltext{2}G}$, $\ell_\Lambda$ and $T$. Let $(\hat{\alpha}^{1,N})_{N \in \mathbb{N}^\star}$ be a sequence of sub-game--perfect Nash equilibria for the multi-player game, and let $(V^{1,N})_{N \in \mathbb{N}^\star}$ denote the associated value processes. Then, $(V^{1,N})_{N \in \mathbb{N}^\star}$ converges to the value process $V^1$ of the mean-field game. More precisely, there exist a constant $C>0$ and a function $\eta: \Omega \times [0,T] \times \N^\star \longrightarrow \R^\star_{\smalltext{+}}$ such that, for any $N \in \mathbb{N}^\star$, for any $u \in [0,T]$,
\begin{align}\label{align:convValueProcesses}
\big|V^{1,N}_u(\omega)-V^1_u(\omega)\big|^2 \leq C \big( \eta(\omega,u,N) + \gamma(\omega,u,N) \big), \; \P\text{\rm--a.e.} \; \omega\in\Omega,
\end{align}
where 
\begin{gather*}
\eta(\omega,u,N) \coloneqq \eta\big(R_N,(\|X^{i}_u(\omega)\|)_{i \in \{1,\ldots,N\}}\big), \; (\omega,u,N) \in \Omega \times [0,T] \times \N^\star, \; \text{with} \; \lim_{N \rightarrow \infty} \eta(\omega,u,N) = 0, \; \P\text{\rm--a.e.} \; \omega\in\Omega, \\
\gamma(\omega,u,N) \coloneqq \sup_{ t \in [u,T]} \E^{\P^{\smalltext{\hat\alpha}\smalltext{,}\smalltext{N}\smalltext{,}\smalltext{u}}_\smalltext{\omega}} \Big[ \cW_2^2\big(L^N\big(\X^N_{\cdot \land t}\big),\cL_{\hat\alpha}(X_{\cdot \land t})\big) + \cW_2^2\big(L^N(\hat\alpha_t),\cL_{\hat\alpha}(\hat\alpha_t)\big) \Big], \; (\omega,u,N) \in \Omega \times [0,T] \times \N^\star.
\end{gather*}
Here, $L^N(\hat\alpha)$ denotes the empirical measure of the $A^N$-valued process $(\hat\alpha^1,\ldots,\hat\alpha^N)$, where each $\hat\alpha^i$ is the unique sub-game--perfect mean-field equilibrium for the mean-field game driven by the state process $X^i$, $i \in \{1,\ldots,N\}$. Moreover, the sequence of sub-game--perfect Nash equilibria $(\hat{\alpha}^{1,N})_{N \in \mathbb{N}^\star}$ converges to the sub-game--perfect mean-field equilibrium $\hat{\alpha}$ in the sense that there exists a constant $C>0$ such that, for any $N \in \mathbb{N}^\star$, for any $u \in [0,T]$,
\begin{align}\label{align:convEquilibria}
\int_u^T \cW^2_2\big(\P^{{\hat\balpha}^\smalltext{N}{,}{N}{,}{u}}_{\omega} \circ ( \hat\alpha^{1,N}_t )^{-1}, \P^{{\hat\alpha}{,}{N}{,}{u}}_{\omega} \circ ( \hat\alpha_t )^{-1} \big) \d t \leq C \big( \eta(\omega,u,N) + \gamma(\omega,u,N) \big), \; \P\text{\rm--a.e.} \; \omega\in\Omega.
\end{align}
\end{theorem}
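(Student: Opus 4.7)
The plan is to leverage the BSDE characterisations established in \Cref{necessity_toBSDE} and \Cref{necessity_toBSDE_meanFieldGame}, and to quantify the distance between the $N$-player BSDE system \eqref{BSDE_NplayerGame} and $N$ independent copies of the mean-field system \eqref{BSDE_eanField_fromEquilibrium} via a backward propagation of chaos argument in the spirit of \cite{lauriere2022backward,possamai2025non}. Because the problem is time-inconsistent, the extended dynamic programming principle (\Cref{thm:DPP_NplayerGame}) forces us to carry out the comparison pointwise in $u \in [0,T]$, under the r.c.p.d.\ $\P^{\hat\balpha^\smalltext{N},N,u}_\omega$ (and the analogous one for the mean-field system), rather than merely at $u = 0$ as is customary in the time-consistent case.

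I would start by introducing, as in \Cref{assumpConvThm}.\ref{auxiliarySystem}, the auxiliary FBSDE system on the same stochastic basis in which each of the $N$ players plays the mean-field feedback $\Lambda_t(X^i_{\cdot \land t}, L^N(\X^N_{\cdot \land t}), Z^{i,i,N}_t, Z^{i,m,i,\star,N}_t, Z^{i,n,i,\star,N}_t, \mathbf{0})$, while interactions survive only through the empirical measure. The bridge to the mean-field limit is obtained via a Yamada--Watanabe argument: lifting the solution of this auxiliary system and $N$ independent copies of the mean-field FBSDE on an enlarged space and exploiting the pathwise uniqueness imposed in \ref{auxiliarySystem}, one obtains that the law of the auxiliary solution equals the law of $N$ independent copies of the mean-field McKean--Vlasov FBSDE. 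This gives a law equality
\begin{equation*}
\P \circ \big(\X^N, \Y^N_{\mathrm{aux}}, \M^{\star,N}_{\mathrm{aux}}, N^{\star,N}_{\mathrm{aux}}\big)^{-1} = \P \circ \big((X^i)_{i \leq N}, (Y^i)_{i\leq N}, (M^{i,\star})_{i \leq N}, N^\star\big)^{-1},
\end{equation*}
which reduces the analysis to controlling the distance between the true $N$-player solution and the auxiliary one, and pays only a Wasserstein law-of-large-numbers error $\eta(\omega,u,N)$ (originating from the discrepancy between empirical measures and the true mean-field law).

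Next, I would fix $u \in [0,T]$, condition at $u$ through the r.c.p.d., and apply Itô's formula to $|Y^{i,N}_t - Y^{i,\mathrm{aux},N}_t|^2$, $|M^{i,\star,N}_t - M^{i,\star,\mathrm{aux},N}_t|^2$, $|N^{\star,N}_t - N^{\star,\mathrm{aux},N}_t|^2$ under $\P^{\hat\balpha^\smalltext{N},N,u}_\omega$. The quadratic-in-$Z^{m,\star},Z^{n,\star}$ terms coming from $\partial^2_{m,m} G, \partial^2_{m,n} G, \partial^2_{n,n} G$ are linearised using the Lipschitz bound \ref{boundedSecondDerivativeG} together with the essential supremum bound on $(M^{\star},N^{\star})$ and the conditional BMO-type estimate on $Z^{m,\star}, Z^{n,\star}$ furnished by \Cref{necessity_toBSDE}. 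The forward mismatch is handled by the dissipativity assumption \ref{diss}: choosing $K_{\sigma b} \geq \delta$ sufficiently large dominates, via repeated Young inequalities, the Lipschitz constants $\ell_\Lambda$, $\ell_f$, $\ell_{g+G,\varphi_\smalltext{1},\varphi_\smalltext{2}}$, $\ell_{\varphi_\smalltext{1}}$, $\ell_{\varphi_\smalltext{2}}$, $\ell_{\partial^\smalltext{2} G}$ and the extra growth terms carried by the functions $\aleph^{i,N}$, which are themselves $O(R_N)$ thanks to \ref{lipLambda_growthAleph}. The identification $V^{1,N}_u = Y^{1,N}_u$, combined with the Lipschitz estimate on the feedback $\Lambda$, yields simultaneously \eqref{align:convValueProcesses} and \eqref{align:convEquilibria}.

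The hard part will be threefold. First, handling the quadratic growth in the drivers coming from the time-inconsistent structure: the a priori bound on $M^{\star}$ and $N^{\star}$ and the conditional integrability of $Z^{m,\star}, Z^{n,\star}$ must be used in their conditioned form $\sup_\tau \E^{\P^{\balpha,N,\tau}_{\smalltext{\cdot}}}[\int_\tau^T \|Z^{m,\star}_t\|^2 + \|Z^{n,\star}_t\|^2 \d t]$, so that the initialisation at an arbitrary $u$ propagates through the Itô expansion — something not needed in the time-consistent setting of \cite{possamai2025non}. Second, making the threshold constant $\delta$ in the dissipativity condition large enough to close the Grönwall-type inequality simultaneously for the three backward components and the forward component requires a careful accounting of how the quadratic terms re-enter after each Young splitting; this is the place where the assumption $\lim_N N R_N^2 = 0$ is required, so that the $\aleph^{i,N}$ contribution vanishes at the right rate. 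Third, the Yamada--Watanabe step hinges on the strong (not just weak) uniqueness postulated in \ref{auxiliarySystem} on an arbitrary filtered basis, as only strong uniqueness yields the law equality used to replace empirical measures by the true mean-field distribution within the conditioned expectation defining $\gamma(\omega,u,N)$.
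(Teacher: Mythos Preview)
Your overall strategy is right in spirit, but the Yamada--Watanabe step is misapplied and this is a genuine gap. The auxiliary system you describe still has the players coupled through the empirical measure $L^N(\widetilde\X^N)$ in both the drift and the feedback $\Lambda$, so it does \emph{not} solve the same FBSDE as $N$ independent copies of the mean-field McKean--Vlasov equation (where each copy sees its own true law $\cL_{\hat\alpha}(X)$). Yamada--Watanabe yields law equality only between two solutions of the \emph{same} FBSDE on different bases; it cannot collapse the empirical-to-law discrepancy, which is precisely the content of propagation of chaos and must be earned by a separate estimate.

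The paper fixes this by introducing \emph{two} auxiliary systems, both solving the abstract FBSDE of \Cref{assumpConvThm}.\ref{auxiliarySystem}: the first is driven by the Brownian motion $W^{\hat\balpha^\smalltext{N},N,u,\omega}$ arising from the $N$-player measure $\P^{\hat\balpha^\smalltext{N},N}$, the second by $W^{\hat\alpha,N,u,\omega}$ arising from the product mean-field measure $\P^{\hat\alpha,N}$. Yamada--Watanabe is applied between \emph{these two} (same equation, different driving noise), giving $\widetilde Y^{i,N}_u = \overline Y^{i,N}_u$ $\P$--a.s.\ at each fixed $u$; this is what makes the change of reference measure from $\P^{\hat\balpha^\smalltext{N},N}$ to $\P^{\hat\alpha,N}$ rigorous. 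The comparison $N$-player $\to$ first auxiliary is then pure It\^o estimates and produces the $\eta$ term, which depends on $R_N$ and captures the $\aleph^{i,N}$ perturbation---\emph{not} the Wasserstein law-of-large-numbers error as you write. The comparison second auxiliary $\to$ mean-field is another round of It\^o estimates and produces the $\gamma$ term, i.e.\ the Wasserstein distance between the empirical measure of the i.i.d.\ mean-field copies and their common law. Your identification of $\eta$ and $\gamma$ is therefore swapped, and the missing second auxiliary system is exactly the device that lets the two halves of the argument live under compatible Brownian motions.
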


\begin{remark}
\begin{enumerate}[label={$(\roman*)$}]
\item To the best of our knowledge, {\rm \Cref{theorem:convergenceTheorem}} is the first convergence result for time-inconsistent games in the literature. The only other work addressing a convergence problem is {\rm\cite{bayraktar2025time}}, which studies a time-inconsistent mean-field Markov decision game in discrete time and shows that the mean-field equilibrium provides an approximate optimal strategy when applied to the corresponding $N$-player game, but only in a precommitment sense. This result does not contradict ours because it considers time-inconsistency arising from non-exponential discounting, whereas we focus on mean--variance type preferences. We show that, under the assumption of uniqueness of the sub-game--perfect mean-field equilibrium, the {\rm BSDE} system describing the $N$-player game converges to the McKean--Vlasov \emph{BSDE} associated with the mean-field game. In this context, the existence and uniqueness of the sub-game--perfect mean-field equilibrium is equivalent to the well-posedness of the McKean--Vlasov {\rm BSDE} described in \eqref{BSDE_eanField_fromEquilibrium}, or equivalently \eqref{BSDE_eanField_toEquilibrium}. Given the nature of this {\rm BSDE}, in which both the driving Brownian motion and the underlying probability measure are part of the solution, together with quadratic growth, proving existence and uniqueness is challenging in general. However, in the mean--variance setting, the system is finite-dimensional, which makes it more tractable. In contrast, non-exponential discounting leads to an infinite-dimensional \emph{BSDE} system, as shown in \emph{\cite[Theorem 3.10 and Theorem 3.12]{hernandez2023me}}, where well-posedness is expected to be extremely difficult to obtain. We believe this may be one of the fundamental reasons for the potential convergence failure highlighted by {\rm \cite{bayraktar2025time}}$;$
\item from the bounds in \eqref{align:convValueProcesses} and \eqref{align:convEquilibria} for the value processes and for the sub-game--perfect equilibria, respectively, we can additionally derive quantitative convergence rates. The key observation is that the constants $C$ appearing in both estimates depend only on the parameters of the game and are independent of $N$. Consequently, the convergence rates are entirely determined by the behaviour of the functions $\eta$ and $\gamma$. More precisely, the function $\eta $ originates from the estimates in \eqref{align:firstPart_finalEst} and takes the form
\begin{align*}
\eta(\omega,u,N) &\coloneqq R_N^2\Bigg(1+\| X^{1}_{u}(\omega)\|^{2}+\frac1N\sum_{\ell =1}^N\| X^{\ell}_{u}(\omega)\|^{2}\Bigg)+C NR_N^2\Bigg(1+\| X^{1}_{u}(\omega)\|^{2\bar{p}}+\frac1N\sum_{\ell =1}^N\| X^{\ell}_{u}(\omega)\|^{2\bar{p}}\Bigg)\\
	&\;\quad+ NR_N^4\Bigg(1+\frac1N\sum_{\ell =1}^N\| X^{\ell}_{u}(\omega)\|^{2}\Bigg) (1+N), \; (\omega,u,N) \in \Omega \times [0,T] \times \N^\star,
\end{align*}
where $\bar{p}$ is introduced in \emph{\Cref{assumpConvThm}.\ref{growth_f_gG}}. If there is no interaction through the strategies, then, as already discussed in {\rm\Cref{ass:comments}.\ref{nocontrolzerofunctions}}, the function $R_N$ vanishes, and consequently so does $\eta$. In the presence of interaction, as already noted in \eqref{align:LLN} $($equivalently, in \eqref{align:LLN_withLIM}$)$, \emph{\Cref{assumpConvThm}.\ref{lipLambda_growthAleph}} and the strong law of large numbers yield
\begin{align*}
\lim_{N \rightarrow \infty} \eta(\omega,u,N) = 0, \; \P\text{\rm--a.e.} \; \omega\in\Omega, \; \text{for any} \; u \in [0,T].
\end{align*}
Moreover, the rate of this convergence is determined jointly by the convergence rate of the sequence $(R_N)_{N \in \N^\smalltext{\star}}$ introduced in \emph{\Cref{assumpConvThm}.\ref{lipLambda_growthAleph}} and by the convergence rate provided by the strong law of large numbers, for which the literature provides explicit rates, see for instance the seminal work \emph{\citeauthor*{marcinkiewicz1938quelques} \cite[Theorem 5]{marcinkiewicz1938quelques}}. In particular, if there exists $q \in [1,2)$ such that $\E^\P\big[\|X^1_0\|^{2\bar{p}q}\big] <+\infty$, than there exists a constant $C>0$ such that, for any $u \in [0,T]$,
\begin{align*}
\frac1N\sum_{\ell =1}^N \Big( \|X^{\ell}_{u}(\omega)\|^{2} - \E^\P \big[ \| X^{1}_{u} \|^{2} \big] \Big) + \frac1N\sum_{\ell =1}^N \Big( \|X^{\ell}_{u}(\omega)\|^{2\bar{p}} - \E^\P \big[ \| X^{1}_{u} \|^{2\bar{p}} \big] \Big) \leq \frac{C}{N^{1-\frac{1}{q}}}, \; \P\text{\rm--a.e.} \; \omega\in\Omega.
\end{align*}
Regarding the function $\gamma$, explicit convergence rates are provided, for instance, by \emph{\citeauthor*{van2023weak} \cite[Theorem~2.7.1]{van2023weak}.} In particular, if $m=1$, then for any $\alpha\in(0,1/2)$, there exists a constant $C>0$ such that, for any $(\omega,u) \in \Omega \times [0,T]$,
\begin{align*}
\gamma(\omega,u,N) \leq \frac{C}{(\log N)^{2 \alpha}}.
\end{align*}
Sharper bounds are obtained by \emph{\citeauthor*{fournier2015rate} \cite[Theorem 1]{fournier2015rate}} under the assumption that the functions $f$, $g$, $\varphi_1$, $\varphi_2$ and $b$ depends only on the law of $X^i_t$ rather than on the entire path $X^i_{\cdot \land t}$, $t \in [0,T]$, for each $i \in \N^\star$, and the set $A$ satisfies $A \subset \R^k$ for some $k \in \N^\star$. More precisely, if $\bar{p}>1$, then for any $q \in (2,2\bar{p}]$ there exists some constant $C>0$ such that, for any $(\omega,u) \in \Omega \times [0,T]$,
\begin{align*}
\gamma(\omega,u,N) \leq C \big(r_{N,m,q} + r_{N,k,q}\big),
\end{align*}
where, for $n \in \N^\star$
\begin{align*}
r_{N,n,q} \coloneqq \begin{cases}
N^{-1/2} + N^{-(q-2)/q}, 
\; \text{if } n < 4 \text{ and } q \neq 4, \\
N^{-1/2}\log(1+N) + N^{-(q-2)/q}, 
\; \text{if } n = 4 \text{ and } q \neq 4, \\
N^{-2/n} + N^{-(q-2)/q}, 
\; \text{if } n > 4 \text{ and } q \neq n/{(n-2)}.
\end{cases}
\end{align*}
\end{enumerate}
\end{remark}

\subsection{A representative example: the convergence}

Before turning to the proof of \Cref{theorem:convergenceTheorem}, we first consider a simple illustrative model to show the convergence of a symmetric $N$-player time-inconsistent game to its mean-field counterpart. Building on the set-up and notation introduced in the previous sections, we consider a Markovian $N$-player game in which the state process of each player $i \in \{1,\ldots,N\}$ evolves according to
\begin{equation*}
X^i_t = X^i_0 + \sigma W^i_t, \; t \in [0,T], \; \P\text{\rm--a.s.},
\end{equation*}
where $\sigma >0$ is constant. To simplify notation, we also assume that $d=m=1$. Let $f: \R \longrightarrow \R$ and $g: \R \longrightarrow \R$ be Lipschitz-continuous functions. Motivated by the criterion studied in \cite[Section 4]{possamai2025non}, we introduce a mean--variance modification to account for time-inconsistency. Hence, if the other players follow the strategy $\balpha^{N,\smallertext{-}i} \in \cA^{N-1}_N$, the payoff faced by player $i\in \{1,\ldots,N\}$ is defined by
\begin{align}\label{align:criterionEx}
\notag J^i\big(t,\omega,\alpha;\balpha^{N,\smallertext{-}i}\big) &\coloneqq \E^{\P^{\smalltext{\alpha} \smalltext{\otimes}_\tinytext{i} \smalltext{\balpha}^{\tinytext{N}\tinytext{,}\tinytext{-}\tinytext{i}}\smalltext{,}\smalltext{N}\smalltext{,}\smalltext{t}}_\smalltext{\omega}} \Bigg[ \int_t^T \bigg( -\frac{(\alpha_s)^2}{2} + \frac{\kappa_1}{N} \sum_{\ell =1}^N f(X^\ell_s) + \frac{\kappa_2}{N} \bigg( \alpha_s + \sum_{\ell \in \{1,\ldots,N\}\setminus\{i\}} \alpha_s^{\ell,N} \bigg) \bigg) \d s  \Bigg] \\
&\; \quad +\E^{\P^{\smalltext{\alpha} \smalltext{\otimes}_\tinytext{i} \smalltext{\balpha}^{\tinytext{N}\tinytext{,}\tinytext{-}\tinytext{i}}\smalltext{,}\smalltext{N}\smalltext{,}\smalltext{t}}_\smalltext{\omega}}\bigg[g(X^i_T) - \frac{\gamma}{2} (X^i_T)^2\bigg]+ \frac{\gamma}{2} \Big( \E^{\P^{\smalltext{\alpha} \smalltext{\otimes}_\tinytext{i} \smalltext{\balpha}^{\tinytext{N}\tinytext{,}\tinytext{-}\tinytext{i}}\smalltext{,}\smalltext{N}\smalltext{,}\smalltext{t}}_\smalltext{\omega}} \big[ X^i_T \big] \Big)^2, \; (t,\omega,\alpha) \in [0,T] \times \Omega \times \cA_N,
\end{align}
where 
\begin{equation*}
\frac{\d \P^{\alpha \otimes_{\smalltext{i}} \balpha^{\smalltext{N}\smalltext{,}\smalltext{-}\smalltext{i}},N}}{\d \P} \coloneqq \cE\Bigg( \int_0^\cdot \bigg( (\alpha_t - k X^i_t) \d W^i_t + \sum_{\ell \in \{1,\ldots,N\}\setminus\{i\}} (\alpha^{\ell,N}_t-kX^\ell_s) \d W^\ell_t \bigg) \Bigg)_T.
\end{equation*}
Moreover, for simplicity, we assume that each strategy takes values in the interval $A \coloneqq (-\bar{a},\bar{a})$ for some $\bar{a}>0$.

\medskip
Rather than immediately introducing the BSDE system that describes the problem, we begin by analysing the corresponding PDE system, which allows for a simplified derivation of explicit solutions. The extended dynamic programming principle in \Cref{thm:DPP_NplayerGame} directly leads to the fundamental PDE system for the value functions. For each $i \in \{1,\ldots,N\}$, we have
\begin{align*}
&\partial_t v^{i,N}(t,x) +  \sigma \sum_{\ell=1}^N (\hat{a}^{\ell,N} - k x^\ell) \partial_{x^\smalltext{\ell}} v^{i,N}(t,x)  + \frac{\sigma^2}{2} \sum_{\ell=1}^N \partial^2_{x^\smalltext{\ell},x^\smalltext{\ell}} v^{i,N}(t,x) \\
&\quad - \frac{(\hat{a}^{i,N})^2}{2} + \frac{\kappa_1}{N} \sum_{\ell=1}^N f(x^\ell) + \frac{\kappa_2}{N} \sum_{\ell=1}^N \hat{a}^{\ell,N} - \frac{\gamma \sigma^2}{2} \sum_{\ell=1}^N \big(\partial_{x^\smalltext{\ell}} v^{m,i,N}(t,x)\big)^2 = 0, \; (t,x) \in [0,T) \times \R^N, \\
&\partial_t v^{m,i,N}(t,x) + \sigma \sum_{\ell=1}^N (\hat{a}^{\ell,N} - k x^\ell) \partial_{x^\smalltext{\ell}} v^{m,i,N}(t,x) + \frac{\sigma^2}{2} \sum_{\ell=1}^N \partial^2_{x^\smalltext{\ell},x^\smalltext{\ell}} v^{m,i,N}(t,x)=0 , \; (t,x) \in [0,T) \times \R^N, \\
&\big(v^{i,N}(T,x),v^{m,i,N}(T,x)\big) = (g(x^i),x^i).
\end{align*}
Here, each $\hat{\alpha}^{i,N}$ maximises the $i$-th component of the $N$-player Hamiltonian, that is,
\begin{align*}
\hat{\alpha}^{i,N} \in \argmax_{a \in (-\bar{a},\bar{a})} \bigg\{ \sigma (a - k x^i) \partial_{x^\smalltext{i}} v^{i,N}(t,x) - \frac{a^2}{2} + \frac{\kappa_2}{N} a \bigg\},
\end{align*} 
Introducing the projection operator onto the set $A$, denoted by $\text{P}_A$, we may write
\begin{align*}
\hat{\alpha}^{i,N}_t = \hat{\alpha}^{i,N}\big(t,x,\partial_{x^\smalltext{i}} v^{i,N}(t,x)\big) = \text{P}_A \bigg(\sigma \partial_{x^\smalltext{i}} v^{i,N}(t,x) + \frac{\kappa_2}{N}\bigg).
\end{align*}

To obtain a simple explicit solution, we further assume that the functions $f$ and $g$ are linear, specifically $f(x) = g(x) = x$, for $x \in \R$. Under this assumption, one can construct functions $(v^{i,N},v^{m,i,N})$ of the form
\begin{align*}
v^{i,N}(t,x) = \mathrm{e}^{\sigma k (t-T)} x^i + \frac{\kappa_1}{\sigma k N} \big(1-\mathrm{e}^{\sigma k (t-T)}\big) \sum_{\ell=1}^N x^\ell + \eta^N(t),\; v^{m,i,N}(t,x) = \mathrm{e}^{\sigma k (t-T)} x^i + \eta^{m,N}(t), \; (t,x) \in [0,T] \times \R^N,
\end{align*}
where ${\eta}^N$ and ${\eta}^{m,N}$ uniquely solve the following ODEs on the interval $[0,T)$
\begin{align*}
 \dot{\eta}^N(t) &= - \bigg( \sigma \mathrm{e}^{\sigma k (t-T)} + \frac{\kappa_1}{kN} \big( 1-\mathrm{e}^{\sigma k (t-T)}\big) + \frac{\kappa_2}{N} \bigg) \bigg( \frac{\sigma}{2} \mathrm{e}^{\sigma k (t-T)} + \bigg( \frac{\kappa_1}{k} \big( 1-\mathrm{e}^{\sigma k (t-T)}\big) + \kappa_2 \bigg) \bigg( 1 - \frac{1}{2N}\bigg) \bigg)\\
&\quad + \frac{\gamma\sigma^2}{2} \mathrm{e}^{2\sigma k (t-T)}, \\
\dot{\eta}^{m,N}(t) &= - \sigma \bigg( \sigma \mathrm{e}^{\sigma k (t-T)} + \frac{\kappa_1}{kN} \big( 1-\mathrm{e}^{\sigma k (t-T)}\big) + \frac{\kappa_2}{N} \bigg) \mathrm{e}^{\sigma k (t-T)},
\end{align*}
with terminal conditions $ {\eta}^N(T) = 0$ and $ {\eta}^{m,N}(T) = 0$. The $N$-player game is equivalently characterised by the BSDE system
\begin{align}\label{align:bsdeExample_nplayergame}
\notag Y^{i,N}_t &= X^i_T + \int_t^T \bigg( -\frac{1}{2} \bigg( \text{P}_A \bigg(Z^{i,i,N}_s + \frac{\kappa_2}{N} \bigg)\bigg)^2 + \frac{\kappa_1}{N} \sum_{\ell =1}^N X^\ell_s + \frac{\kappa_2}{N}  \sum_{\ell =1}^N \text{P}_A \bigg( Z^{\ell,\ell,N}_s + \frac{\kappa_2}{N} \bigg) \\
\notag&\quad + \sum_{\ell =1}^N Z^{i,\ell,N}_s \bigg( \text{P}_A \bigg(Z^{\ell,\ell,N}_s + \frac{\kappa_2}{N}\bigg) - k X^\ell_s \bigg) - \frac{\gamma}{2} \sum_{\ell =1}^N \big(Z^{i,m,\ell,\star,N}_s\big)^2 \bigg) \d s \\
\notag &\quad- \int_t^T \sum_{\ell =1}^N \frac{Z^{i,\ell,N}_s}{\sigma} \d X_s^\ell, \; t \in [0,T], \; \P \text{\rm--a.s.}, \\
M^{i,\star,N}_t &= X^i_T + \int_t^T \sum_{\ell =1}^N Z^{i,m,\ell,\star,N}_s \bigg( \text{P}_A \bigg(Z^{\ell,\ell,N}_s + \frac{\kappa_2}{N}\bigg) - k X^\ell_s \bigg) \d s - \int_t^T \sum_{\ell =1}^N \frac{Z^{i,m,\ell,\star,N}_s}{\sigma} \d X_s^\ell, \; t \in [0,T], \; \P \text{\rm--a.s.}
\end{align}
Given its quadratic growth structure, the system admits a unique solution (see \cite[Theorem 2.2]{cheridito2014bsdes}). Using the expression for the value functions $(v^{i,N},v^{m,i,N})$ derived from the PDE formulation, the associated $Z$-components are explicitly given by 
\begin{equation*}
({Z}^{i,\ell,N}_t,{Z}^{i,m,\ell,\star,N}_t) = \bigg(\sigma \mathrm{e}^{\sigma k (t-T)} \delta^i_\ell + \frac{\kappa_1}{k N} \big(1-\mathrm{e}^{\sigma k (t-T)}\big),\sigma \mathrm{e}^{\sigma k (t-T)} \delta^i_\ell\bigg),\;t\in [0,T].
\end{equation*}
Consequently, there exists a unique sub-game--perfect Nash equilibrium given by
\begin{equation*}
\hat\alpha^{i,N}_t = \text{P}_A \bigg( \sigma \mathrm{e}^{\sigma k (t-T)} + \frac{\kappa_1}{k N} \big(1-\mathrm{e}^{\sigma k (t-T)}\big) + \frac{\kappa_2}{N}\bigg),\;t\in [0,T].
\end{equation*}

\medskip
Analogously to the approach used in the proof of \Cref{theorem:convergenceTheorem}, for each $i \in\{1,\ldots,N\}$, we introduce a BSDE system associated with the state process $X^i$:
\begin{align}\label{align:BSDEexampleMeanField}
\notag Y^{i}_t &\;= X^i_T + \int_t^T \bigg( -\frac{(\hat\alpha^i_s)^2}{2} +  \E^{\P^{\smalltext{\hat\alpha^i}}}[\kappa_1 X^i_s + \kappa_2 Z^i_s ] + Z^i_s ( \hat\alpha^i_s - k X^i_s) - \frac{\gamma}{2} \big({Z}^{i,m,\star}_s\big)^2 \bigg) \d s - \int_t^T \frac{Z^{i}_s}{\sigma} \d X^i_s, \; t \in [0,T], \; \P \text{\rm--a.s.}, \\
\notag M^{i,\star}_t &\;= X^i_T + \int_t^T {Z}^{i,m,\star}_s (\hat\alpha^i_s - k X^i_s) \d s  - \int_t^T \frac{{Z}^{i,m,\star}_s}{\sigma} \d X^i_s, \; t \in [0,T], \; \P \text{\rm--a.s.}, \\
\notag \hat\alpha^i_t &\coloneqq \text{P}_A(Z^i_t), \; t \in [0,T], \\
\frac{\d \P^{\hat\alpha^i}}{\d \P} &\coloneqq \cE\bigg( \int_0^\cdot (\hat\alpha^i_s- k X^i_s) \d W^i_s \bigg)_T.
\end{align}
We assume that each system admits a unique solution, which in turn implies the uniqueness of the sub-game--perfect mean-field equilibrium $\hat\alpha^i$. The unique solution is explicitly given by
\begin{equation*}
(Y^i_t,M^{i,\star}_t,Z^i_t,Z^{i,m,\star}_t) = \bigg( \mathrm{e}^{\sigma k (t-T)} X^i_t + \eta(t), \mathrm{e}^{\sigma k (t-T)} X^i_t + \frac{\sigma}{2k} \big( 1-\mathrm{e}^{2\sigma k (t-T)}\big), \sigma\mathrm{e}^{\sigma k (t-T)}, \sigma\mathrm{e}^{\sigma k (t-T)} \bigg),
\end{equation*}
where
\begin{align*}
\eta(t) = \int_t^T \bigg( \sigma \mathrm{e}^{\sigma k (s-T)} \bigg( \frac{\sigma\mathrm{e}^{\sigma k (s-T)}}{2} + \kappa_2 \bigg) + \kappa_1  \bigg( \mathrm{e}^{-\sigma k (s-t)} X^i_t + \frac{\sigma}{2k} \big( \mathrm{e}^{\sigma k (s-T)} - \mathrm{e}^{-\sigma k (s+T-2t)} \big) \bigg)  - \frac{\gamma\sigma^2}{2} \mathrm{e}^{2\sigma k (t-T)} \bigg) \d s.
\end{align*}

\medskip
Having introduced the two BSDE systems \eqref{align:bsdeExample_nplayergame} and \eqref{align:BSDEexampleMeanField}, which describe the $N$-player game and its mean-field counterpart, we now turn to the convergence analysis. Throughout these computations we work under the assumption that $\bar{a}$, and equivalently the set $A$, is sufficiently large so that the projection operator $\text{P}_A$ can be omitted. Since we are studying a simple and fully explicit example, and that the functions $(\aleph^{i,N})_{i \in \{1,\ldots,N\}}$ introduced in \Cref{assumpConvThm}.\ref{LambdaFunc} reduce here to the constant value $\kappa_2/N$, it is not necessary to introduce an intermediate system. We can therefore proceed directly to proving the convergence result. To this end, we introduce the probability measure
\begin{align*}
\frac{\d \P^{\hat\alpha,N}}{\d \P} &\coloneqq \cE\Bigg( \int_0^\cdot \sum_{\ell =1}^N (\hat\alpha^\ell_t- k X^\ell_t) \d W^\ell_t \Bigg)_T.
\end{align*}
By construction, we have 
\begin{align*}
\E^{\P^{\smalltext{\hat\alpha}\smalltext{,}\smalltext{N}}}[X^i_t] = \E^{\P^{\smalltext{\hat\alpha^i}}}[X^i_t], \; t \in [0,T], \; \text{for all} \; i \in \{1,\ldots,N\}.
\end{align*}
Moreover, we note that all components of the sub-game--perfect Nash equilibrium $\hat\balpha^N= (\hat{\alpha}^{1,N}, \ldots, \hat{\alpha}^{N,N})$ are identical, and similarly all the sub-game--perfect mean-field equilibria $\hat{\alpha}^i$ coincide with each other, for all $i \in \{1,\ldots,N\}$. In addition, we have
\begin{align*}
Z^{i,m,\ell,\star,N}_t = Z^{i,m,\star}_t \delta^i_\ell, \; t \in [0,T], \; \text{for all} \; (i,\ell) \in \{1,\ldots,N\}^2.
\end{align*}
We now fix $\beta > 0$, whose value will be chosen later, and apply It\^o's formula to the processes $\mathrm{e}^{\beta t} \big|\delta Y^{i,N}_t\big|^2 \coloneqq \mathrm{e}^{\beta t} \big|Y^{i,N}_t-Y^i_t\big|^2$ and $\mathrm{e}^{\beta t} \big|\delta M^{i,\star,N}_t\big|:= \mathrm{e}^{\beta t} \big|M^{i,\star,N}_t - M^{i,\star}_t\big|^2$, for $t \in [0,T]$. This yields
\begin{align*}
& \mathrm{e}^{\beta t} \big|\delta Y^{i,N}_t\big|^2 + \int_t^T \sum_{\ell =1}^N \big(Z^{i,\ell,N}_s - Z^i_s \delta^i_\ell \big)^2 \d s \\
&= - \beta \int_t^T \mathrm{e}^{\beta s} \big|\delta Y^{i,N}_s\big|^2 \d s \\
& \quad + 2 \int_t^T \mathrm{e}^{\beta s} \delta Y^{i,N}_s \bigg( \big( \hat{\alpha}^{i,N}_s - \hat\alpha^i_s \big) \bigg( - \frac{\big( \hat{\alpha}^{i,N}_s - \hat\alpha^i_s \big)}{2} + \kappa_2 + \sum_{\ell =1}^N Z^{i,\ell,N}_s \bigg) + \kappa_1 \bigg( \frac{1}{N} \sum_{\ell =1}^N X^\ell_s - \E^{\P^{\smalltext{\hat\alpha}\smalltext{,}\smalltext{N}}}[X^i_s] \bigg) \bigg) \d s\\
\notag &\quad- \int_t^T \sum_{\ell =1}^N \big(Z^{i,\ell,N}_s - Z^i_s \delta^i_\ell \big) \d \big(W_s^{\hat\alpha}\big)^\ell, \; \P \text{\rm--a.s.}, \\
&\mathrm{e}^{\beta t} \big|\delta M^{i,\star,N}_t\big|^2 + \int_t^T \sum_{\ell =1}^N \big(Z^{i,m,\ell,\star,N}_s - Z^{i,m,\star}_s \delta^i_\ell \big) \d s \\
&= - \beta \int_t^T \mathrm{e}^{\beta s} \big|\delta M^{i,\star,N}_s\big|^2 \d s \\
& \quad + 2 \int_t^T \mathrm{e}^{\beta s} \delta M^{i,\star,N}_s \sum_{\ell =1}^N Z^{i,m,\ell,\star,N}_s \big( \hat{\alpha}^{i,N}_s - \hat\alpha^i_s \big) \d s, \; \P \text{\rm--a.s.}
\end{align*}
Applying Young's inequality, taking expectations under $\P^{\hat\alpha}$, and choosing $\beta >1$, we obtain the existence of a constant $c >0$, independent of $N$, such that 
\begin{align*}
& \E^{\P^{\smalltext{\hat\alpha}\smalltext{,}\smalltext{N}}} \Big[ \mathrm{e}^{\beta t} \big|\delta Y^{i,N}_t\big|^2 \Big] + \E^{\P^{\smalltext{\hat\alpha}\smalltext{,}\smalltext{N}}} \Big[ \mathrm{e}^{\beta t} \big|\delta M^{i,\star,N}_t\big|^2 \Big]  \\
&\leq \E^{\P^{\smalltext{\hat\alpha}\smalltext{,}\smalltext{N}}} \Bigg[ \int_t^T \mathrm{e}^{\beta s} \bigg( \big( \hat{\alpha}^{i,N}_s - \hat\alpha^i_s \big) \bigg( - \frac{\big( \hat{\alpha}^{i,N}_s - \hat\alpha^i_s \big)}{2} + \kappa_2 + \sum_{\ell =1}^N Z^{i,\ell,N}_s \bigg) + \kappa_1 \bigg( \frac{1}{N} \sum_{\ell =1}^N X^\ell_s - \E^{\P^{\smalltext{\hat\alpha}\smalltext{,}\smalltext{N}}}[X^i_s] \bigg)  \bigg)^2 \d s \Bigg] \\
&\leq c \bigg( \frac{1}{N} + \frac{1}{N^2} \bigg) \xrightarrow[N \to \infty]{} 0.
\end{align*}
Thus, if the mean-field BSDE system \eqref{align:BSDEexampleMeanField} admits a unique solution, we conclude that the value functions of the $N$-player stochastic differential game with the mean--variance criterion described in \eqref{align:criterionEx} converge to their mean-field game counterparts at rate $1/N$. Moreover, the sub-game--perfect Nash equilibrium also converges to the sub-game mean-field equilibrium with the same rate, as follows directly from their explicit expressions. This verifies the result of \Cref{theorem:convergenceTheorem} in the context of this fully explicit example.

\subsection{Proof of Theorem \ref{theorem:convergenceTheorem}}

This section presents the proof of the convergence result stated in \Cref{theorem:convergenceTheorem}. Our approach is inspired by the steps of \cite[Theorem 2.10]{possamai2025non} and consists in introducing two auxiliary {\rm BSDE} systems that serve to bridge the convergence from the $N$-player game to its mean-field counterpart. Despite the similar structure, the {\rm BSDE} systems arising in our setting are of a different nature, owing to their quadratic growth, which necessitates additional estimates. In addition, the inherent time-inconsistency of our problems requires us to establish convergence at every time $t \in [0,T]$, rather than only at the initial time. For this purpose, we work with some r.c.d.p.s given $\cF_{N,t}$ for each $t \in [0,T]$, and we rewrite the systems \eqref{BSDE_NplayerGame} and \eqref{BSDE_eanField_toEquilibrium}, describing the $N$-player game and the mean-field counterpart respectively, under these conditional measures. The proof is structured into the following steps:

\begin{enumerate}[label={$(\roman*)$}]
\item in \Cref{{section:keyBSDEsystems}}, for a fixed sub-game--perfect Nash equilibrium $\hat\balpha^N$, we write the corresponding BSDE system \eqref{align:systemMeanFieldGame} under the families of r.c.p.d.s $(\P^{\hat\balpha^{\smalltext{N}},N,u}_\omega)_{(\omega,u) \in \Omega \times [0,T]}$. By \Cref{assumpConvThm}.\ref{LambdaFunc}, $\hat\balpha^N$ is described by a Borel-measurable Hamiltonian maximiser $\Lambda$, which also characterises the mean-field equilibrium $\hat\alpha^i$ for each mean-field game associated with the state process $X^i$, $i \in \{1,\ldots,N\}$, following \Cref{sufficiency_toBSDE_meanFieldGame}. The BSDE system for the mean-field game, given in \eqref{align:systemMeanFieldGame_rcpd}, is defined with respect to the families of r.c.p.d.s $(\P^{\hat\alpha,N,u}_\omega)_{(\omega,u) \in \Omega \times [0,T]}$, where the measure $\P^{\hat\alpha,N}$ defined in \eqref{align:measureALLmeanFieldGames} describes all $N$ copies of the mean-field game. Finally, we introduce an auxiliary FBSDE system in \eqref{align:intermediateSystemNplayerGame}, which serves as a bridge: the proof then shows that the $N$-player system converges to this auxiliary system, which in turn converges to the mean-field system;
\item\label{secondSTEPproof} \Cref{subsubsection:fromNplayerGameToAuxiliary} is devoted to the proof of convergence from the $N$-player BSDE system to the intermediate system. The proof is organised into several sub-steps. In \textbf{Step 1}, we derive estimates for the martingale terms $(M^{i,\star,N},N^{\star,N})$ and $(\widetilde M^{i,\star,N},\widetilde N^{\star,N})$; in \textbf{Step 2} we establish bounds for the difference between of the value process $Y^{i,N}$ and $\widetilde Y^{i,N}$; in \textbf{Step 3}, we obtain estimates for the forward components $X^i$ and $\widetilde X^{i}$; and finally, in \textbf{Step 4}, we combine all previous estimates to conclude the desired convergence;
\item \Cref{convTOmeanFieldGames} concludes the proof by showing that the intermediate system converges to the $N$ copies of the mean-field system. In particular, we first introduce a second intermediate system in \eqref{align:intermediateSystemNplayerGame_rcpdMeanField}, which coincides with the auxiliary FBSDE system in \eqref{align:intermediateSystemNplayerGame} but is defined with respect to the fixed Brownian motions $((W^{\hat\alpha,N})^i)_{i \{1,\ldots,N\}}$ instead of $((W^{\hat\balpha^{\smalltext{N}},N})^i)_{i \{1,\ldots,N\}}$. By the Yamada--Watanabe theorem, we have $\widetilde{Y}^{i,N}_u = \overline{Y}^{i,N}_u$ $\P\text{\rm--a.s.}$ for any time $u\in[0,T]$. Hence, the proof is complete once we show that this second auxiliary system converges to the mean-field system. The proof follows the same structure as in \ref{secondSTEPproof}. Namely, \textbf{Step 1} derives estimates for the backward components $(\overline{Y}^{i,N},\overline M^{i,\star,N},\overline N^{\star,N})$ and $(Y^{i},M^{i,\star},N^{\star})$; \textbf{Step 2} gives the necessary estimates for the forward components $\overline X^i$ and $X^i$; and all these estimates are combined in \textbf{Step 4};
\item the final part of the proof is given in \Cref{subsubsection:conEquilibria}, where we show the convergence of a sub-game--perfect Nash equilibrium to the sub-game--perfect mean-field equilibrium. 
\end{enumerate}

\subsubsection{Setting up the key systems}\label{section:keyBSDEsystems}

We fix a sub-game--perfect Nash equilibrium $\hat\balpha^N = (\hat\alpha^{1,N}, \ldots, \hat\alpha^{N,N}) \in \mathcal{N}\!\mathcal{A}_{s,N}$ and denote by $(V^{i,N})_{i \in \{1,\ldots,N\}}$ the associated value processes. Then, for each player $i \in \{1,\ldots,N\}$, it follows from \Cref{necessity_toBSDE} and \Cref{assumpConvThm}.\ref{LambdaFunc} that $V^{i,N}_t = Y^{i,N}_t$, $\P\text{\rm--a.s.}$, for all $t \in [0,T]$, where the processes
\[
(\Y^N,\Z^N,\M^{\star,N},N^{\star,N},\Z^{m,\star,N},\Z^{n,\star,N}) \coloneqq (Y^{i,N}, \Z^{i,N}, M^{i,\star,N}, N^{\star,N}, \Z^{i,m,\star,N},\Z^{n,\star,N})_{i\in\{1,\ldots,N\}},
\]
solve the {\rm BSDE} system
\begin{align}\label{align:systemNplayerGame}
\notag X^{i}_t &= X^i_0 + \int_0^t \sigma_s(X^{i}_{\cdot \land s}) b_s\big(X^{i}_{\cdot \land s},L^N\big(\X^N_{\cdot \land s},\hat\balpha^N_s\big),\hat\alpha^{i,N}_s\big) \d s + \int_0^t \sigma_s(X^i_{\cdot \land s}) \d \big(W_s^{\hat\balpha^\smalltext{N},N}\big)^i, \; t \in [0,T], \; \P \text{\rm--a.s.}, \\
\notag Y^{i,N}_t &= g\big(X^{i}_{\cdot \land T}, L^N\big(\X^N_{\cdot \land T}\big)\big) + G\big(\varphi_1(X^{i}_{\cdot \land T}),\varphi_2(L^N\big(\X^N_{\cdot \land T}\big)\big)+ \int_t^T f_s\big(X^{i}_{\cdot \land s},L^N\big(\X^N_{\cdot \land s},\hat\balpha^N_s\big),\hat\alpha^{i,N}_s\big) \d s \\
\notag &\quad - \int_t^T \partial^2_{m,n}G\big(M^{i,\star,N}_s,N^{\star,N}_s\big) \sum_{\ell =1}^N Z^{i,m,\ell,\star,N}_s \cdot Z^{n,\ell,\star,N}_s \d s \\
\notag &\quad - \frac{1}{2} \int_t^T \partial^2_{m,m}G\big(M^{i,\star,N}_s,N^{\star,N}_s\big) \sum_{\ell =1}^N \big\|Z^{i,m,\ell,\star,N}_s\big\|^2 \d s - \frac{1}{2} \int_t^T \partial^2_{n,n}G\big(M^{i,\star,N}_s,N^{\star,N}_s\big) \sum_{\ell =1}^N \big\|Z^{n,\ell,\star,N}_s\big\|^2 \d s \\
\notag &\quad - \int_t^T \sum_{\ell =1}^N Z^{i,\ell,N}_s \cdot \d \big(W_s^{\hat\balpha^\smalltext{N},N}\big)^\ell, \; t \in [0,T], \; \P \text{\rm--a.s.}, \\
\notag M^{i,\star,N}_t &= \varphi_1\big(X^{i}_{\cdot \land T}\big) - \int_t^T \sum_{\ell =1}^N Z^{i,m,\ell,\star,N}_s \cdot \d \big(W_s^{\hat\balpha^\smalltext{N},N}\big)^\ell, \; t \in [0,T], \; \P \text{\rm--a.s.}, \\
\notag N^{\star,N}_t &= \varphi_2\big(L^N\big(\X^N_{\cdot \land T}\big)\big) - \int_t^T \sum_{\ell =1}^N Z^{n,\ell,\star,N}_s \cdot \d \big(W_s^{\hat\balpha^\smalltext{N},N}\big)^\ell, \; t \in [0,T], \; \P \text{\rm--a.s.}, \\
\hat\alpha^{i,N}_t &= \Lambda_t\big(X^{i}_{\cdot \land t}, L^N\big(\X^N_{\cdot \land t}\big), Z^{i,i,N}_t, Z^{i,m,i,\star,N}_t, Z^{n,i,\star,N}_t, \aleph^{i,N}_t\big), \; \d t \otimes \P \text{\rm--a.e.},
\end{align}
for some Borel-measurable function $\Lambda: [0,T] \times \cC_m \times \cP_2(\cC_{m}) \times \R^d \times \R^d \times \R^d \times \R \longrightarrow A$. We observe that, unlike in \Cref{necessity_toBSDE}, the process $N^{\star,N}$ does not depend on the player index $i$. This is because we are working under the assumption that the $N$-player game is symmetric, and in particular, the function $\varphi_2$ is identical for all players. Moreover, \Cref{assumpConvThm}.\ref{LambdaFunc} also implies that for any $(t,x,\xi,m^\star,n^\star,z^{m,\star}) \in [0,T] \times \cC_{m} \times \cP_2(\cC_m \times A) \times \R^d \times \R \times \R \times \R^d$, we have
\begin{align*}
\Lambda_t\big(x,\xi^x,z,z^{m,\star},\mathbf{0},0\big) \in \argmax_{a \in A}\big\{ h_t(x,\xi,z,a)\big\}.
\end{align*}
Since each mean-field game---each associated with the driving state process $X^i$, or equivalently with the Brownian motion $W^i$, $i \in \{1,\ldots,N\}$---admits a unique equilibrium $\hat\alpha^i$ by \Cref{assumpConvThm}.\ref{uniqueMeanField}, it follows from \Cref{sufficiency_toBSDE_meanFieldGame} that the following BSDE admits a unique solution
\begin{align}\label{align:systemMeanFieldGame}
\notag X^i_t &= X^i_0 + \int_0^t \sigma_s(X^i_{\cdot \land s}) b_s\big(X^i_{\cdot \land s},\cL_{\hat\alpha^{\smalltext{i}}}(X^i_{\cdot \land s},\hat\alpha^i_s),\hat\alpha^i_s\big) \d s + \int_0^t \sigma_s(X^i_{\cdot \land s}) \d \big(W_s^{\hat\alpha^\smalltext{i}}\big)^i, \; t \in [0,T], \; \P \text{\rm--a.s.}, \\
\notag Y^i_t &= g\big(X^i_{\cdot \land T}, \cL_{\hat\alpha^{\smalltext{i}}}(X^i_{\cdot \land T})) + G\big(\varphi_1(X^i_{\cdot \land T}),\varphi_2\big(\cL_{\hat\alpha^{\smalltext{i}}}(X^i_{\cdot \land T}))\big) + \int_t^T f_s\big(X^i_{\cdot \land s},\cL_{\hat\alpha^{\smalltext{i}}}\big(X^i_{\cdot \land s},\hat\alpha^i_s),\hat\alpha^i_s\big) \d s\\
\notag &\quad  - \frac{1}{2} \int_t^T \partial^2_{m,m}G\big(M^{i,\star}_s,N^{\star}_s\big) \big\|Z^{i,m,i,\star}_s\big\|^2 \d s - \int_t^T Z^{i,i}_s \cdot \d \big(W_s^{\hat\alpha^\smalltext{i}}\big)^i, \; t \in [0,T], \; \P \text{\rm--a.s.}, \\
\notag M^{i,\star}_t &= \varphi_1(X^i_{\cdot \land T}) - \int_t^T Z^{i,m,i,\star}_s \cdot \d \big(W_s^{\hat\alpha^\smalltext{i}}\big)^i, \; t \in [0,T], \; \P \text{\rm--a.s.}, \\
\notag N^{\star}_t &\coloneqq \varphi_2\big(\cL_{\hat\alpha^{\smalltext{i}}}(X^i_{\cdot \land T})\big), \; t \in [0,T], \\
\notag \hat\alpha^i_t &\coloneqq \Lambda_t\big(X^i_{\cdot \land t},\cL_{\hat\alpha^{\smalltext{i}}}(X^i_{\cdot \land t}),Z^{i,i}_t,Z^{i,m,i,\star}_t,\mathbf{0},0\big), \; \mathrm{d}t\otimes\P\text{\rm--a.e.}, \\
\frac{\d \P^{\hat\alpha^{\smalltext{i}}}}{\d \P} &\coloneqq \cE\bigg( \int_0^\cdot b_s\big(X^i_{\cdot \land s},\cL_{\hat\alpha^{\smalltext{i}}}(X^i_{\cdot \land s},\hat\alpha^i_s),\hat\alpha^i_s\big) \cdot \d W^i_s \bigg)_T.
\end{align}
It also holds that $V^i_t = Y^i_t$, $\P\text{\rm--a.s.}$, for any $t \in [0,T]$. Unlike the {\rm BSDE} system in \eqref{BSDE_eanField_toEquilibrium}, the system in \eqref{align:systemMeanFieldGame} depends on the index $i \in \{1,\ldots,N\}$, since we are considering $N$ identical copies of the mean-field game introduced in \eqref{align:payoffMeanField}, each driven by its own Brownian motion $W^i$. Nevertheless, it is straightforward to verify that $\cL_{\hat\alpha^{\smalltext{\ell}}}(X^\ell_{\cdot \land T}) = \cL_{\hat\alpha^{\smalltext{j}}}(X^j_{\cdot \land T})$ for all $(\ell,j) \in \{1,\ldots,N\}^2$, so that the process $N^\star$ is independent of the player index, analogously to the $N$-player game. Although each mean-field system is naturally described under its own probability measure $\P^{\hat\alpha^{\smalltext{i}}}$, it is convenient to introduce a single reference measure, equivalent to all $\P^{\hat\alpha^{\smalltext{i}}}$, $i \in \{1,\ldots,N\}$, and under which the system \eqref{align:systemMeanFieldGame} remains unchanged. To this end, we define the probability measure $\P^{\hat\alpha,N}$ by
\begin{align}\label{align:measureALLmeanFieldGames}
\frac{\d \P^{\hat\alpha,N}}{\d \P} &\coloneqq \cE\Bigg( \int_0^\cdot \sum_{\ell =1}^N b_t\big(X^\ell_{\cdot \land t},\cL_{\hat\alpha^{\smalltext{\ell}}}(X^\ell_{\cdot \land t},\hat\alpha^\ell_t),\hat\alpha^\ell_t\big) \cdot \d W^\ell_t \Bigg)_T.
\end{align}
It then follows that each of the families $\X^N \coloneqq (X^1,\ldots,X^N)$ and $\hat\alpha \coloneqq (\hat\alpha^1,\ldots,\hat\alpha^N)$ consists of $\P^{\hat\alpha,N}$--i.i.d.\ processes, and for any $i \in \{1,\ldots,N\}$, we have
\begin{align*}
\cL_{\hat\alpha}(X^i_{\cdot \land t}) = \cL_{\hat\alpha^{\smalltext{i}}}(X^i_{\cdot \land t}) \; \text{and} \;  \cL_{\hat\alpha}(X^i_{\cdot \land t},\hat\alpha^i_t) = \cL_{\hat\alpha^{\smalltext{i}}}(X^i_{\cdot \land t},\hat\alpha^i_t), \; t \in [0,T].
\end{align*}
Hence, in what follows, we adopt the notations $\cL_{\hat\alpha}(X_{\cdot \land t})$ and $\cL_{\hat\alpha}(X_{\cdot \land t},\hat\alpha_t)$ to denote these laws.

\medskip
Our objective is to prove convergence to the mean-field game limit over the entire time interval $[0,T]$. To this end, we fix an arbitrary time $u \in [0,T]$ and consider the families of r.c.p.d.s $(\P^{\hat\balpha^{\smalltext{N}},N,u}_\omega)_{\omega \in \Omega}$ and $(\P^{\hat\alpha,N,u}_\omega)_{\omega \in \Omega}$ of $\P^{\hat\balpha^{\smalltext{N}},N}$ and $\P^{\hat\alpha,N}$, respectively, given the $\sigma$-algebra $\cF_{N,u}$. Accordingly, the system in \eqref{align:systemNplayerGame} can be rewritten, yielding an equivalent representation of the $N$-player game that holds for $\P\text{\rm--a.e.} \; \omega\in\Omega$
\begin{align}\label{align:systemNplayerGame_rcpd}
\notag X^i_t &= X^i_u(\omega) + \int_u^t \sigma_s(X^i_{\cdot \land s}) b_s\big(X^i_{\cdot \land s},L^N\big(\X^N_{\cdot \land s},\hat\balpha^N_s\big),\hat\alpha^{i,N}_s\big) \d s + \int_u^t \sigma_s(X^i_{\cdot \land s}) \d \big(W_s^{\hat\balpha^\smalltext{N},N,u,\omega}\big)^i, \; t \in [u,T], \; \P^{\hat\balpha^\smalltext{N},N,u}_\omega \text{\rm--a.s.}, \\
\notag Y^{i,N}_t &= g\big(X^i_{\cdot \land T}, L^N\big(\X^N_{\cdot \land T}\big)\big) + G\big(\varphi_1(X^i_{\cdot \land T}),\varphi_2(L^N\big(\X^N_{\cdot \land T}\big)\big) + \int_t^T f_s\big(X^i_{\cdot \land s},L^N\big(\X^N_{\cdot \land s},\hat\balpha^N_s\big),\hat\alpha^{i,N}_s\big) \d s\\
\notag &\quad - \int_t^T \partial^2_{m,n}G\big(M^{i,\star,N}_s,N^{\star,N}_s\big) \sum_{\ell =1}^N Z^{i,m,\ell,\star,N}_s \cdot Z^{n,\ell,\star,N}_s \d s \\
\notag &\quad - \frac{1}{2} \int_t^T \partial^2_{m,m}G\big(M^{i,\star,N}_s,N^{\star,N}_s\big) \sum_{\ell =1}^N \big\|Z^{i,m,\ell,\star,N}_s\big\|^2 \d s - \frac{1}{2} \int_t^T \partial^2_{n,n}G\big(M^{i,\star,N}_s,N^{\star,N}_s\big) \sum_{\ell =1}^N \big\|Z^{n,\ell,\star,N}_s\big\|^2 \d s \\
\notag &\quad - \int_t^T \sum_{\ell =1}^N Z^{i,\ell,N}_s \cdot \d \big(W_s^{\hat\balpha^\smalltext{N},N,u,\omega}\big)^\ell, \; t \in [u,T], \; \P^{\hat\balpha^\smalltext{N},N,u}_\omega \text{\rm--a.s.},  \\
\notag M^{i,\star,N}_t &= \varphi_1(X^i_{\cdot \land T}) - \int_t^T \sum_{\ell =1}^N Z^{i,m,\ell,\star,N}_s \cdot \d \big(W_s^{\hat\balpha^\smalltext{N},N,u,\omega}\big)^\ell, \; t \in [u,T], \; \P^{\hat\balpha^\smalltext{N},N,u}_\omega \text{\rm--a.s.}, \\
\notag N^{\star,N}_t &= \varphi_2\big(L^N\big(\X^N_{\cdot \land T}\big)\big) - \int_t^T \sum_{\ell =1}^N Z^{n,\ell,\star,N}_s \cdot \d \big(W_s^{\hat\balpha^\smalltext{N},N,u,\omega}\big)^\ell, \; t \in [u,T], \; \P^{\hat\balpha^\smalltext{N},N,u}_\omega \text{\rm--a.s.}, \\
\hat\alpha^{i,N}_t &= \Lambda_t\big(X^i_{\cdot \land t}, L^N\big(\X^N_{\cdot \land t}\big), Z^{i,i,N}_t, Z^{i,m,i,\star,N}_t, Z^{n,i,\star,N}_t, \aleph^{i,N}_t\big), \; \d t \otimes \P^{\hat\balpha^\smalltext{N},N,u}_\omega \text{\rm--a.e.}
\end{align}
Similarly, the system associated with the mean-field game, which holds $\P\text{\rm--a.e.} \; \omega\in\Omega$, is given by
\begin{align}\label{align:systemMeanFieldGame_rcpd}
\notag X^i_t &= X^i_u(\omega) + \int_u^t \sigma_s(X^i_{\cdot \land s}) b_s\big(X^i_{\cdot \land s},\cL_{\hat\alpha}(X_{\cdot \land s},\hat\alpha_s),\hat\alpha^i_s\big) \d s + \int_u^t \sigma_s(X^i_{\cdot \land s}) \d \big(W_s^{\hat\alpha,N,u,\omega}\big)^i, \; t \in [u,T], \; \P^{\hat\alpha,N,u}_\omega \text{\rm--a.s.}, \\
\notag Y^i_t &= g\big(X^i_{\cdot \land T}, \cL_{\hat\alpha}(X_{\cdot \land T})) + G\big(\varphi_1(X^i_{\cdot \land T}),\varphi_2\big(\cL_{\hat\alpha}(X_{\cdot \land T}))\big) + \int_t^T f_s\big(X^i_{\cdot \land s},\cL_{\hat\alpha}\big(X_{\cdot \land s},\hat\alpha_s),\hat\alpha^i_s\big) \d s\\
\notag &\quad  - \frac{1}{2} \int_t^T \partial^2_{m,m}G\big(M^{i,\star}_s,N^{\star}_s\big) \big\|Z^{i,m,i,\star}_s\big\|^2 \d s  - \int_t^T Z^{i,i}_s \cdot \d \big(W_s^{\hat\alpha,N,u,\omega}\big)^i, \; t \in [u,T], \; \P^{\hat\alpha,N,u}_\omega \text{\rm--a.s.}, \\
\notag M^{i,\star}_t &= \varphi_1(X^i_{\cdot \land T}) - \int_t^T Z^{i,m,i,\star}_s \cdot \d \big(W_s^{\hat\alpha,N,u,\omega}\big)^i, \; t \in [u,T], \; \P^{\hat\alpha,N,u}_\omega \text{\rm--a.s.}, \\
\notag N^{\star}_t &\coloneqq \varphi_2\big(\cL_{\hat\alpha}(X_{\cdot \land T})\big), \; t \in [u,T], \; \P^{\hat\alpha,N,u}_\omega \text{\rm--a.s.},\\
\hat\alpha^i_t &\coloneqq \Lambda_t\big(X^i_{\cdot \land t},\cL_{\hat\alpha}(X_{\cdot \land t}),Z^{i,i}_t,Z^{i,m,i,\star}_t,\mathbf{0},0\big), \; \mathrm{d}t\otimes\P^{\hat\alpha,N,u}_\omega \text{\rm--a.e.}
\end{align}

\medskip
Since the payoff \eqref{align:criteriumNplayerGame} depends on the strategies of the other players, the sub-game--perfect Nash equilibrium $\hat\balpha^N$ depends on $(\aleph^{i,N})_{i \in \{1,\ldots,N\}}$, which encodes the interactions among players. To establish convergence, it is therefore convenient to proceed in two steps: first, by showing that the $N$-player BSDE system \eqref{align:systemNplayerGame_rcpd} converges to an intermediate system, and second, by proving that this intermediate system converges to the mean-field system \eqref{align:systemMeanFieldGame_rcpd}. For $\P\text{\rm--a.e.} \; \omega\in\Omega$, the intermediate system takes the form
\begin{align}\label{align:intermediateSystemNplayerGame}
\notag \widetilde X^{i}_t &= X^i_u(\omega) + \int_u^t \sigma_s(\widetilde X^i_{\cdot \land s}) b_s\big(\widetilde X^i_{\cdot \land s},L^N\big(\widetilde \X^N_{\cdot \land s},\widetilde\balpha^N_s\big),\widetilde\alpha^{i,N}_s\big) \d s + \int_u^t \sigma_s(\widetilde X^i_{\cdot \land s}) \d \big(W_s^{\hat\balpha^\smalltext{N},N,u,\omega}\big)^i, \; t \in [u,T], \; \P^{\hat\balpha^\smalltext{N},N,u}_\omega \text{\rm--a.s.}, \\
\notag\widetilde Y^{i,N}_t &= g\big(\widetilde X^i_{\cdot \land T}, L^N\big(\widetilde \X^N_{\cdot \land T})\big) + G\big(\varphi_1(\widetilde Xi_{\cdot \land T}),\varphi_2\big(L^N\big(\widetilde \X^N_{\cdot \land T}\big)\big)\big) + \int_t^T f_s\big(\widetilde X^i_{\cdot \land s},L^N\big(\widetilde \X^N_{\cdot \land s},\widetilde\balpha^N_s\big),\widetilde\alpha^{i,N}_s\big) \d s\\
\notag &\quad  - \int_t^T \partial^2_{m,n}G\big(\widetilde M^{i,\star,N}_s,\widetilde N^{\star,N}_s\big) \sum_{\ell =1}^N \widetilde Z^{i,m,\ell,\star,N}_s \cdot \widetilde Z^{n,\ell,\star,N}_s \d s \\
\notag &\quad - \frac{1}{2} \int_t^T \partial^2_{m,m}G\big(\widetilde M^{i,\star,N}_s,\widetilde N^{\star,N}_s\big) \sum_{\ell =1}^N \big\|\widetilde Z^{i,m,\ell,\star,N}_s\big\|^2 \d s - \frac{1}{2} \int_t^T \partial^2_{n,n}G\big(\widetilde M^{i,\star,N}_s,\widetilde N^{\star,N}_s\big) \sum_{\ell =1}^N \big\|\widetilde Z^{n,\ell,\star,N}_s\big\|^2 \d s \\
\notag &\quad - \int_t^T \sum_{\ell =1}^N \widetilde Z^{i,\ell,N}_s \cdot \d \big(W_s^{\hat\balpha^\smalltext{N},N,u,\omega}\big)^\ell, \; t \in [u,T], \; \P^{\hat\balpha^\smalltext{N},N,u}_\omega \text{\rm--a.s.}, \\
\notag \widetilde M^{i,\star,N}_t & = \varphi_1(\widetilde X^i_{\cdot \land T}) - \int_t^T \sum_{\ell =1}^N \widetilde Z^{i,m,\ell,\star,N}_s \cdot \d \big(W_s^{\hat\balpha^\smalltext{N},N,u,\omega}\big)^\ell, \; t \in [u,T], \; \P^{\hat\balpha^\smalltext{N},N,u}_\omega \text{\rm--a.s.}, \\
\notag\widetilde N^{\star,N}_t & = \varphi_2\big(L^N(\widetilde \X^N_{\cdot \land T})\big) - \int_t^T \sum_{\ell =1}^N \widetilde Z^{n,\ell,\star,N}_s \cdot \d \big(W_s^{\hat\balpha^\smalltext{N},N,u,\omega}\big)^\ell, \; t \in [u,T], \; \P^{\hat\balpha^\smalltext{N},N,u}_\omega \text{\rm--a.s.}, \\
\widetilde \alpha^{i,N}_t &\coloneqq \Lambda_t\big(\widetilde X^{i,N}_{\cdot \land t}, L^N(\widetilde \X^N_{\cdot \land t}), \widetilde Z^{i,i,N}_t, \widetilde Z^{i,m,i,\star,N}_t, \widetilde Z^{n,i,\star,N}_t, 0\big), \; \mathrm{d}t\otimes\P^{\hat\alpha,N,u}_\omega \text{\rm--a.e.}
\end{align}

\subsubsection{The auxiliary system as a bridge from the finitely many player game}\label{subsubsection:fromNplayerGameToAuxiliary}

In this section, we derive estimates for the {\rm FBSDE} system corresponding to the difference between the system in \Cref{align:systemNplayerGame_rcpd} and the intermediate system in \Cref{align:intermediateSystemNplayerGame}. These estimates are obtained through repeated applications of It\^o's formula. In what follows, for any process $\eta^i \in \big\{X^i,Y^{i,N},M^{i,\star,N},N^{\star,N}, \Z^{i,N},\Z^{i,m,N},\Z^{n,N}\big\}$, $i \in \{1,\ldots,N\}$, we denote
\begin{align*}
\delta \eta^i_t = \eta^i_t - \widetilde{\eta}^i_t, \; t \in [u,T].
\end{align*}
Additionally, we introduce a constant $\beta >0$, whose value will be specified at the end of the section.

\medskip
\textbf{Step 1: estimates for the martingale terms}

\medskip
We fix an index $i \in \{1,\ldots,N\}$ and apply It\^o's formula to the process $\mathrm{e}^{\beta t} \big|\delta M^{i,\star,N}_t\big|^2$, for $t \in [u,T]$, to obtain that, for $\P\text{\rm--a.e.} \; \omega\in\Omega$,
\begin{align}\label{align:MIto_firstDifference}
\notag &\mathrm{e}^{\beta t} \big|\delta M^{i,\star,N}_t\big|^2 + \int_t^T \mathrm{e}^{\beta s} \sum_{\ell\in \{1,\ldots,N\}} \big\|\delta Z^{i,m,\ell,\star,N}_s\big\|^2 \d s \\
\notag &= \mathrm{e}^{\beta T} \big| \varphi_1(X^{i}_{\cdot \land T}) - \varphi_1(\widetilde X^{i}_{\cdot \land T}) \big|^2 - \beta \int_t^T \mathrm{e}^{\beta s} \big|\delta M^{i,\star,N}_s\big|^2 \d s \\
\notag &\quad- 2 \int_t^T \mathrm{e}^{\beta s} \delta M^{i,\star,N}_s \sum_{\ell=1}^N \delta Z^{i,m,\ell,\star,N}_s \cdot \d \big(W_s^{\hat\balpha^\smalltext{N},N,u,\omega}\big)^\ell\\ 
&\leq \ell^2_{\varphi_\smalltext{1}} \mathrm{e}^{\beta T} \big\|\delta X^{i}_{\cdot \land T}\big\|^2_\infty - 2 \int_t^T \mathrm{e}^{\beta s} \delta M^{i,\star,N}_s \sum_{\ell=1}^N \delta Z^{i,m,\ell,\star,N}_s \cdot \d \big(W_s^{\hat\balpha^\smalltext{N},N,u,\omega}\big)^\ell, \; t \in [u,T], \; \P^{\hat\balpha^\smalltext{N},N,u}_\omega \text{\rm--a.s.},
\end{align}
where the inequality is a consequence of \Cref{assumpConvThm}.\ref{lip_gGf}, which establishes the Lipschitz-continuity of $\varphi_1$. For any $\eta>0$, the Burkholder--Davis--Gundy's inequality with constant $c_{1,{\smallertext{\rm BDG}}}$ independent of both $N \in \N^\star$ and $\omega \in \Omega$, as given in \citeauthor*{osekowski2010sharp} \cite[Theorem 1.2]{osekowski2010sharp}, together with Young's inequality, yields
\begin{align*}
\E^{\P^{\smalltext{\hat\balpha}^\tinytext{N}\smalltext{,}\smalltext{N}\smalltext{,}\smalltext{u}}_\smalltext{\omega}} \bigg[ \sup_{t \in [u,T]} \mathrm{e}^{\beta t} \big|\delta M^{i,\star,N}_t\big|^2 \bigg]&\leq \ell^2_{\varphi_\smalltext{1}} \E^{\P^{\smalltext{\hat\balpha}^\tinytext{N}\smalltext{,}\smalltext{N}\smalltext{,}\smalltext{u}}_\smalltext{\omega}} \Big[ \mathrm{e}^{\beta T} \big\|\delta X^{i}_{\cdot \land T}\big\|^2_\infty \Big]+\eta 4c^2_{1,\smallertext{\rm BDG}}\E^{\P^{\smalltext{\hat\balpha}^\tinytext{N}\smalltext{,}\smalltext{N}\smalltext{,}\smalltext{u}}_\smalltext{\omega}} \bigg[ \sup_{t \in [u,T]} \mathrm{e}^{\beta t} \big|\delta M^{i,\star,N}_t\big|^2\bigg] \\
&\quad+ \frac1\eta\E^{\P^{\smalltext{\hat\balpha}^\tinytext{N}\smalltext{,}\smalltext{N}\smalltext{,}\smalltext{u}}_\smalltext{\omega}} \Bigg[  \int_u^T \mathrm{e}^{\beta t} \sum_{\ell =1}^N \big\|\delta Z^{i,m,\ell,\star,N}_t\big\|^2 \d t \Bigg], \; \P\text{\rm--a.e.} \; \omega\in\Omega.
\end{align*}
For any $\eta\in (0,1/(4c^2_{1,\smallertext{\rm BDG}}))$, it holds that
\begin{align}\label{align:MboundSup}
&\notag \E^{\P^{\smalltext{\hat\balpha}^\tinytext{N}\smalltext{,}\smalltext{N}\smalltext{,}\smalltext{u}}_\smalltext{\omega}} \bigg[ \sup_{t \in [u,T]} \mathrm{e}^{\beta t} \big|\delta M^{i,\star,N}_t\big|^2 \bigg]\\
&\leq \frac{1}{(1-\eta4c^2_{1,\smallertext{\rm BDG}})} \Bigg( \ell^2_{\varphi_\smalltext{1}} \E^{\P^{\smalltext{\hat\balpha}^\tinytext{N}\smalltext{,}\smalltext{N}\smalltext{,}\smalltext{u}}_\smalltext{\omega}} \Big[ \mathrm{e}^{\beta T} \big\|\delta X^{i}_{\cdot \land T}\big\|^2_\infty \Big] + \frac1\eta\E^{\P^{\smalltext{\hat\balpha}^\tinytext{N}\smalltext{,}\smalltext{N}\smalltext{,}\smalltext{u}}_\smalltext{\omega}} \Bigg[  \int_u^T \mathrm{e}^{\beta t} \sum_{\ell =1}^N \big\|\delta Z^{i,m,\ell,\star,N}_t\big\|^2 \d t \Bigg] \Bigg), \; \P\text{\rm--a.e.} \; \omega\in\Omega.
\end{align}
The right-hand side is finite, being the sum of two finite terms. The first term is finite due to the boundedness of the drift function $b$, which, together with \Cref{assumpConvThm}.\ref{lipSigma}, guarantees that $\|\delta X^{i}_{\cdot \land T}\|_\infty$ has finite moments of any order under any probability measure $\P^{{\balpha}^\smalltext{N}{,}{N}{,}{u}}_{\omega}$ for $\balpha \in A^N_N$. The second term is finite by the estimates in \Cref{necessity_toBSDE} and \Cref{assumpConvThm}.\ref{auxiliarySystem}. Consequently, the stochastic integral in \eqref{align:MIto_firstDifference} is an $(\F_N,\P^{{\hat\balpha}^\smalltext{N}{,}{N}{,}{u}}_{\omega})$-martingale since
\begin{align*}
&\E^{\P^{\smalltext{\hat\balpha}^\tinytext{N}\smalltext{,}\smalltext{N}\smalltext{,}\smalltext{u}}_\smalltext{\omega}} \Bigg[ \sup_{t \in [u,T]} \Bigg| \int_u^t \mathrm{e}^{\beta t} \delta M^{i,\star,N}_t \sum_{\ell=1}^N \delta Z^{i,m,\ell,\star,N}_t \cdot \d \big(W_t^{\hat\balpha^\smalltext{N},N,u,\omega}\big)^\ell \Bigg| \Bigg] \\
&\leq \frac{c_{1,\smallertext{\rm BDG}}}{2} \E^{\P^{\smalltext{\hat\balpha}^\tinytext{N}\smalltext{,}\smalltext{N}\smalltext{,}\smalltext{u}}_\smalltext{\omega}} \Bigg[ \sup_{t \in [u,T]} \mathrm{e}^{\beta t} \big|\delta M^{i,\star,N}_t\big|^2 + \int_u^T \mathrm{e}^{\beta t} \sum_{\ell =1}^N \big\|\delta Z^{i,m,\ell,\star,N}_t\big\|^2 \d t \Bigg],\; \P\text{\rm--a.e.} \; \omega\in\Omega.
\end{align*}
As a result, from \Cref{align:MIto_firstDifference}, we obtain
\begin{equation*}
\E^{\P^{\smalltext{\hat\balpha}^\tinytext{N}\smalltext{,}\smalltext{N}\smalltext{,}\smalltext{u}}_\smalltext{\omega}} \Bigg[  \int_u^T \mathrm{e}^{\beta t} \sum_{\ell =1}^N \big\|\delta Z^{i,m,\ell,\star,N}_t\big\|^2 \d t \Bigg] \leq \ell^2_{\varphi_\smalltext{1}} \E^{\P^{\smalltext{\hat\balpha}^\tinytext{N}\smalltext{,}\smalltext{N}\smalltext{,}\smalltext{u}}_\smalltext{\omega}} \Big[ \mathrm{e}^{\beta T} \big\|\delta X^{i}_{\cdot \land T}\big\|^2_\infty \Big], \; \P\text{\rm--a.e.} \; \omega\in\Omega,
\end{equation*}
which, combined with the estimate in \eqref{align:MboundSup}, also implies
\begin{align*}
\E^{\P^{\smalltext{\hat\balpha}^\tinytext{N}\smalltext{,}\smalltext{N}\smalltext{,}\smalltext{u}}_\smalltext{\omega}} \bigg[ \sup_{t \in [u,T]} \mathrm{e}^{\beta t} \big|\delta M^{i,\star,N}_t\big|^2 \bigg]\leq \frac{(1+\eta)\ell^2_{\varphi_{\smalltext{1}}}}{\eta(1-\eta4c^2_{1,\smallertext{\rm BDG}})}\E^{\P^{\smalltext{\hat\balpha}^\tinytext{N}\smalltext{,}\smalltext{N}\smalltext{,}\smalltext{u}}_\smalltext{\omega}} \Big[ \mathrm{e}^{\beta T} \big\|\delta X^{i}_{\cdot \land T}\big\|^2_\infty \Big], \; \P\text{\rm--a.e.} \; \omega\in\Omega,
\end{align*}
for any $\eta\in (0,1/(4c^2_{1,\smallertext{\rm BDG}}))$. We may therefore conclude that
\begin{align}\label{align:deltaM_finalEst}
\notag&\E^{\P^{\smalltext{\hat\balpha}^\tinytext{N}\smalltext{,}\smalltext{N}\smalltext{,}\smalltext{u}}_\smalltext{\omega}} \Bigg[ \sup_{t \in [u,T]} \mathrm{e}^{\beta t} \big|\delta M^{i,\star,N}_t\big|^2 + \int_u^T \mathrm{e}^{\beta t} \sum_{\ell =1}^N \big\|\delta Z^{i,m,\ell,\star,N}_t\big\|^2 \d t \Bigg] \\
&\leq \ell^2_{\varphi_{\smalltext{1}}}\underbrace{\bigg(1+\frac{1+\eta}{\eta(1-4c^2_{1,\smallertext{\rm BDG}}\eta)}\bigg)}_{\eqqcolon c^\smalltext{\star}(\eta) }\E^{\P^{\smalltext{\hat\balpha}^\tinytext{N}\smalltext{,}\smalltext{N}\smalltext{,}\smalltext{u}}_\smalltext{\omega}} \Big[ \mathrm{e}^{\beta T} \big\|\delta X^{i}_{\cdot \land T}\big\|^2_\infty \Big]\leq \ell^2_{\varphi_{\smalltext{1}}}c^\star \E^{\P^{\smalltext{\hat\balpha}^\tinytext{N}\smalltext{,}\smalltext{N}\smalltext{,}\smalltext{u}}_\smalltext{\omega}} \Big[ \mathrm{e}^{\beta T} \big\|\delta X^{i}_{\cdot \land T}\big\|^2_\infty \Big], \; \P\text{\rm--a.e.} \; \omega\in\Omega,
\end{align}
where
\begin{equation}\label{eq:cStarConst}
c^\star\coloneqq \min_{\eta \in(0,1/(4c^\smalltext{2}_{\smalltext{1}\smalltext{,}\smalltext{\rm BDG}}{)}{)}}c^\star(\eta)=1+\Big(\sqrt{1+4c^2_{1,\smallertext{\rm BDG}}}+2c_{1,\smallertext{\rm BDG}}\Big)^2.
\end{equation}

\medskip
By analogous reasoning, the same argument applies to the process $\mathrm{e}^{\beta t} \big|\delta N^{\star,N}_t\big|^2$, for $t \in [u,T]$, which satisfies, for $\P\text{\rm--a.e.} \; \omega\in\Omega$
\begin{align*}
&\mathrm{e}^{\beta t} \big|\delta N^{\star,N}_t\big|^2 + \int_t^T \mathrm{e}^{\beta s} \sum_{\ell =1}^N \big\|\delta Z^{n,\ell,\star,N}_s\big\|^2 \d s \\
		&= \mathrm{e}^{\beta T} \big| \varphi_2\big(L^N\big(\X^N_{\cdot \land T}\big)\big) - \varphi_2\big(L^N\big(\widetilde \X^N_{\cdot \land T}\big)\big) \big|^2 - \beta \int_t^T \mathrm{e}^{\beta s} \big|\delta N^{\star,N}_s\big|^2 \d s - 2 \int_t^T \mathrm{e}^{\beta s} \delta N^{\star,N}_s \sum_{\ell =1}^N \delta Z^{n,\ell,\star,N}_s \cdot \d \big(W_s^{\hat\balpha^N,N,u,\omega}\big)^\ell \\ 
		&\leq \frac{\ell^2_{\varphi_\smalltext{2}}}{N} \mathrm{e}^{\beta T} \sum_{\ell =1}^N \big\|\delta X^{\ell}_{\cdot \land T}\big\|^2_\infty  - 2 \int_t^T \mathrm{e}^{\beta s} \delta N^{\star,N}_s \sum_{\ell =1}^N \delta Z^{n,\ell,\star,N}_s \cdot \d \big(W_s^{\hat\balpha^{\smalltext{N}},N,u,\omega}\big)^\ell, \; t \in [u,T], \; \P^{\hat\balpha^\smalltext{N},N,u}_\omega \text{\rm--a.s.}
\end{align*}
We then deduce the estimate
\begin{align}\label{align:deltaN_finalEst}
&\E^{\P^{\smalltext{\hat\balpha}^\tinytext{N}\smalltext{,}\smalltext{N}\smalltext{,}\smalltext{u}}_\smalltext{\omega}} \Bigg[ \sup_{t \in [u,T]} \mathrm{e}^{\beta t} \big|\delta N^{\star,N}_t\big|^2 + \int_u^T \mathrm{e}^{\beta t} \sum_{\ell =1}^N \big\|\delta Z^{n,\ell,\star,N}_t\big\|^2 \d t \Bigg]\leq \frac{\ell^2_{\varphi_{\smalltext{2}}}c^\star}{N} \E^{\P^{\smalltext{\hat\balpha}^\tinytext{N}\smalltext{,}\smalltext{N}\smalltext{,}\smalltext{u}}_\smalltext{\omega}} \Bigg[ \mathrm{e}^{\beta T} \sum_{\ell=1}^N \big\|\delta X^{\ell}_{\cdot \land T}\big\|^2_\infty \Bigg], \; \P\text{\rm--a.e.} \; \omega\in\Omega.
\end{align}

\medskip
\textbf{Step 2: estimates for the value processes}

\medskip
We fix a player index  $i \in \{1,\ldots,N\}$. Applying It\^o's formula to $\mathrm{e}^{\beta t} \big|\delta Y^{i,N}_t\big|^2$, for $t \in [u,T]$, yields that
\begin{align*}
&\mathrm{e}^{\beta t} \big|\delta Y^{i,N}_t\big|^2 + \int_t^T \mathrm{e}^{\beta s} \sum_{\ell =1}^N \big\|\delta Z^{i,\ell,N}_s\big\|^2 \d s \\
	&= \mathrm{e}^{\beta T} \Big| g\big(X^i_{\cdot \land T}, L^N\big(\X^N_{\cdot \land T}\big)\big) + G\big(\varphi_1(X^i_{\cdot \land T}),\varphi_2\big(L^N\big(\X^N_{\cdot \land T}\big)\big)\big) - g\big(\widetilde X^i_{\cdot \land T}, L^N\big(\widetilde \X^N_{\cdot \land T}\big)\big) - G\big(\varphi_1(\widetilde X^i_{\cdot \land T}),\varphi_2\big(L^N\big(\widetilde \X^N_{\cdot \land T}\big)\big)\big) \Big|^2 \\
&\quad- \beta \int_t^T \mathrm{e}^{\beta s} \big|\delta Y^{i,N}_s\big|^2 \d s + 2 \int_t^T \mathrm{e}^{\beta s} \delta Y^{i,N}_s \big( f_s\big(X^i_{\cdot \land s},L^N\big(\X^N_{\cdot \land s},\hat\balpha^N_s\big),\hat\alpha^{i,N}_s\big) - f_s\big(\widetilde X^i_{\cdot \land s},L^N\big(\widetilde \X^N_{\cdot \land s},\widetilde\balpha^N_s\big),\widetilde\alpha^{i,N}_s\big) \big) \d s \\
&\quad- 2 \int_t^T \mathrm{e}^{\beta s} \delta Y^{i,N}_s \bigg( \partial^2_{m,n}G\big(M^{i,\star,N}_s,N^{\star,N}_s\big) \sum_{\ell =1}^N Z^{i,m,\ell,\star,N}_s \cdot Z^{n,\ell,\star,N}_s -\partial^2_{m,n}G\big(\widetilde M^{i,\star,N}_s,\widetilde N^{\star,N}_s\big) \sum_{\ell =1}^N \widetilde Z^{i,m,\ell,\star,N}_s \cdot \widetilde Z^{n,\ell,\star,N}_s \bigg) \d s \\
&\quad- \int_t^T \mathrm{e}^{\beta s} \delta Y^{i,N}_s \bigg( \partial^2_{m,m}G\big(M^{i,\star,N}_s,N^{\star,N}_s\big) \sum_{\ell =1}^N \big\|Z^{i,m,\ell,\star,N}_s\big\|^2  - \partial^2_{m,m}G\big(\widetilde M^{i,\star,N}_s,\widetilde N^{\star,N}_s\big) \sum_{\ell =1}^N \big\|\widetilde Z^{i,m,\ell,\star,N}_s\big\|^2 \bigg) \d s \\
&\quad- \int_t^T \mathrm{e}^{\beta s} \delta Y^{i,N}_s \bigg( \partial^2_{n,n}G\big(M^{i,\star,N}_s,N^{\star,N}_s\big) \sum_{\ell =1}^N \big\|Z^{n,\ell,\star,N}_s\big\|^2  - \partial^2_{n,n}G\big(\widetilde M^{i,\star,N}_s,\widetilde N^{\star,N}_s\big) \sum_{\ell =1}^N \big\|\widetilde Z^{n,\ell,\star,N}_s\big\|^2 \bigg) \d s \\
&\quad- 2 \int_t^T \mathrm{e}^{\beta s} \delta Y^{i,N}_s \sum_{\ell =1}^N \delta Z^{i,\ell,N}_s \cdot \d \big(W_s^{\hat\balpha^{\smalltext{N}},N,u,\omega}\big)^\ell, \; t \in [u,T], \; \P^{\hat\balpha^\smalltext{N},N,u}_\omega \text{\rm--a.s.}, \; \text{for} \; \P\text{\rm--a.e.} \; \omega\in\Omega.
\end{align*}

We fix some $\eps_1>0$, and assume that $\beta \geq 3 \ell^2_f/\eps_1$. Moreover, by \Cref{assumpConvThm}.\ref{lip_gGf} and Young's inequality, it follows that 
\begin{align*}
&\mathrm{e}^{\beta t} \big|\delta Y^{i,N}_t\big|^2 + \int_t^T \mathrm{e}^{\beta s} \sum_{\ell =1}^N \big\|\delta Z^{i,\ell,N}_s\big\|^2 \d s \\
	&\leq 2 \ell_{g+G,\varphi_\smalltext{1},\varphi_\smalltext{2}}^2 \mathrm{e}^{\beta T} \Big( \big\|\delta X^i_{\cdot \land T}\big\|^2_\infty + \cW_2^2\big(L^N\big(\X^N_{\cdot \land T}\big),L^N\big(\widetilde \X^N_{\cdot \land T}\big)\big) \Big) \\
&\quad+ \varepsilon_1 \int_t^T \mathrm{e}^{\beta s} \Big( \big\|\delta X^i_{\cdot \land s}\big\|^2_\infty + \cW^2_2\big(L^N\big(\X^N_{\cdot \land s},\hat\balpha^N_s\big),L^N\big(\widetilde \X^N_{\cdot \land s},\widetilde\balpha^N_s\big)\big) + d^2_A\big(\hat\alpha^{i,N}_s,\widetilde\alpha^{i,N}_s\big) \Big) \d s \\
&\quad- 2 \int_t^T \mathrm{e}^{\beta s} \delta Y^{i,N}_s \partial^2_{m,n}G\big(M^{i,\star,N}_s,N^{\star,N}_s\big) \sum_{\ell =1}^N Z^{i,m,\ell,\star,N}_s \cdot \delta Z^{n,\ell,\star,N}_s \d s \\
&\quad- 2 \int_t^T \mathrm{e}^{\beta s} \delta Y^{i,N}_s \partial^2_{m,n}G\big(M^{i,\star,N}_s,N^{\star,N}_s\big) \sum_{\ell =1}^N \widetilde Z^{n,\ell,\star,N}_s \cdot \delta Z^{i,m,\ell,\star,N}_s \d s \\
&\quad- 2 \int_t^T \mathrm{e}^{\beta s} \delta Y^{i,N}_s \Big( \partial^2_{m,n}G\big(M^{i,\star,N}_s,N^{\star,N}_s\big) - \partial^2_{m,n}G\big(\widetilde M^{i,\star,N}_s,\widetilde N^{\star,N}_s\big) \Big) \sum_{\ell =1}^N \widetilde Z^{i,m,\ell,\star,N}_s \cdot \widetilde Z^{n,\ell,\star,N}_s \d s \\
&\quad- \int_t^T \mathrm{e}^{\beta s} \delta Y^{i,N}_s \partial^2_{m,m}G\big(M^{i,\star,N}_s,N^{\star,N}_s\big) \sum_{\ell =1}^N Z^{i,m,\ell,\star,N}_s \cdot \delta Z^{i,m,\ell,\star,N}_s \d s \\
&\quad- \int_t^T \mathrm{e}^{\beta s} \delta Y^{i,N}_s \partial^2_{m,m}G\big(M^{i,\star,N}_s,N^{\star,N}_s\big) \sum_{\ell =1}^N \widetilde Z^{i,m,\ell,\star,N}_s \cdot \delta Z^{i,m,\ell,\star,N}_s \d s \\
&\quad- \int_t^T \mathrm{e}^{\beta s} \delta Y^{i,N}_s \Big(\partial^2_{m,m}G\big(M^{i,\star,N}_s,N^{\star,N}_s\big) - \partial^2_{m,m}G\big(\widetilde M^{i,\star,N}_s,\widetilde N^{\star,N}_s\big)\Big) \sum_{\ell =1}^N \big\|\widetilde Z^{i,m,\ell,\star,N}_s\big\|^2 \d s \\
&\quad- \int_t^T \mathrm{e}^{\beta s} \delta Y^{i,N}_s \partial^2_{n,n}G\big(M^{i,\star,N}_s,N^{\star,N}_s\big) \sum_{\ell=1}^N Z^{n,\ell,\star,N}_s \cdot \delta Z^{n,\ell,\star,N}_s \d s \\
&\quad- \int_t^T \mathrm{e}^{\beta s} \delta Y^{i,N}_s \partial^2_{n,n}G\big(M^{i,\star,N}_s,N^{\star,N}_s\big) \sum_{\ell =1}^N \widetilde Z^{n,\ell,\star,N}_s \cdot \delta Z^{n,\ell,\star,N}_s \d s \\
&\quad- \int_t^T \mathrm{e}^{\beta s} \delta Y^{i,N}_s \Big( \partial^2_{n,n}G\big(M^{i,\star,N}_s,N^{\star,N}_s\big) - \partial^2_{n,n}G\big(\widetilde M^{i,\star,N}_s,\widetilde N^{\star,N}_s\big) \Big) \sum_{\ell =1}^N \big\|\widetilde Z^{n,\ell,\star,N}_s\big\|^2 \d s \\
&\quad- 2 \int_t^T \mathrm{e}^{\beta s} \delta Y^{i,N}_s \sum_{\ell =1}^N \delta Z^{i,\ell,N}_s \cdot \d \big(W_s^{\hat\balpha^\smalltext{N},N,u,\omega}\big)^\ell \\
	&\leq 2 \ell_{g+G,\varphi_\smalltext{1},\varphi_\smalltext{2}}^2 \mathrm{e}^{\beta T} \Big( \big\|\delta X^i_{\cdot \land T}\big\|^2_\infty + \cW_2^2\big(L^N\big(\X^N_{\cdot \land T}\big),L^N\big(\widetilde \X^N_{\cdot \land T}\big)\big) \Big) \\
&\quad+ \varepsilon_1 \int_t^T \mathrm{e}^{\beta s} \Big( \big\|\delta X^i_{\cdot \land s}\big\|^2_\infty + \cW^2_2\big(L^N\big(\X^N_{\cdot \land s},\hat\balpha^N_s\big),L^N\big(\widetilde \X^N_{\cdot \land s},\widetilde\balpha^N_s\big)\big) + d^2_A\big(\hat\alpha^{i,N}_s,\widetilde\alpha^{i,N}_s\big) \Big) \d s \\
&\quad+ 2 c_{\partial^\smalltext{2}G} \int_t^T \mathrm{e}^{\beta s} \bigg| \d \bigg\langle \int_0^\cdot \delta Y^{i,N}_{r} \d M^{i,\star,N}_{r} , \delta N^{\star,N} \bigg\rangle_s \bigg| + 2 c_{\partial^\smalltext{2}G} \int_t^T \mathrm{e}^{\beta s} \bigg| \d \bigg\langle \int_0^\cdot \delta Y^{i,N}_{r} \d \widetilde N^{\star,N}_{r} , \delta M^{i,\star,N} \bigg\rangle_s \bigg| \\
&\quad+ 2 \int_t^T \mathrm{e}^{\beta s} \bigg| \d \bigg\langle \int_0^\cdot \delta Y^{i,N}_{r} \d \widetilde M^{i,\star,N}_{r} , \int_0^\cdot \Big( \partial^2_{m,n}G\big(M^{i,\star,N}_{r},N^{\star,N}_{r}\big) - \partial^2_{m,n}G\big(\widetilde M^{i,\star,N}_{r},\widetilde N^{\star,N}_{r}\big) \Big) \d \widetilde N^{\star,N}_{r} \bigg\rangle_s \bigg| \\
&\quad+ c_{\partial^\smalltext{2}G} \int_t^T \mathrm{e}^{\beta s} \bigg| \d \bigg\langle \int_0^\cdot \delta Y^{i,N}_{r} \d M^{i,\star,N}_{r} , \delta M^{i,\star,N} \bigg\rangle_s \bigg| + c_{\partial^\smalltext{2}G} \int_t^T \mathrm{e}^{\beta s} \bigg| \d \bigg\langle \int_0^\cdot \delta Y^{i,N}_{r} \d \widetilde M^{i,\star,N}_{r} , \delta M^{i,\star,N} \bigg\rangle_s \bigg| \\
&\quad+ \int_t^T \mathrm{e}^{\beta s} \bigg| \d \bigg\langle \int_0^\cdot \delta Y^{i,N}_{r}\d \widetilde M^{i,\star,N}_{r} , \int_0^\cdot \Big( \partial^2_{m,m}G\big(M^{i,\star,N}_{r},N^{\star,N}_{r}\big) - \partial^2_{m,m}G\big(\widetilde M^{i,\star,N}_{r},\widetilde N^{\star,N}_{r}\big) \Big) \d \widetilde M^{i,\star,N}_{r} \bigg\rangle_s \bigg| \\
&\quad+ c_{\partial^\smalltext{2}G} \int_t^T \mathrm{e}^{\beta s} \bigg| \d \bigg\langle \int_0^\cdot \delta Y^{i,N}_{r} \d N^{\star,N}_{r} , \delta N^{\star,N} \bigg\rangle_s \bigg| + c_{\partial^\smalltext{2}G} \int_t^T \mathrm{e}^{\beta s} \bigg| \d \bigg\langle \int_0^\cdot \delta Y^{i,N}_{r} \d \widetilde N^{\star,N}_{r} , \delta N^{\star,N} \bigg\rangle_s \bigg| \\
&\quad+ \int_t^T \mathrm{e}^{\beta s} \bigg| \d \bigg\langle \int_0^\cdot \delta Y^{i,N}_{r} \d \widetilde N^{\star,N}_{r} , \int_0^\cdot \Big( \partial^2_{n,n}G\big(M^{i,\star,N}_{r},N^{\star,N}_{r}\big) - \partial^2_{n,n}G\big(\widetilde M^{i,\star,N}_{r},\widetilde N^{\star,N}_{r}\big) \Big) \d \widetilde N^{\star,N}_{r} \bigg\rangle_s \bigg| \\
&\quad- 2 \int_t^T \mathrm{e}^{\beta s} \delta Y^{i,N}_s \sum_{\ell =1}^N \delta Z^{i,\ell,N}_s \cdot \d \big(W_s^{\hat\balpha^\smalltext{N},N,u,\omega}\big)^\ell, \; t \in [u,T], \; \P^{\hat\balpha^\smalltext{N},N,u}_\omega \text{\rm--a.s.}, \; \text{for} \; \P\text{\rm--a.e.} \; \omega\in\Omega.
\end{align*}
The last inequality follows from the fact that $\partial_{m,m}^2 G(M^{i,\star,N}, N^{\star,N})$, $\partial_{m,n}^2 G(M^{i,\star,N}, N^{\star,N})$, and $\partial_{n,n}^2 G(M^{i,\star,N}, N^{\star,N})$ are uniformly bounded by a constant $c_{\partial^2 G} > 0$ that does not depend on $N \in \N^\star$ or $\omega \in \Omega$. This, in turn, follows from the continuity of the second-order derivatives of the function $G$ stated in \Cref{assumpConvThm}.\ref{boundedSecondDerivativeG} and the boundedness of the processes $M^{i,\star,N}$ and $N^{\star,N}$, which can be deduced from the estimates in \textbf{Step 1} and the boundedness of the functions $\varphi_1$ and $\varphi_2$ given in \Cref{assumpConvThm}.\ref{boundedPhi_12}. Furthermore, the Lipschitz-continuity assumption on $\Lambda$, stated in \Cref{assumpConvThm}.\ref{lipLambda_growthAleph}, implies that 
\begin{align*}
&d^2_A\big(\hat\alpha^{i,N}_t,\widetilde \alpha^{i,N}_t\big) \\
&= d^2_A\Big(\Lambda_t\big(X^i_{\cdot \land t}, L^N\big(\X^N_{\cdot \land t}\big), Z^{i,i,N}_t, Z^{i,m,i,\star,N}_t, Z^{n,i,\star,N}_t, \aleph^{i,N}_t\big), \Lambda_t\big(\widetilde X^i_{\cdot \land t}, L^N\big(\widetilde \X^N_{\cdot \land t}\big), \widetilde Z^{i,i,N}_t, \widetilde Z^{i,m,i,\star,N}_t, \widetilde Z^{n,i,\star,N}_t, 0\big)\Big) \\
&\leq 6 \ell_\Lambda^2 \Big( \big\|\delta X^i_{\cdot \land t}\big\|^2_\infty + \cW^2_2\big(L^N\big(\X^N_{\cdot \land t}\big),L^N\big(\widetilde \X^N_{\cdot \land t}\big)\big) + \big\|\delta Z^{i,i,N}_t\big\|^2 + \big\|\delta Z^{i,m,i,\star,N}_t\big\|^2 + \big\|\delta Z^{n,i,\star,N}_t\big\|^2 + \big|\aleph^{i,N}_t\big|^2 \Big) \\
&\leq 6 \ell_\Lambda^2 \Bigg( \big\|\delta X^i_{\cdot \land t}\big\|^2_\infty + \frac{1}{N} \sum_{\ell=1}^N \big\|\delta X^{\ell,N}_{\cdot \land t}\big\|^2_\infty + \big\|\delta Z^{i,i,N}_t\big\|^2 + \big\|\delta Z^{i,m,i,\star,N}_t\big\|^2 + \big\|\delta Z^{n,i,\star,N}_t\big\|^2 + \big|\aleph^{i,N}_t\big|^2 \Bigg).
\end{align*}

It follows that
\begin{align}\label{eq:YbeforeEverything}
\notag&\mathrm{e}^{\beta t} \big|\delta Y^{i,N}_t\big|^2 + \int_t^T \mathrm{e}^{\beta s} \sum_{\ell =1}^N \big\|\delta Z^{i,\ell,N}_s\big\|^2 \d s \\
	\notag&\leq 2 \ell_{g+G,\varphi_\smalltext{1},\varphi_\smalltext{2}}^2 \mathrm{e}^{\beta T} \Bigg( \big\|\delta X^i_{\cdot \land T}\big\|^2_\infty + \frac{1}{N} \sum_{\ell=1}^N \big\|\delta X^{\ell}_{\cdot \land T}\big\|^2_\infty \Bigg) + \varepsilon_1 \int_t^T \mathrm{e}^{\beta s} \big\|\delta X^{i}_{\cdot \land s}\big\|^2_\infty \d s + \frac{\varepsilon_1}{N} \int_t^T \mathrm{e}^{\beta s} \sum_{\ell =1}^N \|\delta X^{\ell}_{\cdot \land s} \|^2_{\infty} \d s \\
\notag&\quad+ \varepsilon_1 6 \ell^2_\Lambda \int_t^T \mathrm{e}^{\beta s} \Bigg( \big\|\delta X^{i}_{\cdot \land s}\big\|^2_\infty + \frac{1}{N} \sum_{\ell =1}^N \big\|\delta X^{\ell}_{\cdot \land s}\big\|^2_\infty + \big\|\delta Z^{i,i,N}_s\big\|^2 + \big\|\delta Z^{i,m,i,\star,N}_s\big\|^2 + \big\|\delta Z^{n,i,\star,N}_s\big\|^2 + \big|\aleph^{i,N}_s\big|^2 \Bigg) \d s \\
\notag&\quad+ \frac{\varepsilon_1 6 \ell^2_\Lambda}{N} \int_t^T \mathrm{e}^{\beta s} \sum_{\ell =1}^N \Big( 2 \big\|\delta X^{\ell}_{\cdot \land s}\big\|^2_\infty + \big\|\delta Z^{\ell,\ell,N}_s\big\|^2 + \big\|\delta Z^{\ell,m,\ell,\star,N}_s\big\|^2 + \big\|\delta Z^{n,\ell,\star,N}_s\big\|^2 + \big|\aleph^{\ell,N}_s\big|^2 \Big) \d s \\
\notag&\quad+ 2 c_{\partial^\smalltext{2}G} \int_t^T \mathrm{e}^{\beta s} \bigg| \d \bigg\langle \int_0^\cdot \delta Y^{i,N}_{r} \d M^{i,\star,N}_{r} , \delta N^{\star,N} \bigg\rangle_s \bigg| + 2 c_{\partial^\smalltext{2}G} \int_t^T \mathrm{e}^{\beta s} \bigg| \d \bigg\langle \int_0^\cdot \delta Y^{i,N}_{r} \d \widetilde N^{\star,N}_{r} , \delta M^{i,\star,N} \bigg\rangle_s \bigg| \\
\notag&\quad+ 2 \int_t^T \mathrm{e}^{\beta s} \bigg| \d \bigg\langle \int_0^\cdot \delta Y^{i,N}_{r} \d \widetilde M^{i,\star,N}_{r} , \int_0^\cdot \Big( \partial^2_{m,n}G\big(M^{i,\star,N}_{r},N^{\star,N}_{r}\big) - \partial^2_{m,n}G\big(\widetilde M^{i,\star,N}_{r},\widetilde N^{\star,N}_{r}\big) \Big) \d \widetilde N^{\star,N}_{r} \bigg\rangle_s \bigg| \\
\notag&\quad+ c_{\partial^\smalltext{2}G} \int_t^T \mathrm{e}^{\beta s} \bigg| \d \bigg\langle \int_0^\cdot \delta Y^{i,N}_{r} \d M^{i,\star,N}_{r} , \delta M^{i,\star,N} \bigg\rangle_s \bigg| + c_{\partial^\smalltext{2}G} \int_t^T \mathrm{e}^{\beta s} \bigg| \d \bigg\langle \int_0^\cdot \delta Y^{i,N}_{r} \d \widetilde M^{i,\star,N}_{r} , \delta M^{i,\star,N} \bigg\rangle_s \bigg| \\
\notag&\quad+ \int_t^T \mathrm{e}^{\beta s} \bigg| \d \bigg\langle \int_0^\cdot \delta Y^{i,N}_{r} \d \widetilde M^{i,\star,N}_{r} , \int_0^\cdot \Big( \partial^2_{m,m}G\big(M^{i,\star,N}_{r},N^{\star,N}_{r}\big) - \partial^2_{m,m}G\big(\widetilde M^{i,\star,N}_{r},\widetilde N^{\star,N}_{r}\big) \Big) \d \widetilde M^{i,\star,N}_{r} \bigg\rangle_s \bigg| \\
\notag&\quad+ c_{\partial^\smalltext{2}G} \int_t^T \mathrm{e}^{\beta s} \bigg| \d \bigg\langle \int_0^\cdot \delta Y^{i,N}_{r} \d N^{\star,N}_{r} , \delta N^{\star,N} \bigg\rangle_s \bigg| + c_{\partial^\smalltext{2}G} \int_t^T \mathrm{e}^{\beta s} \bigg| \d \bigg\langle \int_0^\cdot \delta Y^{i,N}_{r} \d \widetilde N^{\star,N}_{r} , \delta N^{\star,N} \bigg\rangle_s \bigg| \\
\notag&\quad+ \int_t^T \mathrm{e}^{\beta s} \bigg| \d \bigg\langle \int_0^\cdot \delta Y^{i,N}_{r}\d \widetilde N^{\star,N}_{r} , \int_0^\cdot \Big( \partial^2_{n,n}G\big(M^{i,\star,N}_{r},N^{\star,N}_{r}\big) - \partial^2_{n,n}G\big(\widetilde M^{i,\star,N}_{r},\widetilde N^{\star,N}_{r}\big) \Big) \d \widetilde N^{\star,N}_{r} \bigg\rangle_s \bigg| \\
&\quad- 2 \int_t^T \mathrm{e}^{\beta s} \delta Y^{i,N}_s\sum_{\ell =1}^N \delta Z^{i,\ell,N}_s \cdot \d \big(W_s^{\hat\balpha^\smalltext{N},N,u,\omega}\big)^\ell, \; t \in [u,T], \; \P^{\hat\balpha^\smalltext{N},N,u}_\omega \text{\rm--a.s.}, \; \text{for} \; \P\text{\rm--a.e.} \; \omega\in\Omega.
\end{align}

\medskip
Given that
\begin{align*}
&\E^{\P^{\smalltext{\hat\balpha}^\tinytext{N}\smalltext{,}\smalltext{N}\smalltext{,}\smalltext{u}}_\smalltext{\omega}} \Bigg[ \sup_{t \in [u,T]} \Bigg| \int_u^t \mathrm{e}^{\beta t} \delta Y^{i,N}_t \sum_{\ell=1}^N \delta Z^{i,\ell,N}_t \cdot \d \big(W_t^{\hat\balpha^\smalltext{N},N,u,\omega}\big)^\ell \Bigg| \Bigg] \\
&\leq \frac{c_{1,\smallertext{\rm BDG}}}{2} \E^{\P^{\smalltext{\hat\balpha}^\tinytext{N}\smalltext{,}\smalltext{N}\smalltext{,}\smalltext{u}}_\smalltext{\omega}} \Bigg[ \sup_{t \in [u,T]} \mathrm{e}^{\beta t} \big|\delta Y^{i,N}_t\big|^2 + \int_u^T \mathrm{e}^{\beta t} \sum_{\ell =1}^N \big\|\delta Z^{i,\ell,N}_t\big\|^2 \d t \Bigg] <+\infty,\; \P\text{\rm--a.e.} \; \omega\in\Omega,
\end{align*}
the stochastic integral in \eqref{eq:YbeforeEverything} is an $(\F_N,\P^{{\hat\balpha}^\smalltext{N}{,}{N}{,}{u}}_{\omega})$-martingale. Consequently,
\begin{align*}
&\E^{\P^{\smalltext{\hat\balpha}^\tinytext{N}\smalltext{,}\smalltext{N}\smalltext{,}\smalltext{u}}_\smalltext{\omega}} \Bigg[ \int_u^T \mathrm{e}^{\beta t} \sum_{\ell =1}^N \big\|\delta Z^{i,\ell,N}_t\big\|^2 \d t \Bigg] \\
		&\leq 2\ell_{g+G,\varphi_\smalltext{1},\varphi_\smalltext{2}}^2 \E^{\P^{\smalltext{\hat\balpha}^\tinytext{N}\smalltext{,}\smalltext{N}\smalltext{,}\smalltext{u}}_\smalltext{\omega}} \Bigg[ \mathrm{e}^{\beta T} \Bigg( \big\|\delta X^{i}_{\cdot \land T}\big\|^2_\infty + \frac{1}{N} \sum_{\ell =1}^N \big\|\delta X^{\ell}_{\cdot \land T}\big\|^2_\infty \Bigg) \Bigg] \\
&\quad+ \varepsilon_1 (1 + 6 \ell^2_\Lambda) \E^{\P^{\smalltext{\hat\balpha}^\tinytext{N}\smalltext{,}\smalltext{N}\smalltext{,}\smalltext{u}}_\smalltext{\omega}} \bigg[ \int_u^T \mathrm{e}^{\beta t} \big\|\delta X^{i}_{\cdot \land t}\big\|^2_\infty \d t \bigg]  + \frac{\varepsilon_1(1 + 18 \ell^2_\Lambda)}{N} \E^{\P^{\smalltext{\hat\balpha}^\tinytext{N}\smalltext{,}\smalltext{N}\smalltext{,}\smalltext{u}}_\smalltext{\omega}} \Bigg[ \int_u^T \mathrm{e}^{\beta t} \sum_{\ell =1}^N \big\|\delta X^{\ell}_{\cdot \land t} \big\|^2_{\infty} \d t \Bigg] \\
&\quad+ \varepsilon_1 6 \ell^2_\Lambda \E^{\P^{\smalltext{\hat\balpha}^\tinytext{N}\smalltext{,}\smalltext{N}\smalltext{,}\smalltext{u}}_\smalltext{\omega}} \bigg[ \int_u^T \mathrm{e}^{\beta t} \Big( \big\|\delta Z^{i,i,N}_t\big\|^2 + \big\|\delta Z^{i,m,i,\star,N}_t\big\|^2 + \big\|\delta Z^{n,i,\star,N}_t\big\|^2 + \big|\aleph^{i,N}_t\big|^2 \Big) \d t \bigg] \\
&\quad+ \frac{\varepsilon_1 6 \ell^2_\Lambda}{N} \E^{\P^{\smalltext{\hat\balpha}^\tinytext{N}\smalltext{,}\smalltext{N}\smalltext{,}\smalltext{u}}_\smalltext{\omega}} \Bigg[ \int_u^T \mathrm{e}^{\beta t} \sum_{\ell=1}^N \Big( \big\|\delta Z^{\ell,\ell,N}_t\big\|^2 + \big\|\delta Z^{\ell,m,\ell,\star,N}_t\big\|^2 + \big\|\delta Z^{n,\ell,\star,N}_t\big\|^2 + \big|\aleph^{\ell,N}_t\big|^2 \Big) \d t \Bigg] \\
&\quad+ 2 c_{\partial^\smalltext{2}G} \E^{\P^{\smalltext{\hat\balpha}^\tinytext{N}\smalltext{,}\smalltext{N}\smalltext{,}\smalltext{u}}_\smalltext{\omega}} \Bigg[ \int_u^T \mathrm{e}^{\beta t} \bigg| \d \bigg\langle \int_0^\cdot \delta Y^{i,N}_{r} \d M^{i,\star,N}_{r} , \delta N^{\star,N} \bigg\rangle_t \bigg| \Bigg] + 2 c_{\partial^\smalltext{2}G} \E^{\P^{\smalltext{\hat\balpha}^\tinytext{N}\smalltext{,}\smalltext{N}\smalltext{,}\smalltext{u}}_\smalltext{\omega}} \Bigg[ \int_u^T \mathrm{e}^{\beta t} \bigg| \d \bigg\langle \int_0^\cdot \delta Y^{i,N}_{r} \d \widetilde N^{\star,N}_{r} , \delta M^{i,\star,N} \bigg\rangle_t \bigg| \Bigg] \\
&\quad+ 2 \E^{\P^{\smalltext{\hat\balpha}^\tinytext{N}\smalltext{,}\smalltext{N}\smalltext{,}\smalltext{u}}_\smalltext{\omega}} \Bigg[ \int_u^T \mathrm{e}^{\beta t} \bigg| \d \bigg\langle \int_0^\cdot \delta Y^{i,N}_{r} \d \widetilde M^{i,\star,N}_{r} , \int_0^\cdot \Big( \partial^2_{m,n}G\big(M^{i,\star,N}_{r},N^{\star,N}_{r}\big) - \partial^2_{m,n}G\big(\widetilde M^{i,\star,N}_{r},\widetilde N^{\star,N}_{r}\big) \Big) \d \widetilde N^{\star,N}_{r} \bigg\rangle_t \bigg| \Bigg] \\
&\quad+ c_{\partial^\smalltext{2}G} \E^{\P^{\smalltext{\hat\balpha}^\tinytext{N}\smalltext{,}\smalltext{N}\smalltext{,}\smalltext{u}}_\smalltext{\omega}} \Bigg[ \int_u^T \mathrm{e}^{\beta t} \bigg| \d \bigg\langle \int_0^\cdot \delta Y^{i,N}_{r} \d M^{i,\star,N}_{r} , \delta M^{i,\star,N} \bigg\rangle_t \bigg| + c_{\partial^\smalltext{2}G}  \int_u^T \mathrm{e}^{\beta t} \bigg| \d \bigg\langle \int_0^\cdot \delta Y^{i,N}_{r} \d \widetilde M^{i,\star,N}_{r} , \delta M^{i,\star,N} \bigg\rangle_t \bigg| \Bigg] \\
&\quad+ \E^{\P^{\smalltext{\hat\balpha}^\tinytext{N}\smalltext{,}\smalltext{N}\smalltext{,}\smalltext{u}}_\smalltext{\omega}} \Bigg[ \int_u^T \mathrm{e}^{\beta t} \bigg| \d \bigg\langle \int_0^\cdot \delta Y^{i,N}_{r} \d \widetilde M^{i,\star,N}_{r} , \int_0^\cdot \Big( \partial^2_{m,m}G\big(M^{i,\star,N}_{r},N^{\star,N}_{r}\big) - \partial^2_{m,m}G\big(\widetilde M^{i,\star,N}_{r},\widetilde N^{\star,N}_{r}\big) \Big) \d \widetilde M^{i,\star,N}_{r} \bigg\rangle_t \bigg| \Bigg] \\
&\quad+ c_{\partial^\smalltext{2}G} \E^{\P^{\smalltext{\hat\balpha}^\tinytext{N}\smalltext{,}\smalltext{N}\smalltext{,}\smalltext{u}}_\smalltext{\omega}} \Bigg[ \int_u^T \mathrm{e}^{\beta t} \bigg| \d \bigg\langle \int_0^\cdot \delta Y^{i,N}_{r} \d N^{\star,N}_{r} , \delta N^{\star,N} \bigg\rangle_t \bigg| \Bigg] + c_{\partial^\smalltext{2}G} \E^{\P^{\smalltext{\hat\balpha}^\tinytext{N}\smalltext{,}\smalltext{N}\smalltext{,}\smalltext{u}}_\smalltext{\omega}} \Bigg[ \int_u^T \mathrm{e}^{\beta t} \bigg| \d \bigg\langle \int_0^\cdot \delta Y^{i,N}_{r} \d \widetilde N^{\star,N}_{r} , \delta N^{\star,N} \bigg\rangle_t \bigg| \Bigg] \\
&\quad+ \E^{\P^{\smalltext{\hat\balpha}^\tinytext{N}\smalltext{,}\smalltext{N}\smalltext{,}\smalltext{u}}_\smalltext{\omega}} \Bigg[ \int_u^T \mathrm{e}^{\beta t} \bigg| \d \bigg\langle \int_0^\cdot \delta Y^{i,N}_{r} \d \widetilde N^{\star,N}_{r} , \int_0^\cdot \Big( \partial^2_{n,n}G\big(M^{i,\star,N}_{r},N^{\star,N}_{r}\big) - \partial^2_{n,n}G\big(\widetilde M^{i,\star,N}_{r},\widetilde N^{\star,N}_{r}\big) \Big) \d \widetilde N^{\star,N}_{r} \bigg\rangle_t \bigg| \Bigg] \\
		&\leq 2 \ell_{g+G,\varphi_\smalltext{1},\varphi_\smalltext{2}}^2 \E^{\P^{\smalltext{\hat\balpha}^\tinytext{N}\smalltext{,}\smalltext{N}\smalltext{,}\smalltext{u}}_\smalltext{\omega}} \Bigg[ \mathrm{e}^{\beta T} \Bigg( \big\|\delta X^{i}_{\cdot \land T}\big\|^2_\infty + \frac{1}{N} \sum_{\ell =1}^N \big\|\delta X^{\ell}_{\cdot \land T}\big\|^2_\infty \Bigg) \Bigg] \\
&\quad+ \varepsilon_1 (1 + 6 \ell^2_\Lambda) \E^{\P^{\smalltext{\hat\balpha}^\tinytext{N}\smalltext{,}\smalltext{N}\smalltext{,}\smalltext{u}}_\smalltext{\omega}} \bigg[ \int_u^T \mathrm{e}^{\beta t} \big\|\delta X^{i}_{\cdot \land t}\big\|^2_\infty \d t \bigg]  + \frac{\varepsilon_1(1 + 18 \ell^2_\Lambda)}{N} \E^{\P^{\smalltext{\hat\balpha}^\tinytext{N}\smalltext{,}\smalltext{N}\smalltext{,}\smalltext{u}}_\smalltext{\omega}} \Bigg[ \int_u^T \mathrm{e}^{\beta t} \sum_{\ell =1}^N \big\|\delta X^{\ell}_{\cdot \land t} \big\|^2_{\infty} \d t \Bigg] \\
&\quad+ \varepsilon_1 6 \ell^2_\Lambda \E^{\P^{\smalltext{\hat\balpha}^\tinytext{N}\smalltext{,}\smalltext{N}\smalltext{,}\smalltext{u}}_\smalltext{\omega}} \bigg[ \int_u^T \mathrm{e}^{\beta t} \Big( \big\|\delta Z^{i,i,N}_t\big\|^2 + \big\|\delta Z^{i,m,i,\star,N}_t\big\|^2 + \big\|\delta Z^{n,i,\star,N}_t\big\|^2 + \big|\aleph^{i,N}_t\big|^2 \Big) \d t \bigg] \\
&\quad+ \frac{\varepsilon_1 6 \ell^2_\Lambda}{N} \E^{\P^{\smalltext{\hat\balpha}^\tinytext{N}\smalltext{,}\smalltext{N}\smalltext{,}\smalltext{u}}_\smalltext{\omega}} \Bigg[ \int_u^T \mathrm{e}^{\beta t} \sum_{\ell =1}^N \Big( \big\|\delta Z^{\ell,\ell,N}_t\big\|^2 + \big\|\delta Z^{\ell,m,\ell,\star,N}_t\big\|^2 + \big\|\delta Z^{n,\ell,\star,N}_t\big\|^2 + \big|\aleph^{\ell,N}_t\big|^2 \Big) \d t \Bigg] \\
&\quad+ 2 c_{\partial^\smalltext{2}G} \Bigg(\E^{\P^{\smalltext{\hat\balpha}^\tinytext{N}\smalltext{,}\smalltext{N}\smalltext{,}\smalltext{u}}_\smalltext{\omega}} \Bigg[ \int_u^T \mathrm{e}^{\beta t} \d \bigg\langle \int_0^\cdot \delta Y^{i,N}_{r} \d M^{i,\star,N}_{r} \bigg\rangle_t \Bigg]\Bigg)^{\frac{1}{2}} \Bigg(\E^{\P^{\smalltext{\hat\balpha}^\tinytext{N}\smalltext{,}\smalltext{N}\smalltext{,}\smalltext{u}}_\smalltext{\omega}} \Bigg[ \int_u^T \mathrm{e}^{\beta t} \d \big\langle \delta N^{\star,N} \big\rangle_t \Bigg]\Bigg)^{\frac{1}{2}} \\
&\quad+ 2 c_{\partial^\smalltext{2}G} \Bigg(\E^{\P^{\smalltext{\hat\balpha}^\tinytext{N}\smalltext{,}\smalltext{N}\smalltext{,}\smalltext{u}}_\smalltext{\omega}} \Bigg[ \int_u^T \mathrm{e}^{\beta t} \d \bigg\langle \int_0^\cdot \delta Y^{i,N}_{r} \d \widetilde N^{\star,N}_{r} \bigg\rangle_t \Bigg]\Bigg)^{\frac{1}{2}} \Bigg(\E^{\P^{\smalltext{\hat\balpha}^\tinytext{N}\smalltext{,}\smalltext{N}\smalltext{,}\smalltext{u}}_\smalltext{\omega}} \Bigg[ \int_u^T \mathrm{e}^{\beta t} \d \big\langle \delta M^{i,\star,N} \big\rangle_t \Bigg]\Bigg)^{\frac{1}{2}} \\
&\quad+ 2 \Bigg(\E^{\P^{\smalltext{\hat\balpha}^\tinytext{N}\smalltext{,}\smalltext{N}\smalltext{,}\smalltext{u}}_\smalltext{\omega}} \Bigg[\int_u^T \mathrm{e}^{\beta t} \d \bigg\langle \int_u^\cdot \delta Y^{i,N}_{r} \d \widetilde M^{i,\star,N}_{r} \bigg\rangle_t \Bigg]\Bigg)^{\frac{1}{2}} \\
&\quad \times \Bigg(\E^{\P^{\smalltext{\hat\balpha}^\tinytext{N}\smalltext{,}\smalltext{N}\smalltext{,}\smalltext{u}}_\smalltext{\omega}} \Bigg[ \int_u^T \mathrm{e}^{\beta t} \d \bigg\langle \int_0^\cdot \Big( \partial^2_{m,n}G\big(M^{i,\star,N}_{r},N^{\star,N}_{r}\big) - \partial^2_{m,n}G\big(\widetilde M^{i,\star,N}_{r},\widetilde N^{\star,N}_{r}\big) \Big) \d \widetilde N^{\star,N}_{r} \bigg\rangle_t \Bigg]\Bigg)^{\frac{1}{2}} \\
&\quad+ c_{\partial^\smalltext{2}G} \Bigg(\E^{\P^{\smalltext{\hat\balpha}^\tinytext{N}\smalltext{,}\smalltext{N}\smalltext{,}\smalltext{u}}_\smalltext{\omega}} \Bigg[ \int_u^T \mathrm{e}^{\beta t} \d \bigg\langle \int_0^\cdot \delta Y^{i,N}_{r} \d M^{i,\star,N}_{r} \bigg\rangle_t \Bigg]\Bigg)^{\frac{1}{2}} \Bigg(\E^{\P^{\smalltext{\hat\balpha}^\tinytext{N}\smalltext{,}\smalltext{N}\smalltext{,}\smalltext{u}}_\smalltext{\omega}} \Bigg[ \int_u^T \mathrm{e}^{\beta t} \d \big\langle \delta M^{i,\star,N} \big\rangle_t \Bigg]\Bigg)^{\frac{1}{2}} \\
&\quad+ c_{\partial^\smalltext{2}G} \Bigg(\E^{\P^{\smalltext{\hat\balpha}^\tinytext{N}\smalltext{,}\smalltext{N}\smalltext{,}\smalltext{u}}_\smalltext{\omega}} \Bigg[ \int_u^T \mathrm{e}^{\beta t} \d \bigg\langle \int_0^\cdot \delta Y^{i,N}_{r} \d \widetilde M^{i,\star,N}_{r} \bigg\rangle_t \Bigg]\Bigg)^{\frac{1}{2}} \Bigg(\E^{\P^{\smalltext{\hat\balpha}^\tinytext{N}\smalltext{,}\smalltext{N}\smalltext{,}\smalltext{u}}_\smalltext{\omega}} \Bigg[ \int_u^T \mathrm{e}^{\beta t} \d \big\langle \delta M^{i,\star,N} \big\rangle_t \Bigg]\Bigg)^{\frac{1}{2}} \\
&\quad+ \Bigg(\E^{\P^{\smalltext{\hat\balpha}^\tinytext{N}\smalltext{,}\smalltext{N}\smalltext{,}\smalltext{u}}_\smalltext{\omega}} \Bigg[ \int_u^T \mathrm{e}^{\beta t} \d \bigg\langle \int_0^\cdot \delta Y^{i,N}_{r} \d \widetilde M^{i,\star,N}_{r} \bigg\rangle_t \Bigg]\Bigg)^{\frac{1}{2}} \\
&\quad \times \Bigg(\E^{\P^{\smalltext{\hat\balpha}^\tinytext{N}\smalltext{,}\smalltext{N}\smalltext{,}\smalltext{u}}_\smalltext{\omega}} \Bigg[ \int_u^T \mathrm{e}^{\beta t} \d \bigg\langle \int_0^\cdot \Big( \partial^2_{m,m}G\big(M^{i,\star,N}_{r},N^{\star,N}_{r}\big) - \partial^2_{m,m}G\big(\widetilde M^{i,\star,N}_{r},\widetilde N^{\star,N}_{r}\big) \Big) \d \widetilde M^{i,\star,N}_{r} \bigg\rangle_t \Bigg]\Bigg)^{\frac{1}{2}} \\
&\quad+ c_{\partial^\smalltext{2}G} \Bigg(\E^{\P^{\smalltext{\hat\balpha}^\tinytext{N}\smalltext{,}\smalltext{N}\smalltext{,}\smalltext{u}}_\smalltext{\omega}} \Bigg[ \int_u^T \mathrm{e}^{\beta t} \d \bigg\langle \int_0^\cdot \delta Y^{i,N}_{r} \d N^{\star,N}_{r} \bigg\rangle_t \Bigg]\Bigg)^{\frac{1}{2}} \Bigg(\E^{\P^{\smalltext{\hat\balpha}^\tinytext{N}\smalltext{,}\smalltext{N}\smalltext{,}\smalltext{u}}_\smalltext{\omega}} \Bigg[ \int_u^T \mathrm{e}^{\beta t} \d \big\langle \delta N^{\star,N} \big\rangle_t \Bigg]\Bigg)^{\frac{1}{2}} \\
&\quad+ c_{\partial^\smalltext{2}G} \Bigg(\E^{\P^{\smalltext{\hat\balpha}^\tinytext{N}\smalltext{,}\smalltext{N}\smalltext{,}\smalltext{u}}_\smalltext{\omega}} \Bigg[ \int_u^T \mathrm{e}^{\beta t} \d \bigg\langle \int_0^\cdot \delta Y^{i,N}_{r} \d \widetilde N^{\star,N}_{r} \bigg\rangle_t \Bigg]\Bigg)^{\frac{1}{2}} \Bigg(\E^{\P^{\smalltext{\hat\balpha}^\tinytext{N}\smalltext{,}\smalltext{N}\smalltext{,}\smalltext{u}}_\smalltext{\omega}} \Bigg[ \int_u^T \mathrm{e}^{\beta t} \d \big\langle \delta N^{\star,N} \big\rangle_t \Bigg]\Bigg)^{\frac{1}{2}} \\
&\quad+ \Bigg(\E^{\P^{\smalltext{\hat\balpha}^\tinytext{N}\smalltext{,}\smalltext{N}\smalltext{,}\smalltext{u}}_\smalltext{\omega}} \Bigg[ \int_u^T \mathrm{e}^{\beta t} \d \bigg\langle \int_0^\cdot \delta Y^{i,N}_{r} \d \widetilde N^{\star,N}_{r} \bigg\rangle_t \Bigg]\Bigg)^{\frac{1}{2}} \\
&\quad \times \Bigg(\E^{\P^{\smalltext{\hat\balpha}^\tinytext{N}\smalltext{,}\smalltext{N}\smalltext{,}\smalltext{u}}_\smalltext{\omega}} \Bigg[ \int_u^T \mathrm{e}^{\beta t} \d \bigg\langle \int_0^\cdot \Big( \partial^2_{n,n}G(M^{i,\star,N}_{r},N^{\star,N}_{r}) - \partial^2_{n,n}G(\widetilde M^{i,\star,N}_{r},\widetilde N^{\star,N}_{r}) \Big) \d \widetilde N^{\star,N}_{r} \bigg\rangle_t \Bigg]\Bigg)^{\frac{1}{2}}, \; \P\text{\rm--a.e.} \; \omega\in\Omega,
\end{align*}
as a consequence of Kunita--Watanabe's inequality and Cauchy--Schwarz's inequality. By applying \citeauthor*{delbaen2010harmonic} \cite[Lemma 1.4]{delbaen2010harmonic}, using the boundedness of both functions $\varphi_1$ and $\varphi_2$, and applying Young's inequality for some $\varepsilon_2>0$, we deduce that for $\P\text{\rm--a.e.} \; \omega\in\Omega$
\begin{align*}
&\E^{\P^{\smalltext{\hat\balpha}^\tinytext{N}\smalltext{,}\smalltext{N}\smalltext{,}\smalltext{u}}_\smalltext{\omega}} \Bigg[ \int_u^T \mathrm{e}^{\beta t} \sum_{\ell =1}^N \big\|\delta Z^{i,\ell,N}_t\big\|^2 \d t \Bigg] \\
		&\leq 2\ell_{g+G,\varphi_\smalltext{1},\varphi_\smalltext{2}}^2 \E^{\P^{\smalltext{\hat\balpha}^\tinytext{N}\smalltext{,}\smalltext{N}\smalltext{,}\smalltext{u}}_\smalltext{\omega}} \Bigg[ \mathrm{e}^{\beta T} \Bigg( \big\|\delta X^i_{\cdot \land T}\big\|^2_\infty + \frac{1}{N} \sum_{\ell =1}^N \big\|\delta X^{\ell}_{\cdot \land T}\big\|^2_\infty \Bigg) \Bigg] \\
		&\quad+ \varepsilon_1 (1 + 6 \ell^2_\Lambda) \E^{\P^{\smalltext{\hat\balpha}^\tinytext{N}\smalltext{,}\smalltext{N}\smalltext{,}\smalltext{u}}_\smalltext{\omega}} \bigg[ \int_u^T \mathrm{e}^{\beta t} \big\|\delta X^i_{\cdot \land t}\big\|^2_\infty \d t \bigg]  + \frac{\varepsilon_1(1 + 18 \ell^2_\Lambda)}{N} \E^{\P^{\smalltext{\hat\balpha}^\tinytext{N}\smalltext{,}\smalltext{N}\smalltext{,}\smalltext{u}}_\smalltext{\omega}} \Bigg[ \int_u^T \mathrm{e}^{\beta t} \sum_{\ell =1}^N \big\|\delta X^\ell_{\cdot \land t} \big\|^2_{\infty} \d t \bigg] \\
&\quad+ \varepsilon_1 6 \ell^2_\Lambda \E^{\P^{\smalltext{\hat\balpha}^\tinytext{N}\smalltext{,}\smalltext{N}\smalltext{,}\smalltext{u}}_\smalltext{\omega}} \bigg[ \int_u^T \mathrm{e}^{\beta t} \Big( \big\|\delta Z^{i,i,N}_t\big\|^2 + \big\|\delta Z^{i,m,i,\star,N}_t\big\|^2 + \big\|\delta Z^{n,i,\star,N}_t\big\|^2 + \big|\aleph^{i,N}_t\big|^2 \Big) \d t \bigg] \\
&\quad+ \frac{\varepsilon_1 6 \ell^2_\Lambda}{N} \E^{\P^{\smalltext{\hat\balpha}^\tinytext{N}\smalltext{,}\smalltext{N}\smalltext{,}\smalltext{u}}_\smalltext{\omega}} \Bigg[ \int_u^T \mathrm{e}^{\beta t} \sum_{\ell =1}^N \Big( \big\|\delta Z^{\ell,\ell,N}_t\big\|^2 + \big\|\delta Z^{\ell,m,\ell,\star,N}_t\big\|^2 + \big\|\delta Z^{n,\ell,\star,N}_t\big\|^2 + \big|\aleph^{\ell,N}_t\big|^2 \Big) \d t \Bigg] \\
&\quad+ \varepsilon_2 2 c^2_{\partial^\smalltext{2}G} \big\|M^{i,\star,N}\big\|_{{\smallertext{\rm{BMO}}}_{\smalltext{[}\smalltext{u}\smalltext{,}\smalltext{T}\smalltext{]}}}^2  \E^{\P^{\smalltext{\hat\balpha}^\tinytext{N}\smalltext{,}\smalltext{N}\smalltext{,}\smalltext{u}}_\smalltext{\omega}} \bigg[ \sup_{t \in [u,T]} \mathrm{e}^{\beta t} \big|\delta Y^{i,N}_t\big|^2 \bigg] + \frac{1}{\varepsilon_2} \E^{\P^{\smalltext{\hat\balpha}^\tinytext{N}\smalltext{,}\smalltext{N}\smalltext{,}\smalltext{u}}_\smalltext{\omega}} \bigg[\int_u^T \mathrm{e}^{\beta t} \d \big\langle \delta N^{\star,N} \big\rangle_t \bigg] \\
&\quad+ \varepsilon_2 2 c^2_{\partial^\smalltext{2}G} \big\|\widetilde N^{\star,N}\big\|_{\smallertext{\rm{BMO}}_{\smalltext{[}\smalltext{u}\smalltext{,}\smalltext{T}\smalltext{]}}}^2  \E^{\P^{\smalltext{\hat\balpha}^\tinytext{N}\smalltext{,}\smalltext{N}\smalltext{,}\smalltext{u}}_\smalltext{\omega}} \bigg[ \sup_{t \in [u,T]} \mathrm{e}^{\beta t} \big|\delta Y^{i,N}_t\big|^2 \bigg] + \frac{1}{\varepsilon_2} \E^{\P^{\smalltext{\hat\balpha}^\tinytext{N}\smalltext{,}\smalltext{N}\smalltext{,}\smalltext{u}}_\smalltext{\omega}} \bigg[ \int_u^T \mathrm{e}^{\beta t} \d \big\langle \delta M^{i,\star,N} \big\rangle_t \bigg] \\	
&\quad+ \varepsilon_2 2 \big\|\widetilde M^{i,\star,N}\big\|_{\smallertext{\rm{BMO}}_{\smalltext{[}\smalltext{u}\smalltext{,}\smalltext{T}\smalltext{]}}}^2 \E^{\P^{\smalltext{\hat\balpha}^\tinytext{N}\smalltext{,}\smalltext{N}\smalltext{,}\smalltext{u}}_\smalltext{\omega}} \bigg[ \sup_{t \in [u,T]} \mathrm{e}^{\beta t} \big|\delta Y^{i,N}_t\big|^2 \bigg] \\
&\quad+ \frac{2\ell^2_{\partial^\smalltext{2}G}}{\varepsilon_2} \big\|\widetilde N^{\star,N}\big\|^2_{\smallertext{{\rm{BMO}}}_{\smalltext{[}\smalltext{u}\smalltext{,}\smalltext{T}\smalltext{]}}} \E^{\P^{\smalltext{\hat\balpha}^\tinytext{N}\smalltext{,}\smalltext{N}\smalltext{,}\smalltext{u}}_\smalltext{\omega}} \bigg[ \sup_{t \in [u,T]} \mathrm{e}^{\beta t} \Big(\big| \delta M^{i,\star,N}_t\big|^2 + \big|\delta N^{\star,N}_t\big|^2\Big) \bigg] \\
&\quad+ \varepsilon_2 c^2_{\partial^\smalltext{2}G} \big\|M^{i,\star,N}\big\|^2_{\smallertext{\rm{BMO}}_{\smalltext{[}\smalltext{u}\smalltext{,}\smalltext{T}\smalltext{]}}} \E^{\P^{\smalltext{\hat\balpha}^\tinytext{N}\smalltext{,}\smalltext{N}\smalltext{,}\smalltext{u}}_\smalltext{\omega}} \bigg[ \sup_{t \in [u,T]} \mathrm{e}^{\beta t} \big|\delta Y^{i,N}_t\big|^2\bigg] + \frac{1}{2\varepsilon_2} \E^{\P^{\smalltext{\hat\balpha}^\tinytext{N}\smalltext{,}\smalltext{N}\smalltext{,}\smalltext{u}}_\smalltext{\omega}} \bigg[ \int_u^T \mathrm{e}^{\beta t} \d \big\langle \delta M^{i,\star,N} \big\rangle_t \bigg] \\		
&\quad+ \varepsilon_2 c^2_{\partial^\smalltext{2}G} \big\|\widetilde M^{i,\star,N}\big\|^2_{\smallertext{\rm{BMO}}_{\smalltext{[}\smalltext{u}\smalltext{,}\smalltext{T}\smalltext{]}}}  \E^{\P^{\smalltext{\hat\balpha}^\tinytext{N}\smalltext{,}\smalltext{N}\smalltext{,}\smalltext{u}}_\smalltext{\omega}} \bigg[ \sup_{t \in [u,T]} \mathrm{e}^{\beta t} \big|\delta Y^{i,N}_t\big|^2 \bigg] + \frac{1}{2\varepsilon_2} \E^{\P^{\smalltext{\hat\balpha}^\tinytext{N}\smalltext{,}\smalltext{N}\smalltext{,}\smalltext{u}}_\smalltext{\omega}} \bigg[ \int_u^T \mathrm{e}^{\beta t} \d \big\langle \delta M^{i,\star,N} \big\rangle_t \bigg] \\	
&\quad+ \varepsilon_2 \big\|\widetilde M^{i,\star,N}\big\|^2_{\smallertext{\rm{BMO}}_{\smalltext{[}\smalltext{u}\smalltext{,}\smalltext{T}\smalltext{]}}} \E^{\P^{\smalltext{\hat\balpha}^\tinytext{N}\smalltext{,}\smalltext{N}\smalltext{,}\smalltext{u}}_\smalltext{\omega}} \bigg[ \sup_{t \in [u,T]} \mathrm{e}^{\beta t} \big|\delta Y^{i,N}_t\big|^2 \bigg] \\
&\quad+ \frac{\ell^2_{\partial^\smalltext{2}G}}{\varepsilon_2} \big\|\widetilde M^{i,\star,N}\big\|^2_{\smallertext{\rm{BMO}}_{\smalltext{[}\smalltext{u}\smalltext{,}\smalltext{T}\smalltext{]}}} \E^{\P^{\smalltext{\hat\balpha}^\tinytext{N}\smalltext{,}\smalltext{N}\smalltext{,}\smalltext{u}}_\smalltext{\omega}} \bigg[ \sup_{t \in [u,T]} \mathrm{e}^{\beta t}\Big( \big|\delta M^{i,\star,N}_t\big|^2 + \big|\delta N^{\star,N}_t\big|^2\Big) \bigg] \\
&\quad+ \varepsilon_2 c^2_{\partial^\smalltext{2}G} \big\|N^{\star,N}\big\|^2_{\smallertext{\rm{BMO}}_{\smalltext{[}\smalltext{u}\smalltext{,}\smalltext{T}\smalltext{]}}} \E^{\P^{\smalltext{\hat\balpha}^\tinytext{N}\smalltext{,}\smalltext{N}\smalltext{,}\smalltext{u}}_\smalltext{\omega}} \bigg[ \sup_{t \in [u,T]} \mathrm{e}^{\beta t} \big|\delta Y^{i,N}_t\big|^2 \bigg] + \frac{1}{2\varepsilon_2} \E^{\P^{\smalltext{\hat\balpha}^\tinytext{N}\smalltext{,}\smalltext{N}\smalltext{,}\smalltext{u}}_\smalltext{\omega}} \bigg[ \int_u^T \mathrm{e}^{\beta t} \d \big\langle \delta N^{\star,N} \big\rangle_t \bigg] \\	
&\quad+ \varepsilon_2 c^2_{\partial^\smalltext{2}G} \big\|\widetilde N^{\star,N}\big\|^2_{\smallertext{\rm{BMO}}_{\smalltext{[}\smalltext{u}\smalltext{,}\smalltext{T}\smalltext{]}}} \E^{\P^{\smalltext{\hat\balpha}^\tinytext{N}\smalltext{,}\smalltext{N}\smalltext{,}\smalltext{u}}_\smalltext{\omega}} \bigg[ \sup_{t \in [u,T]} \mathrm{e}^{\beta t} \big|\delta Y^{i,N}_t\big|^2 \bigg] + \frac{1}{2\varepsilon_2} \E^{\P^{\smalltext{\hat\balpha}^\tinytext{N}\smalltext{,}\smalltext{N}\smalltext{,}\smalltext{u}}_\smalltext{\omega}} \bigg[ \int_u^T \mathrm{e}^{\beta t} \d \big\langle \delta N^{\star,N} \big\rangle_t \bigg] \\	
&\quad+ \varepsilon_2 \big\|\widetilde N^{\star,N}\big\|^2_{\smallertext{\rm{BMO}}_{\smalltext{[}\smalltext{u}\smalltext{,}\smalltext{T}\smalltext{]}}} \E^{\P^{\smalltext{\hat\balpha}^\tinytext{N}\smalltext{,}\smalltext{N}\smalltext{,}\smalltext{u}}_\smalltext{\omega}} \bigg[ \sup_{t \in [u,T]} \mathrm{e}^{\beta t} \big|\delta Y^{i,N}_t\big|^2 \bigg] + \frac{\ell^2_{\partial^\smalltext{2}G}}{\varepsilon_2} \big\|\widetilde N^{\star,N}\big\|^2_{\smallertext{\rm{BMO}}_{\smalltext{[}\smalltext{u}\smalltext{,}\smalltext{T}\smalltext{]}}} \E^{\P^{\smalltext{\hat\balpha}^\tinytext{N}\smalltext{,}\smalltext{N}\smalltext{,}\smalltext{u}}_\smalltext{\omega}} \bigg[ \sup_{t \in [u,T]} \mathrm{e}^{\beta t} \Big(\big| \delta M^{i,\star,N}_t\big|^2 + \big|\delta N^{\star,N}_t\big|^2\Big) \bigg].
\end{align*}

We define
\begin{gather*}
c_{\smallertext{\rm{BMO}}_{\smalltext{[}\smalltext{u}\smalltext{,}\smalltext{T}\smalltext{]}}} \coloneqq 3 c^2_{\partial^\smalltext{2}G} \big\|M^{i,\star,N}\big\|^2_{\smallertext{\rm{BMO}}_{\smalltext{[}\smalltext{u}\smalltext{,}\smalltext{T}\smalltext{]}}} + (3+c^2_{\partial^\smalltext{2}G}) \big\|\widetilde M^{i,\star,N}\big\|^2_{\smallertext{\rm{BMO}}_{\smalltext{[}\smalltext{u}\smalltext{,}\smalltext{T}\smalltext{]}}} + c^2_{\partial^\smalltext{2}G} \big\|N^{\star,N}\big\|^2_{\smallertext{\rm{BMO}}_{\smalltext{[}\smalltext{u}\smalltext{,}\smalltext{T}\smalltext{]}}} + (1+3c^2_{\partial^\smalltext{2}G}) \big\|\widetilde N^{\star,N}\big\|^2_{\smallertext{\rm{BMO}}_{\smalltext{[}\smalltext{u}\smalltext{,}\smalltext{T}\smalltext{]}}},\\
\bar c_{\smallertext{\rm{BMO}}_{\smalltext{[}\smalltext{u}\smalltext{,}\smalltext{T}\smalltext{]}}} \coloneqq \big\|\widetilde M^{i,\star,N}\big\|^2_{\smallertext{\rm{BMO}}_{\smalltext{[}\smalltext{u}\smalltext{,}\smalltext{T}\smalltext{]}}} + 3 \big\|\widetilde N^{\star,N}\big\|^2_{\smallertext{\rm{BMO}}_{\smalltext{[}\smalltext{u}\smalltext{,}\smalltext{T}\smalltext{]}}} .
\end{gather*}
Although the notation is slightly abused, it is clear that all the above constants are uniformly bounded in $N \in \N^\star$, since $\varphi^1$ and $\varphi^2$ are assumed to be bounded, as stated in \Cref{assumpConvThm}.\ref{boundedPhi_12}, and the drift function $b$ is bounded as well, so much so that we can use \citeauthor*{herdegen2021equilibrium} \cite[Lemma A.1]{herdegen2021equilibrium} to ensure that the BMO-norms appearing are indeed uniformly bounded in $N \in \N^\star$ (and $\omega \in \Omega)$, both under $\P^{{\hat\balpha}^\smalltext{N}{,}{N}{,}{u}}_{\omega}$ and $\P^{{N}{,}{u}}_{\omega}$. It follows that for $\P\text{\rm--a.e.} \; \omega\in\Omega$
\begin{align}\label{eq:intDeltaZ}
\notag&\E^{\P^{\smalltext{\hat\balpha}^\tinytext{N}\smalltext{,}\smalltext{N}\smalltext{,}\smalltext{u}}_\smalltext{\omega}} \Bigg[ \int_u^T \mathrm{e}^{\beta t} \sum_{\ell =1}^N \big\|\delta Z^{i,\ell,N}_t\big\|^2 \d t \Bigg] \\
		\notag&\leq 2\ell_{g+G,\varphi_\smalltext{1},\varphi_\smalltext{2}}^2 \E^{\P^{\smalltext{\hat\balpha}^\tinytext{N}\smalltext{,}\smalltext{N}\smalltext{,}\smalltext{u}}_\smalltext{\omega}} \Bigg[ \mathrm{e}^{\beta T} \Bigg( \big\|\delta X^{i}_{\cdot \land T}\big\|^2_\infty + \frac{1}{N} \sum_{\ell =1}^N \big\|\delta X^{\ell}_{\cdot \land T}\big\|^2_\infty \Bigg) \Bigg] \\
\notag&\quad+ \varepsilon_1 (1 + 6 \ell^2_\Lambda) \E^{\P^{\smalltext{\hat\balpha}^\tinytext{N}\smalltext{,}\smalltext{N}\smalltext{,}\smalltext{u}}_\smalltext{\omega}} \bigg[\int_u^T \mathrm{e}^{\beta t} \big\|\delta X^{i}_{\cdot \land t}\big\|^2_\infty \d t \bigg]  + \frac{\varepsilon_1(1 + 18 \ell^2_\Lambda)}{N} \E^{\P^{\smalltext{\hat\balpha}^\tinytext{N}\smalltext{,}\smalltext{N}\smalltext{,}\smalltext{u}}_\smalltext{\omega}} \Bigg[ \int_u^T \mathrm{e}^{\beta t} \sum_{\ell=1}^N \big\|\delta X^{\ell}_{\cdot \land t} \big\|^2_{\infty} \d t \Bigg] \\
\notag&\quad+ \varepsilon_1 6 \ell^2_\Lambda \E^{\P^{\smalltext{\hat\balpha}^\tinytext{N}\smalltext{,}\smalltext{N}\smalltext{,}\smalltext{u}}_\smalltext{\omega}} \bigg[ \int_u^T \mathrm{e}^{\beta t} \Big( \big\|\delta Z^{i,i,N}_t\big\|^2 + \big\|\delta Z^{i,m,i,\star,N}_t\big\|^2 + \big\|\delta Z^{n,i,\star,N}_t\big\|^2 + \big|\aleph^{i,N}_t\big|^2 \Big) \d t \bigg] \\
\notag&\quad+ \frac{\varepsilon_1 6 \ell^2_\Lambda}{N} \E^{\P^{\smalltext{\hat\balpha}^\tinytext{N}\smalltext{,}\smalltext{N}\smalltext{,}\smalltext{u}}_\smalltext{\omega}} \Bigg[ \int_u^T \mathrm{e}^{\beta t} \sum_{\ell =1}^N \Big( \big\|\delta Z^{\ell,\ell,N}_t\big\|^2 + \big\|\delta Z^{\ell,m,\ell,\star,N}_t\big\|^2 + \big\|\delta Z^{n,\ell,\star,N}_t\big\|^2 + \big|\aleph^{\ell,N}_t\big|^2 \Big) \d t \Bigg] \\
\notag&\quad+ \frac{2}{\varepsilon_2} \E^{\P^{\smalltext{\hat\balpha}^\tinytext{N}\smalltext{,}\smalltext{N}\smalltext{,}\smalltext{u}}_\smalltext{\omega}} \Bigg[ \int_u^T \mathrm{e}^{\beta t} \sum_{\ell=1}^N \big\|\delta Z^{i,m,\ell,\star,N}_t\big\|^2 \d t \Bigg] + \frac{2}{\varepsilon_2} \E^{\P^{\smalltext{\hat\balpha}^\tinytext{N}\smalltext{,}\smalltext{N}\smalltext{,}\smalltext{u}}_\smalltext{\omega}} \Bigg[ \int_u^T \mathrm{e}^{\beta t} \sum_{\ell =1}^N \big\|\delta Z^{n,\ell,\star,N}_t\big\|^2 \d t \Bigg] \\	
&\quad+ \frac{\bar c_{\smallertext{\rm{BMO}}_{\smalltext{[}\smalltext{u}\smalltext{,}\smalltext{T}\smalltext{]}}} \ell^2_{\partial^\smalltext{2}G} }{\varepsilon_2} \E^{\P^{\smalltext{\hat\balpha}^\tinytext{N}\smalltext{,}\smalltext{N}\smalltext{,}\smalltext{u}}_\smalltext{\omega}} \bigg[ \sup_{t \in [u,T]} \mathrm{e}^{\beta t} \Big(\big| \delta M^{i,\star,N}_t\big|^2 + \big|\delta N^{\star,N}_t \big|^2\Big) \bigg] + \varepsilon_2  c_{\smallertext{\rm{BMO}}_{\smalltext{[}\smalltext{u}\smalltext{,}\smalltext{T}\smalltext{]}}} \E^{\P^{\smalltext{\hat\balpha}^\tinytext{N}\smalltext{,}\smalltext{N}\smalltext{,}\smalltext{u}}_\smalltext{\omega}} \bigg[ \sup_{t \in [u,T]} \mathrm{e}^{\beta t} \big|\delta Y^{i,N}_t\big|^2 \bigg].
\end{align}

\medskip
Applying the Burkholder--Davis--Gundy's inequality together with Young’s inequality for some $\varepsilon_3 > 0$, and then proceeding as in the previous steps by making use of \cite[Lemma 1.4]{delbaen2010harmonic}, followed by another application of Young’s inequality for some $\varepsilon_4>0$, we deduce from \Cref{eq:YbeforeEverything} that
\begin{align}\label{eq:supDeltaY}
\notag&\E^{\P^{\smalltext{\hat\balpha}^\tinytext{N}\smalltext{,}\smalltext{N}\smalltext{,}\smalltext{u}}_\smalltext{\omega}} \bigg[  \sup_{t \in [u,T]} \mathrm{e}^{\beta t} \big|\delta Y^{i,N}_t\big|^2 \bigg]\\
		\notag&\leq 2\ell_{g+G,\varphi_\smalltext{1},\varphi_\smalltext{2}}^2 \E^{\P^{\smalltext{\hat\balpha}^\tinytext{N}\smalltext{,}\smalltext{N}\smalltext{,}\smalltext{u}}_\smalltext{\omega}} \Bigg[ \mathrm{e}^{\beta T} \bigg( \big\|\delta X^{i}_{\cdot \land T}\big\|^2_\infty + \frac{1}{N} \sum_{\ell =1}^N \big\|\delta X^{\ell}_{\cdot \land T}\big\|^2_\infty \bigg) \Bigg] \\
\notag&\quad+ \varepsilon_1 (1 + 6 \ell^2_\Lambda) \E^{\P^{\smalltext{\hat\balpha}^\tinytext{N}\smalltext{,}\smalltext{N}\smalltext{,}\smalltext{u}}_\smalltext{\omega}} \bigg[\int_u^T \mathrm{e}^{\beta t} \big\|\delta X^{i}_{\cdot \land t}\big\|^2_\infty \d t \bigg]  + \frac{\varepsilon_1(1 + 18 \ell^2_\Lambda)}{N} \E^{\P^{\smalltext{\hat\balpha}^\tinytext{N}\smalltext{,}\smalltext{N}\smalltext{,}\smalltext{u}}_\smalltext{\omega}} \Bigg[ \int_u^T \mathrm{e}^{\beta t} \sum_{\ell =1}^N \big\|\delta X^{\ell}_{\cdot \land t} \big\|^2_{\infty} \d t \Bigg] \\
\notag&\quad+ \varepsilon_1 6 \ell^2_\Lambda \E^{\P^{\smalltext{\hat\balpha}^\tinytext{N}\smalltext{,}\smalltext{N}\smalltext{,}\smalltext{u}}_\smalltext{\omega}} \bigg[ \int_u^T \mathrm{e}^{\beta t} \Big( \big\|\delta Z^{i,i,N}_t\big\|^2 + \big\|\delta Z^{i,m,i,\star,N}_t\big\|^2 + \big\|\delta Z^{n,i,\star,N}_t\big\|^2 + \big|\aleph^{i,N}_t\big|^2 \Big) \d t \bigg] \\
\notag&\quad+ \frac{\varepsilon_1 6 \ell^2_\Lambda}{N} \E^{\P^{\smalltext{\hat\balpha}^\tinytext{N}\smalltext{,}\smalltext{N}\smalltext{,}\smalltext{u}}_\smalltext{\omega}} \Bigg[ \int_u^T \mathrm{e}^{\beta t} \sum_{\ell =1}^N \Big( \big\|\delta Z^{\ell,\ell,N}_t\big\|^2 + \big\|\delta Z^{\ell,m,\ell,\star,N}_t\big\|^2 + \big\|\delta Z^{n,\ell,\star,N}_t\big\|^2 + \big|\aleph^{\ell,N}_t\big|^2 \Big) \d t \Bigg] \\
\notag&\quad+ \frac{2}{\varepsilon_4} \E^{\P^{\smalltext{\hat\balpha}^\tinytext{N}\smalltext{,}\smalltext{N}\smalltext{,}\smalltext{u}}_\smalltext{\omega}} \Bigg[ \int_u^T \mathrm{e}^{\beta t} \sum_{\ell =1}^N \big\|\delta Z^{i,m,\ell,\star,N}_t\big\|^2 \d t \Bigg] + \frac{2}{\varepsilon_4} \E^{\P^{\smalltext{\hat\balpha}^\tinytext{N}\smalltext{,}\smalltext{N}\smalltext{,}\smalltext{u}}_\smalltext{\omega}} \Bigg[ \int_u^T \mathrm{e}^{\beta t} \sum_{\ell =1}^N \big\|\delta Z^{n,\ell,\star,N}_t\big\|^2 \d t \Bigg] \\	
\notag&\quad+ \frac{\bar c_{\smallertext{\rm{BMO}}_{\smalltext{[}\smalltext{u}\smalltext{,}\smalltext{T}\smalltext{]}}} \ell^2_{\partial^\smalltext{2}G} }{\varepsilon_4} \E^{\P^{\smalltext{\hat\balpha}^\tinytext{N}\smalltext{,}\smalltext{N}\smalltext{,}\smalltext{u}}_\smalltext{\omega}} \bigg[ \sup_{t \in [u,T]} \mathrm{e}^{\beta t} \Big(\big| \delta M^{i,\star,N}_t\big|^2 + \big|\delta N^{\star,N}_t \big|^2\Big) \bigg] + \varepsilon_4 c_{\smallertext{\rm{BMO}}_{\smalltext{[}\smalltext{u}\smalltext{,}\smalltext{T}\smalltext{]}}} \E^{\P^{\smalltext{\hat\balpha}^\tinytext{N}\smalltext{,}\smalltext{N}\smalltext{,}\smalltext{u}}_\smalltext{\omega}} \bigg[ \sup_{t \in [u,T]} \mathrm{e}^{\beta t} \big|\delta Y^{i,N}_t\big|^2 \bigg] \\
&\quad+ \varepsilon_3 4 c^2_{1,{\smallertext{\rm BDG}}} \E^{\P^{\smalltext{\hat\balpha}^\tinytext{N}\smalltext{,}\smalltext{N}\smalltext{,}\smalltext{u}}_\smalltext{\omega}} \bigg[ \sup_{t \in [u,T]} \mathrm{e}^{\beta t} \big|\delta Y^{i,N}_t\big|^2 \bigg] + \frac{1}{\varepsilon_3} \E^{\P^{\smalltext{\hat\balpha}^\tinytext{N}\smalltext{,}\smalltext{N}\smalltext{,}\smalltext{u}}_\smalltext{\omega}} \Bigg[ \int_u^T \mathrm{e}^{\beta t} \sum_{\ell =1}^N \big\|\delta Z^{i,\ell,N}_t\big\|^2 \d t \Bigg], \; \P\text{\rm--a.e.} \; \omega\in\Omega.
\end{align}

Combining \eqref{eq:intDeltaZ} and \eqref{eq:supDeltaY}, we get that
\begin{align*}
&\bigg( 1 - \varepsilon_3 4c^2_{1,\smallertext{\rm BDG}} - \bigg(\varepsilon_4 +\frac{\eps_2}{\eps_3}\bigg) c_{\smallertext{\rm{BMO}}_{\smalltext{[}\smalltext{u}\smalltext{,}\smalltext{T}\smalltext{]}}} \bigg) \E^{\P^{\smalltext{\hat\balpha}^\tinytext{N}\smalltext{,}\smalltext{N}\smalltext{,}\smalltext{u}}_\smalltext{\omega}} \bigg[ \sup_{t \in [u,T]} \mathrm{e}^{\beta t} \big|\delta Y^{i,N}_t\big|^2 \bigg] \\
&\leq \bigg(1+\frac1{\eps_3}\bigg) 2\ell_{g+G,\varphi_\smalltext{1},\varphi_\smalltext{2}}^2 \E^{\P^{\smalltext{\hat\balpha}^\tinytext{N}\smalltext{,}\smalltext{N}\smalltext{,}\smalltext{u}}_\smalltext{\omega}} \Bigg[ \mathrm{e}^{\beta T} \Bigg( \big\|\delta X^{i}_{\cdot \land T}\big\|^2_\infty + \frac{1}{N} \sum_{\ell =1}^N \big\|\delta X^{\ell}_{\cdot \land T}\big\|^2_\infty \Bigg) \Bigg] \\
&\quad+ \varepsilon_1 \bigg(1+\frac1{\eps_3}\bigg) (1 + 6 \ell^2_\Lambda) \E^{\P^{\smalltext{\hat\balpha}^\tinytext{N}\smalltext{,}\smalltext{N}\smalltext{,}\smalltext{u}}_\smalltext{\omega}} \bigg[\int_u^T \mathrm{e}^{\beta t} \big\|\delta X^{i}_{\cdot \land t}\big\|^2_\infty \d t \bigg]  + \varepsilon_1 \bigg(1+\frac1{\eps_3}\bigg) \frac{(1 + 18 \ell^2_\Lambda)}{N} \E^{\P^{\smalltext{\hat\balpha}^\tinytext{N}\smalltext{,}\smalltext{N}\smalltext{,}\smalltext{u}}_\smalltext{\omega}} \Bigg[ \int_u^T \mathrm{e}^{\beta t} \sum_{\ell =1}^N \big\|\delta X^{\ell}_{\cdot \land t} \big\|^2_{\infty} \d t \Bigg] \\
&\quad+ \varepsilon_1 \bigg(1+\frac1{\eps_3}\bigg) 6 \ell^2_\Lambda \E^{\P^{\smalltext{\hat\balpha}^\tinytext{N}\smalltext{,}\smalltext{N}\smalltext{,}\smalltext{u}}_\smalltext{\omega}} \bigg[ \int_u^T \mathrm{e}^{\beta t} \Big( \big\|\delta Z^{i,i,N}_t\big\|^2 + \big\|\delta Z^{i,m,i,\star,N}_t\big\|^2 + \big\|\delta Z^{n,i,\star,N}_t\big\|^2 + \big|\aleph^{i,N}_t\big|^2 \Big) \d t \bigg] \\
&\quad+ \varepsilon_1 \bigg(1+\frac1{\eps_3}\bigg) \frac{6 \ell^2_\Lambda}{N} \E^{\P^{\smalltext{\hat\balpha}^\tinytext{N}\smalltext{,}\smalltext{N}\smalltext{,}\smalltext{u}}_\smalltext{\omega}} \Bigg[ \int_u^T \mathrm{e}^{\beta t} \sum_{\ell =1}^N \Big( \big\|\delta Z^{\ell,\ell,N}_t\big\|^2 + \big\|\delta Z^{\ell,m,\ell,\star,N}_t\big\|^2 + \big\|\delta Z^{n,\ell,\star,N}_t\big\|^2 + \big|\aleph^{\ell,N}_t\big|^2 \Big) \d t \Bigg] \\
&\quad+ \bigg(\frac{1}{\varepsilon_4}+\frac{1}{\eps_3\eps_2}\bigg) 2 \E^{\P^{\smalltext{\hat\balpha}^\tinytext{N}\smalltext{,}\smalltext{N}\smalltext{,}\smalltext{u}}_\smalltext{\omega}} \Bigg[ \int_u^T \mathrm{e}^{\beta t} \sum_{\ell =1}^N \Big( \big\|\delta Z^{i,m,\ell,\star,N}_t\big\|^2 + \big\|\delta Z^{n,\ell,\star,N}_t\big\|^2 \Big) \d t \Bigg] \\	
&\quad+ \bigg(\frac{1}{\varepsilon_4}+\frac{1}{\eps_3\eps_2}\bigg) \bar c_{\smallertext{\rm{BMO}}_{\smalltext{[}\smalltext{u}\smalltext{,}\smalltext{T}\smalltext{]}}} \ell^2_{\partial^\smalltext{2}G} \E^{\P^{\smalltext{\hat\balpha}^\tinytext{N}\smalltext{,}\smalltext{N}\smalltext{,}\smalltext{u}}_\smalltext{\omega}} \bigg[ \sup_{t \in [u,T]} \mathrm{e}^{\beta t} \Big(\big| \delta M^{i,\star,N}_t\big|^2 + \big|\delta N^{\star,N}_t\big|^2\Big) \bigg], \; \P\text{\rm--a.e.} \; \omega\in\Omega,
\end{align*}
and hence, rearranging terms, we have
\begin{align}\label{eq:longestim}
\notag&\bigg( 1 - \varepsilon_3 4c^2_{1,\smallertext{\rm BDG}} - \bigg(\varepsilon_2 +\eps_4+\frac{\eps_2}{\eps_3}\bigg) c_{\smallertext{\rm{BMO}}_{\smalltext{[}\smalltext{u}\smalltext{,}\smalltext{T}\smalltext{]}}} \bigg) \E^{\P^{\smalltext{\hat\balpha}^\tinytext{N}\smalltext{,}\smalltext{N}\smalltext{,}\smalltext{u}}_\smalltext{\omega}} \bigg[ \sup_{t \in [u,T]} \mathrm{e}^{\beta t} \big|\delta Y^{i,N}_t\big|^2 \bigg] + \E^{\P^{\smalltext{\hat\balpha}^\tinytext{N}\smalltext{,}\smalltext{N}\smalltext{,}\smalltext{u}}_\smalltext{\omega}} \Bigg[ \int_u^T \mathrm{e}^{\beta t} \sum_{\ell =1}^N \big\|\delta Z^{i,\ell,N}_t\big\|^2 \d t \Bigg] \\
\notag&\leq \bigg(2+\frac1{\eps_3}\bigg) 2\ell_{g+G,\varphi_\smalltext{1},\varphi_\smalltext{2}}^2 \E^{\P^{\smalltext{\hat\balpha}^\tinytext{N}\smalltext{,}\smalltext{N}\smalltext{,}\smalltext{u}}_\smalltext{\omega}} \Bigg[ \mathrm{e}^{\beta T} \Bigg( \big\|\delta X^{i}_{\cdot \land T}\big\|^2_\infty + \frac{1}{N} \sum_{\ell =1}^N \big\|\delta X^{\ell}_{\cdot \land T}\big\|^2_\infty \Bigg) \Bigg] \\
\notag&\quad+ \varepsilon_1 \bigg(2+\frac1{\eps_3}\bigg) (1 + 6 \ell^2_\Lambda) \E^{\P^{\smalltext{\hat\balpha}^\tinytext{N}\smalltext{,}\smalltext{N}\smalltext{,}\smalltext{u}}_\smalltext{\omega}} \bigg[\int_u^T \mathrm{e}^{\beta t} \big\|\delta X^{i}_{\cdot \land t}\big\|^2_\infty \d t \bigg] + \varepsilon_1 \bigg(2+\frac1{\eps_3}\bigg) \frac{(1 + 18 \ell^2_\Lambda)}{N} \E^{\P^{\smalltext{\hat\balpha}^\tinytext{N}\smalltext{,}\smalltext{N}\smalltext{,}\smalltext{u}}_\smalltext{\omega}} \Bigg[ \int_u^T \mathrm{e}^{\beta t} \sum_{\ell =1}^N \|\delta X^{\ell}_{\cdot \land t} \|^2_{\infty} \d t \Bigg] \\
\notag&\quad+ \varepsilon_1 \bigg(2+\frac1{\eps_3}\bigg) 6 \ell^2_\Lambda \E^{\P^{\smalltext{\hat\balpha}^\tinytext{N}\smalltext{,}\smalltext{N}\smalltext{,}\smalltext{u}}_\smalltext{\omega}} \bigg[ \int_u^T \mathrm{e}^{\beta t} \Big( \big\|\delta Z^{i,i,N}_t\big\|^2 + \big\|\delta Z^{i,m,i,\star,N}_t\big\|^2 + \big\|\delta Z^{n,i,\star,N}_t\big\|^2 + \big|\aleph^{i,N}_t\big|^2 \Big) \d t \bigg] \\
\notag&\quad+ \varepsilon_1 \bigg(2+\frac1{\eps_3}\bigg) \frac{6 \ell^2_\Lambda}{N} \E^{\P^{\smalltext{\hat\balpha}^\tinytext{N}\smalltext{,}\smalltext{N}\smalltext{,}\smalltext{u}}_\smalltext{\omega}} \Bigg[ \int_u^T \mathrm{e}^{\beta t} \sum_{\ell =1}^N \Big( \big\|\delta Z^{\ell,\ell,N}_t\big\|^2 + \big\|\delta Z^{\ell,m,\ell,\star,N}_t\big\|^2 + \big\|\delta Z^{n,\ell,\star,N}_t\big\|^2 + \big|\aleph^{\ell,N}_t\big|^2 \Big) \d t \Bigg] \\
\notag&\quad+ \bigg(\frac{1}{\varepsilon_2}+\frac1{\eps_4}+\frac{1}{\eps_2\eps_3}\bigg) 2 \E^{\P^{\smalltext{\hat\balpha}^\tinytext{N}\smalltext{,}\smalltext{N}\smalltext{,}\smalltext{u}}_\smalltext{\omega}} \Bigg[ \int_u^T \mathrm{e}^{\beta t} \sum_{\ell=1}^N \Big( \big\|\delta Z^{n,\ell,\star,N}_t\big\|^2 + \big\|\delta Z^{i,m,\ell,\star,N}_t\big\|^2 \Big) \d t \Bigg] \\	
&\quad+ \bigg(\frac{1}{\varepsilon_2}+\frac1{\eps_4}+\frac{1}{\eps_2\eps_3}\bigg) \bar c_{\smallertext{\rm{BMO}}_{\smalltext{[}\smalltext{u}\smalltext{,}\smalltext{T}\smalltext{]}}} \ell^2_{\partial^\smalltext{2}G} \E^{\P^{\smalltext{\hat\balpha}^\tinytext{N}\smalltext{,}\smalltext{N}\smalltext{,}\smalltext{u}}_\smalltext{\omega}} \bigg[ \sup_{t \in [u,T]} \mathrm{e}^{\beta t} \Big(\big| \delta M^{i,\star,N}_t\big|^2 + \big|\delta N^{\star,N}_t\big|^2\Big) \bigg], \; \P\text{\rm--a.e.} \; \omega\in\Omega.
\end{align}

\medskip
By substituting the estimates from \eqref{align:deltaM_finalEst} and \eqref{align:deltaN_finalEst} into \Cref{eq:longestim}, $\P\text{\rm--a.e.} \; \omega\in\Omega$, we have
\begin{align}\label{eq:longestim2}
\notag&\bigg( 1 - \varepsilon_3 4c^2_{1,\smallertext{\rm BDG}} - \bigg(\varepsilon_2 +\eps_4+\frac{\eps_2}{\eps_3}\bigg) c_{\smallertext{\rm{BMO}}_{\smalltext{[}\smalltext{u}\smalltext{,}\smalltext{T}\smalltext{]}}} \bigg) \E^{\P^{\smalltext{\hat\balpha}^\tinytext{N}\smalltext{,}\smalltext{N}\smalltext{,}\smalltext{u}}_\smalltext{\omega}} \bigg[ \sup_{t \in [u,T]} \mathrm{e}^{\beta t} \big|\delta Y^{i,N}_t\big|^2 \bigg] +\E^{\P^{\smalltext{\hat\balpha}^\tinytext{N}\smalltext{,}\smalltext{N}\smalltext{,}\smalltext{u}}_\smalltext{\omega}} \Bigg[ \int_u^T \mathrm{e}^{\beta t} \sum_{\ell =1}^N \big\|\delta Z^{i,\ell,N}_t\big\|^2 \d t \Bigg] \\
\notag&\leq \bigg(2+\frac1{\eps_3}\bigg) 2\ell_{g+G,\varphi_\smalltext{1},\varphi_\smalltext{2}}^2 \E^{\P^{\smalltext{\hat\balpha}^\tinytext{N}\smalltext{,}\smalltext{N}\smalltext{,}\smalltext{u}}_\smalltext{\omega}} \Bigg[ \mathrm{e}^{\beta T} \Bigg( \big\|\delta X^{i}_{\cdot \land T}\big\|^2_\infty + \frac{1}{N} \sum_{\ell =1}^N \big\|\delta X^{\ell}_{\cdot \land T}\big\|^2_\infty \Bigg) \Bigg] \\
\notag&\quad+ \varepsilon_1 \bigg(2+\frac1{\eps_3}\bigg) (1 + 6 \ell^2_\Lambda) \E^{\P^{\smalltext{\hat\balpha}^\tinytext{N}\smalltext{,}\smalltext{N}\smalltext{,}\smalltext{u}}_\smalltext{\omega}} \bigg[\int_u^T \mathrm{e}^{\beta t} \big\|\delta X^{i}_{\cdot \land t}\big\|^2_\infty \d t \bigg]  + \varepsilon_1 \bigg(2+\frac1{\eps_3}\bigg) \frac{(1 + 18 \ell^2_\Lambda)}{N} \E^{\P^{\smalltext{\hat\balpha}^\tinytext{N}\smalltext{,}\smalltext{N}\smalltext{,}\smalltext{u}}_\smalltext{\omega}} \Bigg[ \int_u^T \mathrm{e}^{\beta t} \sum_{\ell =1}^N \big\|\delta X^{\ell}_{\cdot \land t} \big\|^2_{\infty} \d t \Bigg] \\
\notag&\quad+ \varepsilon_1 \bigg(2+\frac1{\eps_3}\bigg) 6 \ell^2_\Lambda \E^{\P^{\smalltext{\hat\balpha}^\tinytext{N}\smalltext{,}\smalltext{N}\smalltext{,}\smalltext{u}}_\smalltext{\omega}} \bigg[ \int_u^T \mathrm{e}^{\beta t} \Big( \big\|\delta Z^{i,i,N}_t\big\|^2 + \big\|\delta Z^{i,m,i,\star,N}_t\big\|^2 + \big\|\delta Z^{n,i,\star,N}_t\big\|^2 + \big|\aleph^{i,N}_t\big|^2 \Big) \d t \bigg] \\
\notag&\quad+ \varepsilon_1 \bigg(2+\frac1{\eps_3}\bigg) \frac{6 \ell^2_\Lambda}{N} \E^{\P^{\smalltext{\hat\balpha}^\tinytext{N}\smalltext{,}\smalltext{N}\smalltext{,}\smalltext{u}}_\smalltext{\omega}} \Bigg[ \int_u^T \mathrm{e}^{\beta t} \sum_{\ell =1}^N \Big( \big\|\delta Z^{\ell,\ell,N}_t\big\|^2 + \big\|\delta Z^{\ell,m,\ell,\star,N}_t\big\|^2 + \big\|\delta Z^{n,\ell,\star,N}_t\big\|^2 + \big|\aleph^{\ell,N}_t\big|^2 \Big) \d t \Bigg] \\
\notag&\quad+ \bigg(\frac{1}{\varepsilon_2}+\frac1{\eps_4}+\frac{1}{\eps_2\eps_3}\bigg) \big(2\vee (\bar c_{\smallertext{\rm{BMO}}_{\smalltext{[}\smalltext{u}\smalltext{,}\smalltext{T}\smalltext{]}}} \ell^2_{\partial^\smalltext{2}G})\big) c^\star \Bigg(\ell^2_{\varphi_\smalltext{1}} \E^{\P^{\smalltext{\hat\balpha}^\tinytext{N}\smalltext{,}\smalltext{N}\smalltext{,}\smalltext{u}}_\smalltext{\omega}} \Big[ \mathrm{e}^{\beta T} \big\|\delta X^{i}_{\cdot \land T}\big\|^2_\infty \Big] +\frac{\ell^2_{\varphi_\smalltext{2}}}{N}  \E^{\P^{\smalltext{\hat\balpha}^\tinytext{N}\smalltext{,}\smalltext{N}\smalltext{,}\smalltext{u}}_\smalltext{\omega}} \Bigg[ \mathrm{e}^{\beta T} \sum_{\ell =1}^N \big\|\delta X^{\ell}_{\cdot \land T}\big\|^2_\infty \Bigg]\Bigg)\\
	\notag&= \E^{\P^{\smalltext{\hat\balpha}^\tinytext{N}\smalltext{,}\smalltext{N}\smalltext{,}\smalltext{u}}_\smalltext{\omega}} \Bigg[ \mathrm{e}^{\beta T} \Bigg( c_{\eps_{\smalltext{2}\smalltext{,}\smalltext{3}\smalltext{,}\smalltext{4}}} \big\|\delta X^{i}_{\cdot \land T}\big\|^2_\infty +\frac{\overline{c}_{\eps_{\smalltext{2}\smalltext{,}\smalltext{3}\smalltext{,}\smalltext{4}}}}N \sum_{\ell =1}^N \big\|\delta X^{\ell}_{\cdot \land T}\big\|^2_\infty \Bigg) \Bigg]\\
\notag&\quad+ \varepsilon_1 \bigg(2+\frac1{\eps_3}\bigg) (1 + 6 \ell^2_\Lambda) \E^{\P^{\smalltext{\hat\balpha}^\tinytext{N}\smalltext{,}\smalltext{N}\smalltext{,}\smalltext{u}}_\smalltext{\omega}} \bigg[\int_u^T \mathrm{e}^{\beta t} \big\|\delta X^{i}_{\cdot \land t}\big\|^2_\infty \d t \bigg] + \varepsilon_1 \bigg(2+\frac1{\eps_3}\bigg) \frac{(1 + 18 \ell^2_\Lambda)}{N} \E^{\P^{\smalltext{\hat\balpha}^\tinytext{N}\smalltext{,}\smalltext{N}\smalltext{,}\smalltext{u}}_\smalltext{\omega}} \Bigg[ \int_u^T \mathrm{e}^{\beta t} \sum_{\ell =1}^N \big\|\delta X^{\ell}_{\cdot \land t} \big\|^2_{\infty} \d t \Bigg] \\
\notag&\quad+ \varepsilon_1 \bigg(2+\frac1{\eps_3}\bigg) 6 \ell^2_\Lambda \E^{\P^{\smalltext{\hat\balpha}^\tinytext{N}\smalltext{,}\smalltext{N}\smalltext{,}\smalltext{u}}_\smalltext{\omega}} \bigg[ \int_u^T \mathrm{e}^{\beta t} \Big( \big\|\delta Z^{i,i,N}_t\big\|^2 + \big\|\delta Z^{i,m,i,\star,N}_t\big\|^2 + \big\|\delta Z^{n,i,\star,N}_t\big\|^2 + \big|\aleph^{i,N}_t\big|^2 \Big) \d t \bigg] \\
&\quad+ \varepsilon_1 \bigg(2+\frac1{\eps_3}\bigg) \frac{6 \ell^2_\Lambda}{N} \E^{\P^{\smalltext{\hat\balpha}^\tinytext{N}\smalltext{,}\smalltext{N}\smalltext{,}\smalltext{u}}_\smalltext{\omega}} \Bigg[ \int_u^T \mathrm{e}^{\beta t} \sum_{\ell =1}^N \Big( \big\|\delta Z^{\ell,\ell,N}_t\big\|^2 + \big\|\delta Z^{\ell,m,\ell,\star,N}_t\big\|^2 + \big\|\delta Z^{n,\ell,\star,N}_t\big\|^2 + \big|\aleph^{\ell,N}_t\big|^2 \Big) \d t \Bigg],
\end{align}
where
\begin{gather*}
c_{\eps_{\smalltext{2}\smalltext{,}\smalltext{3}\smalltext{,}\smalltext{4}}}\coloneqq \bigg(2+\frac1{\eps_3}\bigg) 2\ell_{g+G,\varphi_\smalltext{1},\varphi_\smalltext{2}}^2 + \bigg(\frac{1}{\varepsilon_2}+\frac1{\eps_4}+\frac{1}{\eps_2\eps_3}\bigg) \big(2\vee (\bar c_{\smallertext{\rm{BMO}}_{\smalltext{[}\smalltext{u}\smalltext{,}\smalltext{T}\smalltext{]}}}\ell^2_{\partial^\smalltext{2}G})\big) c^\star\ell^2_{\varphi_\smalltext{1}},\\
\overline{c}_{\eps_{\smalltext{2}\smalltext{,}\smalltext{3}\smalltext{,}\smalltext{4}}}\coloneqq \bigg(2+\frac1{\eps_3}\bigg) 2\ell_{g+G,\varphi_\smalltext{1},\varphi_\smalltext{2}}^2 + \bigg(\frac{1}{\varepsilon_2}+\frac1{\eps_4}+\frac{1}{\eps_2\eps_3}\bigg) \big(2\vee (\bar c_{\smallertext{\rm{BMO}}_{\smalltext{[}\smalltext{u}\smalltext{,}\smalltext{T}\smalltext{]}}}\ell^2_{\partial^\smalltext{2}G})\big) c^\star\ell^2_{\varphi_\smalltext{2}}.
\end{gather*}

The growth condition on $\aleph^{i,N}$ stated in \Cref{assumpConvThm}.\ref{lipLambda_growthAleph} reads as
\begin{align*}
\big|\aleph^{i,N}_t\big|^2 &\leq R_N^2 \Bigg( 1 + \big\|X^i_{\cdot \land t}\big\|_\infty + \sum_{\ell =1}^N \Big( \big\|Z^{i,\ell,N}_t\big\| + \big\|Z^{i,m,\ell,\star,N}_t\big\| + \big\|Z^{n,\ell,\star,N}_t\big\| \Big) \Bigg)^2 \\
&\leq 8R_N^2 \Bigg( 1 + \big\|X^i_{\cdot \land t}\big\|_\infty^2 + N \sum_{\ell =1}^N \Big( \big\|\widetilde Z^{i,\ell,N}_t\big\|^2  + \big\|\widetilde Z^{i,m,\ell,\star,N}_t\big\|^2 + \big\|\widetilde Z^{n,\ell,\star,N}_t\big\|^2 \Big) \Bigg) \\
&\quad+ 8NR_N^2 \sum_{\ell =1}^N \Big( \big\|\delta Z^{i,\ell,N}_t\big\|^2 + \big\|\delta Z^{i,m,\ell,\star,N}_t\big\|^2 + \big\|\delta Z^{n,\ell,\star,N}_t\big\|^2 \Big),\; i\in\{1,\dots,N\},
\end{align*}
and consequently implies that
\begin{align}\label{eq:controlxhi}
\nonumber&\big|\aleph^{i,N}_t\big|^2 + \frac1N \sum_{\ell =1}^N \big|\aleph^{\ell,N}_t\big|^2\\
\nonumber&\leq 16R_N^2 +8R_N^2\Bigg(\big\|X^i_{\cdot \land t}\big\|_\infty^2+\frac1N \sum_{\ell =1}^N \big\|X^\ell_{\cdot \land t}\big\|_\infty^2\Bigg) +8NR_N^2 \sum_{\ell=1}^N \Big( \big\|\widetilde Z^{i,\ell,N}_t\big\|^2+ \big\|\widetilde Z^{i,m,\ell,\star,N}_t\big\|^2+2 \big\|\widetilde Z^{n,\ell,\star,N}_t\big\|^2\Big)\\
\nonumber&\quad+8R_N^2 \sum_{(k,\ell)\in\{1,\dots,N\}^\smalltext{2}} \Big(\big\|\widetilde Z^{k,\ell,N}_t\big\|^2 +\big\|\widetilde Z^{k,m,\ell,\star,N}_t\big\|^2 + \big\|\delta Z^{k,\ell,N}_t\big\|^2 + \big\|\delta Z^{k,m,\ell,\star,N}_t\big\|^2\Big)\\
&\quad+8NR_N^2 \sum_{\ell =1}^N \Big(\big\|\delta Z^{i,\ell,N}_t\big\|^2 + \big\|\delta Z^{i,m,\ell,\star,N}_t\big\|^2 +2 \big\|\delta Z^{n,\ell,\star,N}_t\big\|^2\Big).
\end{align}

Plugging the above estimate into \eqref{eq:longestim2}, we have
\begin{align}\label{align:deltaYtoSum}
\notag&\bigg( 1 - \varepsilon_3 4c^2_{1,\smallertext{\rm BDG}} - \bigg(\varepsilon_2 +\eps_4+\frac{\eps_2}{\eps_3}\bigg) c_{\smallertext{\rm{BMO}}_{\smalltext{[}\smalltext{u}\smalltext{,}\smalltext{T}\smalltext{]}}} \bigg) \E^{\P^{\smalltext{\hat\balpha}^\tinytext{N}\smalltext{,}\smalltext{N}\smalltext{,}\smalltext{u}}_\smalltext{\omega}} \bigg[ \sup_{t \in [u,T]} \mathrm{e}^{\beta t} \big|\delta Y^{i,N}_t\big|^2 \bigg] +\E^{\P^{\smalltext{\hat\balpha}^\tinytext{N}\smalltext{,}\smalltext{N}\smalltext{,}\smalltext{u}}_\smalltext{\omega}} \Bigg[ \int_u^T \mathrm{e}^{\beta t} \sum_{\ell =1}^N \big\|\delta Z^{i,\ell,N}_t \big\|^2 \d t \Bigg] \\
	\notag&\leq c_{\eps_{\smalltext{2}\smalltext{,}\smalltext{3}\smalltext{,}\smalltext{4}}} \E^{\P^{\smalltext{\hat\balpha}^\tinytext{N}\smalltext{,}\smalltext{N}\smalltext{,}\smalltext{u}}_\smalltext{\omega}} \Big[ \mathrm{e}^{\beta T} \big\|\delta X^{i}_{\cdot \land T}\big\|^2_\infty \Big] +\frac{\overline{c}_{\eps_{\smalltext{2}\smalltext{,}\smalltext{3}\smalltext{,}\smalltext{4}}}}N \E^{\P^{\smalltext{\hat\balpha}^\tinytext{N}\smalltext{,}\smalltext{N}\smalltext{,}\smalltext{u}}_\smalltext{\omega}} \Bigg[\mathrm{e}^{\beta T} \sum_{\ell =1}^N \big\|\delta X^{\ell}_{\cdot \land T}\big\|^2_\infty\Bigg] \\
\notag&\quad+ \eps_1 \bigg(2+\frac1{\eps_3}\bigg) 48 \ell^2_\Lambda R_N^2 \Bigg(\frac{2}{\beta} \mathrm{e}^{\beta T} + \E^{\P^{\smalltext{\hat\balpha}^\tinytext{N}\smalltext{,}\smalltext{N}\smalltext{,}\smalltext{u}}_\smalltext{\omega}} \bigg[\int_u^T\mathrm{e}^{\beta t}\big\|X^i_{\cdot \land t}\big\|_\infty^2\mathrm{d}t\bigg]+\frac1N \E^{\P^{\smalltext{\hat\balpha}^\tinytext{N}\smalltext{,}\smalltext{N}\smalltext{,}\smalltext{u}}_\smalltext{\omega}} \Bigg[\int_u^T\mathrm{e}^{\beta t} \sum_{\ell =1}^N \big\|X^\ell_{\cdot \land t}\big\|_\infty^2\mathrm{d}t\Bigg]\Bigg)\\
\notag&\quad+ \varepsilon_1 \bigg(2+\frac1{\eps_3}\bigg) (1 + 6 \ell^2_\Lambda) \E^{\P^{\smalltext{\hat\balpha}^\tinytext{N}\smalltext{,}\smalltext{N}\smalltext{,}\smalltext{u}}_\smalltext{\omega}} \bigg[\int_u^T \mathrm{e}^{\beta t} \big\|\delta X^{i}_{\cdot \land t}\big\|^2_\infty \d t \bigg] + \varepsilon_1 \bigg(2+\frac1{\eps_3}\bigg) \frac{(1 + 18 \ell^2_\Lambda)}{N} \E^{\P^{\smalltext{\hat\balpha}^\tinytext{N}\smalltext{,}\smalltext{N}\smalltext{,}\smalltext{u}}_\smalltext{\omega}} \Bigg[ \int_u^T \mathrm{e}^{\beta t} \sum_{\ell =1}^N \big\|\delta X^{\ell}_{\cdot \land t}\big\|^2_{\infty} \d t \Bigg] \\
\notag&\quad + \eps_1 \bigg(2+\frac1{\eps_3}\bigg) 48\ell^2_\Lambda NR_N^2\E^{\P^{\smalltext{\hat\balpha}^\tinytext{N}\smalltext{,}\smalltext{N}\smalltext{,}\smalltext{u}}_\smalltext{\omega}} \Bigg[\int_u^T\mathrm{e}^{\beta t} \sum_{\ell =1}^N \Big( \big\|\widetilde Z^{i,\ell,N}_t\big|^2+\big\|\widetilde Z^{i,m,\ell,\star,N}_s\big\|^2+2 \big\|\widetilde Z^{n,\ell,\star,N}_s\big\|^2\Big)\mathrm{d}t\Bigg]\\
\notag&\quad+ \eps_1 \bigg(2+\frac1{\eps_3}\bigg) 48\ell^2_\Lambda R_N^2\E^{\P^{\smalltext{\hat\balpha}^\tinytext{N}\smalltext{,}\smalltext{N}\smalltext{,}\smalltext{u}}_\smalltext{\omega}} \Bigg[\int_u^T\mathrm{e}^{\beta t} \sum_{(k,\ell)\in\{1,\dots,N\}^\smalltext{2}} \Big( \big\|\widetilde Z^{k,\ell,N}_t\big\|^2 +\big\|\widetilde Z^{k,m,\ell,\star,N}_s\big\|^2\Big)\mathrm{d}t\Bigg]\\
\notag&\quad+ \varepsilon_1 \bigg(2+\frac1{\eps_3}\bigg) 6 \ell^2_\Lambda \E^{\P^{\smalltext{\hat\balpha}^\tinytext{N}\smalltext{,}\smalltext{N}\smalltext{,}\smalltext{u}}_\smalltext{\omega}} \bigg[ \int_u^T \mathrm{e}^{\beta t} \Big( \big\|\delta Z^{i,i,N}_t\big\|^2 + \big\|\delta Z^{i,m,i,\star,N}_t\big\|^2 + \big\|\delta Z^{n,i,\star,N}_t\big\|^2 \Big) \d t \bigg] \\
\notag&\quad+ \varepsilon_1 \bigg(2+\frac1{\eps_3}\bigg) \frac{6 \ell^2_\Lambda}{N} \E^{\P^{\smalltext{\hat\balpha}^\tinytext{N}\smalltext{,}\smalltext{N}\smalltext{,}\smalltext{u}}_\smalltext{\omega}} \Bigg[ \int_u^T \mathrm{e}^{\beta t} \sum_{\ell =1}^N \Big( \big\|\delta Z^{\ell,\ell,N}_t\big\|^2 + \big\|\delta Z^{\ell,m,\ell,\star,N}_t\big\|^2 + (1+16N^2R_N^2)\big\|\delta Z^{n,\ell,\star,N}_t\big\|^2  \Big) \d t \Bigg]\\
\notag&\quad + \eps_1 \bigg(2+\frac1{\eps_3}\bigg) 48\ell^2_\Lambda NR_N^2\E^{\P^{\smalltext{\hat\balpha}^\tinytext{N}\smalltext{,}\smalltext{N}\smalltext{,}\smalltext{u}}_\smalltext{\omega}} \Bigg[\int_u^T\mathrm{e}^{\beta t} \sum_{\ell =1}^N \Big(\big\|\delta Z^{i,\ell,N}_t\big\|^2 + \big\|\delta Z^{i,m,\ell,\star,N}_t\big\|^2\Big)\mathrm{d}t\Bigg]\\
&\quad + \eps_1 \bigg(2+\frac1{\eps_3}\bigg) 48\ell^2_\Lambda R_N^2\E^{\P^{\smalltext{\hat\balpha}^\tinytext{N}\smalltext{,}\smalltext{N}\smalltext{,}\smalltext{u}}_\smalltext{\omega}} \Bigg[\int_u^T\mathrm{e}^{\beta t} \sum_{(k,\ell)\in\{1,\dots,N\}^\smalltext{2}} \Big(\big\|\delta Z^{k,\ell,N}_t\|^2 + \big\|\delta Z^{k,m,\ell,\star,N}_t\big\|^2\Big)\mathrm{d}t\Bigg], \; \P\text{\rm--a.e.} \; \omega\in\Omega.
\end{align}

Combining \Cref{align:deltaYtoSum} with the bounds derived in \eqref{align:deltaM_finalEst} and \eqref{align:deltaN_finalEst}, we conclude that, for $\P\text{\rm--a.e.} \; \omega\in\Omega$
\begin{align}\label{align:deltaEverything}
\notag&\bigg( 1 - \varepsilon_3 4c^2_{1,\smallertext{\rm BDG}} - \bigg(\varepsilon_2 +\eps_4+\frac{\eps_2}{\eps_3}\bigg) c_{\smallertext{\rm{BMO}}_{\smalltext{[}\smalltext{u}\smalltext{,}\smalltext{T}\smalltext{]}}} \bigg) \E^{\P^{\smalltext{\hat\balpha}^\tinytext{N}\smalltext{,}\smalltext{N}\smalltext{,}\smalltext{u}}_\smalltext{\omega}} \bigg[ \sup_{t \in [u,T]} \mathrm{e}^{\beta t} \big|\delta Y^{i,N}_t\big|^2 \bigg] +\E^{\P^{\smalltext{\hat\balpha}^\tinytext{N}\smalltext{,}\smalltext{N}\smalltext{,}\smalltext{u}}_\smalltext{\omega}} \Bigg[ \int_u^T \mathrm{e}^{\beta t} \sum_{\ell =1}^N \big\|\delta Z^{i,\ell,N}_t\big\|^2 \d t \Bigg] \\
\notag&\quad +\E^{\P^{\smalltext{\hat\balpha}^\tinytext{N}\smalltext{,}\smalltext{N}\smalltext{,}\smalltext{u}}_\smalltext{\omega}} \Bigg[ \sup_{t \in [u,T]} \mathrm{e}^{\beta t} \big|\delta M^{i,\star,N}_t\big|^2 + \sup_{t \in [u,T]} \mathrm{e}^{\beta t} \big|\delta N^{\star,N}_t\big|^2 + \int_u^T \mathrm{e}^{\beta t} \sum_{\ell =1}^N \Big( \big\|Z^{i,m,\ell,\star,N}_t\big\|^2 + \big\|\delta Z^{n,\ell,\star,N}_t\big\| \Big) \d t \Bigg] \\
	\notag&\leq \big(c_{\eps_{\smalltext{2}\smalltext{,}\smalltext{3}\smalltext{,}\smalltext{4}}}+\ell^2_{\varphi_{\smalltext{1}}} c^\star\big) \E^{\P^{\smalltext{\hat\balpha}^\tinytext{N}\smalltext{,}\smalltext{N}\smalltext{,}\smalltext{u}}_\smalltext{\omega}} \Big[ \mathrm{e}^{\beta T} \big\|\delta X^{i}_{\cdot \land T}\big\|^2_\infty \Big] + \frac{\big(\overline{c}_{\eps_{\smalltext{2}\smalltext{,}\smalltext{3}\smalltext{,}\smalltext{4}}}+\ell^2_{\varphi_{\smalltext{2}}} c^\star\big)}N \E^{\P^{\smalltext{\hat\balpha}^\tinytext{N}\smalltext{,}\smalltext{N}\smalltext{,}\smalltext{u}}_\smalltext{\omega}} \Bigg[\mathrm{e}^{\beta T} \sum_{\ell =1}^N \big\|\delta X^{\ell}_{\cdot \land T}\big\|^2_\infty\Bigg] \\
\notag&\quad+ \eps_1 \bigg(2+\frac1{\eps_3}\bigg) 48 \ell^2_\Lambda R_N^2 \Bigg(\frac{2}{\beta} \mathrm{e}^{\beta T}+ \E^{\P^{\smalltext{\hat\balpha}^\tinytext{N}\smalltext{,}\smalltext{N}\smalltext{,}\smalltext{u}}_\smalltext{\omega}} \bigg[\int_u^T\mathrm{e}^{\beta t}\big\|X^i_{\cdot \land t}\big\|_\infty^2\mathrm{d}t\bigg]+\frac1N \E^{\P^{\smalltext{\hat\balpha}^\tinytext{N}\smalltext{,}\smalltext{N}\smalltext{,}\smalltext{u}}_\smalltext{\omega}} \Bigg[\int_u^T\mathrm{e}^{\beta t} \sum_{\ell =1}^N \big\|X^\ell_{\cdot \land t}\big\|_\infty^2\mathrm{d}t\Bigg]\Bigg)\\
\notag&\quad+ \varepsilon_1 \bigg(2+\frac1{\eps_3}\bigg) (1 + 6 \ell^2_\Lambda) \E^{\P^{\smalltext{\hat\balpha}^\tinytext{N}\smalltext{,}\smalltext{N}\smalltext{,}\smalltext{u}}_\smalltext{\omega}} \bigg[\int_u^T \mathrm{e}^{\beta t} \big\|\delta X^{i}_{\cdot \land t}\big\|^2_\infty \d t \bigg] + \varepsilon_1 \bigg(2+\frac1{\eps_3}\bigg) \frac{(1 + 18 \ell^2_\Lambda)}{N} \E^{\P^{\smalltext{\hat\balpha}^\tinytext{N}\smalltext{,}\smalltext{N}\smalltext{,}\smalltext{u}}_\smalltext{\omega}} \Bigg[ \int_u^T \mathrm{e}^{\beta t} \sum_{\ell =1}^N \big\|\delta X^{\ell}_{\cdot \land t} \big\|^2_{\infty} \d t \Bigg] \\
\notag&\quad + \eps_1 \bigg(2+\frac1{\eps_3}\bigg) 48\ell^2_\Lambda NR_N^2\E^{\P^{\smalltext{\hat\balpha}^\tinytext{N}\smalltext{,}\smalltext{N}\smalltext{,}\smalltext{u}}_\smalltext{\omega}} \Bigg[\int_u^T\mathrm{e}^{\beta t} \sum_{\ell =1}^N \Big(\big\|\widetilde Z^{i,\ell,N}_t\big\|^2+\big\|\widetilde Z^{i,m,\ell,\star,N}_t\big\|^2+\big\|\delta Z^{i,\ell,N}_t\big\|^2+\big\|\delta Z^{i,m,\ell,\star,N}_t\big\|^2\Big)\mathrm{d}t\Bigg]\\
\notag&\quad+ \varepsilon_1 \bigg(2+\frac1{\eps_3}\bigg) 6 \ell^2_\Lambda \E^{\P^{\smalltext{\hat\balpha}^\tinytext{N}\smalltext{,}\smalltext{N}\smalltext{,}\smalltext{u}}_\smalltext{\omega}} \bigg[ \int_u^T \mathrm{e}^{\beta t} \Big( \big\|\delta Z^{i,i,N}_t\big\|^2 + \big\|\delta Z^{i,m,i,\star,N}_t\big\|^2 + \big\|\delta Z^{n,i,\star,N}_t\big\|^2  \Big) \d t \bigg] \\
\notag&\quad+ \varepsilon_1 \bigg(2+\frac1{\eps_3}\bigg) \frac{6 \ell^2_\Lambda}{N} \E^{\P^{\smalltext{\hat\balpha}^\tinytext{N}\smalltext{,}\smalltext{N}\smalltext{,}\smalltext{u}}_\smalltext{\omega}} \Bigg[ \int_u^T \mathrm{e}^{\beta t} \sum_{\ell =1}^N \Big( \big\|\delta Z^{\ell,\ell,N}_t\big\|^2 + \big\|\delta Z^{\ell,m,\ell,\star,N}_t\big\|^2 + \big\|\delta Z^{n,\ell,\star,N}_t\big\|^2 \Big) \d t \Bigg]\\
\notag&\quad + \eps_1 \bigg(2+\frac1{\eps_3}\bigg) 96\ell^2_\Lambda NR_N^2\E^{\P^{\smalltext{\hat\balpha}^\tinytext{N}\smalltext{,}\smalltext{N}\smalltext{,}\smalltext{u}}_\smalltext{\omega}} \Bigg[\int_u^T\mathrm{e}^{\beta t} \sum_{\ell =1}^N \Big( \big\|\widetilde Z^{n,\ell,\star,N}_t\big\|^2 + \big\|\delta Z^{n,\ell,\star,N}_t\big\|^2\Big)\mathrm{d}t\Bigg]\\
&\quad + \eps_1 \bigg(2+\frac1{\eps_3}\bigg) 48\ell^2_\Lambda R_N^2\E^{\P^{\smalltext{\hat\balpha}^\tinytext{N}\smalltext{,}\smalltext{N}\smalltext{,}\smalltext{u}}_\smalltext{\omega}} \Bigg[\int_u^T\mathrm{e}^{\beta t}\sum_{(k,\ell)\in\{1,\dots,N\}^\smalltext{2}} \Big(\big\|\widetilde Z^{k,\ell,N}_t\big\|^2+ \big\|\widetilde Z^{k,m,\ell,\star,N}_t\big\|^2+\big\|\delta Z^{k,\ell,N}_t\big\|^2 + \big\|\delta Z^{k,m,\ell,\star,N}_t\big\|^2\Big)\mathrm{d}t\Bigg].
\end{align}

\textbf{Step 3: estimates for the forward component}

\medskip
An application of It\^o's formula to $\mathrm{e}^{\beta t} \|\delta X^{i}_t\|^2$, for $t \in [u,T]$, yields
\begin{align*}
&\mathrm{e}^{\beta t} \|\delta X^i_t\|^2 \\
&= \int_u^t \beta \mathrm{e}^{\beta s} \|\delta X^i_s\|^2 \d s \\
&\quad+ 2 \int_u^t \mathrm{e}^{\beta s} \delta X^i_s \cdot \Big( \sigma_s(X^i_{\cdot \land s}) b_s\big(X^i_{\cdot \land s},L^N\big(\X^N_{\cdot \land s},\hat\balpha^N_s\big),\hat\alpha^{i,N}_s\big) - \sigma_s(\widetilde X^{i}_{\cdot \land s}) b_s\big(\widetilde X^i_{\cdot \land s},L^N\big(\widetilde \X^N_{\cdot \land s},\widetilde\balpha^N_s\big),\widetilde\alpha^{i,N}_s\big) \Big) \d s \\
&\quad+ 2 \int_u^t \mathrm{e}^{\beta s} \delta X^i_s \cdot  \big( \sigma_s(X^i_{\cdot \land s}) - \sigma_s(\widetilde X^i_{\cdot \land s}) \big) \d \big(W_s^{\hat\balpha^\smalltext{N},N,u,\omega}\big)^i  \\
&\quad+ \int_u^t \mathrm{e}^{\beta s} \mathrm{Tr} \Big[ \big(\sigma_s(X^{i}_{\cdot \land s}) - \sigma_s(\widetilde X^{i}_{\cdot \land s}) \big) \big( \sigma_s(X^{i}_{\cdot \land s}) - \sigma_s(\widetilde X^{i}_{\cdot \land s}) \big)^\top \Big] \d s \\
	&= \int_u^t \beta \mathrm{e}^{\beta s} \|\delta X^{i}_s\|^2 \d s \\
&\quad+ 2 \int_u^t \mathrm{e}^{\beta s} \delta X^{i}_s \cdot \Big( \sigma_s(X^{i}_{\cdot \land s}) b_s\big(X^{i}_{\cdot \land s},L^N\big(\X^N_{\cdot \land s},\hat\balpha^N_s\big),\hat\alpha^{i,N}_s\big) - \sigma_s(\widetilde X^{i}_{\cdot \land s}) b_s\big(\widetilde X^{i}_{\cdot \land s},L^N\big(\X^N_{\cdot \land s},\hat\balpha^N_s\big),\hat\alpha^{i,N}_s\big) \Big) \d s \\
&\quad+ 2 \int_u^t \mathrm{e}^{\beta s} \delta X^{i}_s \cdot \Big( \sigma_s(\widetilde X^{i}_{\cdot \land s}) b_s\big(\widetilde X^{i}_{\cdot \land s},L^N\big(\X^N_{\cdot \land s},\hat\balpha^N_s\big),\hat\alpha^{i,N}_s\big) - \sigma_s(\widetilde X^{i}_{\cdot \land s}) b_s\big(\widetilde X^{i}_{\cdot \land s},L^N\big(\widetilde \X^N_{\cdot \land s},\widetilde\balpha^N_s\big),\widetilde\alpha^{i,N}_s\big)\big) \Big) \d s \\
&\quad+ 2 \int_u^t \mathrm{e}^{\beta s} \delta X^{i}_s \cdot \big( \sigma_s(X^{i}_{\cdot \land s}) - \sigma_s(\widetilde X^{i}_{\cdot \land s}) \big) \d \big(W_s^{\hat\balpha^{\smalltext{N}},N,u,\omega}\big)^i  \\
&\quad+ \int_u^t \mathrm{e}^{\beta s} \mathrm{Tr} \Big[ \big(\sigma_s(X^{i}_{\cdot \land s}) - \sigma_s(\widetilde X^{i}_{\cdot \land s}) \big) \big( \sigma_s(X^{i}_{\cdot \land s}) - \sigma_s(\widetilde X^{i}_{\cdot \land s})\big)^\top\Big]\mathrm{d}s \\
	&\leq \bigg( \beta - 2 K_{\sigma b} + \ell^2_\sigma + \frac{2 \ell^2_{\sigma b}}{\varepsilon_5}\bigg) \int_u^t \mathrm{e}^{\beta s} \big\|\delta X^{i}_{\cdot \land s}\big\|^2_\infty \d s + \varepsilon_5 \int_u^t \mathrm{e}^{\beta s} \Big( \cW^2_2\big(L^N\big(\X^N_{\cdot \land s},\hat\balpha^N_s\big),L^N\big(\widetilde \X^N_{\cdot \land s},\widetilde\balpha^N_s\big)\big) + d^2_A\big(\hat\alpha^{i,N}_s,\widetilde\alpha^{i,N}_s\big) \Big) \d s \\
&\quad+ 2 \int_u^t \mathrm{e}^{\beta s} \delta X^{i}_s \cdot  \big( \sigma_s(X^{i}_{\cdot \land s}) - \sigma_s(\widetilde X^{i}_{\cdot \land s}) \big) \d \big(W_s^{\hat\balpha^{\smalltext{N}},N,u,\omega}\big)^i  \\
	&\leq \bigg( \beta - 2 K_{\sigma b} + \ell^2_\sigma + \frac{2\ell^2_{\sigma b}}{\varepsilon_5} \bigg) \int_u^t \mathrm{e}^{\beta s} \big\|\delta X^{i}_{\cdot \land s}\big\|^2_\infty \d s + \frac{\varepsilon_5}{N} \int_u^t \mathrm{e}^{\beta s} \sum_{\ell =1}^N \big\|\delta X^{\ell}_{\cdot \land s}\big\|^2_\infty \d s\\
	&\quad+ \varepsilon_5 6 \ell_\Lambda^2 \int_u^t \mathrm{e}^{\beta s} \Bigg( \big\|\delta X^{i}_{\cdot \land s}\big\|_\infty^2 + \frac{1}{N} \sum_{\ell =1}^N \big\|\delta X^{\ell}_{\cdot \land s}\big\|_\infty^2 + \big\|\delta Z^{i,i,N}_s\big\|^2 + \big\|\delta Z^{i,m,i,\star,N}_s\big\|^2 + \big\|\delta Z^{n,i,\star,N}_s\big\|^2 + \big|\aleph^{i,N}_s\big|^2 \Bigg) \d s \\
	&\quad+ \varepsilon_5 \frac{6 \ell_\Lambda^2}{N} \int_u^t \mathrm{e}^{\beta s} \sum_{\ell =1}^N \Big( 2 \big\|\delta X^{\ell}_{\cdot \land s}\big\|_\infty^2 + \big\|\delta Z^{\ell,\ell,N}_s\big\|^2 + \big\|\delta Z^{\ell,m,\ell,\star,N}_s\big\|^2 + \big\|\delta Z^{n,\ell,\star,N}_s\big\|^2 + \big|\aleph^{\ell,N}_s\big|^2 \Big) \d s \\
	&\quad+ 2 \int_u^t \mathrm{e}^{\beta s} \delta X^{i}_s \cdot  \big( \sigma_s(X^{i}_{\cdot \land s}) - \sigma_s(\widetilde X^{i}_{\cdot \land s}) \big) \d \big(W_s^{\hat\balpha^{\smalltext{N}},N,u,\omega}\big)^i \\
		&= \bigg( \beta - 2 K_{\sigma b} + \ell^2_\sigma + \frac{2\ell^2_{\sigma b}}{\varepsilon_5} + \varepsilon_5 6 \ell_\Lambda^2 \bigg) \int_u^t \mathrm{e}^{\beta s} \big\|\delta X^{i}_{\cdot \land s}\big\|^2_\infty \d s + \varepsilon_5 \frac{( 1 + 18 \ell^2_\Lambda )}{N} \int_u^t \mathrm{e}^{\beta s} \sum_{\ell =1}^N \|\delta X^{\ell}_{\cdot \land s}\|^2_\infty \d s\\
&\quad+ \varepsilon_5 6 \ell_\Lambda^2 \int_u^t \mathrm{e}^{\beta s} \Big( \big\|\delta Z^{i,i,N}_s\big\|^2 + \big\|\delta Z^{i,m,i,\star,N}_s\big\|^2 + \big\|\delta Z^{n,i,\star,N}_s\big\|^2 + \big|\aleph^{i,N}_s\big|^2 \Big) \d s \\
&\quad+ \varepsilon_5 \frac{6 \ell_\Lambda^2}{N} \int_u^t \mathrm{e}^{\beta s} \sum_{\ell=1}^N \Big( \big\|\delta Z^{\ell,\ell,N}_s\big\|^2 + \big\|\delta Z^{\ell,m,\ell,\star,N}_s\big\|^2 + \big\|\delta Z^{n,\ell,\star,N}_s\big\|^2 + \big|\aleph^{\ell,N}_s\big|^2 \Big) \d s \\
&\quad+ 2 \int_u^t \mathrm{e}^{\beta s} \delta X^{i}_s \cdot \big( \sigma_s(X^{i,N}_{\cdot \land s}) - \sigma_s(\widetilde X^{i}_{\cdot \land s}) \big) \d \big(W_s^{\hat\balpha^\smalltext{N},N,u,\omega}\big)^i, \; t \in [u,T], \; \P^{\hat\balpha^\smalltext{N},N,u}_\omega \text{\rm--a.s.}, \; \text{for} \; \P\text{\rm--a.e.} \; \omega\in\Omega.
\end{align*}
The first inequality follows from the Lipschitz condition in \Cref{assumpConvThm}.\ref{lipSigma} and the dissipativity condition in \Cref{assumpConvThm}.\ref{diss}, together with an application of Young’s inequality for some $\eps_5>0$, while the second follows directly from the definition of the Wasserstein distance for empirical distributions (see, for instance, \citeauthor*{cardaliaguet2018short} \cite[Lemma 5.1.7]{cardaliaguet2018short}) and the Lipschitz-continuity of the function $\Lambda$ stated in \Cref{assumpConvThm}.\ref{lipLambda_growthAleph}. Moreover, the assumptions on $\aleph^{i,N}$---recall \Cref{eq:controlxhi}---imply that
\begin{align*}
\mathrm{e}^{\beta t} \|\delta X^{i}_t\|^2 
	&\leq \bigg( \beta - 2 K_{\sigma b} + \ell^2_\sigma + \frac{2\ell^2_{\sigma b}}{\varepsilon_5} + \varepsilon_5 6 \ell_\Lambda^2 \bigg) \int_u^t \mathrm{e}^{\beta s} \big\|\delta X^{i}_{\cdot \land s}\big\|^2_\infty \d s + \varepsilon_5 \frac{( 1 + 18 \ell^2_\Lambda )}{N} \int_u^t \mathrm{e}^{\beta s} \sum_{\ell =1}^N \big\|\delta X^{\ell}_{\cdot \land s}\big\|^2_\infty \d s \\
&\quad + \eps_5 48 \ell^2_\Lambda R_N^2 \Bigg( \frac{2}{\beta} \mathrm{e}^{\beta t} + \int_u^t \mathrm{e}^{\beta s}\big\|X^i_{\cdot \land s}\big\|_\infty^2 \d s +\frac1N \int_u^t \sum_{\ell =1}^N \mathrm{e}^{\beta s}\big\|X^\ell_{\cdot \land s}\big\|_\infty^2 \d s \Bigg) \\
&\quad + \eps_5 48 \ell^2_\Lambda NR_N^2 \int_u^t \mathrm{e}^{\beta s} \sum_{\ell =1}^N \Big( \big\|\widetilde Z^{i,\ell,N}_s\big\|^2 + \big\|\widetilde Z^{i,m,\ell,\star,N}_s\big\|^2 + 2 \big\|\widetilde Z^{n,\ell,\star,N}_s\big\|^2 \Big) \d s \\
&\quad+ \eps_5 48 \ell^2_\Lambda R_N^2 \int_u^t \mathrm{e}^{\beta s} \sum_{(k,\ell)\in\{1,\dots,N\}^\smalltext{2}} \Big(\big\|\widetilde Z^{k,\ell,N}_s\big\|^2 +\big\|\widetilde Z^{k,m,\ell,\star,N}_s\big\|^2 + \big\|\delta Z^{k,\ell,N}_s\big\|^2 + \big\|\delta Z^{k,m,\ell,\star,N}_s\big\|^2\Big) \d s \\
&\quad+ \varepsilon_5 6 \ell_\Lambda^2 \int_u^t \mathrm{e}^{\beta s} \Big( \big\|\delta Z^{i,i,N}_s\big\|^2 + \big\|\delta Z^{i,m,i,\star,N}_s\big\|^2 + \big\|\delta Z^{n,i,\star,N}_s\big\|^2 \Big) \d s \\
&\quad + \eps_5 48 \ell^2_\Lambda NR_N^2 \int_u^t \mathrm{e}^{\beta s} \sum_{\ell =1}^N \Big( \big\|\delta Z^{i,\ell,N}_s\big\|^2 + \big\|\delta Z^{i,m,\ell,\star,N}_s\big\|^2 + 2 \big\|\delta Z^{n,\ell,\star,N}_s\big\|^2 \Big) \d s \\
&\quad+ \varepsilon_5 \frac{6 \ell_\Lambda^2}{N} \int_u^t \mathrm{e}^{\beta s} \sum_{\ell =1}^N \Big( \big\|\delta Z^{\ell,\ell,N}_s\big\|^2 + \big\|\delta Z^{\ell,m,\ell,\star,N}_s\big\|^2 + \big\|\delta Z^{n,\ell,\star,N}_s\big\|^2 \Big) \d s \\
&\quad+ 2 \int_u^t \mathrm{e}^{\beta s} \delta X^{i}_s \cdot \big( \sigma_s(X^{i}_{\cdot \land s}) - \sigma_s(\widetilde X^{i}_{\cdot \land s}) \big) \d \big(W_s^{\hat\balpha^{\smalltext{N}},N,u,\omega}\big)^i, \; t \in [u,T], \; \P^{\hat\balpha^\smalltext{N},N,u}_\omega \text{\rm--a.s.}, \; \text{for} \; \P\text{\rm--a.e.} \; \omega\in\Omega.
\end{align*}

Assuming that
\[
K_{\sigma b}\geq \frac12\bigg(\beta+\ell^2_\sigma + \frac{2\ell^2_{\sigma b}}{\varepsilon_5} + \varepsilon_5 6 \ell_\Lambda^2\bigg),
\]
and taking the expectation, we deduce that for any $t\in[u,T]$,
\begin{align*}
&\E^{\P^{\smalltext{\hat\balpha}^\tinytext{N}\smalltext{,}\smalltext{N}\smalltext{,}\smalltext{u}}_\smalltext{\omega}} \bigg[ \mathrm{e}^{\beta t}\sup_{r \in [u,t]}  \|\delta X^{i}_r\|^2 \bigg] \\
	&\leq \varepsilon_5\mathrm{e}^{\beta t} \frac{( 1 + 18 \ell^2_\Lambda )}{N} \E^{\P^{\smalltext{\hat\balpha}^\tinytext{N}\smalltext{,}\smalltext{N}\smalltext{,}\smalltext{u}}_\smalltext{\omega}} \Bigg[ \int_u^t \mathrm{e}^{\beta s} \sum_{\ell =1}^N \big\|\delta X^{\ell}_{\cdot \land s}\big\|^2_\infty \d s \Bigg] \\
&\quad + \eps_5 \mathrm{e}^{\beta t}48 \ell^2_\Lambda R_N^2 \Bigg( \frac{2}{\beta} \mathrm{e}^{\beta t} + \E^{\P^{\smalltext{\hat\balpha}^\tinytext{N}\smalltext{,}\smalltext{N}\smalltext{,}\smalltext{u}}_\smalltext{\omega}} \bigg[ \int_u^t\mathrm{e}^{\beta s} \big\|X^i_{\cdot \land s}\big\|_\infty^2 \d s \bigg] +\frac1N \E^{\P^{\smalltext{\hat\balpha}^\tinytext{N}\smalltext{,}\smalltext{N}\smalltext{,}\smalltext{u}}_\smalltext{\omega}} \Bigg[ \int_u^t\mathrm{e}^{\beta s} \sum_{\ell =1}^N \big\|X^\ell_{\cdot \land s}\big\|_\infty^2 \d s \Bigg] \Bigg) \\
&\quad + \eps_5 \mathrm{e}^{\beta t}48 \ell^2_\Lambda NR_N^2 \E^{\P^{\smalltext{\hat\balpha}^\tinytext{N}\smalltext{,}\smalltext{N}\smalltext{,}\smalltext{u}}_\smalltext{\omega}} \Bigg[ \int_u^t \mathrm{e}^{\beta s} \sum_{\ell =1}^N \Big( \big\|\widetilde Z^{i,\ell,N}_s\big\|^2 + \big\|\widetilde Z^{i,m,\ell,\star,N}_s\big\|^2 + 2 \big\|\widetilde Z^{n,\ell,\star,N}_s\big\|^2 \Big) \d s \Bigg] \\
&\quad+ \eps_5 \mathrm{e}^{\beta t}48 \ell^2_\Lambda R_N^2 \E^{\P^{\smalltext{\hat\balpha}^\tinytext{N}\smalltext{,}\smalltext{N}\smalltext{,}\smalltext{u}}_\smalltext{\omega}} \Bigg[ \int_u^t \mathrm{e}^{\beta s} \sum_{(k,\ell)\in\{1,\dots,N\}^\smalltext{2}} \Big(\big\|\widetilde Z^{k,\ell,N}_s\big\|^2 +\big\|\widetilde Z^{k,m,\ell,\star,N}_s\big\|^2 + \big\|\delta Z^{k,\ell,N}_s\big\|^2 + \big\|\delta Z^{k,m,\ell,\star,N}_s\big\|^2\Big) \d s \Bigg] \\
&\quad+ \varepsilon_5 \mathrm{e}^{\beta t}6 \ell_\Lambda^2 \E^{\P^{\smalltext{\hat\balpha}^\tinytext{N}\smalltext{,}\smalltext{N}\smalltext{,}\smalltext{u}}_\smalltext{\omega}} \bigg[ \int_u^t \mathrm{e}^{\beta s} \Big( \big\|\delta Z^{i,i,N}_s\big\|^2 + \big\|\delta Z^{i,m,i,\star,N}_s\big\|^2 + \big\|\delta Z^{n,i,\star,N}_s\big\|^2 \Big) \d s \bigg] \\
&\quad + \eps_5 \mathrm{e}^{\beta t}48 \ell^2_\Lambda NR_N^2 \E^{\P^{\smalltext{\hat\balpha}^\tinytext{N}\smalltext{,}\smalltext{N}\smalltext{,}\smalltext{u}}_\smalltext{\omega}} \Bigg[ \int_u^t \mathrm{e}^{\beta s} \sum_{\ell =1}^N \Big( \big\|\delta Z^{i,\ell,N}_s\big\|^2 + \big\|\delta Z^{i,m,\ell,\star,N}_s\big\|^2 + 2 \big\|\delta Z^{n,\ell,\star,N}_s\big\|^2 \Big) \d s \bigg] \\
&\quad+ \varepsilon_5 \mathrm{e}^{\beta t}\frac{6 \ell_\Lambda^2}{N} \E^{\P^{\smalltext{\hat\balpha}^\tinytext{N}\smalltext{,}\smalltext{N}\smalltext{,}\smalltext{u}}_\smalltext{\omega}} \Bigg[ \int_u^t \mathrm{e}^{\beta s} \sum_{\ell =1}^N \Big( \big\|\delta Z^{\ell,\ell,N}_s\big\|^2 + \big\|\delta Z^{\ell,m,\ell,\star,N}_s\big\|^2 + \big\|\delta Z^{n,\ell,\star,N}_s\big\|^2 \Big) \d s \Bigg] \\
&\quad+ 2\mathrm{e}^{\beta t} \E^{\P^{\smalltext{\hat\balpha}^\tinytext{N}\smalltext{,}\smalltext{N}\smalltext{,}\smalltext{u}}_\smalltext{\omega}} \bigg[ \sup_{r \in [u,t]} \bigg|\int_u^r \mathrm{e}^{\beta s} \delta X^{i}_s \cdot \big( \sigma_s(X^{i}_{\cdot \land s}) - \sigma_s(\widetilde X^{i}_{\cdot \land s}) \big) \d \big(W_s^{\hat\balpha^{\smalltext{N}},N,u,\omega}\big)^i \bigg| \bigg] \\
	&\leq \varepsilon_5\mathrm{e}^{\beta t} \frac{( 1 + 18 \ell^2_\Lambda )}{N} \E^{\P^{\smalltext{\hat\balpha}^\tinytext{N}\smalltext{,}\smalltext{N}\smalltext{,}\smalltext{u}}_\smalltext{\omega}} \Bigg[ \int_u^t \mathrm{e}^{\beta s} \sum_{\ell =1}^N \big\|\delta X^{\ell}_{\cdot \land s}\big\|^2_\infty \d s \Bigg] \\
&\quad + \eps_5 \mathrm{e}^{\beta t}48 \ell^2_\Lambda R_N^2 \Bigg( \frac{2}{\beta} \mathrm{e}^{\beta t} + \E^{\P^{\smalltext{\hat\balpha}^\tinytext{N}\smalltext{,}\smalltext{N}\smalltext{,}\smalltext{u}}_\smalltext{\omega}} \bigg[ \int_u^t\mathrm{e}^{\beta s} \big\|X^i_{\cdot \land s}\big\|_\infty^2 \d s \bigg] +\frac1N \E^{\P^{\smalltext{\hat\balpha}^\tinytext{N}\smalltext{,}\smalltext{N}\smalltext{,}\smalltext{u}}_\smalltext{\omega}} \Bigg[ \int_u^t \mathrm{e}^{\beta s}\sum_{\ell =1}^N \big\|X^\ell_{\cdot \land s}\big\|_\infty^2 \d s \Bigg] \Bigg) \\
&\quad + \eps_5 \mathrm{e}^{\beta t}48 \ell^2_\Lambda NR_N^2 \E^{\P^{\smalltext{\hat\balpha}^\tinytext{N}\smalltext{,}\smalltext{N}\smalltext{,}\smalltext{u}}_\smalltext{\omega}} \Bigg[ \int_u^t \mathrm{e}^{\beta s} \sum_{\ell =1}^N \Big( \big\|\widetilde Z^{i,\ell,N}_s\big\|^2 + \big\|\widetilde Z^{i,m,\ell,\star,N}_s\big\|^2 + 2 \big\|\widetilde Z^{n,\ell,\star,N}_s\big\|^2 \Big) \d s \Bigg] \\
&\quad+ \eps_5 \mathrm{e}^{\beta t}48 \ell^2_\Lambda R_N^2 \E^{\P^{\smalltext{\hat\balpha}^\tinytext{N}\smalltext{,}\smalltext{N}\smalltext{,}\smalltext{u}}_\smalltext{\omega}} \Bigg[ \int_u^t \mathrm{e}^{\beta s} \sum_{(k,\ell)\in\{1,\dots,N\}^\smalltext{2}} \Big(\big\|\widetilde Z^{k,\ell,N}_s\big\|^2 +\big\|\widetilde Z^{k,m,\ell,\star,N}_s\big\|^2 + \big\|\delta Z^{k,\ell,N}_s\big\|^2 + \big\|\delta Z^{k,m,\ell,\star,N}_s\big\|^2\Big) \d s \Bigg] \\
&\quad+ \varepsilon_5 \mathrm{e}^{\beta t}6 \ell_\Lambda^2 \E^{\P^{\smalltext{\hat\balpha}^\tinytext{N}\smalltext{,}\smalltext{N}\smalltext{,}\smalltext{u}}_\smalltext{\omega}} \bigg[ \int_u^t \mathrm{e}^{\beta s} \Big( \big\|\delta Z^{i,i,N}_s\big\|^2 + \big\|\delta Z^{i,m,i,\star,N}_s\big\|^2 + \big\|\delta Z^{n,i,\star,N}_s\big\|^2 \Big) \d s \bigg] \\
&\quad + \eps_5 \mathrm{e}^{\beta t}48 \ell^2_\Lambda NR_N^2 \E^{\P^{\smalltext{\hat\balpha}^\tinytext{N}\smalltext{,}\smalltext{N}\smalltext{,}\smalltext{u}}_\smalltext{\omega}} \Bigg[ \int_u^t \mathrm{e}^{\beta s} \sum_{\ell =1}^N \Big( \big\|\delta Z^{i,\ell,N}_s\big\|^2 + \big\|\delta Z^{i,m,\ell,\star,N}_s\big\|^2 + 2 \big\|\delta Z^{n,\ell,\star,N}_s\big\|^2 \Big) \d s \Bigg] \\
&\quad+ \varepsilon_5\mathrm{e}^{\beta t} \frac{6 \ell_\Lambda^2}{N} \E^{\P^{\smalltext{\hat\balpha}^\tinytext{N}\smalltext{,}\smalltext{N}\smalltext{,}\smalltext{u}}_\smalltext{\omega}} \Bigg[ \int_u^t \mathrm{e}^{\beta s} \sum_{\ell =1}^N \Big( \big\|\delta Z^{\ell,\ell,N}_s\big\|^2 + \big\|\delta Z^{\ell,m,\ell,\star,N}_s\big\|^2 + \big\|\delta Z^{n,\ell,\star,N}_s\big\|^2 \Big) \d s \Bigg] \\
&\quad+ \varepsilon_6 \mathrm{e}^{2\beta t}c^2_{1,{\smallertext{\rm BDG}}} \ell^2_\sigma \E^{\P^{\smalltext{\hat\balpha}^\tinytext{N}\smalltext{,}\smalltext{N}\smalltext{,}\smalltext{u}}_\smalltext{\omega}} \bigg[ \sup_{r \in [u,t]} \mathrm{e}^{\beta r} \|\delta X^{i}_r\|^2 \bigg] + \frac{1}{\varepsilon_6} \E^{\P^{\smalltext{\hat\balpha}^\tinytext{N}\smalltext{,}\smalltext{N}\smalltext{,}\smalltext{u}}_\smalltext{\omega}} \bigg[ \int_u^t \mathrm{e}^{\beta s} \big\|\delta X^{i}_{\cdot \land s}\big\|^2_\infty \d s \bigg], \; \P\text{\rm--a.e.} \; \omega\in\Omega,
\end{align*}
where the last inequality follows from Burkholder--Davis--Gundy's inequality, combined with the Lipschitz condition stated in \Cref{assumpConvThm}.\ref{lipSigma}, and Young’s inequality for some $\varepsilon_6 > 0$. Consequently, for any $\eps_6 \in (0,1/(c^2_{1,{\smallertext{\rm BDG}}} \mathrm{e}^{2\beta t}\ell^2_\sigma))$, for $\P\text{\rm--a.e.} \; \omega\in\Omega$,
\begin{align*}
 &(1-\varepsilon_6 \mathrm{e}^{2\beta t}c^2_{1,{\smallertext{\rm BDG}}} \ell^2_\sigma) \E^{\P^{\smalltext{\hat\balpha}^\tinytext{N}\smalltext{,}\smalltext{N}\smalltext{,}\smalltext{u}}_\smalltext{\omega}} \bigg[ \mathrm{e}^{\beta t} \big\|\delta X^{i}_{\cdot \land t}\big\|^2 \bigg] \\
	&\leq \varepsilon_5\mathrm{e}^{\beta t} \frac{( 1 + 18 \ell^2_\Lambda )}{N} \E^{\P^{\smalltext{\hat\balpha}^\tinytext{N}\smalltext{,}\smalltext{N}\smalltext{,}\smalltext{u}}_\smalltext{\omega}} \Bigg[ \int_u^t \mathrm{e}^{\beta s} \sum_{\ell =1}^N \big\|\delta X^{\ell}_{\cdot \land s}\big\|^2_\infty \d s \Bigg] \\
&\quad + \eps_5 \mathrm{e}^{\beta t}48 \ell^2_\Lambda R_N^2 \Bigg( \frac{2}{\beta} \mathrm{e}^{\beta t} + \E^{\P^{\smalltext{\hat\balpha}^\tinytext{N}\smalltext{,}\smalltext{N}\smalltext{,}\smalltext{u}}_\smalltext{\omega}} \bigg[ \int_u^t \mathrm{e}^{\beta s} \big\|X^i_{\cdot \land s}\big\|_\infty^2 \d s \bigg] +\frac1N \E^{\P^{\smalltext{\hat\balpha}^\tinytext{N}\smalltext{,}\smalltext{N}\smalltext{,}\smalltext{u}}_\smalltext{\omega}} \Bigg[ \int_u^t \mathrm{e}^{\beta s} \sum_{\ell =1}^N \big\|X^\ell_{\cdot \land s}\big\|_\infty^2 \d s \Bigg] \Bigg) \\
&\quad + \eps_5 \mathrm{e}^{\beta t}48 \ell^2_\Lambda NR_N^2 \E^{\P^{\smalltext{\hat\balpha}^\tinytext{N}\smalltext{,}\smalltext{N}\smalltext{,}\smalltext{u}}_\smalltext{\omega}} \Bigg[ \int_u^t \mathrm{e}^{\beta s} \sum_{\ell =1}^N \Big( \big\|\widetilde Z^{i,\ell,N}_s\big\|^2 + \big\|\widetilde Z^{i,m,\ell,\star,N}_s\big\|^2 + 2 \big\|\widetilde Z^{n,\ell,\star,N}_s\big\|^2 \Big) \d s \Bigg] \\
&\quad+ \eps_5 \mathrm{e}^{\beta t}48 \ell^2_\Lambda R_N^2 \E^{\P^{\smalltext{\hat\balpha}^\tinytext{N}\smalltext{,}\smalltext{N}\smalltext{,}\smalltext{u}}_\smalltext{\omega}} \Bigg[ \int_u^t \mathrm{e}^{\beta s} \sum_{(k,\ell)\in\{1,\dots,N\}^\smalltext{2}} \Big(\big\|\widetilde Z^{k,\ell,N}_s\big\|^2 +\big\|\widetilde Z^{k,m,\ell,\star,N}_s\big\|^2 + \big\|\delta Z^{k,\ell,N}_s\big\|^2 + \big\|\delta Z^{k,m,\ell,\star,N}_s\big\|^2\Big) \d s \Bigg] \\
&\quad+ \varepsilon_5 \mathrm{e}^{\beta t}6 \ell_\Lambda^2 \E^{\P^{\smalltext{\hat\balpha}^\tinytext{N}\smalltext{,}\smalltext{N}\smalltext{,}\smalltext{u}}_\smalltext{\omega}} \bigg[ \int_u^t \mathrm{e}^{\beta s} \Big( \big\|\delta Z^{i,i,N}_s\big\|^2 + \big\|\delta Z^{i,m,i,\star,N}_s\big\|^2 + \big\|\delta Z^{n,i,\star,N}_s\big\|^2 \Big) \d s \bigg] \\
&\quad + \eps_5 \mathrm{e}^{\beta t}48 \ell^2_\Lambda NR_N^2 \E^{\P^{\smalltext{\hat\balpha}^\tinytext{N}\smalltext{,}\smalltext{N}\smalltext{,}\smalltext{u}}_\smalltext{\omega}} \Bigg[ \int_u^t \mathrm{e}^{\beta s} \sum_{\ell =1}^N \Big( \big\|\delta Z^{i,\ell,N}_s\big\|^2 + \big\|\delta Z^{i,m,\ell,\star,N}_s\big\|^2 + 2 \big\|\delta Z^{n,\ell,\star,N}_s\big\|^2 \Big) \d s \Bigg] \\
&\quad+ \varepsilon_5 \mathrm{e}^{\beta t}\frac{6 \ell_\Lambda^2}{N} \E^{\P^{\smalltext{\hat\balpha}^\tinytext{N}\smalltext{,}\smalltext{N}\smalltext{,}\smalltext{u}}_\smalltext{\omega}} \Bigg[ \int_u^t \mathrm{e}^{\beta s} \sum_{\ell =1}^N \Big( \big\|\delta Z^{\ell,\ell,N}_s\big\|^2 + \big\|\delta Z^{\ell,m,\ell,\star,N}_s\big\|^2 + \big\|\delta Z^{n,\ell,\star,N}_s\big\|^2 \Big) \d s \Bigg] + \frac{1}{\varepsilon_6} \E^{\P^{\smalltext{\hat\balpha}^\tinytext{N}\smalltext{,}\smalltext{N}\smalltext{,}\smalltext{u}}_\smalltext{\omega}} \bigg[ \int_u^t \mathrm{e}^{\beta s} \big\|\delta X^{i}_{\cdot \land s}\big\|^2_\infty \d s \bigg].
\end{align*}

If we define 
\begin{align*}
c_{\eps_{\smalltext{6}}}(t) \coloneqq \mathrm{exp}\bigg(\beta t+\frac{ t}{\varepsilon_6 (1-\varepsilon_6 \mathrm{e}^{2\beta t}c^2_{1,{\smallertext{\rm BDG}}} \ell^2_\sigma)}  \bigg)\big(1-\varepsilon_6 \mathrm{e}^{2\beta t}c^2_{1,{\smallertext{\rm BDG}}} \ell^2_\sigma\big)^{-1},
\end{align*}
Gr\"onwall's inequality implies that
\begin{align}\label{gronwallineq1}
\notag&\E^{\P^{\smalltext{\hat\balpha}^\tinytext{N}\smalltext{,}\smalltext{N}\smalltext{,}\smalltext{u}}_\smalltext{\omega}} \bigg[ \mathrm{e}^{\beta t} \big\|\delta X^{i}_{\cdot \land t}\big\|^2 \bigg] \\
	\notag&\leq \varepsilon_5 \frac{( 1 + 18 \ell^2_\Lambda ) c_{\eps_{\smalltext{6}}}(t)}{N} \E^{\P^{\smalltext{\hat\balpha}^\tinytext{N}\smalltext{,}\smalltext{N}\smalltext{,}\smalltext{u}}_\smalltext{\omega}} \Bigg[ \int_u^t \mathrm{e}^{\beta s} \sum_{\ell =1}^N \big\|\delta X^{\ell}_{\cdot \land s}\big\|^2_\infty \d s \Bigg] \\
\notag&\quad + \eps_5 48 \ell^2_\Lambda c_{\eps_{\smalltext{6}}}(t) R_N^2 \Bigg( \frac{2}{\beta} \mathrm{e}^{\beta t} + \E^{\P^{\smalltext{\hat\balpha}^\tinytext{N}\smalltext{,}\smalltext{N}\smalltext{,}\smalltext{u}}_\smalltext{\omega}} \bigg[ \int_u^t \mathrm{e}^{\beta s}\big\|X^i_{\cdot \land s}\big\|_\infty^2 \d s \bigg] +\frac1N \E^{\P^{\smalltext{\hat\balpha}^\tinytext{N}\smalltext{,}\smalltext{N}\smalltext{,}\smalltext{u}}_\smalltext{\omega}} \Bigg[ \int_u^t \mathrm{e}^{\beta s}\sum_{\ell =1}^N \big\|X^\ell_{\cdot \land s}\big\|_\infty^2 \d s \Bigg] \Bigg) \\
\notag&\quad + \eps_5 48 \ell^2_\Lambda c_{\eps_{\smalltext{6}}}(t) NR_N^2 \E^{\P^{\smalltext{\hat\balpha}^\tinytext{N}\smalltext{,}\smalltext{N}\smalltext{,}\smalltext{u}}_\smalltext{\omega}} \Bigg[ \int_u^t \mathrm{e}^{\beta s} \sum_{\ell =1}^N \Big( \big\|\widetilde Z^{i,\ell,N}_s\big\|^2 + \big\|\widetilde Z^{i,m,\ell,\star,N}_s\big\|^2 + 2 \big\|\widetilde Z^{n,\ell,\star,N}_s\big\|^2 \Big) \d s \bigg] \\
\notag&\quad+ \eps_5 48 \ell^2_\Lambda c_{\eps_{\smalltext{6}}}(t) R_N^2 \E^{\P^{\smalltext{\hat\balpha}^\tinytext{N}\smalltext{,}\smalltext{N}\smalltext{,}\smalltext{u}}_\smalltext{\omega}} \Bigg[ \int_u^t \mathrm{e}^{\beta s} \sum_{(k,\ell)\in\{1,\dots,N\}^\smalltext{2}} \Big(\big\|\widetilde Z^{k,\ell,N}_s\big\|^2 +\big\|\widetilde Z^{k,m,\ell,\star,N}_s\big\|^2 + \big\|\delta Z^{k,\ell,N}_s\big\|^2 + \big\|\delta Z^{k,m,\ell,\star,N}_s\big\|^2\Big) \d s \Bigg] \\
\notag&\quad+ \varepsilon_5 6 \ell_\Lambda^2 c_{\eps_{\smalltext{6}}}(t) \E^{\P^{\smalltext{\hat\balpha}^\tinytext{N}\smalltext{,}\smalltext{N}\smalltext{,}\smalltext{u}}_\smalltext{\omega}} \bigg[ \int_u^t \mathrm{e}^{\beta s} \Big( \big\|\delta Z^{i,i,N}_s\big\|^2 + \big\|\delta Z^{i,m,i,\star,N}_s\big\|^2 + \big\|\delta Z^{n,i,\star,N}_s\big\|^2 \Big) \d s \bigg] \\
\notag&\quad + \eps_5 48 \ell^2_\Lambda c_{\eps_{\smalltext{6}}}(t) NR_N^2 \E^{\P^{\smalltext{\hat\balpha}^\tinytext{N}\smalltext{,}\smalltext{N}\smalltext{,}\smalltext{u}}_\smalltext{\omega}} \Bigg[ \int_u^t \mathrm{e}^{\beta s} \sum_{\ell =1}^N \Big( \big\|\delta Z^{i,\ell,N}_s\big\|^2 + \big\|\delta Z^{i,m,\ell,\star,N}_s\big\|^2 + 2 \big\|\delta Z^{n,\ell,\star,N}_s\big\|^2 \Big) \d s \Bigg] \\
&\quad+ \varepsilon_5 \frac{6 \ell_\Lambda^2 c_{\eps_{\smalltext{6}}}(t)}{N} \E^{\P^{\smalltext{\hat\balpha}^\tinytext{N}\smalltext{,}\smalltext{N}\smalltext{,}\smalltext{u}}_\smalltext{\omega}} \Bigg[ \int_u^t \mathrm{e}^{\beta s} \sum_{\ell =1}^N \Big( \big\|\delta Z^{\ell,\ell,N}_s\big\|^2 + \big\|\delta Z^{\ell,m,\ell,\star,N}_s\big\|^2 + \big\|\delta Z^{n,\ell,\star,N}_s\big\|^2 \Big) \d s \Bigg], \; \P\text{\rm--a.e.} \; \omega\in\Omega.
\end{align}

Summing with respect to $i\in\{1,\dots,N\}$ yields
\begin{align*}
&\E^{\P^{\smalltext{\hat\balpha}^\tinytext{N}\smalltext{,}\smalltext{N}\smalltext{,}\smalltext{u}}_\smalltext{\omega}} \Bigg[\mathrm{e}^{\beta t} \sum_{\ell =1}^N \big\|\delta X^{\ell}_{\cdot \land t}\big\|^2_\infty \Bigg] \\
\notag&\leq  \varepsilon_5 \big( 1 + 18 \ell^2_\Lambda \big) c_{\eps_{\smalltext{6}}}(t) \E^{\P^{\smalltext{\hat\balpha}^\tinytext{N}\smalltext{,}\smalltext{N}\smalltext{,}\smalltext{u}}_\smalltext{\omega}} \Bigg[ \int_u^t \mathrm{e}^{\beta s} \sum_{\ell =1}^N \big\|\delta X^{\ell}_{\cdot \land s}\big\|^2_\infty \d s \Bigg]\\
&\quad+\eps_5 96 \ell^2_\Lambda c_{\eps_{\smalltext{6}}}(t) R_N^2 \Bigg( \frac{N}{\beta} \mathrm{e}^{\beta t} + \E^{\P^{\smalltext{\hat\balpha}^\tinytext{N}\smalltext{,}\smalltext{N}\smalltext{,}\smalltext{u}}_\smalltext{\omega}} \Bigg[ \int_u^t \mathrm{e}^{\beta s} \sum_{\ell =1}^N \big\|X^\ell_{\cdot \land s}\big\|_\infty^2 \d s \Bigg] \Bigg) \\
&\quad+ \eps_5 96 \ell^2_\Lambda c_{\eps_{\smalltext{6}}}(t) NR_N^2 \E^{\P^{\smalltext{\hat\balpha}^\tinytext{N}\smalltext{,}\smalltext{N}\smalltext{,}\smalltext{u}}_\smalltext{\omega}} \Bigg[ \int_u^t \mathrm{e}^{\beta s}\Bigg( \sum_{(k,\ell)\in\{1,\dots,N\}^\smalltext{2}} \Big(\big\|\widetilde Z^{k,\ell,N}_s\big\|^2 +\big\|\widetilde Z^{k,m,\ell,\star,N}_s\big\|^2\Big)+N\sum_{\ell =1}^N \big\|\widetilde Z^{n,\ell,\star,N}_s\big\|^2\Bigg) \d s \Bigg]\\
&\quad + \eps_5 96 \ell^2_\Lambda c_{\eps_{\smalltext{6}}}(t) NR_N^2 \E^{\P^{\smalltext{\hat\balpha}^\tinytext{N}\smalltext{,}\smalltext{N}\smalltext{,}\smalltext{u}}_\smalltext{\omega}} \Bigg[ \int_u^t \mathrm{e}^{\beta s}\Bigg( \sum_{(\ell,k)\in\{1,\dots,N\}^\smalltext{2}} \Big( \big|\delta Z^{k,\ell,N}_s\|^2 + \big|\delta Z^{k,m,\ell,\star,N}_s\big|^2\Big) + N \sum_{\ell =1}^N \big\|\delta Z^{n,\ell,\star,N}_s\big\|^2\Bigg) \d s \Bigg] \\
&\quad+ \varepsilon_5 12 \ell_\Lambda^2 c_{\eps_{\smalltext{6}}}(t) \E^{\P^{\smalltext{\hat\balpha}^\tinytext{N}\smalltext{,}\smalltext{N}\smalltext{,}\smalltext{u}}_\smalltext{\omega}} \Bigg[ \int_u^t \mathrm{e}^{\beta s} \sum_{\ell =1}^N \Big( \big\|\delta Z^{\ell,\ell,N}_s\big\|^2 + \big\|\delta Z^{\ell,m,\ell,\star,N}_s\big\|^2 + \big\|\delta Z^{n,\ell,\star,N}_s\big\|^2 \Big) \d s \Bigg], \; \P\text{\rm--a.e.} \; \omega\in\Omega.
\end{align*}
Applying Gr\"onwall's inequality once more, and introducing the constant
\[
 c_{\eps_{\smalltext{5}\smalltext{,}\smalltext{6}}}(t)\coloneqq \eps_5 12 \ell^2_\Lambda c_{\eps_{\smalltext{6}}}(t)\exp\big( \varepsilon_5 \big( 1 + 18 \ell^2_\Lambda \big) c_{\eps_{\smalltext{6}}}(T)T\big),
\]
we get
\begin{align}\label{align:GronwallSumDeltaX2}
&\notag\frac1N\E^{\P^{\smalltext{\hat\balpha}^\tinytext{N}\smalltext{,}\smalltext{N}\smalltext{,}\smalltext{u}}_\smalltext{\omega}} \Bigg[\mathrm{e}^{\beta t}\sum_{\ell =1}^N \big\|\delta X^{\ell}_{\cdot \land t}\big\|^2_\infty \Bigg] \\
	\notag&\leq c_{\eps_{\smalltext{5}\smalltext{,}\smalltext{6}}}(t) 8R_N^2 \Bigg( \frac{ \mathrm{e}^{\beta t}}{\beta} + \frac 1N\E^{\P^{\smalltext{\hat\balpha}^\tinytext{N}\smalltext{,}\smalltext{N}\smalltext{,}\smalltext{u}}_\smalltext{\omega}} \bigg[ \int_u^t \mathrm{e}^{\beta s} \big\|X^\ell_{\cdot \land s}\big\|_\infty^2 \d s \bigg] \Bigg) \\
\notag&\quad+ \frac{c_{\eps_{\smalltext{5}\smalltext{,}\smalltext{6}}}(t)}N \E^{\P^{\smalltext{\hat\balpha}^\tinytext{N}\smalltext{,}\smalltext{N}\smalltext{,}\smalltext{u}}_\smalltext{\omega}} \Bigg[ \int_u^t \mathrm{e}^{\beta s} \sum_{\ell =1}^N \Big( \big\|\delta Z^{\ell,\ell,N}_s\big\|^2 + \big\|\delta Z^{\ell,m,\ell,\star,N}_s\big\|^2 + \big\|\delta Z^{n,\ell,\star,N}_s\big\|^2 \Big) \d s \Bigg] \\
\notag&\quad + 8c_{\eps_{\smalltext{5}\smalltext{,}\smalltext{6}}}(t) R_N^2 \E^{\P^{\smalltext{\hat\balpha}^\tinytext{N}\smalltext{,}\smalltext{N}\smalltext{,}\smalltext{u}}_\smalltext{\omega}} \Bigg[ \int_u^t \mathrm{e}^{\beta s} \sum_{(\ell,k)\in\{1,\dots,N\}^\smalltext{2}} \Big( \big\|\widetilde Z^{\ell,k,N}_s\big\|^2 + \big\|\widetilde Z^{\ell,m,k,\star,N}_s\big\|^2 + \big\|\delta Z^{\ell,k,N}_s\big\|^2 + \big\|\delta Z^{\ell,m,k,\star,N}_s\big\|^2 \Big) \d s \Bigg] \\
&\quad + 8c_{\eps_{\smalltext{5}\smalltext{,}\smalltext{6}}}(t) NR_N^2 \E^{\P^{\smalltext{\hat\balpha}^\tinytext{N}\smalltext{,}\smalltext{N}\smalltext{,}\smalltext{u}}_\smalltext{\omega}} \Bigg[ \int_u^t \mathrm{e}^{\beta s} \sum_{\ell =1}^N \Big( \big\|\widetilde Z^{n,\ell,\star,N}_s\big\|^2 + \big\|\delta Z^{n,\ell,\star,N}_s\big\|^2 \Big) \d s \Bigg], \; \P\text{\rm--a.e.} \; \omega\in\Omega,
\end{align}
which in turn implies
\begin{align}\label{align:GronwallSumDeltaX}
&\notag\frac1N\E^{\P^{\smalltext{\hat\balpha}^\tinytext{N}\smalltext{,}\smalltext{N}\smalltext{,}\smalltext{u}}_\smalltext{\omega}} \Bigg[\int_u^t\mathrm{e}^{\beta s}\sum_{\ell =1}^N \big\|\delta X^{\ell}_{\cdot \land s}\big\|^2_\infty\mathrm{d}s \Bigg] \\
	\notag&\leq c_{\eps_{\smalltext{5}\smalltext{,}\smalltext{6}}}(t) 8R_N^2 \Bigg( \frac{\mathrm{e}^{\beta t}}{\beta^2} + \frac tN\E^{\P^{\smalltext{\hat\balpha}^\tinytext{N}\smalltext{,}\smalltext{N}\smalltext{,}\smalltext{u}}_\smalltext{\omega}} \bigg[ \int_u^t \mathrm{e}^{\beta s} \sum_{\ell =1}^N \big\|X^\ell_{\cdot \land s}\big\|_\infty^2 \d s \bigg] \Bigg) \\
\notag&\quad+ \frac{tc_{\eps_{\smalltext{5}\smalltext{,}\smalltext{6}}}(t)}N \E^{\P^{\smalltext{\hat\balpha}^\tinytext{N}\smalltext{,}\smalltext{N}\smalltext{,}\smalltext{u}}_\smalltext{\omega}} \Bigg[ \int_u^t \mathrm{e}^{\beta s} \sum_{\ell=1}^N \Big( \big\|\delta Z^{\ell,\ell,N}_s\big\|^2 + \big\|\delta Z^{\ell,m,\ell,\star,N}_s\big\|^2 + \big\|\delta Z^{n,\ell,\star,N}_s\big\|^2 \Big) \d s \Bigg] \\
\notag&\quad + 8tc_{\eps_{\smalltext{5}\smalltext{,}\smalltext{6}}}(t) R_N^2 \E^{\P^{\smalltext{\hat\balpha}^\tinytext{N}\smalltext{,}\smalltext{N}\smalltext{,}\smalltext{u}}_\smalltext{\omega}} \Bigg[ \int_u^t \mathrm{e}^{\beta s} \sum_{(\ell,k)\in\{1,\dots,N\}^\smalltext{2}}  \Big( \big\|\widetilde Z^{\ell,k,N}_s\big\|^2 + \big\|\widetilde Z^{\ell,m,k,\star,N}_s\big\|^2 + \big\|\delta Z^{\ell,k,N}_s\big\|^2 + \big\|\delta Z^{\ell,m,k,\star,N}_s\big\|^2 \Big) \d s \Bigg] \\
&\quad + 8tc_{\eps_{\smalltext{5}\smalltext{,}\smalltext{6}}}(t) NR_N^2 \E^{\P^{\smalltext{\hat\balpha}^\tinytext{N}\smalltext{,}\smalltext{N}\smalltext{,}\smalltext{u}}_\smalltext{\omega}} \Bigg[ \int_u^t \mathrm{e}^{\beta s} \sum_{\ell =1}^N \Big( \big\|\widetilde Z^{n,\ell,\star,N}_s\big\|^2 + \big\|\delta Z^{n,\ell,\star,N}_s\big\|^2 \Big) \d s \Bigg], \; \P\text{\rm--a.e.} \; \omega\in\Omega.
\end{align}

We define
\[
\bar{c}^1_{\eps_{\smalltext{5}\smalltext{,}\smalltext{6}}}(t)\coloneqq \eps_5 8 c_{\eps_\smalltext{6}}(t)\big(6\ell^2_\Lambda +(1+18\ell^2_\Lambda)tc_{\eps_{\smalltext{5}\smalltext{,}\smalltext{6}}}(t)\big),\; \bar{c}^2_{\eps_{\smalltext{5}\smalltext{,}\smalltext{6}}}(t)\coloneqq \eps_5 8 c_{\eps_\smalltext{6}}(t)\big(12\ell^2_\Lambda +(1+18\ell^2_\Lambda)tc_{\eps_{\smalltext{5}\smalltext{,}\smalltext{6}}}(t)\big),\; \bar{c}^3_{\eps_{\smalltext{5}\smalltext{,}\smalltext{6}}}(t)\coloneqq  \varepsilon_5 6 \ell_\Lambda^2 c_{\eps_{\smalltext{6}}}(t).
\]
Plugging the estimate in \eqref{align:GronwallSumDeltaX} back into \Cref{gronwallineq1}, we have
\begin{align}\label{align:GronwallDeltaX}
\notag&\E^{\P^{\smalltext{\hat\balpha}^\tinytext{N}\smalltext{,}\smalltext{N}\smalltext{,}\smalltext{u}}_\smalltext{\omega}} \bigg[ \mathrm{e}^{\beta t}\big\|\delta X^{i}_{\cdot \land t}\big\|^2 \bigg] \\
\notag&\leq \eps_5 \frac8{\beta^2} c_{\eps_\smalltext{6}}(t)R_N^2\mathrm{e}^{\beta t}\big(12\beta\ell^2_\Lambda +c_{\eps_{\smalltext{5}\smalltext{,}\smalltext{6}}}(t)(1+18\ell^2_\Lambda)\big)+8\bar{c}^3_{\eps_{\smalltext{5}\smalltext{,}\smalltext{6}}}(t)R^2_N\E^{\P^{\smalltext{\hat\balpha}^\tinytext{N}\smalltext{,}\smalltext{N}\smalltext{,}\smalltext{u}}_\smalltext{\omega}} \bigg[ \int_u^t \mathrm{e}^{\beta s}\big\|X^i_{\cdot \land s}\big\|_\infty^2 \d s \bigg]\\
\notag&\quad+\bar{c}^1_{\eps_{\smalltext{5}\smalltext{,}\smalltext{6}}}(t)\frac{R_N^2}N\E^{\P^{\smalltext{\hat\balpha}^\tinytext{N}\smalltext{,}\smalltext{N}\smalltext{,}\smalltext{u}}_\smalltext{\omega}} \Bigg[ \int_u^t \mathrm{e}^{\beta s}\sum_{\ell =1} \big\|X^\ell_{\cdot \land s}\big\|_\infty^2 \d s \Bigg]\\
\notag&\quad +8\bar{c}^3_{\eps_{\smalltext{5}\smalltext{,}\smalltext{6}}}(t) NR_N^2 \E^{\P^{\smalltext{\hat\balpha}^\tinytext{N}\smalltext{,}\smalltext{N}\smalltext{,}\smalltext{u}}_\smalltext{\omega}} \Bigg[ \int_u^t \mathrm{e}^{\beta s} \sum_{\ell =1}^N \Big( \big\|\widetilde Z^{i,\ell,N}_s\big\|^2 + \big\|\widetilde Z^{i,m,\ell,\star,N}_s\big\|^2 +\big\|\delta Z^{i,\ell,N}_s\big\|^2 + \big\|\delta Z^{i,m,\ell,\star,N}_s\big\|^2 \Big) \d s \Bigg] \\
\notag&\quad + \bar{c}^2_{\eps_{\smalltext{5}\smalltext{,}\smalltext{6}}}(t) NR_N^2\E^{\P^{\smalltext{\hat\balpha}^\tinytext{N}\smalltext{,}\smalltext{N}\smalltext{,}\smalltext{u}}_\smalltext{\omega}} \Bigg[ \int_u^t \mathrm{e}^{\beta s} \sum_{\ell =1}^N \Big(\big\|\widetilde Z^{n,\ell,\star,N}_s\big\|^2+\big\|\delta Z^{n,\ell,\star,N}_s\big\|^2\Big)\mathrm{d}s\Bigg]\\
\notag&\quad +\bar{c}^1_{\eps_{\smalltext{5}\smalltext{,}\smalltext{6}}}(t) R_N^2\E^{\P^{\smalltext{\hat\balpha}^\tinytext{N}\smalltext{,}\smalltext{N}\smalltext{,}\smalltext{u}}_\smalltext{\omega}} \bigg[ \int_u^t \mathrm{e}^{\beta s} \sum_{(k,\ell)\in\{1,\dots,N\}^\smalltext{2}} \Big(\big\|\widetilde Z^{k,\ell,N}_s\big\|^2 +\big\|\widetilde Z^{k,m,\ell,\star,N}_s\big\|^2 + \big\|\delta Z^{k,\ell,N}_s\big\|^2 + \big\|\delta Z^{k,m,\ell,\star,N}_s\big\|^2\Big) \d s \Bigg] \\
\notag&\quad+ \bar{c}^3_{\eps_{\smalltext{5}\smalltext{,}\smalltext{6}}}(t) \E^{\P^{\smalltext{\hat\balpha}^\tinytext{N}\smalltext{,}\smalltext{N}\smalltext{,}\smalltext{u}}_\smalltext{\omega}} \bigg[ \int_u^t \mathrm{e}^{\beta s} \Big( \big\|\delta Z^{i,i,N}_s\big\|^2 + \big\|\delta Z^{i ,m,i,\star,N}_s\big\|^2 + \big\|\delta Z^{n,i,\star,N}_s\big\|^2 \Big) \d s \bigg] \\
&\quad+\frac{\bar{c}^1_{\eps_{\smalltext{5}\smalltext{,}\smalltext{6}}}(t)}{N} \E^{\P^{\smalltext{\hat\balpha}^\tinytext{N}\smalltext{,}\smalltext{N}\smalltext{,}\smalltext{u}}_\smalltext{\omega}} \Bigg[ \int_u^t \mathrm{e}^{\beta s} \sum_{\ell =1}^N \Big( \big\|\delta Z^{\ell,\ell,N}_s\big\|^2 + \big\|\delta Z^{\ell,m,\ell,\star,N}_s\big\|^2 + \big\|\delta Z^{n,\ell,\star,N}_s\big\|^2 \Big) \d s \Bigg], \; \P\text{\rm--a.e.} \; \omega\in\Omega,
\end{align}
and consequently,
\begin{align}\label{align:GronwallDeltaX2}
\notag&\E^{\P^{\smalltext{\hat\balpha}^\tinytext{N}\smalltext{,}\smalltext{N}\smalltext{,}\smalltext{u}}_\smalltext{\omega}} \bigg[\int_u^t \mathrm{e}^{\beta s}\big\|\delta X^{i}_{\cdot \land s}\big\|^2 \d s \bigg] \\
\notag&\leq \eps_5 \frac8{\beta^3}c_{\eps_\smalltext{6}}(t)R_N^2\mathrm{e}^{\beta t}\big(12\beta\ell^2_\Lambda +c_{\eps_{\smalltext{5}\smalltext{,}\smalltext{6}}}(t)(1+18\ell^2_\Lambda)\big)+8t\bar{c}^3_{\eps_{\smalltext{5}\smalltext{,}\smalltext{6}}}(t)R^2_N\E^{\P^{\smalltext{\hat\balpha}^\tinytext{N}\smalltext{,}\smalltext{N}\smalltext{,}\smalltext{u}}_\smalltext{\omega}} \bigg[ \int_u^t \mathrm{e}^{\beta s}\big\|X^i_{\cdot \land s}\big\|_\infty^2 \d s \bigg]\\
\notag&\quad+t\bar{c}^1_{\eps_{\smalltext{5}\smalltext{,}\smalltext{6}}}(t)\frac{R_N^2}N\E^{\P^{\smalltext{\hat\balpha}^\tinytext{N}\smalltext{,}\smalltext{N}\smalltext{,}\smalltext{u}}_\smalltext{\omega}} \Bigg[ \int_u^t \mathrm{e}^{\beta s}\sum_{\ell =1}^N\big\|X^\ell_{\cdot \land s}\big\|_\infty^2 \d s \Bigg]\\
\notag&\quad +8t\bar{c}^3_{\eps_{\smalltext{5}\smalltext{,}\smalltext{6}}}(t) NR_N^2 \E^{\P^{\smalltext{\hat\balpha}^\tinytext{N}\smalltext{,}\smalltext{N}\smalltext{,}\smalltext{u}}_\smalltext{\omega}} \Bigg[ \int_u^t \mathrm{e}^{\beta s} \sum_{\ell =1}^N \Big( \big\|\widetilde Z^{i,\ell,N}_s\big\|^2 + \big\|\widetilde Z^{i,m,\ell,\star,N}_s\big\|^2 +\big\|\delta Z^{i,\ell,N}_s\big\|^2 + \big\|\delta Z^{i,m,\ell,\star,N}_s\big\|^2 \Big) \d s \Bigg] \\
\notag&\quad + t\bar{c}^2_{\eps_{\smalltext{5}\smalltext{,}\smalltext{6}}}(t) NR_N^2\E^{\P^{\smalltext{\hat\balpha}^\tinytext{N}\smalltext{,}\smalltext{N}\smalltext{,}\smalltext{u}}_\smalltext{\omega}} \Bigg[ \int_u^t \mathrm{e}^{\beta s} \sum_{\ell =1}^N \Big(\big\|\widetilde Z^{n,\ell,\star,N}_s\big\|^2+\big\|\delta Z^{n,\ell,\star,N}_s\big\|^2\Big)\mathrm{d}s\Bigg]\\
\notag&\quad +t\bar{c}^1_{\eps_{\smalltext{5}\smalltext{,}\smalltext{6}}}(t) R_N^2\E^{\P^{\smalltext{\hat\balpha}^\tinytext{N}\smalltext{,}\smalltext{N}\smalltext{,}\smalltext{u}}_\smalltext{\omega}} \Bigg[ \int_u^t \mathrm{e}^{\beta s} \sum_{(k,\ell)\in\{1,\dots,N\}^\smalltext{2}} \Big(\big\|\widetilde Z^{k,\ell,N}_s\big\|^2 +\big\|\widetilde Z^{k,m,\ell,\star,N}_s\big\|^2 + \big\|\delta Z^{k,\ell,N}_s\big\|^2 + \big\|\delta Z^{k,m,\ell,\star,N}_s\big\|^2\Big) \d s \Bigg] \\
\notag&\quad+ t\bar{c}^3_{\eps_{\smalltext{5}\smalltext{,}\smalltext{6}}}(t) \E^{\P^{\smalltext{\hat\balpha}^\tinytext{N}\smalltext{,}\smalltext{N}\smalltext{,}\smalltext{u}}_\smalltext{\omega}} \bigg[ \int_u^t \mathrm{e}^{\beta s} \Big( \big\|\delta Z^{i,i,N}_s\big\|^2 + \big\|\delta Z^{i ,m,i,\star,N}_s\big\|^2 + \big\|\delta Z^{n,i,\star,N}_s\big\|^2 \Big) \d s \bigg] \\
&\quad+t\frac{\bar{c}^1_{\eps_{\smalltext{5}\smalltext{,}\smalltext{6}}}(t)}{N} \E^{\P^{\smalltext{\hat\balpha}^\tinytext{N}\smalltext{,}\smalltext{N}\smalltext{,}\smalltext{u}}_\smalltext{\omega}} \Bigg[ \int_u^t \mathrm{e}^{\beta s} \sum_{\ell =1}^N \Big( \big\|\delta Z^{\ell,\ell,N}_s\big\|^2 + \big\|\delta Z^{\ell,m,\ell,\star,N}_s\big\|^2 + \big\|\delta Z^{n,\ell,\star,N}_s\big\|^2 \Big) \d s \Bigg], \; \P\text{\rm--a.e.} \; \omega\in\Omega.
\end{align}

\medskip
\textbf{Step 4: combining all the estimates}

\medskip
We define 
\begin{align*}
{c}^1_{\eps_{\smalltext{1}\smalltext{,}\smalltext{2}\smalltext{,}\smalltext{3}\smalltext{,}\smalltext{4}\smalltext{,}\smalltext{5}\smalltext{,}\smalltext{6}}}&\coloneqq 8\frac{\mathrm{e}^{\beta T}}{\beta^2}\Bigg(\eps_1\bigg(2+\frac1{\eps_3}\bigg)\bigg(12\beta\ell^2_\Lambda+\eps_5(1+6\ell^2_\Lambda)\frac{c_{\eps_\smalltext{6}}(T)}{\beta}\big(12\beta\ell^2_\Lambda +c_{\eps_{\smalltext{5}\smalltext{,}\smalltext{6}}}(T)(1+18\ell^2_\Lambda)\big)+(1+18\ell^2_\Lambda)c_{\eps_{\smalltext{5}\smalltext{,}\smalltext{6}}}(T)\bigg)\\&\quad+\eps_5\big(c_{\eps_{\smalltext{2}\smalltext{,}\smalltext{3}\smalltext{,}\smalltext{4}}}+\ell^2_{\varphi_\smalltext{1}}c^\star\big)c_{\eps_\smalltext{6}}(T)\big(12\beta\ell^2_\Lambda + c_{\eps_{\smalltext{5}\smalltext{,}\smalltext{6}}}(T)(1+18\ell^2_\Lambda)\big)+\beta\big(\bar{c}_{\eps_{\smalltext{2}\smalltext{,}\smalltext{3}\smalltext{,}\smalltext{4}}}+\ell^2_{\varphi_\smalltext{2}}c^\star\big)	 c_{\eps_{\smalltext{5}\smalltext{,}\smalltext{6}}}(T)\Bigg),\\
{c}^2_{\eps_{\smalltext{1}\smalltext{,}\smalltext{2}\smalltext{,}\smalltext{3}\smalltext{,}\smalltext{4}\smalltext{,}\smalltext{5}\smalltext{,}\smalltext{6}}}&\coloneqq 8\Bigg(\eps_1\bigg(2+\frac1{\eps_3}\bigg)\bigg(6\ell^2_\Lambda+(1+6\ell^2_\Lambda)T\bar{c}^3_{\eps_{\smalltext{5}\smalltext{,}\smalltext{6}}}(T)\bigg)+\bar{c}^3_{\eps_{\smalltext{5}\smalltext{,}\smalltext{6}}}(T)\big(c_{\eps_{\smalltext{2}\smalltext{,}\smalltext{3}\smalltext{,}\smalltext{4}}}+\ell^2_{\varphi_\smalltext{1}}c^\star\big)\Bigg),\\
{c}^3_{\eps_{\smalltext{1}\smalltext{,}\smalltext{2}\smalltext{,}\smalltext{3}\smalltext{,}\smalltext{4}\smalltext{,}\smalltext{5}\smalltext{,}\smalltext{6}}}&\coloneqq \eps_1\bigg(2+\frac1{\eps_3}\bigg)\big(48\ell^2_\Lambda+(1+6\ell_\Lambda^2)T\bar{c}^1_{\eps_{\smalltext{5}\smalltext{,}\smalltext{6}}}(T)+(1+18\ell^2_\Lambda)8Tc_{\eps_{\smalltext{5}\smalltext{,}\smalltext{6}}}(T)\big)+\big(c_{\eps_{\smalltext{2}\smalltext{,}\smalltext{3}\smalltext{,}\smalltext{4}}}+\ell^2_{\varphi_\smalltext{1}}c^\star\big)\bar{c}^1_{\eps_{\smalltext{5}\smalltext{,}\smalltext{6}}}(T)\\
&\quad+8\big(\bar{c}_{\eps_{\smalltext{2}\smalltext{,}\smalltext{3}\smalltext{,}\smalltext{4}}}+\ell^2_{\varphi_\smalltext{2}}c^\star\big)c_{\eps_{\smalltext{5}\smalltext{,}\smalltext{6}}}(T),\\
{c}^4_{\eps_{\smalltext{1}\smalltext{,}\smalltext{2}\smalltext{,}\smalltext{3}\smalltext{,}\smalltext{4}\smalltext{,}\smalltext{5}\smalltext{,}\smalltext{6}}}&\coloneqq\eps_1\bigg(2+\frac1{\eps_3}\bigg)\big(48\ell^2_\Lambda+(1+6\ell^2_\Lambda)8T\bar{c}^3_{\eps_{\smalltext{5}\smalltext{,}\smalltext{6}}}(T)\big)+8\bar{c}^3_{\eps_{\smalltext{5}\smalltext{,}\smalltext{6}}}(T)\big(c_{\eps_{\smalltext{2}\smalltext{,}\smalltext{3}\smalltext{,}\smalltext{4}}}+\ell^2_{\varphi_\smalltext{1}}c^\star\big),\\
{c}^5_{\eps_{\smalltext{1}\smalltext{,}\smalltext{2}\smalltext{,}\smalltext{3}\smalltext{,}\smalltext{4}\smalltext{,}\smalltext{5}\smalltext{,}\smalltext{6}}}&\coloneqq \eps_1\bigg(2+\frac1{\eps_3}\bigg)\big(48\ell^2_\Lambda+(1+6\ell^2_\Lambda)T\bar{c}^1_{\eps_{\smalltext{5}\smalltext{,}\smalltext{6}}}(T)+(1+18\ell^2_\Lambda)8T{c}_{\eps_{\smalltext{5}\smalltext{,}\smalltext{6}}}(T)\big)+\big(c_{\eps_{\smalltext{2}\smalltext{,}\smalltext{3}\smalltext{,}\smalltext{4}}}+\ell^2_{\varphi_\smalltext{1}}c^\star\big)\bar{c}^1_{\eps_{\smalltext{5}\smalltext{,}\smalltext{6}}}(T)\\
&\quad+8\big(\bar{c}_{\eps_{\smalltext{2}\smalltext{,}\smalltext{3}\smalltext{,}\smalltext{4}}}+\ell^2_{\varphi_\smalltext{2}}c^\star\big){c}_{\eps_{\smalltext{5}\smalltext{,}\smalltext{6}}}(T),\\
{c}^6_{\eps_{\smalltext{1}\smalltext{,}\smalltext{2}\smalltext{,}\smalltext{3}\smalltext{,}\smalltext{4}\smalltext{,}\smalltext{5}\smalltext{,}\smalltext{6}}}&\coloneqq \eps_1\bigg(2+\frac1{\eps_3}\bigg)\big(96\ell^2_\Lambda+(1+6\ell^2_\Lambda)T\bar{c}^2_{\eps_{\smalltext{5}\smalltext{,}\smalltext{6}}}(T)+(1+18\ell^2_\Lambda)8T{c}_{\eps_{\smalltext{5}\smalltext{,}\smalltext{6}}}(T)\big)+\big(c_{\eps_{\smalltext{2}\smalltext{,}\smalltext{3}\smalltext{,}\smalltext{4}}}+\ell^2_{\varphi_\smalltext{1}}c^\star\big)\bar{c}^2_{\eps_{\smalltext{5}\smalltext{,}\smalltext{6}}}(T)\\
&\quad+8\big(\bar{c}_{\eps_{\smalltext{2}\smalltext{,}\smalltext{3}\smalltext{,}\smalltext{4}}}+\ell^2_{\varphi_\smalltext{2}}c^\star\big){c}_{\eps_{\smalltext{5}\smalltext{,}\smalltext{6}}}(T),\\
{c}^7_{\eps_{\smalltext{1}\smalltext{,}\smalltext{2}\smalltext{,}\smalltext{3}\smalltext{,}\smalltext{4}\smalltext{,}\smalltext{5}\smalltext{,}\smalltext{6}}}&\coloneqq \eps_1\bigg(2+\frac1{\eps_3}\bigg)\big(6\ell^2_\Lambda+(1+6\ell^2_\Lambda)T\bar{c}^1_{\eps_{\smalltext{5}\smalltext{,}\smalltext{6}}}(T)+(1+18\ell^2_\Lambda)T{c}_{\eps_{\smalltext{5}\smalltext{,}\smalltext{6}}}(T)\big)+\big(c_{\eps_{\smalltext{2}\smalltext{,}\smalltext{3}\smalltext{,}\smalltext{4}}}+\ell^2_{\varphi_\smalltext{1}}c^\star\big)\bar{c}^1_{\eps_{\smalltext{5}\smalltext{,}\smalltext{6}}}(T)\\
&\quad+\big(\bar{c}_{\eps_{\smalltext{2}\smalltext{,}\smalltext{3}\smalltext{,}\smalltext{4}}}+\ell^2_{\varphi_\smalltext{2}}c^\star\big){c}_{\eps_{\smalltext{5}\smalltext{,}\smalltext{6}}}(T),\\
{c}^8_{\eps_{\smalltext{1}\smalltext{,}\smalltext{2}\smalltext{,}\smalltext{3}\smalltext{,}\smalltext{4}\smalltext{,}\smalltext{5}\smalltext{,}\smalltext{6}}}&\coloneqq\eps_1\bigg(2+\frac1{\eps_3}\bigg)\big(6\ell^2_\Lambda+(1+6\ell^2_\Lambda)T\bar{c}^3_{\eps_{\smalltext{5}\smalltext{,}\smalltext{6}}}(T)\big)+\bar{c}^3_{\eps_{\smalltext{5}\smalltext{,}\smalltext{6}}}(T)\big(c_{\eps_{\smalltext{2}\smalltext{,}\smalltext{3}\smalltext{,}\smalltext{4}}}+\ell^2_{\varphi_\smalltext{1}}c^\star\big).
\end{align*}

Returning to \Cref{align:deltaEverything} and using the estimates derived in \eqref{align:GronwallSumDeltaX2}, \eqref{align:GronwallSumDeltaX}, \eqref{align:GronwallDeltaX} and \eqref{align:GronwallDeltaX2}, we have 
\begin{align*}
\notag&\bigg( 1 - \varepsilon_3 4c^2_{1,\smallertext{\rm BDG}} - \bigg(\varepsilon_2 +\eps_4+\frac{\eps_2}{\eps_3}\bigg) c_{\smallertext{\rm{BMO}}_{\smalltext{[}\smalltext{u}\smalltext{,}\smalltext{T}\smalltext{]}}} \bigg) \E^{\P^{\smalltext{\hat\balpha}^\tinytext{N}\smalltext{,}\smalltext{N}\smalltext{,}\smalltext{u}}_\smalltext{\omega}} \bigg[ \sup_{t \in [u,T]} \mathrm{e}^{\beta t} \big|\delta Y^{i,N}_t\big|^2 \bigg] +\E^{\P^{\smalltext{\hat\balpha}^\tinytext{N}\smalltext{,}\smalltext{N}\smalltext{,}\smalltext{u}}_\smalltext{\omega}} \Bigg[ \int_u^T \mathrm{e}^{\beta t} \sum_{\ell =1}^N \big\|\delta Z^{i,\ell,N}_t\big\|^2 \d t \Bigg] \\
\notag&\quad +\E^{\P^{\smalltext{\hat\balpha}^\tinytext{N}\smalltext{,}\smalltext{N}\smalltext{,}\smalltext{u}}_\smalltext{\omega}} \Bigg[ \sup_{t \in [u,T]} \mathrm{e}^{\beta t} \big|\delta M^{i,\star,N}_t\big|^2 + \sup_{t \in [u,T]} \mathrm{e}^{\beta t} \big|\delta N^{\star,N}_t\big|^2 + \int_u^T \mathrm{e}^{\beta t} \sum_{\ell =1}^N \Big( \big\|Z^{i,m,\ell,\star,N}_t\big\|^2 + \big\|\delta Z^{n,\ell,\star,N}_t\big\| \Big) \d t \Bigg] \\
	\notag&\leq  {c}^1_{\eps_{\smalltext{1}\smalltext{,}\smalltext{2}\smalltext{,}\smalltext{3}\smalltext{,}\smalltext{4}\smalltext{,}\smalltext{5}\smalltext{,}\smalltext{6}}}R_N^2+{c}^2_{\eps_{\smalltext{1}\smalltext{,}\smalltext{2}\smalltext{,}\smalltext{3}\smalltext{,}\smalltext{4}\smalltext{,}\smalltext{5}\smalltext{,}\smalltext{6}}}R_N^2\E^{\P^{\smalltext{\hat\balpha}^\tinytext{N}\smalltext{,}\smalltext{N}\smalltext{,}\smalltext{u}}_\smalltext{\omega}} \bigg[\int_u^T\mathrm{e}^{\beta t}\big\| X^{i}_{\cdot\wedge t}\big\|^2_\infty\mathrm{d}t\bigg] + {c}^3_{\eps_{\smalltext{1}\smalltext{,}\smalltext{2}\smalltext{,}\smalltext{3}\smalltext{,}\smalltext{4}\smalltext{,}\smalltext{5}\smalltext{,}\smalltext{6}}}\frac{R_N^2}{N}\E^{\P^{\smalltext{\hat\balpha}^\tinytext{N}\smalltext{,}\smalltext{N}\smalltext{,}\smalltext{u}}_\smalltext{\omega}} \Bigg[\int_u^T\mathrm{e}^{\beta t}\sum_{\ell =1}^N\big\| X^{\ell}_{\cdot\wedge t}\big\|^2_\infty\mathrm{d}t\Bigg]\\
	&\quad+{c}^4_{\eps_{\smalltext{1}\smalltext{,}\smalltext{2}\smalltext{,}\smalltext{3}\smalltext{,}\smalltext{4}\smalltext{,}\smalltext{5}\smalltext{,}\smalltext{6}}}NR_N^2\E^{\P^{\smalltext{\hat\balpha}^\tinytext{N}\smalltext{,}\smalltext{N}\smalltext{,}\smalltext{u}}_\smalltext{\omega}} \Bigg[\int_u^T\mathrm{e}^{\beta t}\sum_{\ell=1}^N\Big( \big\|\widetilde Z^{i,\ell,N}_t\big\|^2+ \big\|\widetilde Z^{i,m,\ell,\star,N}_t\big\|^2+ \big\|\delta Z^{i,\ell,N}_t\big\|^2+\big\|\delta Z^{i,m,\ell,\star,N}_t\big\|^2\Big)\mathrm{d}t\Bigg]\\
	&\quad+{c}^5_{\eps_{\smalltext{1}\smalltext{,}\smalltext{2}\smalltext{,}\smalltext{3}\smalltext{,}\smalltext{4}\smalltext{,}\smalltext{5}\smalltext{,}\smalltext{6}}}R_N^2\E^{\P^{\smalltext{\hat\balpha}^\tinytext{N}\smalltext{,}\smalltext{N}\smalltext{,}\smalltext{u}}_\smalltext{\omega}} \Bigg[ \int_u^T \mathrm{e}^{\beta t} \sum_{(k,\ell)\in\{1,\dots,N\}^\smalltext{2}} \Big(\big\|\widetilde Z^{k,\ell,N}_t\big\|^2 +\big\|\widetilde Z^{k,m,\ell,\star,N}_t\big\|^2 + \big\|\delta Z^{k,\ell,N}_t\big\|^2 + \big\|\delta Z^{k,m,\ell,\star,N}_t\big\|^2\Big) \d t \Bigg]\\
	&\quad+ {c}^6_{\eps_{\smalltext{1}\smalltext{,}\smalltext{2}\smalltext{,}\smalltext{3}\smalltext{,}\smalltext{4}\smalltext{,}\smalltext{5}\smalltext{,}\smalltext{6}}}NR_N^2\E^{\P^{\smalltext{\hat\balpha}^\tinytext{N}\smalltext{,}\smalltext{N}\smalltext{,}\smalltext{u}}_\smalltext{\omega}} \Bigg[ \int_u^T \mathrm{e}^{\beta t} \sum_{\ell =1}^N \Big(\big\|\widetilde Z^{n,\ell,\star,N}_t\big\|^2+\big\|\delta Z^{n,\ell,\star,N}_t\big\|^2\Big)\mathrm{d}t\Bigg]\\
	&\quad+\frac{{c}^7_{\eps_{\smalltext{1}\smalltext{,}\smalltext{2}\smalltext{,}\smalltext{3}\smalltext{,}\smalltext{4}\smalltext{,}\smalltext{5}\smalltext{,}\smalltext{6}}}}{N} \E^{\P^{\smalltext{\hat\balpha}^\tinytext{N}\smalltext{,}\smalltext{N}\smalltext{,}\smalltext{u}}_\smalltext{\omega}} \Bigg[ \int_u^T \mathrm{e}^{\beta t} \sum_{\ell =1}^N \Big( \big\|\delta Z^{\ell,\ell,N}_t\big\|^2 + \big\|\delta Z^{\ell,m,\ell,\star,N}_t\big\|^2 + \big\|\delta Z^{n,\ell,\star,N}_t\big\|^2 \Big) \d t \Bigg]\\
	&\quad+{c}^8_{\eps_{\smalltext{1}\smalltext{,}\smalltext{2}\smalltext{,}\smalltext{3}\smalltext{,}\smalltext{4}\smalltext{,}\smalltext{5}\smalltext{,}\smalltext{6}}}\E^{\P^{\smalltext{\hat\balpha}^\tinytext{N}\smalltext{,}\smalltext{N}\smalltext{,}\smalltext{u}}_\smalltext{\omega}} \bigg[ \int_u^T \mathrm{e}^{\beta t} \Big( \big\|\delta Z^{i,i,N}_t\big\|^2 + \big\|\delta Z^{i ,m,i,\star,N}_t\big\|^2 + \big\|\delta Z^{n,i,\star,N}_t\big\|^2 \Big) \d t \bigg], \; \P\text{\rm--a.e.} \; \omega\in\Omega.
	\end{align*}
	
We continue and deduce that
\begin{align}\label{eq:estim678}
\notag&\E^{\P^{\smalltext{\hat\balpha}^\tinytext{N}\smalltext{,}\smalltext{N}\smalltext{,}\smalltext{u}}_\smalltext{\omega}} \Bigg[ \bigg( 1 - \varepsilon_3 4c^2_{1,\smallertext{\rm BDG}} - \bigg(\varepsilon_2 +\eps_4+\frac{\eps_2}{\eps_3}\bigg) c_{\smallertext{\rm{BMO}}_{\smalltext{[}\smalltext{u}\smalltext{,}\smalltext{T}\smalltext{]}}} \bigg) \sup_{t \in [u,T]} \mathrm{e}^{\beta t} \big|\delta Y^{i,N}_t\big|^2   + \sup_{t \in [u,T]} \mathrm{e}^{\beta t} \big|\delta M^{i,\star,N}_t\big|^2 + \sup_{t \in [u,T]} \mathrm{e}^{\beta t} \big|\delta N^{\star,N}_t\big|^2 \Bigg] \\
\notag&\quad +\big(1-{c}^8_{\eps_{\smalltext{1}\smalltext{,}\smalltext{2}\smalltext{,}\smalltext{3}\smalltext{,}\smalltext{4}\smalltext{,}\smalltext{5}\smalltext{,}\smalltext{6}}}-{c}^4_{\eps_{\smalltext{1}\smalltext{,}\smalltext{2}\smalltext{,}\smalltext{3}\smalltext{,}\smalltext{4}\smalltext{,}\smalltext{5}\smalltext{,}\smalltext{6}}} NR_N^2\big) \E^{\P^{\smalltext{\hat\balpha}^\tinytext{N}\smalltext{,}\smalltext{N}\smalltext{,}\smalltext{u}}_\smalltext{\omega}} \Bigg[ \int_u^T \mathrm{e}^{\beta t} \sum_{\ell =1}^N \Big( \big\|\delta Z^{i,\ell,N}_t\big\|^2 + \big\|\delta Z^{i,m,\ell,\star,N}_t\big\|^2 \Big) \d t \Bigg] \\
\notag&\quad +\bigg(1-{c}^8_{\eps_{\smalltext{1}\smalltext{,}\smalltext{2}\smalltext{,}\smalltext{3}\smalltext{,}\smalltext{4}\smalltext{,}\smalltext{5}\smalltext{,}\smalltext{6}}}-\frac{c^7_{\eps_{\smalltext{1}\smalltext{,}\smalltext{2}\smalltext{,}\smalltext{3}\smalltext{,}\smalltext{4}\smalltext{,}\smalltext{5}\smalltext{,}\smalltext{6}}}}{N}-c^6_{\eps_{\smalltext{1}\smalltext{,}\smalltext{2}\smalltext{,}\smalltext{3}\smalltext{,}\smalltext{4}\smalltext{,}\smalltext{5}\smalltext{,}\smalltext{6}}} N R_N^2\bigg) \E^{\P^{\smalltext{\hat\balpha}^\tinytext{N}\smalltext{,}\smalltext{N}\smalltext{,}\smalltext{u}}_\smalltext{\omega}} \Bigg[ \int_u^T \mathrm{e}^{\beta t} \sum_{\ell =1}^N \big\|\delta Z^{n,\ell,\star,N}_t\big\|^2 \d t \Bigg] \\
	\notag&\leq  {c}^1_{\eps_{\smalltext{1}\smalltext{,}\smalltext{2}\smalltext{,}\smalltext{3}\smalltext{,}\smalltext{4}\smalltext{,}\smalltext{5}\smalltext{,}\smalltext{6}}}R_N^2+{c}^2_{\eps_{\smalltext{1}\smalltext{,}\smalltext{2}\smalltext{,}\smalltext{3}\smalltext{,}\smalltext{4}\smalltext{,}\smalltext{5}\smalltext{,}\smalltext{6}}}R_N^2\E^{\P^{\smalltext{\hat\balpha}^\tinytext{N}\smalltext{,}\smalltext{N}\smalltext{,}\smalltext{u}}_\smalltext{\omega}} \bigg[\int_u^T\mathrm{e}^{\beta t}\big\| X^{i}_{\cdot\wedge t}\big\|^2_\infty\mathrm{d}t\bigg] + {c}^3_{\eps_{\smalltext{1}\smalltext{,}\smalltext{2}\smalltext{,}\smalltext{3}\smalltext{,}\smalltext{4}\smalltext{,}\smalltext{5}\smalltext{,}\smalltext{6}}}\frac{R_N^2}{N}\E^{\P^{\smalltext{\hat\balpha}^\tinytext{N}\smalltext{,}\smalltext{N}\smalltext{,}\smalltext{u}}_\smalltext{\omega}} \Bigg[\int_u^T\mathrm{e}^{\beta t}\sum_{\ell =1}^N\big\| X^{\ell}_{\cdot\wedge t}\big\|^2_\infty\mathrm{d}t\Bigg]\\
	\notag&\quad+NR_N^2\E^{\P^{\smalltext{\hat\balpha}^\tinytext{N}\smalltext{,}\smalltext{N}\smalltext{,}\smalltext{u}}_\smalltext{\omega}} \Bigg[{c}^4_{\eps_{\smalltext{1}\smalltext{,}\smalltext{2}\smalltext{,}\smalltext{3}\smalltext{,}\smalltext{4}\smalltext{,}\smalltext{5}\smalltext{,}\smalltext{6}}}\int_u^T\mathrm{e}^{\beta t}\sum_{\ell =1}^N\Big( \big\|\widetilde Z^{i,\ell,N}_t\big\|^2+ \big\|\widetilde Z^{i,m,\ell,\star,N}_t\big\|^2\Big)\mathrm{d}t+ {c}^6_{\eps_{\smalltext{1}\smalltext{,}\smalltext{2}\smalltext{,}\smalltext{3}\smalltext{,}\smalltext{4}\smalltext{,}\smalltext{5}\smalltext{,}\smalltext{6}}} \int_u^T \mathrm{e}^{\beta t} \sum_{\ell =1}^N \big\|\widetilde Z^{n,\ell,\star,N}_t\big\|^2\mathrm{d}t\Bigg]\\
	\notag&\quad+{c}^5_{\eps_{\smalltext{1}\smalltext{,}\smalltext{2}\smalltext{,}\smalltext{3}\smalltext{,}\smalltext{4}\smalltext{,}\smalltext{5}\smalltext{,}\smalltext{6}}}R_N^2\E^{\P^{\smalltext{\hat\balpha}^\tinytext{N}\smalltext{,}\smalltext{N}\smalltext{,}\smalltext{u}}_\smalltext{\omega}} \Bigg[ \int_u^T \mathrm{e}^{\beta t} \sum_{(k,\ell)\in\{1,\dots,N\}^\smalltext{2}} \Big(\big\|\widetilde Z^{k,\ell,N}_t\big\|^2 +\big\|\widetilde Z^{k,m,\ell,\star,N}_t\big\|^2 + \big\|\delta Z^{k,\ell,N}_t\big\|^2 + \big\|\delta Z^{k,m,\ell,\star,N}_t\big\|^2\Big) \d t \Bigg]\\
	&\quad+\frac{{c}^7_{\eps_{\smalltext{1}\smalltext{,}\smalltext{2}\smalltext{,}\smalltext{3}\smalltext{,}\smalltext{4}\smalltext{,}\smalltext{5}\smalltext{,}\smalltext{6}}}}{N} \E^{\P^{\smalltext{\hat\balpha}^\tinytext{N}\smalltext{,}\smalltext{N}\smalltext{,}\smalltext{u}}_\smalltext{\omega}} \Bigg[ \int_u^T \mathrm{e}^{\beta t} \sum_{\ell =1}^N \Big( \big\|\delta Z^{\ell,\ell,N}_t\big\|^2 + \big\|\delta Z^{\ell,m,\ell,\star,N}_t\big\|^2 \Big) \d t \Bigg], \; \P\text{\rm--a.e.} \; \omega\in\Omega.
	\end{align}

Summing over $i \in \{1,\ldots,N\}$, we have
\begin{align}\label{eq:sumZN}
\notag&\big(1-{c}^8_{\eps_{\smalltext{1}\smalltext{,}\smalltext{2}\smalltext{,}\smalltext{3}\smalltext{,}\smalltext{4}\smalltext{,}\smalltext{5}\smalltext{,}\smalltext{6}}} -{c}^7_{\eps_{\smalltext{1}\smalltext{,}\smalltext{2}\smalltext{,}\smalltext{3}\smalltext{,}\smalltext{4}\smalltext{,}\smalltext{5}\smalltext{,}\smalltext{6}}} - (c^4_{\eps_{\smalltext{1}\smalltext{,}\smalltext{2}\smalltext{,}\smalltext{3}\smalltext{,}\smalltext{4}\smalltext{,}\smalltext{5}\smalltext{,}\smalltext{6}}} +c^5_{\eps_{\smalltext{1}\smalltext{,}\smalltext{2}\smalltext{,}\smalltext{3}\smalltext{,}\smalltext{4}\smalltext{,}\smalltext{5}\smalltext{,}\smalltext{6}}})NR_N^2 \big) \E^{\P^{\smalltext{\hat\balpha}^\tinytext{N}\smalltext{,}\smalltext{N}\smalltext{,}\smalltext{u}}_\smalltext{\omega}} \Bigg[ \int_u^T \mathrm{e}^{\beta t} \sum_{(k,\ell) \in \{1,\ldots,N\}^\smalltext{2}} \Big( \big\|\delta Z^{k,\ell,N}_t\big\|^2 + \big\|\delta Z^{k,m,\ell,\star,N}_t \big\|^2 \Big) \d t \Bigg] \\
		\notag&\leq  {c}^1_{\eps_{\smalltext{1}\smalltext{,}\smalltext{2}\smalltext{,}\smalltext{3}\smalltext{,}\smalltext{4}\smalltext{,}\smalltext{5}\smalltext{,}\smalltext{6}}}NR_N^2+\big({c}^2_{\eps_{\smalltext{1}\smalltext{,}\smalltext{2}\smalltext{,}\smalltext{3}\smalltext{,}\smalltext{4}\smalltext{,}\smalltext{5}\smalltext{,}\smalltext{6}}} + {c}^3_{\eps_{\smalltext{1}\smalltext{,}\smalltext{2}\smalltext{,}\smalltext{3}\smalltext{,}\smalltext{4}\smalltext{,}\smalltext{5}\smalltext{,}\smalltext{6}}}\big)R_N^2\E^{\P^{\smalltext{\hat\balpha}^\tinytext{N}\smalltext{,}\smalltext{N}\smalltext{,}\smalltext{u}}_\smalltext{\omega}} \Bigg[\int_u^T\mathrm{e}^{\beta t}\sum_{\ell =1}^N\big\| X^{\ell}_{\cdot\wedge t}\big\|^2_\infty\mathrm{d}t\Bigg]\\
	\notag&\quad+\big({c}^4_{\eps_{\smalltext{1}\smalltext{,}\smalltext{2}\smalltext{,}\smalltext{3}\smalltext{,}\smalltext{4}\smalltext{,}\smalltext{5}\smalltext{,}\smalltext{6}}}+{c}^5_{\eps_{\smalltext{1}\smalltext{,}\smalltext{2}\smalltext{,}\smalltext{3}\smalltext{,}\smalltext{4}\smalltext{,}\smalltext{5}\smalltext{,}\smalltext{6}}}\big)NR_N^2\E^{\P^{\smalltext{\hat\balpha}^\tinytext{N}\smalltext{,}\smalltext{N}\smalltext{,}\smalltext{u}}_\smalltext{\omega}} \Bigg[\int_u^T\mathrm{e}^{\beta t}\sum_{(k,\ell)\in\{1,\dots,N\}^\smalltext{2}}\Big( \big\|\widetilde Z^{k,\ell,N}_t\big\|^2+ \big\|\widetilde Z^{k,m,\ell,\star,N}_t\big\|^2\Big)\mathrm{d}t\Bigg]\\
	&\quad+ {c}^6_{\eps_{\smalltext{1}\smalltext{,}\smalltext{2}\smalltext{,}\smalltext{3}\smalltext{,}\smalltext{4}\smalltext{,}\smalltext{5}\smalltext{,}\smalltext{6}}}N^2R_N^2\E^{\P^{\smalltext{\hat\balpha}^\tinytext{N}\smalltext{,}\smalltext{N}\smalltext{,}\smalltext{u}}_\smalltext{\omega}} \Bigg[ \int_u^T \mathrm{e}^{\beta t} \sum_{\ell =1}^N \big\|\widetilde Z^{n,\ell,\star,N}_t\big\|^2\mathrm{d}t\Bigg], \; \P\text{\rm--a.e.} \; \omega\in\Omega.
	\end{align}
	
Before continuing, we need the following lemma, whose proof is relegated to \Cref{appendix:auxResult}.
\begin{lemma}\label{lemma:momBoundAuxiliarySystem}
There are constants---defined explicitly in the proof---such that for $\P\text{\rm--a.e.} \; \omega\in\Omega$
\begin{gather*}
\E^{\P^{\smalltext{\hat\balpha}^\tinytext{N}\smalltext{,}\smalltext{N}\smalltext{,}\smalltext{u}}_\smalltext{\omega}} \bigg[\int_u^T\mathrm{e}^{\beta t} \big\| X^{i}_{\cdot\wedge t}\big\|^2_\infty\mathrm{d}t \bigg] \leq c^2_{2} \|X^{i}_u(\omega)\|^2 + \bar{c}^2_{2}, \; \E^{\P^{\smalltext{\hat\balpha}^\tinytext{N}\smalltext{,}\smalltext{N}\smalltext{,}\smalltext{u}}_\smalltext{\omega}} \Bigg[\int_u^T\mathrm{e}^{\beta t} \sum_{\ell =1}^N \big\| X^{\ell}_{\cdot\wedge t}\big\|^2_\infty\mathrm{d}t \Bigg] \leq c^2_{2} \sum_{\ell =1}^N \|X^{\ell}_u(\omega)\|^2 + \bar{c}^2_{2} N,\\
\E^{\P^{\smalltext{\hat\balpha}^\tinytext{N}\smalltext{,}\smalltext{N}\smalltext{,}\smalltext{u}}_\smalltext{\omega}} \Bigg[ \int_u^T \mathrm{e}^{\beta t} \sum_{\ell =1}^N \big\|\widetilde Z^{i,\ell,N}_t\big\|^2 \d t \Bigg]\leq \bar{c}^1_{\eps_{{\smalltext{7}\smalltext{,}\smalltext{8}\smalltext{,}\smalltext{9}}}}+\bar{c}^2_{\eps_{{\smalltext{7}\smalltext{,}\smalltext{8}\smalltext{,}\smalltext{9}}}}\Bigg(\|X^{i}_u(\omega)\|^{2\bar{p}}+ \frac{1}{N} \sum_{\ell =1}^N \| X^{\ell}_{u}(\omega)\|^{2\bar{p}} \Bigg),\\
\E^{\P^{\smalltext{\hat\balpha}^\tinytext{N}\smalltext{,}\smalltext{N}\smalltext{,}\smalltext{u}}_\smalltext{\omega}} \Bigg[ \int_u^T \mathrm{e}^{\beta t} \sum_{\ell =1}^N \big\|\widetilde Z^{i,m,\ell,\star,N}_t\big\|^2 \d t \Bigg]  \leq \mathrm{e}^{\beta T} c_{\varphi_\smalltext{1}},\;  \E^{\P^{\smalltext{\hat\balpha}^\tinytext{N}\smalltext{,}\smalltext{N}\smalltext{,}\smalltext{u}}_\smalltext{\omega}} \Bigg[ \int_u^T \mathrm{e}^{\beta t} \sum_{\ell =1}^N \big\|\widetilde Z^{n,\ell,\star,N}_t\big\|^2 \d t \Bigg]  \leq \mathrm{e}^{\beta T} c_{\varphi_\smalltext{2}}.
\end{gather*}
\end{lemma}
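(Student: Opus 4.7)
The plan is to establish the four estimates in the order stated, each by a relatively standard \emph{a priori} argument adapted to the intermediate FBSDE system~\eqref{align:intermediateSystemNplayerGame}. For the first two bounds, I would start from the forward SDE for $\widetilde X^i$. Since the drift $b$ is bounded (as stated in \Cref{section:ProbSetting}) and $\sigma$ satisfies the linear growth and Lipschitz conditions of \Cref{assumpConvThm}.\ref{lipSigma}, a standard application of the Burkholder--Davis--Gundy and Gronwall inequalities yields, for any $q \geq 1$, an estimate of the form $\E^{\P^{\smalltext{\hat\balpha}^\tinytext{N}\smalltext{,}\smalltext{N}\smalltext{,}\smalltext{u}}_\smalltext{\omega}}[\sup_{t \in [u,T]} \|\widetilde X^i_{\cdot \wedge t}\|^{2q}_\infty] \leq C_q (1+\|X^i_u(\omega)\|^{2q})$ with constants $C_q$ independent of $N$ and $\omega$. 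Integrating this in $t$ against $\mathrm{e}^{\beta t}$ with $q=1$ immediately produces the first bound, and summing over $i \in \{1,\ldots,N\}$ gives the second; the same argument with $q = \bar p$ will be needed below.

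For the fourth pair of bounds, the key observation is that the BSDEs for $\widetilde M^{i,\star,N}$ and $\widetilde N^{\star,N}$ in~\eqref{align:intermediateSystemNplayerGame} are pure martingale equations with bounded terminal data $\varphi_1$ and $\varphi_2$ by \Cref{assumpConvThm}.\ref{boundedPhi_12}. Applying It\^o's formula to $|\widetilde M^{i,\star,N}_t|^2$, using the martingale property of the stochastic integral (justified exactly as in \textbf{Step 1} of \Cref{subsubsection:fromNplayerGameToAuxiliary}) and the mutual orthogonality of the Brownian drivers, yields $\E^{\P^{\smalltext{\hat\balpha}^\tinytext{N}\smalltext{,}\smalltext{N}\smalltext{,}\smalltext{u}}_\smalltext{\omega}}[\int_u^T \sum_{\ell=1}^N \|\widetilde Z^{i,m,\ell,\star,N}_t\|^2 \d t] \leq \E^{\P^{\smalltext{\hat\balpha}^\tinytext{N}\smalltext{,}\smalltext{N}\smalltext{,}\smalltext{u}}_\smalltext{\omega}}[|\varphi_1(\widetilde X^i_{\cdot \wedge T})|^2] \leq c_{\varphi_1}^2$, and analogously for $\widetilde N^{\star,N}$. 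Absorbing $\mathrm{e}^{\beta t} \leq \mathrm{e}^{\beta T}$ inside the integral gives the stated bounds.

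The third estimate is the most delicate due to the quadratic growth of the generator of $\widetilde Y^{i,N}$. My plan is to apply It\^o's formula to $|\widetilde Y^{i,N}_t|^2$ under $\P^{\hat\balpha^N,N,u}_\omega$, isolate $\E[\int_u^T \sum_\ell \|\widetilde Z^{i,\ell,N}_t\|^2\d t]$ on the left-hand side, and control the quadratic driver terms on the right-hand side. Because $\partial^2_{m,m}G$, $\partial^2_{m,n}G$ and $\partial^2_{n,n}G$ are bounded (see \Cref{assumpConvThm}.\ref{boundedSecondDerivativeG}), the quadratic contributions can be estimated \emph{à la} Kunita--Watanabe and via \cite[Lemma 1.4]{delbaen2010harmonic}, precisely as in \textbf{Step 2}, by the BMO norms of $\widetilde M^{i,\star,N}$ and $\widetilde N^{\star,N}$. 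These BMO norms are uniformly bounded in $N$ and $\omega$ thanks to the boundedness of $\varphi_1,\varphi_2$ and of the Girsanov drift $b$ (so that \cite[Lemma A.1]{herdegen2021equilibrium} applies uniformly). The remaining terminal contribution $g + G(\varphi_1,\varphi_2)$ and running contribution $f$ are handled through the growth bounds in \Cref{assumpConvThm}.\ref{growth_f_gG}; squaring and taking expectation yields terms of order $\E[\|\widetilde X^i_{\cdot \wedge T}\|_\infty^{2\bar p}]$ together with an empirical average of the analogous quantities. Applying the forward moment estimate from the first paragraph with $q = \bar p$ converts these into the dependence on $\|X^i_u(\omega)\|^{2\bar p}$ and $N^{-1}\sum_\ell \|X^\ell_u(\omega)\|^{2\bar p}$ announced in the lemma. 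The main obstacle is closing this quadratic estimate without circularity: one must choose the Young-inequality parameters small enough that the quadratic terms absorbed via the uniform BMO bounds still leave a strictly positive factor in front of $\E[\int \sum_\ell \|\widetilde Z^{i,\ell,N}_t\|^2\d t]$; since all BMO constants are uniform in $N$, this can be done and the resulting constants $\bar c^1_{\eps_{7,8,9}}$, $\bar c^2_{\eps_{7,8,9}}$ depend only on the game data and on $\beta, T$.
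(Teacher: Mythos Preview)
Your proposal is correct and follows essentially the same route as the paper: BDG $+$ Gr\"onwall for the forward moment bounds, the trivial martingale-square identity for the bounded-terminal BSDEs governing $\widetilde M^{i,\star,N}$ and $\widetilde N^{\star,N}$, and an It\^o/Kunita--Watanabe/BMO argument with Young-inequality parameters $\eps_7,\eps_8,\eps_9$ to close the estimate on $\widetilde Z^{i,\ell,N}$. One small slip: the first two displayed bounds in the lemma concern $X^i$, not $\widetilde X^i$; the paper handles this by first proving the estimate for $\widetilde X^i$ and then noting that $X^i$ satisfies an SDE of identical structure under $\P^{\hat\balpha^N,N,u}_\omega$, so the same constants apply---you should make that transfer explicit rather than identifying the two processes.
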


Therefore, the bounds listed above can be plugged into \Cref{eq:sumZN}, from which we deduce that, for $\P\text{\rm--a.e.} \; \omega\in\Omega$
\begin{align*}
\notag&\big(1-{c}^8_{\eps_{\smalltext{1}\smalltext{,}\smalltext{2}\smalltext{,}\smalltext{3}\smalltext{,}\smalltext{4}\smalltext{,}\smalltext{5}\smalltext{,}\smalltext{6}}} -{c}^7_{\eps_{\smalltext{1}\smalltext{,}\smalltext{2}\smalltext{,}\smalltext{3}\smalltext{,}\smalltext{4}\smalltext{,}\smalltext{5}\smalltext{,}\smalltext{6}}} - (c^4_{\eps_{\smalltext{1}\smalltext{,}\smalltext{2}\smalltext{,}\smalltext{3}\smalltext{,}\smalltext{4}\smalltext{,}\smalltext{5}\smalltext{,}\smalltext{6}}} +c^5_{\eps_{\smalltext{1}\smalltext{,}\smalltext{2}\smalltext{,}\smalltext{3}\smalltext{,}\smalltext{4}\smalltext{,}\smalltext{5}\smalltext{,}\smalltext{6}}})NR_N^2 \big) \E^{\P^{\smalltext{\hat\balpha}^\tinytext{N}\smalltext{,}\smalltext{N}\smalltext{,}\smalltext{u}}_\smalltext{\omega}} \Bigg[ \int_u^T \mathrm{e}^{\beta t} \sum_{(k,\ell) \in \{1,\ldots,N\}^\smalltext{2}} \Big( \big\|\delta Z^{k,\ell,N}_t\big\|^2 + \big\|\delta Z^{k,m,\ell,\star,N}_t \big\|^2 \Big) \d t \Bigg] \\
		\notag&\leq  {c}^1_{\eps_{\smalltext{1}\smalltext{,}\smalltext{2}\smalltext{,}\smalltext{3}\smalltext{,}\smalltext{4}\smalltext{,}\smalltext{5}\smalltext{,}\smalltext{6}}}NR_N^2+\big({c}^2_{\eps_{\smalltext{1}\smalltext{,}\smalltext{2}\smalltext{,}\smalltext{3}\smalltext{,}\smalltext{4}\smalltext{,}\smalltext{5}\smalltext{,}\smalltext{6}}} + {c}^3_{\eps_{\smalltext{1}\smalltext{,}\smalltext{2}\smalltext{,}\smalltext{3}\smalltext{,}\smalltext{4}\smalltext{,}\smalltext{5}\smalltext{,}\smalltext{6}}}\big)NR_N^2\Bigg(\frac{c^2_{2}}{N} \sum_{\ell=1}^N \big\|X^{\ell}_u(\omega)\big\|^2 + \bar{c}^2_{2} \Bigg)\\
\notag	&\quad+\big({c}^4_{\eps_{\smalltext{1}\smalltext{,}\smalltext{2}\smalltext{,}\smalltext{3}\smalltext{,}\smalltext{4}\smalltext{,}\smalltext{5}\smalltext{,}\smalltext{6}}}+{c}^5_{\eps_{\smalltext{1}\smalltext{,}\smalltext{2}\smalltext{,}\smalltext{3}\smalltext{,}\smalltext{4}\smalltext{,}\smalltext{5}\smalltext{,}\smalltext{6}}}\big)N^2R_N^2\Bigg(\mathrm{e}^{\beta T}c_{\varphi_\smalltext{1}}+\bar{c}^1_{\eps_{{\smalltext{7}\smalltext{,}\smalltext{8}\smalltext{,}\smalltext{9}}}}+2\frac{\bar{c}^2_{\eps_{{\smalltext{7}\smalltext{,}\smalltext{8}\smalltext{,}\smalltext{9}}}}}{N} \sum_{\ell=1}^N \| X^{\ell}_{u}(\omega)\|^{2\bar{p}} \Bigg)+ \mathrm{e}^{\beta T}c_{\varphi_\smalltext{2}}{c}^6_{\eps_{\smalltext{1}\smalltext{,}\smalltext{2}\smalltext{,}\smalltext{3}\smalltext{,}\smalltext{4}\smalltext{,}\smalltext{5}\smalltext{,}\smalltext{6}}}N^2R_N^2\\
	&=\Bigg({c}^1_{\eps_{\smalltext{1}\smalltext{,}\smalltext{2}\smalltext{,}\smalltext{3}\smalltext{,}\smalltext{4}\smalltext{,}\smalltext{5}\smalltext{,}\smalltext{6}\smalltext{,}\smalltext{7}\smalltext{,}\smalltext{8}\smalltext{,}\smalltext{9}}}+\frac{\bar{c}^1_{\eps_{\smalltext{1}\smalltext{,}\smalltext{2}\smalltext{,}\smalltext{3}\smalltext{,}\smalltext{4}\smalltext{,}\smalltext{5}\smalltext{,}\smalltext{6}\smalltext{,}\smalltext{7}\smalltext{,}\smalltext{8}\smalltext{,}\smalltext{9}}}}{N} \sum_{\ell=1}^N \| X^{\ell}_{u}(\omega)\|^{2} \Bigg)NR_N^2+\Bigg({c}^2_{\eps_{\smalltext{1}\smalltext{,}\smalltext{2}\smalltext{,}\smalltext{3}\smalltext{,}\smalltext{4}\smalltext{,}\smalltext{5}\smalltext{,}\smalltext{6}\smalltext{,}\smalltext{7}\smalltext{,}\smalltext{8}\smalltext{,}\smalltext{9}}}+\frac{\bar{c}^2_{\eps_{\smalltext{1}\smalltext{,}\smalltext{2}\smalltext{,}\smalltext{3}\smalltext{,}\smalltext{4}\smalltext{,}\smalltext{5}\smalltext{,}\smalltext{6}\smalltext{,}\smalltext{7}\smalltext{,}\smalltext{8}\smalltext{,}\smalltext{9}}}}{N} \sum_{\ell=1}^N \| X^{\ell}_{u}(\omega)\|^{2\bar{p}} \Bigg)N^2R_N^2,
	\end{align*}
	where
	\begin{gather*}
	{c}^1_{\eps_{\smalltext{1}\smalltext{,}\smalltext{2}\smalltext{,}\smalltext{3}\smalltext{,}\smalltext{4}\smalltext{,}\smalltext{5}\smalltext{,}\smalltext{6}\smalltext{,}\smalltext{7}\smalltext{,}\smalltext{8}\smalltext{,}\smalltext{9}}}\coloneqq {c}^1_{\eps_{\smalltext{1}\smalltext{,}\smalltext{2}\smalltext{,}\smalltext{3}\smalltext{,}\smalltext{4}\smalltext{,}\smalltext{5}\smalltext{,}\smalltext{6}}} + \big( {c}^2_{\eps_{\smalltext{1}\smalltext{,}\smalltext{2}\smalltext{,}\smalltext{3}\smalltext{,}\smalltext{4}\smalltext{,}\smalltext{5}\smalltext{,}\smalltext{6}}} + {c}^3_{\eps_{\smalltext{1}\smalltext{,}\smalltext{2}\smalltext{,}\smalltext{3}\smalltext{,}\smalltext{4}\smalltext{,}\smalltext{5}\smalltext{,}\smalltext{6}}} \big) \bar{c}^2_{2} , \; \bar{c}^1_{\eps_{\smalltext{1}\smalltext{,}\smalltext{2}\smalltext{,}\smalltext{3}\smalltext{,}\smalltext{4}\smalltext{,}\smalltext{5}\smalltext{,}\smalltext{6}\smalltext{,}\smalltext{7}\smalltext{,}\smalltext{8}\smalltext{,}\smalltext{9}}}\coloneqq \big( {c}^2_{\eps_{\smalltext{1}\smalltext{,}\smalltext{2}\smalltext{,}\smalltext{3}\smalltext{,}\smalltext{4}\smalltext{,}\smalltext{5}\smalltext{,}\smalltext{6}}} + {c}^3_{\eps_{\smalltext{1}\smalltext{,}\smalltext{2}\smalltext{,}\smalltext{3}\smalltext{,}\smalltext{4}\smalltext{,}\smalltext{5}\smalltext{,}\smalltext{6}}} \big) {c}^2_{2}, \\
	{c}^2_{\eps_{\smalltext{1}\smalltext{,}\smalltext{2}\smalltext{,}\smalltext{3}\smalltext{,}\smalltext{4}\smalltext{,}\smalltext{5}\smalltext{,}\smalltext{6}\smalltext{,}\smalltext{7}\smalltext{,}\smalltext{8}\smalltext{,}\smalltext{9}}}\coloneqq \mathrm{e}^{\beta T}c_{\varphi_\smalltext{2}}{c}^6_{\eps_{\smalltext{1}\smalltext{,}\smalltext{2}\smalltext{,}\smalltext{3}\smalltext{,}\smalltext{4}\smalltext{,}\smalltext{5}\smalltext{,}\smalltext{6}}} + \big({c}^4_{\eps_{\smalltext{1}\smalltext{,}\smalltext{2}\smalltext{,}\smalltext{3}\smalltext{,}\smalltext{4}\smalltext{,}\smalltext{5}\smalltext{,}\smalltext{6}}}+{c}^5_{\eps_{\smalltext{1}\smalltext{,}\smalltext{2}\smalltext{,}\smalltext{3}\smalltext{,}\smalltext{4}\smalltext{,}\smalltext{5}\smalltext{,}\smalltext{6}}}\big) \big( \mathrm{e}^{\beta T}c_{\varphi_\smalltext{1}}+\bar{c}^1_{\eps_{{\smalltext{7}\smalltext{,}\smalltext{8}\smalltext{,}\smalltext{9}}}} \big) , \; \bar{c}^2_{\eps_{\smalltext{1}\smalltext{,}\smalltext{2}\smalltext{,}\smalltext{3}\smalltext{,}\smalltext{4}\smalltext{,}\smalltext{5}\smalltext{,}\smalltext{6}\smalltext{,}\smalltext{7}\smalltext{,}\smalltext{8}\smalltext{,}\smalltext{9}}}\coloneqq 2 \big({c}^4_{\eps_{\smalltext{1}\smalltext{,}\smalltext{2}\smalltext{,}\smalltext{3}\smalltext{,}\smalltext{4}\smalltext{,}\smalltext{5}\smalltext{,}\smalltext{6}}}+{c}^5_{\eps_{\smalltext{1}\smalltext{,}\smalltext{2}\smalltext{,}\smalltext{3}\smalltext{,}\smalltext{4}\smalltext{,}\smalltext{5}\smalltext{,}\smalltext{6}}}\big) \bar{c}^2_{\eps_{{\smalltext{7}\smalltext{,}\smalltext{8}\smalltext{,}\smalltext{9}}}}.
	\end{gather*}
And thus, for $\P\text{\rm--a.e.} \; \omega\in\Omega$,
	\begin{align*}
&\E^{\P^{\smalltext{\hat\balpha}^\tinytext{N}\smalltext{,}\smalltext{N}\smalltext{,}\smalltext{u}}_\smalltext{\omega}} \Bigg[ \int_u^T \mathrm{e}^{\beta s} \sum_{(k,\ell) \in \{1,\ldots,N\}^\smalltext{2}} \Big( \big\|\delta Z^{k,\ell,N}_s\big\|^2 + \big\|\delta Z^{k,m,\ell,\star,N}_s \big\|^2 \Big) \d s \Bigg]\\
&\leq \Bigg({c}^3_{\eps_{\smalltext{1}\smalltext{,}\smalltext{2}\smalltext{,}\smalltext{3}\smalltext{,}\smalltext{4}\smalltext{,}\smalltext{5}\smalltext{,}\smalltext{6}\smalltext{,}\smalltext{7}\smalltext{,}\smalltext{8}\smalltext{,}\smalltext{9}}}+\frac{\bar{c}^3_{\eps_{\smalltext{1}\smalltext{,}\smalltext{2}\smalltext{,}\smalltext{3}\smalltext{,}\smalltext{4}\smalltext{,}\smalltext{5}\smalltext{,}\smalltext{6}\smalltext{,}\smalltext{7}\smalltext{,}\smalltext{8}\smalltext{,}\smalltext{9}}}}{N} \sum_{\ell=1}^N \| X^{\ell}_{u}(\omega)\|^{2} \Bigg)NR_N^2+\Bigg({c}^4_{\eps_{\smalltext{1}\smalltext{,}\smalltext{2}\smalltext{,}\smalltext{3}\smalltext{,}\smalltext{4}\smalltext{,}\smalltext{5}\smalltext{,}\smalltext{6}\smalltext{,}\smalltext{7}\smalltext{,}\smalltext{8}\smalltext{,}\smalltext{9}}}+\frac{\bar{c}^4_{\eps_{\smalltext{1}\smalltext{,}\smalltext{2}\smalltext{,}\smalltext{3}\smalltext{,}\smalltext{4}\smalltext{,}\smalltext{5}\smalltext{,}\smalltext{6}\smalltext{,}\smalltext{7}\smalltext{,}\smalltext{8}\smalltext{,}\smalltext{9}}}}{N} \sum_{\ell=1}^N \| X^{\ell,N}_{u}(\omega)\|^{2\bar{p}} \Bigg)N^2R_N^2,
	\end{align*}
	where, for $i\in\{3,4\}$,
	\begin{gather*}
	{c}^i_{\eps_{\smalltext{1}\smalltext{,}\smalltext{2}\smalltext{,}\smalltext{3}\smalltext{,}\smalltext{4}\smalltext{,}\smalltext{5}\smalltext{,}\smalltext{6}\smalltext{,}\smalltext{7}\smalltext{,}\smalltext{8}\smalltext{,}\smalltext{9}}}\coloneqq \frac{{c}^{i-2}_{\eps_{\smalltext{1}\smalltext{,}\smalltext{2}\smalltext{,}\smalltext{3}\smalltext{,}\smalltext{4}\smalltext{,}\smalltext{5}\smalltext{,}\smalltext{6}\smalltext{,}\smalltext{7}\smalltext{,}\smalltext{8}\smalltext{,}\smalltext{9}}}}{1-{c}^8_{\eps_{\smalltext{1}\smalltext{,}\smalltext{2}\smalltext{,}\smalltext{3}\smalltext{,}\smalltext{4}\smalltext{,}\smalltext{5}\smalltext{,}\smalltext{6}}} -{c}^7_{\eps_{\smalltext{1}\smalltext{,}\smalltext{2}\smalltext{,}\smalltext{3}\smalltext{,}\smalltext{4}\smalltext{,}\smalltext{5}\smalltext{,}\smalltext{6}}} - (c^4_{\eps_{\smalltext{1}\smalltext{,}\smalltext{2}\smalltext{,}\smalltext{3}\smalltext{,}\smalltext{4}\smalltext{,}\smalltext{5}\smalltext{,}\smalltext{6}}} +c^5_{\eps_{\smalltext{1}\smalltext{,}\smalltext{2}\smalltext{,}\smalltext{3}\smalltext{,}\smalltext{4}\smalltext{,}\smalltext{5}\smalltext{,}\smalltext{6}}})NR_N^2 },\\	
	\bar{c}^i_{\eps_{\smalltext{1}\smalltext{,}\smalltext{2}\smalltext{,}\smalltext{3}\smalltext{,}\smalltext{4}\smalltext{,}\smalltext{5}\smalltext{,}\smalltext{6}\smalltext{,}\smalltext{7}\smalltext{,}\smalltext{8}\smalltext{,}\smalltext{9}}}\coloneqq \frac{\bar{c}^{i-2}_{\eps_{\smalltext{1}\smalltext{,}\smalltext{2}\smalltext{,}\smalltext{3}\smalltext{,}\smalltext{4}\smalltext{,}\smalltext{5}\smalltext{,}\smalltext{6}\smalltext{,}\smalltext{7}\smalltext{,}\smalltext{8}\smalltext{,}\smalltext{9}}}}{1-{c}^8_{\eps_{\smalltext{1}\smalltext{,}\smalltext{2}\smalltext{,}\smalltext{3}\smalltext{,}\smalltext{4}\smalltext{,}\smalltext{5}\smalltext{,}\smalltext{6}}} -{c}^7_{\eps_{\smalltext{1}\smalltext{,}\smalltext{2}\smalltext{,}\smalltext{3}\smalltext{,}\smalltext{4}\smalltext{,}\smalltext{5}\smalltext{,}\smalltext{6}}} - (c^4_{\eps_{\smalltext{1}\smalltext{,}\smalltext{2}\smalltext{,}\smalltext{3}\smalltext{,}\smalltext{4}\smalltext{,}\smalltext{5}\smalltext{,}\smalltext{6}}} +c^5_{\eps_{\smalltext{1}\smalltext{,}\smalltext{2}\smalltext{,}\smalltext{3}\smalltext{,}\smalltext{4}\smalltext{,}\smalltext{5}\smalltext{,}\smalltext{6}}})NR_N^2 }.
	\end{gather*}
Now, we apply all of the above in \Cref{eq:estim678} to deduce that, for $\P\text{\rm--a.e.} \; \omega\in\Omega$,
\begin{align*}
\notag&\E^{\P^{\smalltext{\hat\balpha}^\tinytext{N}\smalltext{,}\smalltext{N}\smalltext{,}\smalltext{u}}_\smalltext{\omega}} \Bigg[ \bigg( 1 - \varepsilon_3 4c^2_{1,\smallertext{\rm BDG}} - \bigg(\varepsilon_2 +\eps_4+\frac{\eps_2}{\eps_3}\bigg) c_{\smallertext{\rm{BMO}}_{\smalltext{[}\smalltext{u}\smalltext{,}\smalltext{T}\smalltext{]}}} \bigg) \sup_{t \in [u,T]} \mathrm{e}^{\beta t} \big|\delta Y^{i,N}_t\big|^2   + \sup_{t \in [u,T]} \mathrm{e}^{\beta t} \big|\delta M^{i,\star,N}_t\big|^2 + \sup_{t \in [u,T]} \mathrm{e}^{\beta t} \big|\delta N^{\star,N}_t\big|^2 \Bigg] \\
\notag&\quad +\big(1-{c}^8_{\eps_{\smalltext{1}\smalltext{,}\smalltext{2}\smalltext{,}\smalltext{3}\smalltext{,}\smalltext{4}\smalltext{,}\smalltext{5}\smalltext{,}\smalltext{6}}}-{c}^4_{\eps_{\smalltext{1}\smalltext{,}\smalltext{2}\smalltext{,}\smalltext{3}\smalltext{,}\smalltext{4}\smalltext{,}\smalltext{5}\smalltext{,}\smalltext{6}}} NR_N^2\big) \E^{\P^{\smalltext{\hat\balpha}^\tinytext{N}\smalltext{,}\smalltext{N}\smalltext{,}\smalltext{u}}_\smalltext{\omega}} \Bigg[ \int_u^T \mathrm{e}^{\beta t} \sum_{\ell=1}^N \Big( \big\|\delta Z^{i,\ell,N}_t\big\|^2 + \big\|\delta Z^{i,m,\ell,\star,N}_t\big\|^2 \Big) \d t \Bigg] \\
\notag&\quad +\bigg(1-{c}^8_{\eps_{\smalltext{1}\smalltext{,}\smalltext{2}\smalltext{,}\smalltext{3}\smalltext{,}\smalltext{4}\smalltext{,}\smalltext{5}\smalltext{,}\smalltext{6}}}-\frac{c^7_{\eps_{\smalltext{1}\smalltext{,}\smalltext{2}\smalltext{,}\smalltext{3}\smalltext{,}\smalltext{4}\smalltext{,}\smalltext{5}\smalltext{,}\smalltext{6}}}}{N}-c^6_{\eps_{\smalltext{1}\smalltext{,}\smalltext{2}\smalltext{,}\smalltext{3}\smalltext{,}\smalltext{4}\smalltext{,}\smalltext{5}\smalltext{,}\smalltext{6}}} N R_N^2\bigg) \E^{\P^{\smalltext{\hat\balpha}^\tinytext{N}\smalltext{,}\smalltext{N}\smalltext{,}\smalltext{u}}_\smalltext{\omega}} \Bigg[ \int_u^T \mathrm{e}^{\beta t} \sum_{\ell=1}^N \big\|\delta Z^{n,\ell,\star,N}_t\big\|^2 \d t \Bigg] \\
	\notag&\leq  {c}^1_{\eps_{\smalltext{1}\smalltext{,}\smalltext{2}\smalltext{,}\smalltext{3}\smalltext{,}\smalltext{4}\smalltext{,}\smalltext{5}\smalltext{,}\smalltext{6}}}R_N^2+{c}^2_{\eps_{\smalltext{1}\smalltext{,}\smalltext{2}\smalltext{,}\smalltext{3}\smalltext{,}\smalltext{4}\smalltext{,}\smalltext{5}\smalltext{,}\smalltext{6}}}R_N^2\big( c^2_{2}  \big\|X^{i}_u(\omega)\big\|^2 + \bar{c}^2_{2} \big) + {c}^3_{\eps_{\smalltext{1}\smalltext{,}\smalltext{2}\smalltext{,}\smalltext{3}\smalltext{,}\smalltext{4}\smalltext{,}\smalltext{5}\smalltext{,}\smalltext{6}}}{R_N^2}\Bigg( \frac{c^2_{2}}{N} \sum_{\ell=1}^N \big\|X^{\ell}_u(\omega)\big\|^2 + \bar{c}^2_{2} \Bigg)\\
	\notag&\quad+NR_N^2\Bigg({c}^4_{\eps_{\smalltext{1}\smalltext{,}\smalltext{2}\smalltext{,}\smalltext{3}\smalltext{,}\smalltext{4}\smalltext{,}\smalltext{5}\smalltext{,}\smalltext{6}}}\Bigg(\mathrm{e}^{\beta T}c_{\varphi_\smalltext{1}}+ \bar{c}^1_{\eps_{{\smalltext{7}\smalltext{,}\smalltext{8}\smalltext{,}\smalltext{9}}}}+\bar{c}^2_{\eps_{{\smalltext{7}\smalltext{,}\smalltext{8}\smalltext{,}\smalltext{9}}}}\Bigg(\big\|X^{i}_u(\omega)\big\|^{2\bar{p}}+ \frac{1}{N} \sum_{\ell =1}^N \| X^{\ell}_{u}(\omega)\|^{2\bar{p}} \Bigg)\Bigg)+ \mathrm{e}^{\beta T}{c}^6_{\eps_{\smalltext{1}\smalltext{,}\smalltext{2}\smalltext{,}\smalltext{3}\smalltext{,}\smalltext{4}\smalltext{,}\smalltext{5}\smalltext{,}\smalltext{6}}}c_{\varphi_\smalltext{2}}\Bigg)\\
	\notag&\quad+{c}^5_{\eps_{\smalltext{1}\smalltext{,}\smalltext{2}\smalltext{,}\smalltext{3}\smalltext{,}\smalltext{4}\smalltext{,}\smalltext{5}\smalltext{,}\smalltext{6}}}NR_N^2\Bigg(\bar{c}^1_{\eps_{{\smalltext{7}\smalltext{,}\smalltext{8}\smalltext{,}\smalltext{9}}}}+2\frac{\bar{c}^2_{\eps_{{\smalltext{7}\smalltext{,}\smalltext{8}\smalltext{,}\smalltext{9}}}}}{N}\sum_{\ell=1}^N \| X^{\ell}_{u}(\omega)\|^{2\bar{p}} +\mathrm{e}^{\beta T}c_{\varphi_\smalltext{1}}+\Bigg({c}^3_{\eps_{\smalltext{1}\smalltext{,}\smalltext{2}\smalltext{,}\smalltext{3}\smalltext{,}\smalltext{4}\smalltext{,}\smalltext{5}\smalltext{,}\smalltext{6}\smalltext{,}\smalltext{7}\smalltext{,}\smalltext{8}\smalltext{,}\smalltext{9}}}+\frac{\bar{c}^3_{\eps_{\smalltext{1}\smalltext{,}\smalltext{2}\smalltext{,}\smalltext{3}\smalltext{,}\smalltext{4}\smalltext{,}\smalltext{5}\smalltext{,}\smalltext{6}\smalltext{,}\smalltext{7}\smalltext{,}\smalltext{8}\smalltext{,}\smalltext{9}}}}{N} \sum_{\ell=1}^N \| X^{\ell}_{u}(\omega)\|^{2} \Bigg)R_N^2\Bigg)\\
	&\quad+{c}^5_{\eps_{\smalltext{1}\smalltext{,}\smalltext{2}\smalltext{,}\smalltext{3}\smalltext{,}\smalltext{4}\smalltext{,}\smalltext{5}\smalltext{,}\smalltext{6}}}N^2R_N^4\Bigg({c}^4_{\eps_{\smalltext{1}\smalltext{,}\smalltext{2}\smalltext{,}\smalltext{3}\smalltext{,}\smalltext{4}\smalltext{,}\smalltext{5}\smalltext{,}\smalltext{6}\smalltext{,}\smalltext{7}\smalltext{,}\smalltext{8}\smalltext{,}\smalltext{9}}}+\frac{\bar{c}^4_{\eps_{\smalltext{1}\smalltext{,}\smalltext{2}\smalltext{,}\smalltext{3}\smalltext{,}\smalltext{4}\smalltext{,}\smalltext{5}\smalltext{,}\smalltext{6}\smalltext{,}\smalltext{7}\smalltext{,}\smalltext{8}\smalltext{,}\smalltext{9}}}}{N} \sum_{\ell=1}^N \| X^{\ell}_{u}(\omega)\|^{2\bar{p}} \Bigg)\\
	&\quad+{c}^7_{\eps_{\smalltext{1}\smalltext{,}\smalltext{2}\smalltext{,}\smalltext{3}\smalltext{,}\smalltext{4}\smalltext{,}\smalltext{5}\smalltext{,}\smalltext{6}}}R_N^2\Bigg( {c}^3_{\eps_{\smalltext{1}\smalltext{,}\smalltext{2}\smalltext{,}\smalltext{3}\smalltext{,}\smalltext{4}\smalltext{,}\smalltext{5}\smalltext{,}\smalltext{6}\smalltext{,}\smalltext{7}\smalltext{,}\smalltext{8}\smalltext{,}\smalltext{9}}}+\frac{\bar{c}^3_{\eps_{\smalltext{1}\smalltext{,}\smalltext{2}\smalltext{,}\smalltext{3}\smalltext{,}\smalltext{4}\smalltext{,}\smalltext{5}\smalltext{,}\smalltext{6}\smalltext{,}\smalltext{7}\smalltext{,}\smalltext{8}\smalltext{,}\smalltext{9}}}}{N} \sum_{\ell=1}^N \| X^{\ell}_{u}(\omega)\|^{2} +\Bigg({c}^4_{\eps_{\smalltext{1}\smalltext{,}\smalltext{2}\smalltext{,}\smalltext{3}\smalltext{,}\smalltext{4}\smalltext{,}\smalltext{5}\smalltext{,}\smalltext{6}\smalltext{,}\smalltext{7}\smalltext{,}\smalltext{8}\smalltext{,}\smalltext{9}}}+\frac{\bar{c}^4_{\eps_{\smalltext{1}\smalltext{,}\smalltext{2}\smalltext{,}\smalltext{3}\smalltext{,}\smalltext{4}\smalltext{,}\smalltext{5}\smalltext{,}\smalltext{6}\smalltext{,}\smalltext{7}\smalltext{,}\smalltext{8}\smalltext{,}\smalltext{9}}}}{N} \sum_{\ell=1}^N \| X^{\ell}_{u}(\omega)\|^{2\bar{p}} \Bigg)N\Bigg)\\
	&=R_N^2\Bigg({c}^5_{\eps_{\smalltext{1}\smalltext{,}\smalltext{2}\smalltext{,}\smalltext{3}\smalltext{,}\smalltext{4}\smalltext{,}\smalltext{5}\smalltext{,}\smalltext{6}\smalltext{,}\smalltext{7}\smalltext{,}\smalltext{8}\smalltext{,}\smalltext{9}}}+\bar{c}^5_{\eps_{\smalltext{1}\smalltext{,}\smalltext{2}\smalltext{,}\smalltext{3}\smalltext{,}\smalltext{4}\smalltext{,}\smalltext{5}\smalltext{,}\smalltext{6}\smalltext{,}\smalltext{7}\smalltext{,}\smalltext{8}\smalltext{,}\smalltext{9}}}\| X^{i}_{u}(\omega)\|^{2}+\frac{\tilde{c}^5_{\eps_{\smalltext{1}\smalltext{,}\smalltext{2}\smalltext{,}\smalltext{3}\smalltext{,}\smalltext{4}\smalltext{,}\smalltext{5}\smalltext{,}\smalltext{6}\smalltext{,}\smalltext{7}\smalltext{,}\smalltext{8}\smalltext{,}\smalltext{9}}}}{N}\sum_{\ell=1}^N\| X^{\ell}_{u}(\omega)\|^{2}\Bigg)\\
	&\quad+NR_N^2\Bigg({c}^6_{\eps_{\smalltext{1}\smalltext{,}\smalltext{2}\smalltext{,}\smalltext{3}\smalltext{,}\smalltext{4}\smalltext{,}\smalltext{5}\smalltext{,}\smalltext{6}\smalltext{,}\smalltext{7}\smalltext{,}\smalltext{8}\smalltext{,}\smalltext{9}}}+\bar{c}^6_{\eps_{\smalltext{1}\smalltext{,}\smalltext{2}\smalltext{,}\smalltext{3}\smalltext{,}\smalltext{4}\smalltext{,}\smalltext{5}\smalltext{,}\smalltext{6}\smalltext{,}\smalltext{7}\smalltext{,}\smalltext{8}\smalltext{,}\smalltext{9}}}\| X^{i}_{u}(\omega)\|^{2\bar{p}}+\frac{\tilde{c}^6_{\eps_{\smalltext{1}\smalltext{,}\smalltext{2}\smalltext{,}\smalltext{3}\smalltext{,}\smalltext{4}\smalltext{,}\smalltext{5}\smalltext{,}\smalltext{6}\smalltext{,}\smalltext{7}\smalltext{,}\smalltext{8}\smalltext{,}\smalltext{9}}}}{N}\sum_{\ell=1}^N\| X^{\ell}_{u}(\omega)\|^{2\bar{p}}\Bigg)\\
	&\quad+NR_N^4\Bigg({c}^7_{\eps_{\smalltext{1}\smalltext{,}\smalltext{2}\smalltext{,}\smalltext{3}\smalltext{,}\smalltext{4}\smalltext{,}\smalltext{5}\smalltext{,}\smalltext{6}\smalltext{,}\smalltext{7}\smalltext{,}\smalltext{8}\smalltext{,}\smalltext{9}}}+\frac{\tilde{c}^7_{\eps_{\smalltext{1}\smalltext{,}\smalltext{2}\smalltext{,}\smalltext{3}\smalltext{,}\smalltext{4}\smalltext{,}\smalltext{5}\smalltext{,}\smalltext{6}\smalltext{,}\smalltext{7}\smalltext{,}\smalltext{8}\smalltext{,}\smalltext{9}}}}{N}\sum_{\ell=1}^N\| X^{\ell}_{u}(\omega)\|^{2}\Bigg)+N^2R_N^4\Bigg({c}^8_{\eps_{\smalltext{1}\smalltext{,}\smalltext{2}\smalltext{,}\smalltext{3}\smalltext{,}\smalltext{4}\smalltext{,}\smalltext{5}\smalltext{,}\smalltext{6}\smalltext{,}\smalltext{7}\smalltext{,}\smalltext{8}\smalltext{,}\smalltext{9}}}+\frac{\tilde{c}^8_{\eps_{\smalltext{1}\smalltext{,}\smalltext{2}\smalltext{,}\smalltext{3}\smalltext{,}\smalltext{4}\smalltext{,}\smalltext{5}\smalltext{,}\smalltext{6}\smalltext{,}\smalltext{7}\smalltext{,}\smalltext{8}\smalltext{,}\smalltext{9}}}}{N}\sum_{\ell=1}^N\| X^{\ell}_{u}(\omega)\|^{2}\Bigg), 
	\end{align*}
	where
	\begin{gather*}
	{c}^5_{\eps_{\smalltext{1}\smalltext{,}\smalltext{2}\smalltext{,}\smalltext{3}\smalltext{,}\smalltext{4}\smalltext{,}\smalltext{5}\smalltext{,}\smalltext{6}\smalltext{,}\smalltext{7}\smalltext{,}\smalltext{8}\smalltext{,}\smalltext{9}}} \coloneqq {c}^1_{\eps_{\smalltext{1}\smalltext{,}\smalltext{2}\smalltext{,}\smalltext{3}\smalltext{,}\smalltext{4}\smalltext{,}\smalltext{5}\smalltext{,}\smalltext{6}}} + \big( {c}^2_{\eps_{\smalltext{1}\smalltext{,}\smalltext{2}\smalltext{,}\smalltext{3}\smalltext{,}\smalltext{4}\smalltext{,}\smalltext{5}\smalltext{,}\smalltext{6}}} + {c}^3_{\eps_{\smalltext{1}\smalltext{,}\smalltext{2}\smalltext{,}\smalltext{3}\smalltext{,}\smalltext{4}\smalltext{,}\smalltext{5}\smalltext{,}\smalltext{6}}} \big) \bar{c}^2_{2} + {c}^7_{\eps_{\smalltext{1}\smalltext{,}\smalltext{2}\smalltext{,}\smalltext{3}\smalltext{,}\smalltext{4}\smalltext{,}\smalltext{5}\smalltext{,}\smalltext{6}}} {c}^3_{\eps_{\smalltext{1}\smalltext{,}\smalltext{2}\smalltext{,}\smalltext{3}\smalltext{,}\smalltext{4}\smalltext{,}\smalltext{5}\smalltext{,}\smalltext{6}\smalltext{,}\smalltext{7}\smalltext{,}\smalltext{8}\smalltext{,}\smalltext{9}}}, \;
	\bar{c}^5_{\eps_{\smalltext{1}\smalltext{,}\smalltext{2}\smalltext{,}\smalltext{3}\smalltext{,}\smalltext{4}\smalltext{,}\smalltext{5}\smalltext{,}\smalltext{6}\smalltext{,}\smalltext{7}\smalltext{,}\smalltext{8}\smalltext{,}\smalltext{9}}} \coloneqq {c}^2_{\eps_{\smalltext{1}\smalltext{,}\smalltext{2}\smalltext{,}\smalltext{3}\smalltext{,}\smalltext{4}\smalltext{,}\smalltext{5}\smalltext{,}\smalltext{6}}} {c}^2_{2} , \\
	\tilde{c}^5_{\eps_{\smalltext{1}\smalltext{,}\smalltext{2}\smalltext{,}\smalltext{3}\smalltext{,}\smalltext{4}\smalltext{,}\smalltext{5}\smalltext{,}\smalltext{6}\smalltext{,}\smalltext{7}\smalltext{,}\smalltext{8}\smalltext{,}\smalltext{9}}} \coloneqq {c}^3_{\eps_{\smalltext{1}\smalltext{,}\smalltext{2}\smalltext{,}\smalltext{3}\smalltext{,}\smalltext{4}\smalltext{,}\smalltext{5}\smalltext{,}\smalltext{6}}} {c}^2_{2} + {c}^7_{\eps_{\smalltext{1}\smalltext{,}\smalltext{2}\smalltext{,}\smalltext{3}\smalltext{,}\smalltext{4}\smalltext{,}\smalltext{5}\smalltext{,}\smalltext{6}}}  \bar{c}^3_{\eps_{\smalltext{1}\smalltext{,}\smalltext{2}\smalltext{,}\smalltext{3}\smalltext{,}\smalltext{4}\smalltext{,}\smalltext{5}\smalltext{,}\smalltext{6}\smalltext{,}\smalltext{7}\smalltext{,}\smalltext{8}\smalltext{,}\smalltext{9}}} , \\
	{c}^6_{\eps_{\smalltext{1}\smalltext{,}\smalltext{2}\smalltext{,}\smalltext{3}\smalltext{,}\smalltext{4}\smalltext{,}\smalltext{5}\smalltext{,}\smalltext{6}\smalltext{,}\smalltext{7}\smalltext{,}\smalltext{8}\smalltext{,}\smalltext{9}}} \coloneqq {c}^4_{\eps_{\smalltext{1}\smalltext{,}\smalltext{2}\smalltext{,}\smalltext{3}\smalltext{,}\smalltext{4}\smalltext{,}\smalltext{5}\smalltext{,}\smalltext{6}}} \big(\mathrm{e}^{\beta T}c_{\varphi_\smalltext{1}}+ \bar{c}^1_{\eps_{{\smalltext{7}\smalltext{,}\smalltext{8}\smalltext{,}\smalltext{9}}}} \big) + \mathrm{e}^{\beta T}{c}^6_{\eps_{\smalltext{1}\smalltext{,}\smalltext{2}\smalltext{,}\smalltext{3}\smalltext{,}\smalltext{4}\smalltext{,}\smalltext{5}\smalltext{,}\smalltext{6}}}c_{\varphi_\smalltext{2}} + {c}^5_{\eps_{\smalltext{1}\smalltext{,}\smalltext{2}\smalltext{,}\smalltext{3}\smalltext{,}\smalltext{4}\smalltext{,}\smalltext{5}\smalltext{,}\smalltext{6}}} \big( \bar{c}^1_{\eps_{{\smalltext{7}\smalltext{,}\smalltext{8}\smalltext{,}\smalltext{9}}}} +\mathrm{e}^{\beta T}c_{\varphi_\smalltext{1}} \big) + {c}^7_{\eps_{\smalltext{1}\smalltext{,}\smalltext{2}\smalltext{,}\smalltext{3}\smalltext{,}\smalltext{4}\smalltext{,}\smalltext{5}\smalltext{,}\smalltext{6}}} {c}^4_{\eps_{\smalltext{1}\smalltext{,}\smalltext{2}\smalltext{,}\smalltext{3}\smalltext{,}\smalltext{4}\smalltext{,}\smalltext{5}\smalltext{,}\smalltext{6}\smalltext{,}\smalltext{7}\smalltext{,}\smalltext{8}\smalltext{,}\smalltext{9}}} , \\
	\bar{c}^6_{\eps_{\smalltext{1}\smalltext{,}\smalltext{2}\smalltext{,}\smalltext{3}\smalltext{,}\smalltext{4}\smalltext{,}\smalltext{5}\smalltext{,}\smalltext{6}\smalltext{,}\smalltext{7}\smalltext{,}\smalltext{8}\smalltext{,}\smalltext{9}}} \coloneqq {c}^4_{\eps_{\smalltext{1}\smalltext{,}\smalltext{2}\smalltext{,}\smalltext{3}\smalltext{,}\smalltext{4}\smalltext{,}\smalltext{5}\smalltext{,}\smalltext{6}}} \bar{c}^2_{\eps_{{\smalltext{7}\smalltext{,}\smalltext{8}\smalltext{,}\smalltext{9}}}} , \\
	\tilde{c}^6_{\eps_{\smalltext{1}\smalltext{,}\smalltext{2}\smalltext{,}\smalltext{3}\smalltext{,}\smalltext{4}\smalltext{,}\smalltext{5}\smalltext{,}\smalltext{6}\smalltext{,}\smalltext{7}\smalltext{,}\smalltext{8}\smalltext{,}\smalltext{9}}} \coloneqq {c}^4_{\eps_{\smalltext{1}\smalltext{,}\smalltext{2}\smalltext{,}\smalltext{3}\smalltext{,}\smalltext{4}\smalltext{,}\smalltext{5}\smalltext{,}\smalltext{6}}} \bar{c}^2_{\eps_{{\smalltext{7}\smalltext{,}\smalltext{8}\smalltext{,}\smalltext{9}}}} + 2 {c}^5_{\eps_{\smalltext{1}\smalltext{,}\smalltext{2}\smalltext{,}\smalltext{3}\smalltext{,}\smalltext{4}\smalltext{,}\smalltext{5}\smalltext{,}\smalltext{6}}} \bar{c}^2_{\eps_{{\smalltext{7}\smalltext{,}\smalltext{8}\smalltext{,}\smalltext{9}}}} + {c}^7_{\eps_{\smalltext{1}\smalltext{,}\smalltext{2}\smalltext{,}\smalltext{3}\smalltext{,}\smalltext{4}\smalltext{,}\smalltext{5}\smalltext{,}\smalltext{6}}} \bar{c}^4_{\eps_{\smalltext{1}\smalltext{,}\smalltext{2}\smalltext{,}\smalltext{3}\smalltext{,}\smalltext{4}\smalltext{,}\smalltext{5}\smalltext{,}\smalltext{6}\smalltext{,}\smalltext{7}\smalltext{,}\smalltext{8}\smalltext{,}\smalltext{9}}} , \\
	{c}^7_{\eps_{\smalltext{1}\smalltext{,}\smalltext{2}\smalltext{,}\smalltext{3}\smalltext{,}\smalltext{4}\smalltext{,}\smalltext{5}\smalltext{,}\smalltext{6}\smalltext{,}\smalltext{7}\smalltext{,}\smalltext{8}\smalltext{,}\smalltext{9}}} \coloneqq {c}^5_{\eps_{\smalltext{1}\smalltext{,}\smalltext{2}\smalltext{,}\smalltext{3}\smalltext{,}\smalltext{4}\smalltext{,}\smalltext{5}\smalltext{,}\smalltext{6}}} {c}^3_{\eps_{\smalltext{1}\smalltext{,}\smalltext{2}\smalltext{,}\smalltext{3}\smalltext{,}\smalltext{4}\smalltext{,}\smalltext{5}\smalltext{,}\smalltext{6}\smalltext{,}\smalltext{7}\smalltext{,}\smalltext{8}\smalltext{,}\smalltext{9}}} , \;
	\tilde{c}^7_{\eps_{\smalltext{1}\smalltext{,}\smalltext{2}\smalltext{,}\smalltext{3}\smalltext{,}\smalltext{4}\smalltext{,}\smalltext{5}\smalltext{,}\smalltext{6}\smalltext{,}\smalltext{7}\smalltext{,}\smalltext{8}\smalltext{,}\smalltext{9}}} \coloneqq {c}^5_{\eps_{\smalltext{1}\smalltext{,}\smalltext{2}\smalltext{,}\smalltext{3}\smalltext{,}\smalltext{4}\smalltext{,}\smalltext{5}\smalltext{,}\smalltext{6}}} \bar{c}^3_{\eps_{\smalltext{1}\smalltext{,}\smalltext{2}\smalltext{,}\smalltext{3}\smalltext{,}\smalltext{4}\smalltext{,}\smalltext{5}\smalltext{,}\smalltext{6}\smalltext{,}\smalltext{7}\smalltext{,}\smalltext{8}\smalltext{,}\smalltext{9}}} , \\
	{c}^8_{\eps_{\smalltext{1}\smalltext{,}\smalltext{2}\smalltext{,}\smalltext{3}\smalltext{,}\smalltext{4}\smalltext{,}\smalltext{5}\smalltext{,}\smalltext{6}\smalltext{,}\smalltext{7}\smalltext{,}\smalltext{8}\smalltext{,}\smalltext{9}}} \coloneqq {c}^5_{\eps_{\smalltext{1}\smalltext{,}\smalltext{2}\smalltext{,}\smalltext{3}\smalltext{,}\smalltext{4}\smalltext{,}\smalltext{5}\smalltext{,}\smalltext{6}}} {c}^4_{\eps_{\smalltext{1}\smalltext{,}\smalltext{2}\smalltext{,}\smalltext{3}\smalltext{,}\smalltext{4}\smalltext{,}\smalltext{5}\smalltext{,}\smalltext{6}\smalltext{,}\smalltext{7}\smalltext{,}\smalltext{8}\smalltext{,}\smalltext{9}}} , \;
	\tilde{c}^8_{\eps_{\smalltext{1}\smalltext{,}\smalltext{2}\smalltext{,}\smalltext{3}\smalltext{,}\smalltext{4}\smalltext{,}\smalltext{5}\smalltext{,}\smalltext{6}\smalltext{,}\smalltext{7}\smalltext{,}\smalltext{8}\smalltext{,}\smalltext{9}}} \coloneqq {c}^5_{\eps_{\smalltext{1}\smalltext{,}\smalltext{2}\smalltext{,}\smalltext{3}\smalltext{,}\smalltext{4}\smalltext{,}\smalltext{5}\smalltext{,}\smalltext{6}}} \bar{c}^4_{\eps_{\smalltext{1}\smalltext{,}\smalltext{2}\smalltext{,}\smalltext{3}\smalltext{,}\smalltext{4}\smalltext{,}\smalltext{5}\smalltext{,}\smalltext{6}\smalltext{,}\smalltext{7}\smalltext{,}\smalltext{8}\smalltext{,}\smalltext{9}}} .
	\end{gather*}
	
\medskip 
To complete the first part of the proof, which concerns the convergence of the $N$-player system described in \eqref{align:systemNplayerGame_rcpd} to the intermediate system introduced in \eqref{align:intermediateSystemNplayerGame}, we now verify that all the conditions that have been either explicitly or implicitly used through all the previous steps are indeed satisfied. This can be achieved by appropriately choosing the parameters $\varepsilon_i>0$ for $i=1,\ldots,9$, and $\beta>0$, and by requiring that the dissipativity constant $K_{\sigma b}$ is sufficiently large, so that all of the following conditions are satisfied
\begin{gather}\label{1}
\beta \geq \max\bigg\{\frac{3 \ell^2_f}{\eps_1},\ell_f^2(1+c_A)^2+ 2\ell^2_f \bigg\},\\
\label{2}
K_{\sigma b}\geq \frac12\bigg(\beta+\ell^2_\sigma + \frac{2\ell^2_{\sigma b}}{\varepsilon_5} + \varepsilon_5 6 \ell_\Lambda^2\bigg),\\
\label{3}
1-\varepsilon_6 \mathrm{e}^{2\beta T}c^2_{1,{\smallertext{\rm BDG}}} \ell^2_\sigma>0,\\
\label{4}
1 - \varepsilon_3 4c^2_{1,\smallertext{\rm BDG}} - \bigg(\varepsilon_2 +\eps_4+\frac{\eps_2}{\eps_3}\bigg) c_{\smallertext{\rm{BMO}}_{\smalltext{[}\smalltext{u}\smalltext{,}\smalltext{T}\smalltext{]}}} >0,\\
\label{5}
1-{c}^8_{\eps_{\smalltext{1}\smalltext{,}\smalltext{2}\smalltext{,}\smalltext{3}\smalltext{,}\smalltext{4}\smalltext{,}\smalltext{5}\smalltext{,}\smalltext{6}}} -{c}^7_{\eps_{\smalltext{1}\smalltext{,}\smalltext{2}\smalltext{,}\smalltext{3}\smalltext{,}\smalltext{4}\smalltext{,}\smalltext{5}\smalltext{,}\smalltext{6}}} - (c^4_{\eps_{\smalltext{1}\smalltext{,}\smalltext{2}\smalltext{,}\smalltext{3}\smalltext{,}\smalltext{4}\smalltext{,}\smalltext{5}\smalltext{,}\smalltext{6}}} +c^5_{\eps_{\smalltext{1}\smalltext{,}\smalltext{2}\smalltext{,}\smalltext{3}\smalltext{,}\smalltext{4}\smalltext{,}\smalltext{5}\smalltext{,}\smalltext{6}}})NR_N^2>0,\\
\label{6}
1 - \varepsilon_7 4c^2_{1,\smallertext{\rm BDG}} - \varepsilon_8 c^\prime_{\smallertext{\rm{BMO}}_{\smalltext{[}\smalltext{u}\smalltext{,}\smalltext{T}\smalltext{]}}}>0,\\
\label{7}
1 - \frac{c_{\eps_{\smalltext{7}\smalltext{,}\smalltext{8}\smalltext{,}\smalltext{9}}}}{\eps_7}>0,\\
\label{8}
1-{c}^8_{\eps_{\smalltext{1}\smalltext{,}\smalltext{2}\smalltext{,}\smalltext{3}\smalltext{,}\smalltext{4}\smalltext{,}\smalltext{5}\smalltext{,}\smalltext{6}}}-\frac{c^7_{\eps_{\smalltext{1}\smalltext{,}\smalltext{2}\smalltext{,}\smalltext{3}\smalltext{,}\smalltext{4}\smalltext{,}\smalltext{5}\smalltext{,}\smalltext{6}}}}{N}-c^6_{\eps_{\smalltext{1}\smalltext{,}\smalltext{2}\smalltext{,}\smalltext{3}\smalltext{,}\smalltext{4}\smalltext{,}\smalltext{5}\smalltext{,}\smalltext{6}}} N R_N^2>0,
\end{gather}
where
\begin{align*}
c_{\eps_{\smalltext{7}\smalltext{,}\smalltext{8}\smalltext{,}\smalltext{9}}}&= \frac{ \varepsilon_9 c^\prime_{\smallertext{\rm{BMO}}_{\smalltext{[}\smalltext{u}\smalltext{,}\smalltext{T}\smalltext{]}}} }{1 - \varepsilon_7 4c^2_{1,\smallertext{\rm BDG}} - \varepsilon_8 c^\prime_{\smallertext{\rm{BMO}}_{\smalltext{[}\smalltext{u}\smalltext{,}\smalltext{T}\smalltext{]}}}},\\
c_{\eps_{\smalltext{2}\smalltext{,}\smalltext{3}\smalltext{,}\smalltext{4}}}&= \bigg(2+\frac1{\eps_3}\bigg) 2\ell_{g+G,\varphi_\smalltext{1},\varphi_\smalltext{2}}^2 + \bigg(\frac{1}{\varepsilon_2}+\frac1{\eps_4}+\frac{1}{\eps_2\eps_3}\bigg) \big(2\vee (\bar c_{\smallertext{\rm{BMO}}_{\smalltext{[}\smalltext{u}\smalltext{,}\smalltext{T}\smalltext{]}}}\ell^2_{\partial^\smalltext{2}G})\big) c^\star\ell^2_{\varphi_\smalltext{1}},\\
\overline{c}_{\eps_{\smalltext{2}\smalltext{,}\smalltext{3}\smalltext{,}\smalltext{4}}}&= \bigg(2+\frac1{\eps_3}\bigg) 2\ell_{g+G,\varphi_\smalltext{1},\varphi_\smalltext{2}}^2 + \bigg(\frac{1}{\varepsilon_2}+\frac1{\eps_4}+\frac{1}{\eps_2\eps_3}\bigg) \big(2\vee (\bar c_{\smallertext{\rm{BMO}}_{\smalltext{[}\smalltext{u}\smalltext{,}\smalltext{T}\smalltext{]}}}\ell^2_{\partial^\smalltext{2}G})\big) c^\star\ell^2_{\varphi_\smalltext{2}},\\
c_{\eps_{\smalltext{6}}}(T) &= \mathrm{exp}\bigg(\beta T+\frac{ T}{\varepsilon_6 (1-\varepsilon_6 \mathrm{e}^{2\beta T}c^2_{1,{\smallertext{\rm BDG}}} \ell^2_\sigma)}  \bigg)\big(1-\varepsilon_6 \mathrm{e}^{2\beta T}c^2_{1,{\smallertext{\rm BDG}}} \ell^2_\sigma\big)^{-1},\\
c_{\eps_{\smalltext{5}\smalltext{,}\smalltext{6}}}(T)&= \eps_5 12 \ell^2_\Lambda c_{\eps_{\smalltext{6}}}(T)\exp\big( \varepsilon_5 \big( 1 + 18 \ell^2_\Lambda \big) c_{\eps_{\smalltext{6}}}(T)T\big),\\
\bar{c}^1_{\eps_{\smalltext{5}\smalltext{,}\smalltext{6}}}(T)&= \eps_5 8 c_{\eps_\smalltext{6}}(T)\big(6\ell^2_\Lambda +(1+18\ell^2_\Lambda)Tc_{\eps_{\smalltext{5}\smalltext{,}\smalltext{6}}}(T)\big),\\
\bar{c}^2_{\eps_{\smalltext{5}\smalltext{,}\smalltext{6}}}(T)&= \eps_5 8 c_{\eps_\smalltext{6}}(t)\big(12\ell^2_\Lambda +(1+18\ell^2_\Lambda)Tc_{\eps_{\smalltext{5}\smalltext{,}\smalltext{6}}}(T)\big),\\
\bar{c}^3_{\eps_{\smalltext{5}\smalltext{,}\smalltext{6}}}(T) &= \varepsilon_5 6 \ell_\Lambda^2 c_{\eps_{\smalltext{6}}}(T),\\
{c}^4_{\eps_{\smalltext{1}\smalltext{,}\smalltext{2}\smalltext{,}\smalltext{3}\smalltext{,}\smalltext{4}\smalltext{,}\smalltext{5}\smalltext{,}\smalltext{6}}}&=\eps_1\bigg(2+\frac1{\eps_3}\bigg)\big(48\ell^2_\Lambda+(1+6\ell^2_\Lambda)8T\bar{c}^3_{\eps_{\smalltext{5}\smalltext{,}\smalltext{6}}}(T)\big)+8\bar{c}^3_{\eps_{\smalltext{5}\smalltext{,}\smalltext{6}}}(T)\big(c_{\eps_{\smalltext{2}\smalltext{,}\smalltext{3}\smalltext{,}\smalltext{4}}}+\ell^2_{\varphi_\smalltext{1}}c^\star\big),\\
{c}^5_{\eps_{\smalltext{1}\smalltext{,}\smalltext{2}\smalltext{,}\smalltext{3}\smalltext{,}\smalltext{4}\smalltext{,}\smalltext{5}\smalltext{,}\smalltext{6}}}&= \eps_1\bigg(2+\frac1{\eps_3}\bigg)\big(48\ell^2_\Lambda+(1+6\ell^2_\Lambda)T\bar{c}^1_{\eps_{\smalltext{5}\smalltext{,}\smalltext{6}}}(T)+(1+18\ell^2_\Lambda)8T{c}_{\eps_{\smalltext{5}\smalltext{,}\smalltext{6}}}(T)\big)+\big(c_{\eps_{\smalltext{2}\smalltext{,}\smalltext{3}\smalltext{,}\smalltext{4}}}+\ell^2_{\varphi_\smalltext{1}}c^\star\big)\bar{c}^1_{\eps_{\smalltext{5}\smalltext{,}\smalltext{6}}}(T)\\
&\quad+8\big(\bar{c}_{\eps_{\smalltext{2}\smalltext{,}\smalltext{3}\smalltext{,}\smalltext{4}}}+\ell^2_{\varphi_\smalltext{2}}c^\star\big){c}_{\eps_{\smalltext{5}\smalltext{,}\smalltext{6}}}(T),\\
{c}^6_{\eps_{\smalltext{1}\smalltext{,}\smalltext{2}\smalltext{,}\smalltext{3}\smalltext{,}\smalltext{4}\smalltext{,}\smalltext{5}\smalltext{,}\smalltext{6}}}&= \eps_1\bigg(2+\frac1{\eps_3}\bigg)\big(96\ell^2_\Lambda+(1+6\ell^2_\Lambda)T\bar{c}^2_{\eps_{\smalltext{5}\smalltext{,}\smalltext{6}}}(T)+(1+18\ell^2_\Lambda)8T{c}_{\eps_{\smalltext{5}\smalltext{,}\smalltext{6}}}(T)\big)+\big(c_{\eps_{\smalltext{2}\smalltext{,}\smalltext{3}\smalltext{,}\smalltext{4}}}+\ell^2_{\varphi_\smalltext{1}}c^\star\big)\bar{c}^2_{\eps_{\smalltext{5}\smalltext{,}\smalltext{6}}}(T)\\
&\quad+8\big(\bar{c}_{\eps_{\smalltext{2}\smalltext{,}\smalltext{3}\smalltext{,}\smalltext{4}}}+\ell^2_{\varphi_\smalltext{2}}c^\star\big){c}_{\eps_{\smalltext{5}\smalltext{,}\smalltext{6}}}(T),\\
{c}^7_{\eps_{\smalltext{1}\smalltext{,}\smalltext{2}\smalltext{,}\smalltext{3}\smalltext{,}\smalltext{4}\smalltext{,}\smalltext{5}\smalltext{,}\smalltext{6}}}&=\eps_1\bigg(2+\frac1{\eps_3}\bigg)\big(6\ell^2_\Lambda+(1+6\ell^2_\Lambda)T\bar{c}^1_{\eps_{\smalltext{5}\smalltext{,}\smalltext{6}}}(T)+(1+18\ell^2_\Lambda)T{c}_{\eps_{\smalltext{5}\smalltext{,}\smalltext{6}}}(T)\big)+\big(c_{\eps_{\smalltext{2}\smalltext{,}\smalltext{3}\smalltext{,}\smalltext{4}}}+\ell^2_{\varphi_\smalltext{1}}c^\star\big)\bar{c}^1_{\eps_{\smalltext{5}\smalltext{,}\smalltext{6}}}(T)\\
&\quad+\big(\bar{c}_{\eps_{\smalltext{2}\smalltext{,}\smalltext{3}\smalltext{,}\smalltext{4}}}+\ell^2_{\varphi_\smalltext{2}}c^\star\big){c}_{\eps_{\smalltext{5}\smalltext{,}\smalltext{6}}}(T), \\
{c}^8_{\eps_{\smalltext{1}\smalltext{,}\smalltext{2}\smalltext{,}\smalltext{3}\smalltext{,}\smalltext{4}\smalltext{,}\smalltext{5}\smalltext{,}\smalltext{6}}} &=  \eps_1\bigg(2+\frac1{\eps_3}\bigg)\big(6\ell^2_\Lambda+(1+6\ell^2_\Lambda) T \bar{c}^3_{\eps_{\smalltext{5}\smalltext{,}\smalltext{6}}}(T) + \varepsilon_5 6 \ell_\Lambda^2 c_{\eps_{\smalltext{6}}}(T) \big(c_{\eps_{\smalltext{2}\smalltext{,}\smalltext{3}\smalltext{,}\smalltext{4}}}+\ell^2_{\varphi_\smalltext{1}}c^\star\big).
\end{align*}
We observe that, for $i \in\{ 4,\ldots,8\}$, each constant ${c}^i_{\eps_{\smalltext{1}\smalltext{,}\smalltext{2}\smalltext{,}\smalltext{3}\smalltext{,}\smalltext{4}\smalltext{,}\smalltext{5}\smalltext{,}\smalltext{6}}}$ is a linear combination of the terms $\eps_1 f(\ell_\Lambda,T,\varepsilon_3,\varepsilon_4,\varepsilon_2,\varepsilon_5,\varepsilon_6)$ and $\eps_5 g(\ell_\Lambda, \ell^2_{\varphi_\smalltext{1}}, \ell^2_{\varphi_\smalltext{2}}, T, \varepsilon_3,\varepsilon_4,\varepsilon_2,\varepsilon_5,\varepsilon_6)$, for some well-defined positive functions $f:(\R^\star_+)^7 \longrightarrow \R^\star_+$ and $g:(\R^\star_+)^9 \longrightarrow \R^\star_+$. Moreover, the function $g$ satisfies
\begin{align*}
\lim_{\varepsilon_\smalltext{5} \rightarrow 0} g(\ell_\Lambda, \ell^2_{\varphi_\smalltext{1}}, \ell^2_{\varphi_\smalltext{2}}, T, \varepsilon_3,\varepsilon_4,\varepsilon_2,\varepsilon_5,\varepsilon_6) = \hat{g}(\ell_\Lambda, \ell_{\varphi_\smalltext{1}}, \ell_{\varphi_\smalltext{2}}, T, \varepsilon_3,\varepsilon_4,\varepsilon_2,\varepsilon_6) \in (0,\infty).
\end{align*}
Then, for all of this to work, we can do the following:
\begin{enumerate}[label={$(\roman*)$}]
\item start by fixing $\eps_6$ small enough for \Cref{3} to be satisfied;
\item second, fix $\eps_7$ and $\eps_8$ small enough for \Cref{6} to be satisfied;
\item we can then fix $\eps_9$ small enough for \Cref{7} to be satisfied;
\item next, fix first $\eps_2$ small enough, and subsequently $\eps_3$ and $\eps_4$ small enough for \Cref{4} to be satisfied;
\item afterwards, make $\eps_5$ and $\eps_1$ small enough for \Cref{5} and \Cref{8} to be satisfied;
\item finally, we fix $\beta$ large enough for \Cref{1} to be satisfied, and ultimately $K_{\sigma b}$ large enough for \Cref{2} to be satisfied.
\end{enumerate}

\medskip
We conclude that there exists some constant $C>0$, which we omit explicitly here for notational simplicity, since it depends on all the constants listed above, and thus only on the parameters of the game and not on $N$, such that
\begin{align}\label{align:firstPart_finalEst}
\notag&\E^{\P^{\smalltext{\hat\balpha}^\tinytext{N}\smalltext{,}\smalltext{N}\smalltext{,}\smalltext{u}}_\smalltext{\omega}} \bigg[ \sup_{t \in [u,T]} \mathrm{e}^{\beta t} \big|\delta Y^{i,N}_t\big|^2   + \sup_{t \in [u,T]} \mathrm{e}^{\beta t} \big|\delta M^{i,\star,N}_t\big|^2 + \sup_{t \in [u,T]} \mathrm{e}^{\beta t} \big|\delta N^{\star,N}_t\big|^2 \bigg] \\
\notag&\quad + \E^{\P^{\smalltext{\hat\balpha}^\tinytext{N}\smalltext{,}\smalltext{N}\smalltext{,}\smalltext{u}}_\smalltext{\omega}} \Bigg[ \int_u^T \mathrm{e}^{\beta t} \sum_{\ell=1}^N \Big( \big\|\delta Z^{i,\ell,N}_t\big\|^2 + \big\|\delta Z^{i,m,\ell,\star,N}_t\big\|^2 + \big\|\delta Z^{n,\ell,\star,N}_t\big\|^2\Big) \d t \Bigg] \\
	\notag&\leq C R_N^2\Bigg(1+\| X^{i}_{u}(\omega)\|^{2}+\frac1N\sum_{\ell =1}^N\| X^{\ell}_{u}(\omega)\|^{2}\Bigg)+C NR_N^2\Bigg(1+\| X^{i}_{u}(\omega)\|^{2\bar{p}}+\frac1N\sum_{\ell =1}^N\| X^{\ell}_{u}(\omega)\|^{2\bar{p}}\Bigg)\\
	&\quad+C NR_N^4\Bigg(1+\frac1N\sum_{\ell =1}^N\| X^{\ell}_{u}(\omega)\|^{2}\Bigg) (1+N), \; \P\text{\rm--a.e.} \; \omega\in\Omega.
\end{align}

The proof of the first part is thus complete, since for $p \in \{1,\bar{p}\}$, we have
\begin{align}\label{align:LLN}
\lim_{N \rightarrow \infty} \frac1N\sum_{\ell =1}^N\| X^{\ell}_{u}\|^{2p} \in [0,\infty), \; \P\text{\rm--a.s.}
\end{align}
This follows from the strong law of large numbers (see, for instance, \cite[Theorem 5.23]{kallenberg2002foundations}), since the sequence $(X^i_u)_{i \in \mathbb{N}}$ consists of $\P$--i.i.d. random variables, as stated in \Cref{assumpConvThm}.\ref{growth_f_gG}, so that
\begin{align}\label{align:LLN_withLIM}
\lim_{N \rightarrow \infty} \frac1N\sum_{\ell =1}^N \|X^{\ell}_{u}\|^{2p} = \E^\P \big[ \| X^{1}_{u} \|^{2p} \big] \in [0,+\infty), \; \P\text{\rm--a.s.}
\end{align}

\subsubsection{Convergence to infinitely many identical copies of the mean-field game}\label{convTOmeanFieldGames}

In \Cref{subsubsection:fromNplayerGameToAuxiliary}, we establish the convergence of the $N$-player game to the intermediate system defined in \eqref{align:intermediateSystemNplayerGame}. In this section, we show that this intermediate system converges to the mean-field system described in \eqref{align:systemMeanFieldGame_rcpd}, although not directly. More precisely, we introduce a second auxiliary FBSDE system, which coincides with the one in \eqref{align:intermediateSystemNplayerGame} except that it is not formulated under the probability measure $\P^{\hat\balpha^\smalltext{N},N}$ with the corresponding fixed Brownian motions $((W^{\hat\balpha^{\smalltext{N}},N})^i)_{i \{1,\ldots,N\}}$. Instead, we construct an FBSDE system defined under the probability measure $\P^{\hat\alpha,N}$ and driven by the Brownian motions $((W^{\hat\alpha,N})^i)_{i \{1,\ldots,N\}}$. For $\P\text{\rm--a.e.} \; \omega\in\Omega$,
\begin{align}\label{align:intermediateSystemNplayerGame_rcpdMeanField}
\notag \overline X^i_t & = X^i_u(\omega) + \int_u^t \sigma_s(\overline X^i_{\cdot \land s}) b_s\big(\overline X^i_{\cdot \land s},L^N\big(\overline \X^N_{\cdot \land s},\overline\balpha^N_s\big),\overline\alpha^{i,N}_s\big) \d s + \int_u^t \sigma_s(\overline X^i_{\cdot \land s}) \d \big(W_s^{\hat\alpha,N,u,\omega}\big)^i, \; t \in [u,T], \; \P^{\hat\alpha,N,u}_\omega \text{\rm--a.s.}, \\
\notag\overline Y^{i,N}_t & = g\big(\overline X^i_{\cdot \land T}, L^N\big(\overline \X^N_{\cdot \land T})\big) + G\big(\varphi_1(\overline Xi_{\cdot \land T}),\varphi_2\big(L^N\big(\overline \X^N_{\cdot \land T}\big)\big)\big) \\
\notag &\quad + \int_t^T f_s\big(\overline X^i_{\cdot \land s},L^N\big(\overline \X^N_{\cdot \land s},\overline\balpha^N_s\big),\overline\alpha^{i,N}_s\big) \d s - \int_t^T \partial^2_{m,n}G\big(\overline M^{i,\star,N}_s,\overline N^{\star,N}_s\big) \sum_{\ell =1}^N \overline Z^{i,m,\ell,\star,N}_s \cdot \overline Z^{n,\ell,\star,N}_s \d s \\
\notag &\quad - \frac{1}{2} \int_t^T \partial^2_{m,m}G\big(\overline M^{i,\star,N}_s,\overline N^{\star,N}_s\big) \sum_{\ell =1}^N \big\|\overline Z^{i,m,\ell,\star,N}_s\big\|^2 \d s - \frac{1}{2} \int_t^T \partial^2_{n,n}G\big(\overline M^{i,\star,N}_s,\overline N^{\star,N}_s\big) \sum_{\ell =1}^N \big\|\overline Z^{n,\ell,\star,N}_s\big\|^2 \d s \\
\notag &\quad - \int_t^T \sum_{\ell =1}^N \overline Z^{i,\ell,N}_s \cdot \d \big(W_s^{\hat\alpha,N,u,\omega}\big)^\ell, \; t \in [u,T], \; \P^{\hat\alpha,N,u}_\omega \text{\rm--a.s.}, \\
\notag \overline M^{i,\star,N}_t & = \varphi_1(\overline X^i_{\cdot \land T}) - \int_t^T \sum_{\ell =1}^N \overline Z^{i,m,\ell,\star,N}_s \cdot \d \big(W_s^{\hat\balpha,N,u,\omega}\big)^\ell, \; t \in [u,T], \; \P^{\hat\alpha,N,u}_\omega \text{\rm--a.s.}, \\
\notag\overline N^{\star,N}_t & = \varphi_2\big(L^N(\overline \X^N_{\cdot \land T})\big) - \int_t^T \sum_{\ell =1}^N \overline Z^{n,\ell,\star,N}_s \cdot \d \big(W_s^{\hat\alpha,N,u,\omega}\big)^\ell, \; t \in [u,T], \; \P^{\hat\alpha,N,u}_\omega \text{\rm--a.s.}, \\
\overline \alpha^{i,N}_t &\coloneqq \Lambda_t\big(\overline X^{i,N}_{\cdot \land t}, L^N(\overline \X^N_{\cdot \land t}), \overline Z^{i,i,N}_t, \overline Z^{i,m,i,\star,N}_t, \overline Z^{n,i,\star,N}_t, 0\big), \; \mathrm{d}t\otimes\P^{\hat\alpha,N,u}_\omega \text{\rm--a.e.}
\end{align}
\Cref{assumpConvThm}.\ref{auxiliarySystem} implies that a Yamada--Watanabe-type result holds (one may adapt, for instance, the argument used in the proof of \citeauthor*{carmona2018probabilisticII} \cite[Theorem 1.33]{carmona2018probabilisticII} to the non-Markovian setting). Namely, for any $u \in [0,T]$, for $\P\text{\rm--a.e.} \; \omega\in\Omega$, and for $\mathrm{d}t$--a.e. in $[u,T]$, we have
\begin{align}\label{YW_lawEq}
\notag&\P^{\hat\balpha^\smalltext{N},N,u}_\omega \circ \big(\widetilde X^{i}_t, \widetilde Y^{i,N}_t, \widetilde M^{i,\star,N}_t, \widetilde N^{\star,N}_t, \widetilde \Z^{i,N}_t, \widetilde \Z^{i,m,\star,N}_t, \widetilde \Z^{n,\star,N}_t \big)^{-1} \\
& = \P^{\hat\alpha,N,u}_\omega \circ \big(\overline{X}^{i}_t, \overline Y^{i,N}_t, \overline M^{i,\star,N}_t, \overline N^{\star,N}_t, \overline \Z^{i,N}_t, \overline \Z^{i,m,\star,N}_t, \overline \Z^{n,\star,N}_t \big)^{-1}.
\end{align}
Consequently
\begin{align*}
\widetilde Y^{i,N}_u(\omega) = \E^{\P^{\smalltext{\hat\balpha}^\tinytext{N}\smalltext{,}\smalltext{N}\smalltext{,}\smalltext{u}}_\smalltext{\omega}} \big[ \widetilde Y^{i,N}_u \big] = \E^{\P^{\hat\balpha,N,u}_\omega} \big[ \overline Y^{i,N}_u \big] = \overline Y^{i,N}_u(\omega), \; \text{for} \; \P\text{\rm--a.e.} \; \omega \in \Omega, \; \text{for any} \; u \in [0,T].
\end{align*}

From this equality, we deduce that it suffices to prove the convergence of the newly introduced auxiliary system to the mean-field system. To do so, we rely on arguments similar to those used previously; we therefore outline only the main steps, emphasising the key differences so as to avoid unnecessary repetition. As a first step, we define, for $(i,\ell) \in \{1,\ldots,N\}^2$, the processes
\begin{gather*}
\delta X^{i}_t \coloneqq \overline X^{i}_t - X^i_t, \; \delta Y^{i,N}_t \coloneqq \overline Y^{i,N}_t - Y^i_t, \; \delta M^{i,\star,N}_t \coloneqq \overline M^{i,\star,N}_t - M^{i,\star}_t, \; \delta N^{\star,N}_t \coloneqq \overline N^{\star,N}_t - N^{\star}_t \\
\delta Z^{i,\ell,N}_t \coloneqq \overline Z^{i,\ell,N}_t - Z^{i,i}_t \mathbf{1}_{\{i = \ell\}}, \; \delta Z^{i,m,\ell,\star,N}_t \coloneqq \overline Z^{i,m,\ell,\star,N}_t - Z^{i,m,i,\star}_t \mathbf{1}_{\{i = \ell\}}, \; \delta Z^{n,\ell,\star,N}_t \coloneqq Z^{n,\ell,\star,N}_t - \mathbf{0}, \; t \in [u,T].
\end{gather*}
We then introduce a constant $\beta > 0$, whose value will be fixed at the end of the section. Throughout the analysis, we keep the same notation as in \Cref{subsubsection:fromNplayerGameToAuxiliary}, further highlighting the analogies between the two parts.

\medskip
\textbf{Step 1: estimates for the backward components}

\medskip
By applying Itô's formula to the processes $\mathrm{e}^{\beta t} \big|\delta M^{i,\star,N}_t\big|^2$, for each $i \in \{1, \ldots, N\}$, and $\mathrm{e}^{\beta t} \big|\delta N^{\star,N}_t\big|^2$, $t \in [u, T]$, and using the Lipschitz-continuity of $\varphi_1$ and $\varphi_2$ as stated in \Cref{assumpConvThm}.\ref{lip_gGf}, together with the Burkholder--Davis--Gundy inequality with constant $c_{1,{\smallertext{\rm BDG}}}$, we have
\begin{align}\label{align:deltaM_secondPart}
\E^{\P^{\smalltext{\hat\alpha}\smalltext{,}\smalltext{N}\smalltext{,}\smalltext{u}}_\smalltext{\omega}} \Bigg[ \sup_{t \in [u,T]} \mathrm{e}^{\beta t} \big|\delta M^{i,\star,N}_t\big|^2 + \int_u^T \mathrm{e}^{\beta t} \sum_{\ell =1}^N \big\|\delta Z^{i,m,\ell,\star,N}_t\big\|^2 \d t \Bigg] \leq \ell^2_{\varphi_{\smalltext{1}}}c^\star \E^{\P^{\smalltext{\hat\alpha}\smalltext{,}\smalltext{N}\smalltext{,}\smalltext{u}}_\smalltext{\omega}} \Big[ \mathrm{e}^{\beta T} \big\|\delta X^{i}_{\cdot \land T}\big\|^2_\infty \Big], \; \P\text{\rm--a.e.} \; \omega\in\Omega,
\end{align}
and, using the triangle inequality for the Wasserstein distance,
\begin{align}\label{align:deltaN_secondPart}
\notag &\E^{\P^{\smalltext{\hat\alpha}\smalltext{,}\smalltext{N}\smalltext{,}\smalltext{u}}_\smalltext{\omega}} \Bigg[ \sup_{t \in [u,T]} \mathrm{e}^{\beta t} \big|\delta N^{\star,N}_t\big|^2 + \int_u^T \mathrm{e}^{\beta t} \sum_{\ell =1}^N \big\|\delta Z^{n,\ell,\star,N}_t\big\|^2 \d t \Bigg] \\
&\leq 2 \ell^2_{\varphi_{\smalltext{2}}}c^\star \E^{\P^{\smalltext{\hat\alpha}\smalltext{,}\smalltext{N}\smalltext{,}\smalltext{u}}_\smalltext{\omega}} \Bigg[ \mathrm{e}^{\beta T} \Bigg( \frac1N \sum_{\ell =1}^N \big\|\delta X^{\ell}_{\cdot \land T}\big\|^2_\infty + \cW_2^2\big(L^N\big(\X^N_{\cdot \land T}\big),\cL_{\hat\alpha}(X_{\cdot \land T})\big) \Bigg) \Bigg], \; \P\text{\rm--a.e.} \; \omega\in\Omega,
\end{align}
where $c^\star$ is defined as in \Cref{eq:cStarConst}.

\medskip
Again we apply It\^o's formula to the process $\mathrm{e}^{\beta t} \big|\delta Y^{i,\star,N}_t\big|^2$, for $t \in [u, T]$, and use the Lipschitz-continuity of $(g+G)(\varphi_1,\varphi_2)$, $f$ and $\Lambda$ as assumed in \Cref{assumpConvThm}.\ref{lip_gGf} and \Cref{assumpConvThm}.\ref{lipLambda_growthAleph}, Young's inequality with some constant $\eps_1 \geq 3 \ell^2_f/\beta$, the boundedness of the processes $\partial_{m,m}^2 G(\overline M^{i,\star,N}, \overline N^{\star,N})$, $\partial_{m,n}^2 G(\overline M^{i,\star,N}, \overline N^{\star,N})$ and $\partial_{n,n}^2 G(\overline M^{i,\star,N}, \overline N^{\star,N})$, as well as the triangle inequality for the Wasserstein distance. This yields
\begin{align}\label{align:deltaY_secondPart}
\notag&\mathrm{e}^{\beta t} \big|\delta Y^{i,N}_t\big|^2 + \int_t^T \mathrm{e}^{\beta s} \sum_{\ell =1}^N \big\|\delta Z^{i,\ell,N}_s\big\|^2 \d s \\
	\notag&\leq 2 \ell_{g+G,\varphi_\smalltext{1},\varphi_\smalltext{2}}^2 \mathrm{e}^{\beta T} \Bigg( \big\|\delta X^i_{\cdot \land T}\big\|^2_\infty + \frac{2}{N} \sum_{\ell =1}^N \big\|\delta X^{\ell}_{\cdot \land T}\big\|^2_\infty + 2 \cW_2^2\big(L^N\big(\X^N_{\cdot \land T}\big),\cL_{\hat\alpha}(X_{\cdot \land T})\big) \Bigg) \\
\notag&\quad+ \varepsilon_1 \int_t^T \mathrm{e}^{\beta s} \big\|\delta X^{i}_{\cdot \land s}\big\|^2_\infty \d s + \frac{\varepsilon_1 2}{N} \int_t^T \mathrm{e}^{\beta s} \sum_{\ell =1}^N \|\delta X^{\ell}_{\cdot \land s} \|^2_{\infty} \d s \\
\notag&\quad+ \varepsilon_1 30\ell^2_\Lambda \int_t^T \mathrm{e}^{\beta s} \cW_2^2\big(L^N\big(\X^N_{\cdot \land s}\big),\cL_{\hat\alpha}(X_{\cdot \land s})\big) \d s + \varepsilon_1 2 \int_t^T \mathrm{e}^{\beta s} \cW_2^2\big(L^N\big(\X^N_{\cdot \land s},\hat\alpha_s\big),\cL_{\hat\alpha}(X_{\cdot \land s},\hat\alpha_s)\big) \d s \\
\notag&\quad+ \varepsilon_1 5 \ell^2_\Lambda \int_t^T \mathrm{e}^{\beta s} \Bigg( \big\|\delta X^{i}_{\cdot \land s}\big\|^2_\infty + \frac{2}{N} \sum_{\ell =1}^N \big\|\delta X^{\ell}_{\cdot \land s}\big\|^2_\infty + \big\|\delta Z^{i,i,N}_s\big\|^2 + \big\|\delta Z^{i,m,i,\star,N}_s\big\|^2 + \big\|\delta Z^{n,i,\star,N}_s\big\|^2 \Bigg) \d s \\
\notag&\quad+ \frac{\varepsilon_1 10 \ell^2_\Lambda}{N} \int_t^T \mathrm{e}^{\beta s} \sum_{\ell =1}^N \Big( 3 \big\|\delta X^{\ell}_{\cdot \land s}\big\|^2_\infty + \big\|\delta Z^{\ell,\ell,N}_s\big\|^2 + \big\|\delta Z^{\ell,m,\ell,\star,N}_s\big\|^2 + \big\|\delta Z^{n,\ell,\star,N}_s\big\|^2 \Big) \d s \\
\notag&\quad+ 2 c_{\partial^\smalltext{2}G} \int_t^T \mathrm{e}^{\beta s} \bigg| \d \bigg\langle \int_0^\cdot \delta Y^{i,N}_{r} \d \overline M^{i,\star,N}_{r} , \delta N^{\star,N} \bigg\rangle_s \bigg| + c_{\partial^\smalltext{2}G} \int_t^T \mathrm{e}^{\beta s} \bigg| \d \bigg\langle \int_0^\cdot \delta Y^{i,N}_{r} \d \overline M^{i,\star,N}_{r} , \delta M^{i,\star,N} \bigg\rangle_s \bigg| \\
\notag&\quad+ c_{\partial^\smalltext{2}G} \int_t^T \mathrm{e}^{\beta s} \bigg| \d \bigg\langle \int_0^\cdot \delta Y^{i,N}_{r} \d M^{i,\star}_{r} , \delta^i\! M^{i,\star,N} \bigg\rangle_s \bigg| + c_{\partial^\smalltext{2}G} \int_t^T \mathrm{e}^{\beta s} \bigg| \d \bigg\langle \int_0^\cdot \delta Y^{i,N}_{r} \d \overline N^{\star,N}_{r} , \delta N^{\star,N} \bigg\rangle_s \bigg| \\
\notag&\quad+ \int_t^T \mathrm{e}^{\beta s} \bigg| \d \bigg\langle \int_0^\cdot \delta Y^{i,N}_{r} \d M^{i,\star}_{r} , \int_0^\cdot \Big( \partial^2_{m,m}G\big(\overline M^{i,\star,N}_{r},\overline N^{\star,N}_{r}\big) - \partial^2_{m,m}G\big(M^{i,\star}_{r},N^{\star}_{r}\big) \Big) \d M^{i,\star}_{r} \bigg\rangle_s \bigg| \\
&\quad- 2 \int_t^T \mathrm{e}^{\beta s} \delta Y^{i,N}_s\sum_{\ell =1}^N \delta Z^{i,\ell,N}_s \cdot \d \big(W_s^{\hat\alpha,N,u,\omega}\big)^\ell, \; t \in [u,T], \; \P^{\hat\alpha,N,u}_\omega \text{\rm--a.s.}, \; \text{for} \; \P\text{\rm--a.e.} \; \omega\in\Omega,
\end{align}
where, in the eighth line, we introduce the process
\begin{align*}
\delta^i\! \!M^{i,\star,N}_t \coloneqq \delta M^{i,\star,N}_u + \int_u^t \delta Z^{i,m,i,\star,N}_s \cdot \d \big(W_s^{\hat\alpha,N,u,\omega}\big)^i, \; t \in [u,T].
\end{align*}
We define the constants
\begin{gather*}
c_{\smallertext{\rm{BMO}}_{\smalltext{[}\smalltext{u}\smalltext{,}\smalltext{T}\smalltext{]}}} \coloneqq 3 c^2_{\partial^\smalltext{2}G} \big\|\overline M^{i,\star,N}\big\|^2_{\smallertext{\rm{BMO}}_{\smalltext{[}\smalltext{u}\smalltext{,}\smalltext{T}\smalltext{]}}} + (1+c^2_{\partial^\smalltext{2}G}) \big\|M^{i,\star}\big\|^2_{\smallertext{\rm{BMO}}_{\smalltext{[}\smalltext{u}\smalltext{,}\smalltext{T}\smalltext{]}}} + c^2_{\partial^\smalltext{2}G} \big\|\overline N^{\star,N}\big\|^2_{\smallertext{\rm{BMO}}_{\smalltext{[}\smalltext{u}\smalltext{,}\smalltext{T}\smalltext{]}}} , \; \bar c_{\smallertext{\rm{BMO}}_{\smalltext{[}\smalltext{u}\smalltext{,}\smalltext{T}\smalltext{]}}} \coloneqq \big\|\overline M^{i,\star,N}\big\|^2_{\smallertext{\rm{BMO}}_{\smalltext{[}\smalltext{u}\smalltext{,}\smalltext{T}\smalltext{]}}} .
\end{gather*}

By following the exact same steps that led to the estimates in \eqref{eq:intDeltaZ}, and observing that the process $\langle \delta M^{i,\star,N} \rangle - \langle \delta^i\! M^{i,\star,N} \rangle$ is non-negative, we deduce from \eqref{align:deltaY_secondPart} that, for some $\eps_2 > 0$, 
\begin{align}\label{eq:intDeltaZ_secondPart}
\notag&\E^{\P^{\smalltext{\hat\alpha}\smalltext{,}\smalltext{N}\smalltext{,}\smalltext{u}}_\smalltext{\omega}} \Bigg[ \int_u^T \mathrm{e}^{\beta t} \sum_{\ell =1}^N \big\|\delta Z^{i,\ell,N}_t\big\|^2 \d t \Bigg] \\
		\notag&\leq 2\ell_{g+G,\varphi_\smalltext{1},\varphi_\smalltext{2}}^2 \E^{\P^{\smalltext{\hat\alpha}\smalltext{,}\smalltext{N}\smalltext{,}\smalltext{u}}_\smalltext{\omega}} \Bigg[ \mathrm{e}^{\beta T} \Bigg( \big\|\delta X^{i}_{\cdot \land T}\big\|^2_\infty + \frac{2}{N} \sum_{\ell =1}^N \big\|\delta X^{\ell}_{\cdot \land T}\big\|^2_\infty + 2 \cW_2^2\big(L^N\big(\X^N_{\cdot \land T}\big),\cL_{\hat\alpha}(X_{\cdot \land T})\big) \Bigg) \Bigg] \\
\notag&\quad+ \varepsilon_1 (1 + 5 \ell^2_\Lambda) \E^{\P^{\smalltext{\hat\alpha}\smalltext{,}\smalltext{N}\smalltext{,}\smalltext{u}}_\smalltext{\omega}} \bigg[\int_u^T \mathrm{e}^{\beta t} \big\|\delta X^{i}_{\cdot \land t}\big\|^2_\infty \d t \bigg]  + \frac{\varepsilon_1 2 (1 + 20 \ell^2_\Lambda)}{N} \E^{\P^{\smalltext{\hat\alpha}\smalltext{,}\smalltext{N}\smalltext{,}\smalltext{u}}_\smalltext{\omega}} \Bigg[ \int_u^T \mathrm{e}^{\beta t} \sum_{\ell =1}^N \big\|\delta X^{\ell}_{\cdot \land t} \big\|^2_{\infty} \d t \Bigg] \\
\notag&\quad+ \varepsilon_1 2 \E^{\P^{\smalltext{\hat\alpha}\smalltext{,}\smalltext{N}\smalltext{,}\smalltext{u}}_\smalltext{\omega}} \bigg[ \int_u^T \mathrm{e}^{\beta s} \Big( 15 \ell^2_\Lambda \cW_2^2\big(L^N\big(\X^N_{\cdot \land t}\big),\cL_{\hat\alpha}(X_{\cdot \land t})\big) + \cW_2^2\big(L^N\big(\X^N_{\cdot \land t},\hat\alpha_t\big),\cL_{\hat\alpha}(X_{\cdot \land t},\hat\alpha_t)\big) \Big) \d t \bigg] \\
\notag&\quad+ \varepsilon_1 5 \ell^2_\Lambda \E^{\P^{\smalltext{\hat\alpha}\smalltext{,}\smalltext{N}\smalltext{,}\smalltext{u}}_\smalltext{\omega}} \bigg[ \int_u^T \mathrm{e}^{\beta t} \Big( \big\|\delta Z^{i,i,N}_t\big\|^2 + \big\|\delta Z^{i,m,i,\star,N}_t\big\|^2 + \big\|\delta Z^{n,i,\star,N}_t\big\|^2 \Big) \d t \bigg] \\
\notag&\quad+ \frac{\varepsilon_1 10 \ell^2_\Lambda}{N} \E^{\P^{\smalltext{\hat\alpha}\smalltext{,}\smalltext{N}\smalltext{,}\smalltext{u}}_\smalltext{\omega}} \Bigg[ \int_u^T \mathrm{e}^{\beta t} \sum_{\ell =1}^N \Big( \big\|\delta Z^{\ell,\ell,N}_t\big\|^2 + \big\|\delta Z^{\ell,m,\ell,\star,N}_t\big\|^2 + \big\|\delta Z^{n,\ell,\star,N}_t\big\|^2 \Big) \d t \Bigg] \\
\notag&\quad+ \frac{1}{\varepsilon_2} \E^{\P^{\smalltext{\hat\alpha}\smalltext{,}\smalltext{N}\smalltext{,}\smalltext{u}}_\smalltext{\omega}}  \Bigg[ \int_u^T \mathrm{e}^{\beta t} \sum_{\ell =1}^N \big\|\delta Z^{i,m,\ell,\star,N}_t\big\|^2 \d t \Bigg] + \frac{3}{\varepsilon_22} \E^{\P^{\smalltext{\hat\alpha}\smalltext{,}\smalltext{N}\smalltext{,}\smalltext{u}}_\smalltext{\omega}}  \Bigg[ \int_u^T \mathrm{e}^{\beta t} \sum_{\ell =1}^N \big\|\delta Z^{n,\ell,\star,N}_t\big\|^2 \d t \Bigg] \\	
&\notag\quad+ \frac{\bar c_{\smallertext{\rm{BMO}}_{\smalltext{[}\smalltext{u}\smalltext{,}\smalltext{T}\smalltext{]}}} \ell^2_{\partial^\smalltext{2}G} }{\varepsilon_2} \E^{\P^{\smalltext{\hat\alpha}\smalltext{,}\smalltext{N}\smalltext{,}\smalltext{u}}_\smalltext{\omega}} \bigg[ \sup_{t \in [u,T]} \mathrm{e}^{\beta t} \Big(\big| \delta M^{i,\star,N}_t\big|^2 + \big|\delta N^{\star,N}_t \big|^2\Big) \bigg] \\
&\quad+ \varepsilon_2  c_{\smallertext{\rm{BMO}}_{\smalltext{[}\smalltext{u}\smalltext{,}\smalltext{T}\smalltext{]}}} \E^{\P^{\smalltext{\hat\alpha}\smalltext{,}\smalltext{N}\smalltext{,}\smalltext{u}}_\smalltext{\omega}} \bigg[ \sup_{t \in [u,T]} \mathrm{e}^{\beta t} \big|\delta Y^{i,N}_t\big|^2 \bigg], \; \P\text{\rm--a.e.} \; \omega\in\Omega.
\end{align}

Analogously, repeating the same steps as in the derivation of \eqref{eq:supDeltaY}, we find that, for some $\varepsilon_3 > 0$ and $\varepsilon_4 > 0$,
\begin{align}\label{eq:supDeltaY_secondPart}
\notag&\E^{\P^{\smalltext{\hat\alpha}\smalltext{,}\smalltext{N}\smalltext{,}\smalltext{u}}_\smalltext{\omega}} \bigg[  \sup_{t \in [u,T]} \mathrm{e}^{\beta t} \big|\delta Y^{i,N}_t\big|^2 \bigg]\\
		\notag&\leq 2\ell_{g+G,\varphi_\smalltext{1},\varphi_\smalltext{2}}^2 \E^{\P^{\smalltext{\hat\alpha}\smalltext{,}\smalltext{N}\smalltext{,}\smalltext{u}}_\smalltext{\omega}} \Bigg[ \mathrm{e}^{\beta T} \Bigg( \big\|\delta X^{i}_{\cdot \land T}\big\|^2_\infty + \frac{2}{N} \sum_{\ell =1}^N \big\|\delta X^{\ell}_{\cdot \land T}\big\|^2_\infty + 2 \cW_2^2\big(L^N\big(\X^N_{\cdot \land T}\big),\cL_{\hat\alpha}(X_{\cdot \land T})\big) \Bigg) \Bigg] \\
\notag&\quad+ \varepsilon_1 (1 + 5 \ell^2_\Lambda) \E^{\P^{\smalltext{\hat\alpha}\smalltext{,}\smalltext{N}\smalltext{,}\smalltext{u}}_\smalltext{\omega}} \bigg[\int_u^T \mathrm{e}^{\beta t} \big\|\delta X^{i}_{\cdot \land t}\big\|^2_\infty \d t \bigg]  + \frac{\varepsilon_1 2 (1 + 20 \ell^2_\Lambda)}{N} \E^{\P^{\smalltext{\hat\alpha}\smalltext{,}\smalltext{N}\smalltext{,}\smalltext{u}}_\smalltext{\omega}} \Bigg[ \int_u^T \mathrm{e}^{\beta t} \sum_{\ell =1}^N \big\|\delta X^{\ell}_{\cdot \land t} \big\|^2_{\infty} \d t \Bigg] \\
\notag&\quad+ \varepsilon_1 2 \E^{\P^{\smalltext{\hat\alpha}\smalltext{,}\smalltext{N}\smalltext{,}\smalltext{u}}_\smalltext{\omega}} \bigg[ \int_u^T \mathrm{e}^{\beta s} \Big( 15 \ell^2_\Lambda \cW_2^2\big(L^N\big(\X^N_{\cdot \land t}\big),\cL_{\hat\alpha}(X_{\cdot \land t})\big) + \cW_2^2\big(L^N\big(\X^N_{\cdot \land t},\hat\alpha_t\big),\cL_{\hat\alpha}(X_{\cdot \land t},\hat\alpha_t)\big) \Big) \d t \bigg] \\
\notag&\quad+ \varepsilon_1 5 \ell^2_\Lambda \E^{\P^{\smalltext{\hat\alpha}\smalltext{,}\smalltext{N}\smalltext{,}\smalltext{u}}_\smalltext{\omega}} \bigg[ \int_u^T \mathrm{e}^{\beta t} \Big( \big\|\delta Z^{i,i,N}_t\big\|^2 + \big\|\delta Z^{i,m,i,\star,N}_t\big\|^2 + \big\|\delta Z^{n,i,\star,N}_t\big\|^2 \Big) \d t \bigg] \\
\notag&\quad+ \frac{\varepsilon_1 10 \ell^2_\Lambda}{N} \E^{\P^{\smalltext{\hat\alpha}\smalltext{,}\smalltext{N}\smalltext{,}\smalltext{u}}_\smalltext{\omega}} \Bigg[ \int_u^T \mathrm{e}^{\beta t} \sum_{\ell =1}^N \Big( \big\|\delta Z^{\ell,\ell,N}_t\big\|^2 + \big\|\delta Z^{\ell,m,\ell,\star,N}_t\big\|^2 + \big\|\delta Z^{n,\ell,\star,N}_t\big\|^2 \Big) \d t \Bigg] \\
\notag&\quad+ \frac{1}{\varepsilon_4} \E^{\P^{\smalltext{\hat\alpha}\smalltext{,}\smalltext{N}\smalltext{,}\smalltext{u}}_\smalltext{\omega}} \Bigg[ \int_u^T \mathrm{e}^{\beta t} \sum_{\ell =1}^N \big\|\delta Z^{i,m,\ell,\star,N}_t\big\|^2 \d t \Bigg] + \frac{3}{\varepsilon_42} \E^{\P^{\smalltext{\hat\alpha}\smalltext{,}\smalltext{N}\smalltext{,}\smalltext{u}}_\smalltext{\omega}} \Bigg[ \int_u^T \mathrm{e}^{\beta t} \sum_{\ell =1}^N \big\|\delta Z^{n,\ell,\star,N}_t\big\|^2 \d t \Bigg] \\	
\notag&\quad+ \frac{\bar c_{\smallertext{\rm{BMO}}_{\smalltext{[}\smalltext{u}\smalltext{,}\smalltext{T}\smalltext{]}}} \ell^2_{\partial^\smalltext{2}G} }{\varepsilon_4} \E^{\P^{\smalltext{\hat\alpha}\smalltext{,}\smalltext{N}\smalltext{,}\smalltext{u}}_\smalltext{\omega}} \bigg[ \sup_{t \in [u,T]} \mathrm{e}^{\beta t} \Big(\big| \delta M^{i,\star,N}_t\big|^2 + \big|\delta N^{\star,N}_t \big|^2\Big) \bigg] + \varepsilon_4 c_{\smallertext{\rm{BMO}}_{\smalltext{[}\smalltext{u}\smalltext{,}\smalltext{T}\smalltext{]}}} \E^{\P^{\smalltext{\hat\alpha}\smalltext{,}\smalltext{N}\smalltext{,}\smalltext{u}}_\smalltext{\omega}} \bigg[ \sup_{t \in [u,T]} \mathrm{e}^{\beta t} \big|\delta Y^{i,N}_t\big|^2 \bigg] \\
&\quad+ \varepsilon_3 4 c^2_{1,{\smallertext{\rm BDG}}} \E^{\P^{\smalltext{\hat\alpha}\smalltext{,}\smalltext{N}\smalltext{,}\smalltext{u}}_\smalltext{\omega}} \bigg[ \sup_{t \in [u,T]} \mathrm{e}^{\beta t} \big|\delta Y^{i,N}_t\big|^2 \bigg] + \frac{1}{\varepsilon_3} \E^{\P^{\smalltext{\hat\alpha}\smalltext{,}\smalltext{N}\smalltext{,}\smalltext{u}}_\smalltext{\omega}} \Bigg[ \int_u^T \mathrm{e}^{\beta t} \sum_{\ell =1}^N \big\|\delta Z^{i,\ell,N}_t\big\|^2 \d t \Bigg], \; \P\text{\rm--a.e.} \; \omega\in\Omega.
\end{align}

We introduce the constants
\begin{gather*}
c_{\eps_{\smalltext{2}\smalltext{,}\smalltext{3}\smalltext{,}\smalltext{4}}}\coloneqq \bigg(2+\frac1{\eps_3}\bigg) 2\ell_{g+G,\varphi_\smalltext{1},\varphi_\smalltext{2}}^2 + \bigg(\frac{1}{\varepsilon_2}+\frac1{\eps_4}+\frac{1}{\eps_2\eps_3}\bigg) \bigg(\frac{3}{2}\vee (\bar c_{\smallertext{\rm{BMO}}_{\smalltext{[}\smalltext{u}\smalltext{,}\smalltext{T}\smalltext{]}}}\ell^2_{\partial^\smalltext{2}G})\bigg) c^\star\ell^2_{\varphi_\smalltext{1}},\\
\overline{c}_{\eps_{\smalltext{2}\smalltext{,}\smalltext{3}\smalltext{,}\smalltext{4}}}\coloneqq \bigg(2+\frac1{\eps_3}\bigg) 4\ell_{g+G,\varphi_\smalltext{1},\varphi_\smalltext{2}}^2 + \bigg(\frac{1}{\varepsilon_2}+\frac1{\eps_4}+\frac{1}{\eps_2\eps_3}\bigg) \bigg(\frac{3}{2}\vee (\bar c_{\smallertext{\rm{BMO}}_{\smalltext{[}\smalltext{u}\smalltext{,}\smalltext{T}\smalltext{]}}}\ell^2_{\partial^\smalltext{2}G})\bigg) 2 c^\star\ell^2_{\varphi_\smalltext{2}}.
\end{gather*}
Combining the last two inequalities, \eqref{eq:intDeltaZ_secondPart} and \eqref{eq:supDeltaY_secondPart}, together with the estimates obtained in \eqref{align:deltaM_secondPart} and \eqref{align:deltaN_secondPart}, it follows that
\begin{align}\label{eq:longestim2_secondPart}
\notag&\bigg( 1 - \varepsilon_3 4c^2_{1,\smallertext{\rm BDG}} - \bigg(\varepsilon_2 +\eps_4+\frac{\eps_2}{\eps_3}\bigg) c_{\smallertext{\rm{BMO}}_{\smalltext{[}\smalltext{u}\smalltext{,}\smalltext{T}\smalltext{]}}} \bigg) \E^{\P^{\smalltext{\hat\alpha}\smalltext{,}\smalltext{N}\smalltext{,}\smalltext{u}}_\smalltext{\omega}} \bigg[ \sup_{t \in [u,T]} \mathrm{e}^{\beta t} \big|\delta Y^{i,N}_t\big|^2 \bigg] + \E^{\P^{\smalltext{\hat\alpha}\smalltext{,}\smalltext{N}\smalltext{,}\smalltext{u}}_\smalltext{\omega}} \Bigg[ \int_u^T \mathrm{e}^{\beta t} \sum_{\ell=1}^N \big\|\delta Z^{i,\ell,N}_t\big\|^2 \d t \Bigg] \\
\notag&\leq \E^{\P^{\smalltext{\hat\alpha}\smalltext{,}\smalltext{N}\smalltext{,}\smalltext{u}}_\smalltext{\omega}} \Bigg[ \mathrm{e}^{\beta T} \Bigg( c_{\eps_{\smalltext{2}\smalltext{,}\smalltext{3}\smalltext{,}\smalltext{4}}} \big\|\delta X^{i}_{\cdot \land T}\big\|^2_\infty + \overline{c}_{\eps_{\smalltext{2}\smalltext{,}\smalltext{3}\smalltext{,}\smalltext{4}}} \Bigg( \frac{1}{N} \sum_{\ell =1}^N \big\|\delta X^{\ell}_{\cdot \land T}\big\|^2_\infty + \cW_2^2\big(L^N\big(\X^N_{\cdot \land T}\big),\cL_{\hat\alpha}(X_{\cdot \land T})\big) \Bigg) \Bigg) \Bigg]\\
\notag&\quad+ \varepsilon_1 \bigg( 2 + \frac{1}{\eps_3} \bigg) (1 + 5 \ell^2_\Lambda) \E^{\P^{\smalltext{\hat\alpha}\smalltext{,}\smalltext{N}\smalltext{,}\smalltext{u}}_\smalltext{\omega}} \bigg[\int_u^T \mathrm{e}^{\beta t} \big\|\delta X^{i}_{\cdot \land t}\big\|^2_\infty \d t \bigg]  + \eps_1  \bigg( 2 + \frac{1}{\eps_3} \bigg) \frac{ 2 (1 + 20 \ell^2_\Lambda)}{N} \E^{\P^{\smalltext{\hat\alpha}\smalltext{,}\smalltext{N}\smalltext{,}\smalltext{u}}_\smalltext{\omega}} \Bigg[ \int_u^T \mathrm{e}^{\beta t} \sum_{\ell =1}^N \big\|\delta X^{\ell}_{\cdot \land t} \big\|^2_{\infty} \d t \Bigg] \\
\notag&\quad+ \varepsilon_1 \bigg( 2 + \frac{1}{\eps_3} \bigg) 2 \E^{\P^{\smalltext{\hat\alpha}\smalltext{,}\smalltext{N}\smalltext{,}\smalltext{u}}_\smalltext{\omega}} \bigg[ \int_u^T \mathrm{e}^{\beta s} \Big( 15 \ell^2_\Lambda \cW_2^2\big(L^N\big(\X^N_{\cdot \land t}\big),\cL_{\hat\alpha}(X_{\cdot \land t})\big) + \cW_2^2\big(L^N\big(\X^N_{\cdot \land t},\hat\alpha_t\big),\cL_{\hat\alpha}(X_{\cdot \land t},\hat\alpha_t)\big) \Big) \d t \bigg] \\
\notag&\quad+ \varepsilon_1  \bigg( 2 + \frac{1}{\eps_3} \bigg) 5 \ell^2_\Lambda \E^{\P^{\smalltext{\hat\alpha}\smalltext{,}\smalltext{N}\smalltext{,}\smalltext{u}}_\smalltext{\omega}} \bigg[ \int_u^T \mathrm{e}^{\beta t} \Big( \big\|\delta Z^{i,i,N}_t\big\|^2 + \big\|\delta Z^{i,m,i,\star,N}_t\big\|^2 + \big\|\delta Z^{n,i,\star,N}_t\big\|^2 \Big) \d t \bigg] \\
&\quad+ \varepsilon_1  \bigg( 2 + \frac{1}{\eps_3} \bigg) \frac{10 \ell^2_\Lambda}{N} \E^{\P^{\smalltext{\hat\alpha}\smalltext{,}\smalltext{N}\smalltext{,}\smalltext{u}}_\smalltext{\omega}} \Bigg[ \int_u^T \mathrm{e}^{\beta t} \sum_{\ell =1}^N \Big( \big\|\delta Z^{\ell,\ell,N}_t\big\|^2 + \big\|\delta Z^{\ell,m,\ell,\star,N}_t\big\|^2 + \big\|\delta Z^{n,\ell,\star,N}_t\big\|^2 \Big) \d t \Bigg], \; \P\text{\rm--a.e.} \; \omega\in\Omega.
\end{align}

\medskip
\textbf{Step 2: estimates for the forward component}

\medskip
Repeating the same computations carried out in \textbf{Step 3} of \Cref{subsubsection:fromNplayerGameToAuxiliary}, that is, applying It\^o's formula to $\mathrm{e}^{\beta t} \|\delta X^{i}_t\|^2$, $t \in [u,T]$, and using the Lipschitz condition in \Cref{assumpConvThm}.\ref{lipSigma}, the dissipativity condition in \Cref{assumpConvThm}.\ref{diss}, the Lipschitz-continuity of the function $\Lambda$ from \Cref{assumpConvThm}.\ref{lipLambda_growthAleph}, together with Young’s inequality for some $\eps_5>0$ and the triangle inequality of the Wasserstein distance, it follows that 
\begin{align*}
\mathrm{e}^{\beta t} \|\delta X^i_t\|^2 
	&\leq \bigg( \beta - 2 K_{\sigma b} + \ell^2_\sigma + \frac{2\ell^2_{\sigma b}}{\varepsilon_5} + \varepsilon_5 5 \ell_\Lambda^2 \bigg) \int_u^t \mathrm{e}^{\beta s} \big\|\delta X^{i}_{\cdot \land s}\big\|^2_\infty \d s + \varepsilon_5 \frac{2(1+20\ell^2_\Lambda)}{N} \int_u^t \mathrm{e}^{\beta s} \sum_{\ell =1}^N \big\|\delta X^{\ell}_{\cdot \land s}\big\|^2_\infty \d s\\
	&\quad+ \varepsilon_5 2 \int_u^t \mathrm{e}^{\beta s} \Big(\cW_2^2\big(L^N\big(\X^N_{\cdot \land s}\big),\cL_{\hat\alpha}(X_{\cdot \land s})\big) + 15 \ell^2_\Lambda \cW_2^2\big(L^N\big(\X^N_{\cdot \land s},\hat\alpha_s\big),\cL_{\hat\alpha}(X_{\cdot \land s},\hat\alpha_s)\big) \Big) \d s \\
	&\quad+ \varepsilon_5 5 \ell_\Lambda^2 \int_u^t \mathrm{e}^{\beta s} \Big( \big\|\delta Z^{i,i,N}_s\big\|^2 + \big\|\delta Z^{i,m,i,\star,N}_s\big\|^2 + \big\|\delta Z^{n,i,\star,N}_s\big\|^2 \Big) \d s \\
	&\quad+ \varepsilon_5 \frac{10 \ell_\Lambda^2}{N} \int_u^t \mathrm{e}^{\beta s} \sum_{\ell =1}^N \Big( \big\|\delta Z^{\ell,\ell,N}_s\big\|^2 + \big\|\delta Z^{\ell,m,\ell,\star,N}_s\big\|^2 + \big\|\delta Z^{n,\ell,\star,N}_s\big\|^2 \Big) \d s \\
	&\quad+ 2 \int_u^t \mathrm{e}^{\beta s} \delta X^{i}_s \cdot  \big( \sigma_s(X^{i}_{\cdot \land s}) - \sigma_s(\widetilde X^{i}_{\cdot \land s}) \big) \d \big(W_s^{\hat\alpha,N,u,\omega}\big)^i, \; t \in [u,T], \; \P^{\hat\alpha,N,u}_\omega \text{\rm--a.s.}, \; \text{for} \; \P\text{\rm--a.e.} \; \omega\in\Omega.
\end{align*}

Assuming that $K_{\sigma b} \geq (\beta + \ell^2_\sigma + 2\ell^2_{\sigma b}/\varepsilon_5 + 5 \ell^2_\Lambda \varepsilon_5)/2$, the application of Burkholder--Davis--Gundy's inequality, Young's inequality for some $\eps_6 \in (0,1/(c^2_{1,{\smallertext{\rm BDG}}} \mathrm{e}^{2\beta t}\ell^2_\sigma))$, and subsequently Gr\"onwall's inequality yields, for any $t \in [u,T]$, that
\begin{align*}
&\E^{\P^{\smalltext{\hat\balpha}\smalltext{,}\smalltext{N}\smalltext{,}\smalltext{u}}_\smalltext{\omega}} \bigg[ \mathrm{e}^{\beta t} \sup_{r \in [u,t]} \|\delta X^{i}_r\|^2 \bigg] \\
	&\leq \eps_5 \frac{2(1 + 20 \ell^2_\Lambda)}{N} c_{\eps_{\smalltext{6}}}(t) \E^{\P^{\smalltext{\hat\alpha}\smalltext{,}\smalltext{N}\smalltext{,}\smalltext{u}}_\smalltext{\omega}} \Bigg[ \int_u^t \mathrm{e}^{\beta s} \sum_{\ell =1}^N \big\|\delta X^{\ell}_{\cdot \land s}\big\|^2_\infty \d s \Bigg] \\
&\quad+ \varepsilon_5 2 c_{\eps_{\smalltext{6}}}(t) \E^{\P^{\smalltext{\hat\alpha}\smalltext{,}\smalltext{N}\smalltext{,}\smalltext{u}}_\smalltext{\omega}} \bigg[ \int_u^t \mathrm{e}^{\beta s} \Big(\cW_2^2\big(L^N\big(\X^N_{\cdot \land s}\big),\cL_{\hat\alpha}(X_{\cdot \land s})\big) + 15 \ell^2_\Lambda \cW_2^2\big(L^N\big(\X^N_{\cdot \land s},\hat\alpha_s\big),\cL_{\hat\alpha}(X_{\cdot \land s},\hat\alpha_s)\big) \Big) \d s \bigg] \\
&\quad + \eps_5 5 \ell^2_\Lambda c_{\eps_{\smalltext{6}}}(t) \E^{\P^{\smalltext{\hat\alpha}\smalltext{,}\smalltext{N}\smalltext{,}\smalltext{u}}_\smalltext{\omega}} \bigg[ \int_u^t \mathrm{e}^{\beta s} \Big( \big\|\delta Z^{i,i,N}_s\big\|^2 + \big\|\delta Z^{i,m,i,\star,N}_s\big\|^2 + \big\|\delta Z^{n,i,\star,N}_s\big\|^2 \Big) \d s \bigg] \\
&\quad + \eps_5 \frac{10 \ell_\Lambda^2 c_{\eps_{\smalltext{6}}}(t)}{N} \E^{\P^{\smalltext{\hat\alpha}\smalltext{,}\smalltext{N}\smalltext{,}\smalltext{u}}_\smalltext{\omega}}\Bigg[ \int_u^t \mathrm{e}^{\beta s} \sum_{\ell =1}^N \Big( \big\|\delta Z^{\ell,\ell,N}_s\big\|^2 + \big\|\delta Z^{\ell,m,\ell,\star,N}_s\big\|^2 + \big\|\delta Z^{n,\ell,\star,N}_s\big\|^2 \Big) \d s \Bigg], \; \P\text{\rm--a.e.} \; \omega\in\Omega,
\end{align*}
where 
\begin{align*}
c_{\eps_{\smalltext{6}}}(t) \coloneqq \mathrm{exp}\bigg(\beta t+\frac{ t}{\varepsilon_6 (1-\varepsilon_6 \mathrm{e}^{2\beta t}c^2_{1,{\smallertext{\rm BDG}}} \ell^2_\sigma)}  \bigg)\big(1-\varepsilon_6 \mathrm{e}^{2\beta t}c^2_{1,{\smallertext{\rm BDG}}} \ell^2_\sigma\big)^{-1}.
\end{align*}
Moreover, if we define $c_{\eps_{\smalltext{5}\smalltext{,}\smalltext{6}}}(t) \coloneqq \eps_5 c_{\eps_{\smalltext{6}}}(t)\exp\big( \varepsilon_5 2 \big( 1 + 20 \ell^2_\Lambda \big) c_{\eps_{\smalltext{6}}}(T)T\big),$
then, applying Gr\"onwall's inequality once more, we have
\begin{align*}
&\frac1N\E^{\P^{\smalltext{\hat\balpha}\smalltext{,}\smalltext{N}\smalltext{,}\smalltext{u}}_\smalltext{\omega}} \Bigg[ \mathrm{e}^{\beta t} \sum_{\ell =1}^N \|\delta X^{\ell}_{\cdot \land t}\|^2_\infty \Bigg] \\
	&\leq 2 c_{\eps_{\smalltext{5}\smalltext{,}\smalltext{6}}}(t) \E^{\P^{\smalltext{\hat\alpha}\smalltext{,}\smalltext{N}\smalltext{,}\smalltext{u}}_\smalltext{\omega}} \bigg[ \int_u^t \mathrm{e}^{\beta s} \Big(\cW_2^2\big(L^N\big(\X^N_{\cdot \land s}\big),\cL_{\hat\alpha}(X_{\cdot \land s})\big) + 15 \ell^2_\Lambda \cW_2^2\big(L^N\big(\X^N_{\cdot \land s},\hat\alpha_s\big),\cL_{\hat\alpha}(X_{\cdot \land s},\hat\alpha_s)\big) \Big) \d s \bigg] \\
&\quad + \frac{15 \ell_\Lambda^2 c_{\eps_{\smalltext{5}\smalltext{,}\smalltext{6}}}(t)}{N} \E^{\P^{\smalltext{\hat\alpha}\smalltext{,}\smalltext{N}\smalltext{,}\smalltext{u}}_\smalltext{\omega}}\Bigg[ \int_u^t \mathrm{e}^{\beta s} \sum_{\ell =1}^N \Big( \big\|\delta Z^{\ell,\ell,N}_s\big\|^2 + \big\|\delta Z^{\ell,m,\ell,\star,N}_s\big\|^2 + \big\|\delta Z^{n,\ell,\star,N}_s\big\|^2 \Big) \d s \Bigg], \; \P\text{\rm--a.e.} \; \omega\in\Omega,
\end{align*}
from which we deduce that, for $\P\text{\rm--a.e.} \; \omega\in\Omega$,
\begin{align}\label{align:forwardMeanField_final}
\notag&\E^{\P^{\smalltext{\hat\balpha}\smalltext{,}\smalltext{N}\smalltext{,}\smalltext{u}}_\smalltext{\omega}} \Big[ \mathrm{e}^{\beta t} \|\delta X^{i}_{\cdot \land t}\|^2_\infty \Big] \\
\notag	&\leq \eps_5 2 c_{\eps_{\smalltext{6}}}(t) (1+2(1 + 20 \ell^2_\Lambda) c_{\eps_{\smalltext{5}\smalltext{,}\smalltext{6}}}(t)) \E^{\P^{\smalltext{\hat\alpha}\smalltext{,}\smalltext{N}\smalltext{,}\smalltext{u}}_\smalltext{\omega}} \bigg[ \int_u^t \mathrm{e}^{\beta s} \cW_2^2\big(L^N\big(\X^N_{\cdot \land s}\big),\cL_{\hat\alpha}(X_{\cdot \land s})\big)\mathrm{d}s\bigg]\\
\notag	&\quad +\eps_5 2 c_{\eps_{\smalltext{6}}}(t) (1+2(1 + 20 \ell^2_\Lambda) c_{\eps_{\smalltext{5}\smalltext{,}\smalltext{6}}}(t)) \E^{\P^{\smalltext{\hat\alpha}\smalltext{,}\smalltext{N}\smalltext{,}\smalltext{u}}_\smalltext{\omega}} \bigg[ \int_u^t \mathrm{e}^{\beta s}  15 \ell^2_\Lambda \cW_2^2\big(L^N\big(\X^N_{\cdot \land s},\hat\alpha_s\big),\cL_{\hat\alpha}(X_{\cdot \land s},\hat\alpha_s)\big) \d s \bigg] \\
\notag&\quad + \eps_5 5 \ell^2_\Lambda c_{\eps_{\smalltext{6}}}(t) \E^{\P^{\smalltext{\hat\alpha}\smalltext{,}\smalltext{N}\smalltext{,}\smalltext{u}}_\smalltext{\omega}} \bigg[ \int_u^t \mathrm{e}^{\beta s} \Big( \big\|\delta Z^{i,i,N}_s\big\|^2 + \big\|\delta Z^{i,m,i,\star,N}_s\big\|^2 + \big\|\delta Z^{n,i,\star,N}_s\big\|^2 \Big) \d s \bigg] \\
&\quad + \eps_5 \frac{10 \ell_\Lambda^2 c_{\eps_{\smalltext{6}}}(t)}{N} ( 1 + 3 (1+20\ell^2_\Lambda) c_{\eps_{\smalltext{5}\smalltext{,}\smalltext{6}}}(t)) \E^{\P^{\smalltext{\hat\alpha}\smalltext{,}\smalltext{N}\smalltext{,}\smalltext{u}}_\smalltext{\omega}}\Bigg[ \int_u^t \mathrm{e}^{\beta s} \sum_{\ell =1}^N \Big( \big\|\delta Z^{\ell,\ell,N}_s\big\|^2 + \big\|\delta Z^{\ell,m,\ell,\star,N}_s\big\|^2 + \big\|\delta Z^{n,\ell,\star,N}_s\big\|^2 \Big) \d s \Bigg].
\end{align}

\medskip
\textbf{Step 3: all estimates combined}

\medskip
We introduce
\begin{align*}
{c}^1_{\eps_{\smalltext{1}\smalltext{,}\smalltext{2}\smalltext{,}\smalltext{3}\smalltext{,}\smalltext{4}\smalltext{,}\smalltext{5}\smalltext{,}\smalltext{6}}}&\coloneqq \varepsilon_5 2 c_{\eps_{\smalltext{6}}}(T) (1+2(1+20\ell^2_\Lambda) c_{\eps_{\smalltext{5}\smalltext{,}\smalltext{6}}}(T)) \bigg( c_{\eps_{\smalltext{2}\smalltext{,}\smalltext{3}\smalltext{,}\smalltext{4}}} + \ell^2_{\varphi_{\smalltext{1}}}c^\star + \varepsilon_1 \bigg( 2 + \frac{1}{\eps_3} \bigg) (1 + 5 \ell^2_\Lambda) T \bigg) \\
&\quad+ 2 c_{\eps_{\smalltext{5}\smalltext{,}\smalltext{6}}}(T) \bigg(\overline{c}_{\eps_{\smalltext{2}\smalltext{,}\smalltext{3}\smalltext{,}\smalltext{4}}} + 2 \ell^2_{\varphi_{\smalltext{2}}}c^\star + \varepsilon_1  \bigg( 2 + \frac{1}{\eps_3} \bigg) 2 (1 + 20 \ell^2_\Lambda) T \bigg) + \varepsilon_1  \bigg( 2 + \frac{1}{\eps_3} \bigg) 2, \\
{c}^2_{\eps_{\smalltext{1}\smalltext{,}\smalltext{2}\smalltext{,}\smalltext{3}\smalltext{,}\smalltext{4}\smalltext{,}\smalltext{5}\smalltext{,}\smalltext{6}}}&\coloneqq \varepsilon_5 5 \ell^2_\Lambda c_{\eps_{\smalltext{6}}}(T) \bigg( c_{\eps_{\smalltext{2}\smalltext{,}\smalltext{3}\smalltext{,}\smalltext{4}}} + \ell^2_{\varphi_{\smalltext{1}}}c^\star + \varepsilon_1 \bigg( 2 + \frac{1}{\eps_3} \bigg) (1 + 5 \ell^2_\Lambda) T \bigg) + \varepsilon_1  \bigg( 2 + \frac{1}{\eps_3} \bigg) 5 \ell^2_\Lambda, \\
{c}^3_{\eps_{\smalltext{1}\smalltext{,}\smalltext{2}\smalltext{,}\smalltext{3}\smalltext{,}\smalltext{4}\smalltext{,}\smalltext{5}\smalltext{,}\smalltext{6}}}&\coloneqq \eps_5 10 \ell^2_\Lambda c_{\eps_{\smalltext{6}}}(T) (1+3(1+20\ell^2_\Lambda) c_{\eps_{\smalltext{5}\smalltext{,}\smalltext{6}}}(T)) \bigg( c_{\eps_{\smalltext{2}\smalltext{,}\smalltext{3}\smalltext{,}\smalltext{4}}} + \ell^2_{\varphi_{\smalltext{1}}}c^\star + \varepsilon_1 \bigg( 2 + \frac{1}{\eps_3} \bigg) (1 + 5 \ell^2_\Lambda) T \bigg) \\
&\quad+ 15\ell^2_\Lambda c_{\eps_{\smalltext{5}\smalltext{,}\smalltext{6}}}(T) \bigg(\overline{c}_{\eps_{\smalltext{2}\smalltext{,}\smalltext{3}\smalltext{,}\smalltext{4}}} + 2 \ell^2_{\varphi_{\smalltext{2}}}c^\star + \varepsilon_1  \bigg( 2 + \frac{1}{\eps_3} \bigg) 2 (1 + 20 \ell^2_\Lambda) T \bigg) + \varepsilon_1  \bigg( 2 + \frac{1}{\eps_3} \bigg) 10 \ell^2_\Lambda.
\end{align*}

Combining the previous estimates for the forward component with those obtained in \eqref{align:deltaM_secondPart}, \eqref{align:deltaN_secondPart}, and \eqref{eq:longestim2_secondPart}, it follows that
\begin{align*}
\notag&\E^{\P^{\smalltext{\hat\alpha}\smalltext{,}\smalltext{N}\smalltext{,}\smalltext{u}}_\smalltext{\omega}} \Bigg[ \bigg( 1 - \varepsilon_3 4c^2_{1,\smallertext{\rm BDG}} - \bigg(\varepsilon_2 +\eps_4+\frac{\eps_2}{\eps_3}\bigg) c_{\smallertext{\rm{BMO}}_{\smalltext{[}\smalltext{u}\smalltext{,}\smalltext{T}\smalltext{]}}} \bigg) \sup_{t \in [u,T]} \mathrm{e}^{\beta t} \big|\delta Y^{i,N}_t\big|^2 + \sup_{t \in [u,T]} \mathrm{e}^{\beta t} \big|\delta M^{i,\star,N}_t\big|^2 + \sup_{t \in [u,T]} \mathrm{e}^{\beta t} \big|\delta N^{\star,N}_t\big|^2 \Bigg] \\
\notag&\quad + \E^{\P^{\smalltext{\hat\alpha}\smalltext{,}\smalltext{N}\smalltext{,}\smalltext{u}}_\smalltext{\omega}} \Bigg[ \int_u^T \mathrm{e}^{\beta t} \sum_{\ell =1}^N \Big( \big\|\delta Z^{i,\ell,N}_t\big\|^2 + \big\|\delta Z^{i,m,\ell,\star,N}_t\big\|^2 + \big\|\delta Z^{n,\ell,\star,N}_t\big\|^2 \Big) \d t \Bigg] \\
	\notag&\leq ( \overline{c}_{\eps_{\smalltext{2}\smalltext{,}\smalltext{3}\smalltext{,}\smalltext{4}}} + 2 \ell^2_{\varphi_{\smalltext{2}}}c^\star ) \E^{\P^{\smalltext{\hat\alpha}\smalltext{,}\smalltext{N}\smalltext{,}\smalltext{u}}_\smalltext{\omega}} \Big[ \mathrm{e}^{\beta T} \cW_2^2\big(L^N\big(\X^N_{\cdot \land T}\big),\cL_{\hat\alpha}(X_{\cdot \land T})\big) \Big] \\
\notag&\quad + {c}^1_{\eps_{\smalltext{1}\smalltext{,}\smalltext{2}\smalltext{,}\smalltext{3}\smalltext{,}\smalltext{4}\smalltext{,}\smalltext{5}\smalltext{,}\smalltext{6}}} \E^{\P^{\smalltext{\hat\alpha}\smalltext{,}\smalltext{N}\smalltext{,}\smalltext{u}}_\smalltext{\omega}} \bigg[ \int_u^T \mathrm{e}^{\beta t} \Big(\cW_2^2\big(L^N\big(\X^N_{\cdot \land t}\big),\cL_{\hat\alpha}(X_{\cdot \land t})\big) + 15 \ell^2_\Lambda \cW_2^2\big(L^N\big(\X^N_{\cdot \land t},\hat\alpha_t\big),\cL_{\hat\alpha}(X_{\cdot \land t},\hat\alpha_t)\big) \Big) \d t \bigg] \\
\notag&\quad+ {c}^2_{\eps_{\smalltext{1}\smalltext{,}\smalltext{2}\smalltext{,}\smalltext{3}\smalltext{,}\smalltext{4}\smalltext{,}\smalltext{5}\smalltext{,}\smalltext{6}}} \E^{\P^{\smalltext{\hat\alpha}\smalltext{,}\smalltext{N}\smalltext{,}\smalltext{u}}_\smalltext{\omega}} \bigg[ \int_u^T \mathrm{e}^{\beta t} \Big( \big\|\delta Z^{i,i,N}_t\big\|^2 + \big\|\delta Z^{i,m,i,\star,N}_t\big\|^2 + \big\|\delta Z^{n,i,\star,N}_t\big\|^2 \Big) \d t \bigg] \\
&\quad+ \frac{{c}^3_{\eps_{\smalltext{1}\smalltext{,}\smalltext{2}\smalltext{,}\smalltext{3}\smalltext{,}\smalltext{4}\smalltext{,}\smalltext{5}\smalltext{,}\smalltext{6}}}}{N} \E^{\P^{\smalltext{\hat\alpha}\smalltext{,}\smalltext{N}\smalltext{,}\smalltext{u}}_\smalltext{\omega}} \Bigg[ \int_u^T \mathrm{e}^{\beta t} \sum_{\ell =1}^N \Big( \big\|\delta Z^{\ell,\ell,N}_t\big\|^2 + \big\|\delta Z^{\ell,m,\ell,\star,N}_t\big\|^2 + \big\|\delta Z^{n,\ell,\star,N}_t\big\|^2 \Big) \d t \Bigg], \; \P\text{\rm--a.e.} \; \omega\in\Omega.
\end{align*}

All the constants mentioned above are independent of both $N \in \N^\star$ and $\omega \in \Omega$, as discussed in \Cref{subsubsection:fromNplayerGameToAuxiliary}. Therefore, following the same reasoning, we can choose the parameters $\varepsilon_i>0$ for $i\in\{1,\ldots,6\}$, and $\beta>0$, and require the dissipativity constant $K_{\sigma b}$ to be sufficiently large so that all the conditions stated throughout the proof are satisfied. These conditions are
\begin{gather*}
\beta \geq \max\bigg\{\frac{3 \ell^2_f}{\eps_1},\ell_f^2(1+c_A)^2+ 2\ell^2_f \bigg\},\; K_{\sigma b}\geq \frac12\bigg(\beta+\ell^2_\sigma + \frac{2\ell^2_{\sigma b}}{\varepsilon_5} + \varepsilon_5 5 \ell_\Lambda^2\bigg),\;
1-\varepsilon_6 \mathrm{e}^{2\beta T}c^2_{1,{\smallertext{\rm BDG}}} \ell^2_\sigma>0,\\ 1 - \varepsilon_3 4c^2_{1,\smallertext{\rm BDG}} - \bigg(\varepsilon_2 +\eps_4+\frac{\eps_2}{\eps_3}\bigg) c_{\smallertext{\rm{BMO}}_{\smalltext{[}\smalltext{u}\smalltext{,}\smalltext{T}\smalltext{]}}} >0,\; 1-c^2_{\eps_{\smalltext{1}\smalltext{,}\smalltext{2}\smalltext{,}\smalltext{3}\smalltext{,}\smalltext{4}\smalltext{,}\smalltext{5}\smalltext{,}\smalltext{6}}}-c^3_{\eps_{\smalltext{1}\smalltext{,}\smalltext{2}\smalltext{,}\smalltext{3}\smalltext{,}\smalltext{4}\smalltext{,}\smalltext{5}\smalltext{,}\smalltext{6}}} > 0.
\end{gather*}

Consequently, as in the derivation of the final inequality in \eqref{align:firstPart_finalEst}, we can conclude that, for $\P\text{\rm--a.e.} \; \omega\in\Omega$,
\begin{align*}
\notag&\E^{\P^{\smalltext{\hat\alpha}\smalltext{,}\smalltext{N}\smalltext{,}\smalltext{u}}_\smalltext{\omega}} \Bigg[ \bigg( 1 - \varepsilon_3 4c^2_{1,\smallertext{\rm BDG}} - \bigg(\varepsilon_2 +\eps_4+\frac{\eps_2}{\eps_3}\bigg) c_{\smallertext{\rm{BMO}}_{\smalltext{[}\smalltext{u}\smalltext{,}\smalltext{T}\smalltext{]}}} \bigg) \sup_{t \in [u,T]} \mathrm{e}^{\beta t} \big|\delta Y^{i,N}_t\big|^2 + \sup_{t \in [u,T]} \mathrm{e}^{\beta t} \big|\delta M^{i,\star,N}_t\big|^2 + \sup_{t \in [u,T]} \mathrm{e}^{\beta t} \big|\delta N^{\star,N}_t\big|^2 \Bigg] \\
\notag&\quad + \big(1-c^2_{\eps_{\smalltext{1}\smalltext{,}\smalltext{2}\smalltext{,}\smalltext{3}\smalltext{,}\smalltext{4}\smalltext{,}\smalltext{5}\smalltext{,}\smalltext{6}}}\big) \E^{\P^{\smalltext{\hat\alpha}\smalltext{,}\smalltext{N}\smalltext{,}\smalltext{u}}_\smalltext{\omega}} \Bigg[ \int_u^T \mathrm{e}^{\beta t} \sum_{\ell =1}^N \Big( \big\|\delta Z^{i,\ell,N}_t\big\|^2 + \big\|\delta Z^{i,m,\ell,\star,N}_t\big\|^2 + \big\|\delta Z^{n,\ell,\star,N}_t\big\|^2 \Big) \d t \Bigg] \\
	\notag&\leq ( \overline{c}_{\eps_{\smalltext{2}\smalltext{,}\smalltext{3}\smalltext{,}\smalltext{4}}} + 2 \ell^2_{\varphi_{\smalltext{2}}}c^\star ) \E^{\P^{\smalltext{\hat\alpha}\smalltext{,}\smalltext{N}\smalltext{,}\smalltext{u}}_\smalltext{\omega}} \Big[ \mathrm{e}^{\beta T} \cW_2^2\big(L^N\big(\X^N_{\cdot \land T}\big),\cL_{\hat\alpha}(X_{\cdot \land T})\big) \Big]+{c}^1_{\eps_{\smalltext{1}\smalltext{,}\smalltext{2}\smalltext{,}\smalltext{3}\smalltext{,}\smalltext{4}\smalltext{,}\smalltext{5}\smalltext{,}\smalltext{6}}} \bigg( 1 + \frac{{c}^3_{\eps_{\smalltext{1}\smalltext{,}\smalltext{2}\smalltext{,}\smalltext{3}\smalltext{,}\smalltext{4}\smalltext{,}\smalltext{5}\smalltext{,}\smalltext{6}}}}{1-c^2_{\eps_{\smalltext{1}\smalltext{,}\smalltext{2}\smalltext{,}\smalltext{3}\smalltext{,}\smalltext{4}\smalltext{,}\smalltext{5}\smalltext{,}\smalltext{6}}}-c^3_{\eps_{\smalltext{1}\smalltext{,}\smalltext{2}\smalltext{,}\smalltext{3}\smalltext{,}\smalltext{4}\smalltext{,}\smalltext{5}\smalltext{,}\smalltext{6}}}} \bigg)  \\
&\quad \times\E^{\P^{\smalltext{\hat\alpha}\smalltext{,}\smalltext{N}\smalltext{,}\smalltext{u}}_\smalltext{\omega}} \bigg[ \int_u^T \mathrm{e}^{\beta t} \Big(\cW_2^2\big(L^N\big(\X^N_{\cdot \land t}\big),\cL_{\hat\alpha}(X_{\cdot \land t})\big) + 15 \ell^2_\Lambda \cW_2^2\big(L^N\big(\X^N_{\cdot \land t},\hat\alpha_t\big),\cL_{\hat\alpha}(X_{\cdot \land t},\hat\alpha_t)\big) \Big) \d t \bigg].
\end{align*}

Therefore, there exists a constant $C>0$, independent of $N$, such that
\begin{align}\label{align:finalEstimatesWithConstants_secondPart}
\notag&\E^{\P^{\smalltext{\hat\alpha}\smalltext{,}\smalltext{N}\smalltext{,}\smalltext{u}}_\smalltext{\omega}} \bigg[ \sup_{t \in [u,T]} \mathrm{e}^{\beta t} \big|\delta Y^{i,N}_t\big|^2 + \sup_{t \in [u,T]} \mathrm{e}^{\beta t} \big|\delta M^{i,\star,N}_t\big|^2 + \sup_{t \in [u,T]} \mathrm{e}^{\beta t} \big|\delta N^{\star,N}_t\big|^2 \bigg] \\
\notag&\quad +  \E^{\P^{\smalltext{\hat\alpha}\smalltext{,}\smalltext{N}\smalltext{,}\smalltext{u}}_\smalltext{\omega}} \Bigg[ \int_u^T \mathrm{e}^{\beta t} \sum_{\ell =1}^N \Big( \big\|\delta Z^{i,\ell,N}_t\big\|^2 + \big\|\delta Z^{i,m,\ell,\star,N}_t\big\|^2 + \big\|\delta Z^{n,\ell,\star,N}_t\big\|^2 \Big) \d t \Bigg] \\
&\leq C \sup_{ t \in [u,T]} \E^{\P^{\smalltext{\hat\alpha}\smalltext{,}\smalltext{N}\smalltext{,}\smalltext{u}}_\smalltext{\omega}} \Big[ \cW_2^2\big(L^N\big(\X^N_{\cdot \land t}\big),\cL_{\hat\alpha}(X_{\cdot \land t})\big) + \cW_2^2\big(L^N(\hat\alpha_t),\cL_{\hat\alpha}(\hat\alpha_t)\big) \Big], \; \P\text{\rm--a.e.} \; \omega\in\Omega.
\end{align}

\subsubsection{The convergence of the equilibria}\label{subsubsection:conEquilibria}

This section is devoted to proving the convergence of a sub-game--perfect Nash equilibrium to the unique sub-game--perfect mean-field equilibrium. Let us fix some $u \in [0,T]$. The triangular inequality, together with the characterisation of the two equilibria via the function $\Lambda$, as given in \eqref{align:systemNplayerGame} and \eqref{align:systemMeanFieldGame}, yields
\begin{align*}
&\int_u^T \cW^2_2\big(\P^{{\hat\balpha}^\smalltext{N}{,}{N}{,}{u}}_{\omega} \circ ( \hat\alpha^{i,N}_t )^{-1}, \P^{{\hat\alpha}{,}{N}{,}{u}}_{\omega} \circ ( \hat\alpha^{i}_t )^{-1} \big) \d t \\
&\leq \int_u^T \cW^2_2\big(\P^{{\hat\balpha}^\smalltext{N}{,}{N}{,}{u}}_{\omega} \circ ( \hat\alpha^{i,N}_t )^{-1}, \P^{{\hat\balpha}^\smalltext{N}{,}{N}{,}{u}}_{\omega} \circ ( \widetilde \alpha^{i,N}_t )^{-1} \big) \d t  +\int_u^T \cW^2_2\big(\P^{{\hat\balpha}^\smalltext{N}{,}{N}{,}{u}}_{\omega} \circ ( \widetilde \alpha^{i,N}_t )^{-1}, \P^{{\hat\alpha}{,}{N}{,}{u}}_{\omega} \circ ( \hat\alpha^{i}_t )^{-1} \big) \d t \\
&\leq 6 \ell_\Lambda^2 \E^{\P^{\smalltext{\hat\balpha}^\tinytext{N}\smalltext{,}\smalltext{N}\smalltext{,}\smalltext{u}}_\smalltext{\omega}} \Bigg[ \int_u^T \Bigg( \big\|\delta \widetilde X^i_{\cdot \land t}\big\|^2_\infty + \frac{1}{N} \sum_{\ell=1}^N \big\|\delta \widetilde X^{\ell,N}_{\cdot \land t}\big\|^2_\infty + \big\|\delta \widetilde Z^{i,i,N}_t\big\|^2 + \big\|\delta \widetilde Z^{i,m,i,\star,N}_t\big\|^2 + \big\|\delta \widetilde Z^{n,i,\star,N}_t\big\|^2 + \big|\aleph^{i,N}_t\big|^2 \Bigg) \d t \Bigg] \\
&\quad + 5 \ell_\Lambda^2 \E^{\P^{\smalltext{\hat\alpha}\smalltext{,}\smalltext{N}\smalltext{,}\smalltext{u}}_\smalltext{\omega}} \Bigg[ \int_u^T \Bigg( \big\|\delta \overline X^i_{\cdot \land t}\big\|^2_\infty + 2 \cW_2^2\big(L^N\big(\X^N_{\cdot \land t}\big),\cL_{\hat\alpha}(X_{\cdot \land t})\big) + \big\|\delta \overline Z^{i,i,N}_t\big\|^2 + \big\|\delta \overline Z^{i,m,i,\star,N}_t\big\|^2 + \big\|\delta \overline Z^{n,i,\star,N}_t\big\|^2 \Bigg) \d t \bigg] \\
&\quad + 10 \ell_\Lambda^2 \E^{\P^{\smalltext{\hat\alpha}\smalltext{,}\smalltext{N}\smalltext{,}\smalltext{u}}_\smalltext{\omega}} \bigg[ \int_u^T \cW_2^2\big(L^N\big(\X^N_{\cdot \land t}\big),\cL_{\hat\alpha}(X_{\cdot \land t})\big) \d t \bigg], \; \P\text{\rm--a.e.} \; \omega\in\Omega,
\end{align*}
where the last inequality follows from the equality in law established in \eqref{YW_lawEq} and the Lipschitz-continuity of $\Lambda$, as stated in \Cref{assumpConvThm}.\ref{lipLambda_growthAleph}. Here, for notational convenience, we use the tilde superscript to denote the differences between the processes associated with the $N$-player game and those of the first auxiliary system, and the overline superscript to denote the differences between the processes associated with the second auxiliary system and the mean-field system. Then, combining the estimates obtained above---namely \eqref{align:GronwallDeltaX2} together with \eqref{align:firstPart_finalEst}, and \eqref{align:forwardMeanField_final} together with \eqref{align:finalEstimatesWithConstants_secondPart}---we conclude that there exists a constant $C>0$ such that, for $\P\text{\rm--a.e.} \; \omega\in\Omega$,
\begin{align*}
&\int_u^T \cW^2_2\big(\P^{{\hat\balpha}^\smalltext{N}{,}{N}{,}{u}}_{\omega} \circ ( \hat\alpha^{i,N}_t )^{-1}, \P^{{\hat\alpha}{,}{N}{,}{u}}_{\omega} \circ ( \hat\alpha^{i}_t )^{-1} \big) \d t \\
&\leq C R_N^2\Bigg(1+\| X^{i}_{u}(\omega)\|^{2}+\frac1N\sum_{\ell =1}^N\| X^{\ell}_{u}(\omega)\|^{2}\Bigg)+C NR_N^2\Bigg(1+\| X^{i}_{u}(\omega)\|^{2\bar{p}}+\frac1N\sum_{\ell =1}^N\| X^{\ell}_{u}(\omega)\|^{2\bar{p}}\Bigg)\\
	&\quad+C NR_N^4\Bigg(1+\frac1N\sum_{\ell =1}^N\| X^{\ell}_{u}(\omega)\|^{2}\Bigg) (1+N) + C \sup_{ t \in [u,T]} \E^{\P^{\smalltext{\hat\alpha}\smalltext{,}\smalltext{N}\smalltext{,}\smalltext{u}}_\smalltext{\omega}} \Big[ \cW_2^2\big(L^N\big(\X^N_{\cdot \land t}\big),\cL_{\hat\alpha}(X_{\cdot \land t})\big) + \cW_2^2\big(L^N(\hat\alpha_t),\cL_{\hat\alpha}(\hat\alpha_t)\big) \Big].
\end{align*}
Therefore, the proof is complete, thanks to the strong law of large numbers stated in \eqref{align:LLN} or, equivalently, in \eqref{align:LLN_withLIM}.

{\small \bibliography{bibliographyDylan}}

\begin{appendix}

\section{Martingale representation under initial enlargement}\label{appendix:martingaleRep}

\begin{proof}[Proof of \Cref{lemma:martingaleRepresentation}]
We prove the result in the case where $M$ is an $(\F^i,\P)$-martingale, the $(\F_N,\P)$-martingale case follows analogously. Without loss of generality, we may assume that $M$ is null at zero. The proof is structured in several steps, each building upon and generalising the previous one; these steps make use of the properties of the two filtrations $\F^i$ and $(\F^i)^{\P_{\smalltext{+}}}$, as well as the boundedness or unboundedness of the martingale $M$. 

\medskip
We first work under the assumption that $M$ is of the form $M_t \coloneqq \E^\P \big[ \eta f(X^i_0) \big| \cF^i_t \big], \; t \in [0,T],$ where $\eta$ is a bounded $\cG^i_T$-measurable random variable and $f$ is a bounded Borel-measurable function. Then, for each $t \in [0,T]$, the $\P$-independence assumption implies that $M_t = f(X^i_0) \E^\P \big[ \eta \big| \cG^i_t \big]$, $\P$--a.s. Therefore, by \cite[Theorem 9.7.4]{weizsaecker1990stochastic} there exists a process $Z \in \L^2_{\mathrm{loc}}(\G^i,\P)$ such that 
\begin{align*}
M_t = f(X^i_0) \int_0^t Z_s \cdot \d W^i_s, \; \P \text{\rm --a.s.}, \; t \in [0,T].
\end{align*}
We note that $(f(X^i_0) Z) \in \L^2_{\mathrm{loc}}(\F^i,\P)$. We can conclude that the martingale representation property holds for all bounded $(\F^i,\P)$-martingales, since those of the form considered above generate all bounded $(\F^i,\P)$-martingales as a direct application of the monotone class theorem, given that 
\begin{align*}
\cF^i_T = \sigma\big(\eta f(X^i_0): \eta \; \text{is a bounded $\cG^i_T$-measurable random variable}, \; f \; \text{is a bounded Borel-measurable function}\big).
\end{align*}

\medskip
We now show that the martingale representation property also holds for bounded $((\F^i)^{\P_{\smalltext{+}}},\P)$-martingales. Let $M$ be a bounded $((\F^i)^{\P_{\smalltext{+}}},\P)$-martingale null at zero. We define 
\begin{align*}
\widetilde{M}_t \coloneqq \E^\P \big[M_T \big|\cF^i_t \big], \; t \in [0,T].
\end{align*}
By construction, $\widetilde{M}$ is a bounded $(\F^i,\P)$-martingale. Therefore, by the martingale representation property proved in the previous step, there exists a unique process $Z \in \L^2_{\mathrm{loc}}(\F^i,\P)$ such that 
\begin{align*}
\widetilde{M}_t = \int_0^t Z_s \cdot \d W^i_s, \; \P \text{\rm --a.s.}, \; t \in [0,T].
\end{align*}
Applying the backward martingale convergence theorem of \citeauthor*{dellacherie1982probabilities} \cite[Theorem V.33]{dellacherie1982probabilities}, we deduce that, for $t\in[0,T]$,
\begin{align*}
M_t = \E^\P \big[M_T \big| (\cF^i_t)^{\P_{\smalltext{+}}} \big] = \E^\P \big[M_T \big|\cF_{t+} \big] = \lim_{u \searrow t} \E^\P \big[M_T \big|\cF_u \big] = \lim_{u \searrow t} \widetilde{M}_u = \lim_{u \searrow t} \int_0^u Z_s \cdot \d W^i_s = \int_0^t Z_s \cdot \d W^i_s, \; \P \text{\rm --a.s.}
\end{align*}

\medskip
By \citeauthor*{he1992semimartingale} \cite[Theorem 13.4]{he1992semimartingale}, the martingale representation property extends to all $((\F^i)^{\P_{\smalltext{+}}},\P)$-martingales null at zero. We conclude the proof by fixing a general $(\F^i,\P)$-martingale $M$ null at zero. Then, there exists a right-continuous process $\overline{M}$ such that 
\begin{align*}
\overline{M}_t = \E^\P \big[M_T \big| (\cF^i_t)^{\P_{\smalltext{+}}} \big], \; \P \text{\rm --a.s.}, \; t \in [0,T].
\end{align*}
The martingale representation property for $((\F^i)^{\P_{\smalltext{+}}},\P)$-martingales ensures there exists a unique $Z \in \L^2_{\mathrm{loc}}((\F^i)^{\P_{\smalltext{+}}},\P)$ such that 
\begin{align*}
\overline{M}_t = \int_0^t \overline{Z}_s \cdot \d W^i_s, \; \P \text{\rm --a.s.}, \; t \in [0,T].
\end{align*}
We can then select a process $Z \in \L^2_{\mathrm{loc}}(\F^i,\P)$, $\P$-indistinguishable from $\overline{Z}$ (see \citeauthor*{dellacherie1978probabilities} \cite[Page 134]{dellacherie1978probabilities}), so that 
\begin{align*}
\overline{M}_t = \int_0^t Z_s \cdot \d W^i_s, \; \P \text{\rm --a.s.}, \; t \in [0,T].
\end{align*}
Moreover, since 
\begin{align*}
M_t = \E^\P \big[M_T \big| \cF^i_t \big] = \E^\P \big[ \overline{M}_t \big| \cF^i_t \big] = \int_0^t Z_s \cdot \d W^i_s, \; \P \text{\rm --a.s.}, \; t \in [0,T],
\end{align*}
we conclude that the $(\F^i,\P)$-martingale $M$ admits the martingale representation property.
\end{proof}

\section{The dynamic programming principle}\label{section:DPP}

This section is dedicated to proving an extended version of the dynamic programming principle, following the approach of \cite[Theorem 7.3]{hernandez2023me}, or equivalently, \cite[Theorem 2.4.3]{hernandez2021general}. Without loss of generality, we focus on player 1. To unify the analysis of the $N$-player game and its mean field counterpart within a common framework, we introduce a probability measure $\Q$ defined on $(\Omega,\cF)$, which may differ from the original measure $\P$ but is assumed to be equivalent to it. We assume that the state process $X^1$ satisfies the SDE
\begin{equation*}
X^1_t = X^1_0 + \int_0^t \sigma^1_s(X^1_{\cdot \land s}) \d W^{\Q,1}_s, \; t \in [0,T], \; \P\text{\rm--a.s.},
\end{equation*}
where $W^{\Q,1}$ is a $(\G^1,\Q)$-Brownian motion. Let $\widetilde{\F}$ denote a filtration, which can be either $\F_N$ or $\F^1$, and define $\widetilde{\cA}$ as the set of $\widetilde{\F}$-predictable, $A$-valued control processes. We also consider a bounded function $\widetilde{b}^1: \Omega \times [0,T] \times \cC_m \times A \longrightarrow \R^d$, which we assume to be ${\rm Prog}(\widetilde{\F})\otimes\cB(\cC_m)\otimes\cB(A)$-measurable. Then, given a control $\alpha \in \widetilde{A}$, we define a new probability measure $\Q^\alpha$ via Girsanov's theorem with Radon–Nikod\'ym derivative
\begin{equation*}
\frac{\d \Q^{\alpha}}{\d \Q} \coloneqq \cE\bigg( \int_0^\cdot \widetilde{b}^1_s\big(X^1_{\cdot \land s},\alpha_s\big) \cdot \d W^{Q,1}_s \bigg)_T.
\end{equation*}
Following the notation introduced in \Cref{section:rcpd}, we consider a family of r.c.p.d.s\ $(\Q_\omega^{\alpha,\tau})_{\omega \in \Omega}$ of $\Q^\alpha$ with respect to $\widetilde{\cF}_\tau$, for any stopping time $\tau \in \cT_{0,T}(\widetilde{\F})$. Within this set-up, we define a generic payoff function of the form
\begin{align*}
\widetilde{J}(t,\omega,\alpha) \coloneqq \E^{\Q^{\smalltext{\alpha}\smalltext{,}\smalltext{t}}_\smalltext{\omega}} \bigg[ \int_t^T \widetilde{f}_s\big(X^1_{\cdot \land s}, \alpha_s \big) \d s + \widetilde{g}\big(X^1_{\cdot \land T}\big) \bigg] + G\Big( \E^{\Q^{\smalltext{\alpha}\smalltext{,}\smalltext{t}}_\smalltext{\omega}} \big[ \widetilde{\varphi}\big(X^1_{\cdot \land T}\big) \big] \Big), \; (t,\omega,\alpha) \in [0,T] \times \Omega \times \widetilde{\cA}.
\end{align*}
Here, the functions $\widetilde{f}: \Omega \times [0,T] \times \cC_m \times A \longrightarrow \R$, $\widetilde{g}: \Omega \times \cC_m \longrightarrow \R$, $G: \R^v \longrightarrow \R$ and $\widetilde{\varphi}: \Omega \times \cC_m \longrightarrow \R^v$ are assumed to satisfy the following conditions:
\begin{enumerate}[label={$(\roman*)$}]
\item the function $\Omega \times [0,T] \times \cC_m \times A \ni (\omega,t,x,a) \longmapsto \widetilde{f}_t(x,a)$ is ${\rm Prog}(\widetilde{\F})\otimes\cB(\cC_m)\otimes\cB(A)$-measurable;
\item for each component $i \in \{1,\ldots,v\}$, where $v \in \N^\star$, the functions $\Omega \times \cC_m \ni (\omega,x) \longmapsto \widetilde{g}(x)$ and $\Omega \times \cC_m \ni (\omega,x) \longmapsto \widetilde{\varphi}^i(x)$ are $\cF \otimes \cB([0,T])$-measurable;
\item the function $\R^v \ni m^\star \longmapsto G(m^\star)$ is Borel-measurable.
\end{enumerate}

We say that $\alpha^\star \in \widetilde{\cA}$ is a sub-game--perfect equilibrium if $\ell_\eps >0$ for any $\varepsilon>0$, where $\ell_\eps$ is defined in \Cref{def:NashEquilibrium}, and analogously in \Cref{def:meanFieldEquilibrium}, that is,
\begin{align*}
\ell_\varepsilon \coloneqq \inf \Big\{ \ell >0 : \exists (t,\alpha) \in [0,T] \times \widetilde{\cA}, \; \P \big[\big\{\omega \in \Omega: \widetilde{J}(t,\omega,\alpha^\star) < \widetilde{J}(t,\omega,\alpha \otimes_{t+\ell} \alpha^\star) - \varepsilon \ell \big\}\big] >0 \Big\}.
\end{align*}
Throughout this section, we assume that such a sub-game--perfect equilibrium exists, and we fix one such strategy $\alpha^\star \in \widetilde{\cA}$. By the definition of the payoff, it is clear that for each fixed pair $(t,\alpha) \in [0,T] \times \widetilde{A}$, the function $\Omega \ni \omega \longmapsto \widetilde{J}(t,\omega,\alpha)$ is $\cF$-measurable. Hence, we can define the value process $\widetilde{V}$ associated with the strategy $\alpha^\star$ as
\begin{equation*}
\widetilde{V}_t \coloneqq \widetilde{J}(t,\cdot,\alpha^\star), \; t \in [0,T].
\end{equation*}
Moreover, the function $\Omega \ni \omega \longmapsto \E^{\Q^{\smalltext{\alpha}^{\tinytext{\star}}\smalltext{,}\smalltext{t}}_\smalltext{\omega}} \big[ \widetilde{\varphi}\big(X^1_{\cdot \land T}\big) \big]$ is also $\cF$-measurable. Consequently, we can define the $\R^v$-valued process
\begin{align*}
\M^\star_t \coloneqq \big(M^{\star,1}_t,\ldots,M^{\star,v}_t\big) \coloneqq \E^{\Q^{\smalltext{\alpha}^{\tinytext{\star}}, \smalltext{t}}_\cdot} \big[ \widetilde{\varphi}\big(X^1_{\cdot \land T}\big) \big] \coloneqq \Big(\E^{\Q^{\smalltext{\alpha}^{\tinytext{\star}}, \smalltext{t}}_\cdot} \big[ \widetilde{\varphi}^1\big(X^1_{\cdot \land T}\big) \big], \ldots, \E^{\Q^{\smalltext{\alpha}^{\tinytext{\star}}, \smalltext{t}}_\cdot} \big[ \widetilde{\varphi}^v\big(X^1_{\cdot \land T}\big) \big]\Big), \; t \in [0,T].
\end{align*}

\begin{assumption}
\begin{enumerate}[label={$(\roman*)$}]\label{assumpDPP}
\item\label{boundedPhisMart} The function $\Omega \times \cC_m \ni (\omega,x) \longmapsto \widetilde{\varphi}(x)$ is bounded$;$
\item\label{GLipcont} the function $\R^v \ni m^\star \coloneqq ((m^\star)^1,\ldots,(m^\star)^v) \longmapsto G(m^\star)$ is twice continuously differentiable with Lipschitz-continuous first- and second-order derivatives $\partial_{m^{\smalltext{i}}}G(m^\star)$, $\partial^2_{m^{\smalltext{i}},m^{\smalltext{j}}}G(m^\star)$, for any $(i,j) \in \{1,\ldots,v\}^2;$
\item\label{modulusCondExpMart} there exists a constant $c>0$ and a modulus of continuity $\rho$ such that
\begin{align*}
\E^{\Q^{\smalltext{\alpha}\smalltext{,}\smalltext{t}}_{\smalltext{\cdot}}} \Bigg[ \sum_{i =1}^v \Big| \E^{\Q^{\smalltext{\alpha} \smalltext{,}\smalltext{\tilde{t}}}_\smalltext{\cdot}} \big[ M^{\star,i}_{{t}^\prime} \big] - M^{\star,i}_{\tilde{t}} \Big|^2 \Bigg] \leq c |t^\prime-\tilde{t}| \rho \big(|t^\prime-\tilde{t}|\big), \; \P\text{\rm--a.s.}, \; (\alpha,t,\tilde{t},t^\prime) \in \widetilde{\cA} \times [0,T] \times [t,T] \times [\tilde{t},T].
\end{align*}
\end{enumerate}
\end{assumption}

\begin{theorem}\label{thm:DPP}
Let {\rm\Cref{assumpDPP}} hold. Given the set-up introduced so far, let $\alpha^\star \in \widetilde{\cA}$ be a sub-game--perfect equilibrium. Then, for any $(t,\tilde{t}) \in [0,T] \times [t,T]$, it holds that
\begin{align*}
\widetilde{V}_t = \esssup_{\alpha \in \widetilde{\cA}} \E^{\Q^{\smalltext{\alpha}\smalltext{,}\smalltext{t}}_{\smalltext{\cdot}}} \Bigg[& \widetilde{V}_{\tilde{t}} + \int_t^{\tilde{t}} \widetilde{f}_s\big(X^1_{\cdot \land s}, \alpha_s \big) \d s - \frac{1}{2} \int_t^{\tilde{t}} \sum_{(i,j) \in \{1,\ldots,v\}^2} \partial^2_{m^{\smalltext{i}},m^{\smalltext{j}}} G\big(\M^{\star}_s\big) \d \big[M^{\star,i},M^{\star,j}\big]_s \Bigg], \; \P\text{\rm--a.s.}
\end{align*}
\end{theorem}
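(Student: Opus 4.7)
The plan is to establish the equality as a combination of (a) a pointwise decomposition of $\widetilde{J}(t,\cdot,\alpha \otimes_{\tilde{t}} \alpha^\star)$ valid for every $\alpha \in \widetilde{\cA}$, (b) an exact matching at $\alpha = \alpha^\star$, and (c) the sub-game--perfect property iterated over a vanishing partition. The starting point is the identity obtained by conditioning at $\tilde{t}$ via r.c.p.d.s: by the tower property applied to the concatenated strategy, together with the representation $\widetilde{V}_{\tilde{t}} = \E^{\Q^{\alpha^\star,\tilde{t}}_\cdot}\big[\int_{\tilde{t}}^T \widetilde{f}_s(X^1_{\cdot \wedge s},\alpha^\star_s)\d s + \widetilde{g}(X^1_{\cdot \wedge T})\big] + G(\M^\star_{\tilde{t}})$ that follows from the very definition of $\widetilde{V}$ and $\M^\star$, one obtains
\begin{equation*}
\widetilde{J}(t,\cdot,\alpha \otimes_{\tilde{t}} \alpha^\star) = \E^{\Q^{\alpha,t}_\cdot}\bigg[\widetilde{V}_{\tilde{t}} + \int_t^{\tilde{t}} \widetilde{f}_s(X^1_{\cdot \wedge s},\alpha_s)\d s\bigg] + G\big(\E^{\Q^{\alpha,t}_\cdot}[\M^\star_{\tilde{t}}]\big) - \E^{\Q^{\alpha,t}_\cdot}[G(\M^\star_{\tilde{t}})].
\end{equation*}
This reduces the entire analysis to understanding the scalar curvature correction $G(\E^{\Q^{\alpha,t}_\cdot}[\M^\star_{\tilde{t}}]) - \E^{\Q^{\alpha,t}_\cdot}[G(\M^\star_{\tilde{t}})]$ as $\alpha$ varies.

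For the easy inequality $\esssup \geq \widetilde{V}_t$, I would insert $\alpha = \alpha^\star$ into the identity: the left-hand side collapses to $\widetilde{V}_t$, while under $\Q^{\alpha^\star,t}_\cdot$ each coordinate $M^{\star,i}$ is by construction a genuine martingale admitting a stochastic-integral representation against $W^{\Q,1}$ by \Cref{lemma:martingaleRepresentation}. An application of It\^o's formula to $G(\M^\star)$ under $\Q^{\alpha^\star,t}_\cdot$, combined with the boundedness of $\widetilde\varphi$ (\Cref{assumpDPP}.\ref{boundedPhisMart}) and the Lipschitz-continuous second derivatives of $G$ (\Cref{assumpDPP}.\ref{GLipcont}) to kill the stochastic-integral part, yields
\begin{equation*}
G\big(\E^{\Q^{\alpha^\star,t}_\cdot}[\M^\star_{\tilde{t}}]\big) - \E^{\Q^{\alpha^\star,t}_\cdot}[G(\M^\star_{\tilde{t}})] = -\frac{1}{2}\E^{\Q^{\alpha^\star,t}_\cdot}\bigg[\int_t^{\tilde{t}} \sum_{i,j=1}^v \partial^2_{m^i,m^j}G(\M^\star_s) \d[M^{\star,i},M^{\star,j}]_s\bigg],
\end{equation*}
which is precisely the quadratic-variation correction appearing in the DPP, so the essential supremum is attained at least at $\widetilde{V}_t$.

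For the reverse inequality $\widetilde{V}_t \geq \E^{\Q^{\alpha,t}_\cdot}[\cdots]$ for arbitrary $\alpha \in \widetilde{\cA}$, the sub-game--perfect property from \Cref{def:NashEquilibrium} cannot be applied in one shot because it only constrains deviations over intervals shorter than $\ell_\varepsilon$. I would therefore fix $\varepsilon > 0$ and $\ell \in (0,\ell_\varepsilon)$, choose a partition $t = t_0 < t_1 < \cdots < t_n = \tilde{t}$ of mesh at most $\ell$, and apply the local equilibrium inequality \Cref{align:condEllEquilibrium} on each $[t_k,t_{k+1}]$ to the deviation $\alpha \otimes_{t_{k+1}} \alpha^\star$ against $\alpha^\star$, obtaining $\widetilde{V}_{t_k} \geq \widetilde{J}(t_k,\cdot,\alpha \otimes_{t_{k+1}} \alpha^\star) - \varepsilon(t_{k+1}-t_k)$ on a set of full $\P$-probability. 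Substituting the identity of the first paragraph piece by piece, taking conditional expectation under $\Q^{\alpha,t}_\cdot$, and telescoping across $k$ then yields a global comparison in which only the local curvature corrections survive, together with the cumulative penalty $\varepsilon(\tilde{t}-t)$.

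The main obstacle, and the technical heart of the argument, is then to identify $\sum_k \E^{\Q^{\alpha,t}_\cdot}\big[G(\E^{\Q^{\alpha,t_k}_\cdot}[\M^\star_{t_{k+1}}]) - \E^{\Q^{\alpha,t_k}_\cdot}[G(\M^\star_{t_{k+1}})]\big]$ with the quadratic-variation integral in the limit $\ell \to 0$. This will proceed via a second-order Taylor expansion of $G$ about $\M^\star_{t_k}$: the first-order contributions cancel against the finite-variation part of $\E^{\Q^{\alpha,t_k}_\cdot}[G(\M^\star_{t_{k+1}})]$ obtained from It\^o's formula, up to remainders controlled by the Lipschitz-continuity of $\nabla^2 G$ from \Cref{assumpDPP}.\ref{GLipcont}, while the second-order contribution converges to $-\tfrac{1}{2}\E^{\Q^{\alpha,t}_\cdot}\big[\int_t^{\tilde{t}}\sum_{i,j} \partial^2_{m^i,m^j}G(\M^\star_s)\d[M^{\star,i},M^{\star,j}]_s\big]$ thanks to \Cref{assumpDPP}.\ref{modulusCondExpMart}, which bounds precisely the $L^2$-fluctuations of the conditional expectations of $\M^\star$ by $|t'-\tilde{t}|\rho(|t'-\tilde{t}|)$ and hence forces summability of the Taylor residuals to $o(1)$ as the mesh shrinks. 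Sending first $\ell \to 0$ and then $\varepsilon \to 0^+$ closes the inequality and completes the proof.
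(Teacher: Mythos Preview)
Your proposal is correct and matches the paper's proof: both establish the conditioning identity for $\widetilde{J}(t,\cdot,\alpha\otimes_{\tilde{t}}\alpha^\star)$, obtain $\widetilde{V}_t\le\esssup$ by taking $\alpha=\alpha^\star$ and applying It\^o to $G(\M^\star)$, and obtain $\widetilde{V}_t\ge\esssup$ by iterating the local equilibrium inequality over a vanishing partition and passing to the limit via Taylor expansion. One small correction on the bookkeeping in your last paragraph: \Cref{assumpDPP}.\ref{modulusCondExpMart} is not what drives convergence of the second-order Taylor sum to the quadratic-variation integral---that follows from boundedness of $\M^\star$ and standard Riemann-sum approximation of $[\M^\star]$---but rather controls the separate residual $\sum_k\big|\E^{\Q^{\alpha,t_k}_\cdot}[\M^\star_{t_{k+1}}]-\M^\star_{t_k}\big|^2$, which is nonzero precisely because $\M^\star$ is a martingale under $\Q^{\alpha^\star}$ but not under the generic $\Q^\alpha$.
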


\begin{remark}
First, we recall that {\rm\Cref{assumpDPP}.\ref{boundedPhisMart}} implies that the process $\M^{\star}$ is an $(\widetilde{\F},\Q^{\alpha^{\smalltext{\star}}})$--martingale. By the martingale representation property stated in {\rm\Cref{lemma:martingaleRepresentation}}, it admits a $\P$-modification that is right-continuous and $\P\text{\rm--a.s.}$ continuous. With a slight abuse of notation, we continue to denote this $\P$-modification by $\M^\star$.

\medskip
It is worth emphasising that, for the martingale property to hold, it actually suffices to assume that $\M^\star$ is $\Q^{\alpha^{\smalltext{\star}}}$-integrable. However, the stronger boundedness assumption on $\M^\star$ is used at two key points in the proof of {\rm\Cref{thm:DPP}}, specifically where we examine the convergence of the terms we will denote by $J^1$ and $J^3$. In both cases, boundedness plays a crucial role: it allows us both to control the square of the quadratic variation of $\M^\star$, and to apply the dominated convergence theorem. A localisation argument would not suffice in this context, as we are unable to interchange the limits corresponding to the vanishing mesh size and the sequence of stopping times approaching the terminal time $T$.

\medskip
Finally, as observed in {\rm\cite[Remark 7.1]{hernandez2023me}}, the non-linear dependence of the payoff on the expected value makes it necessary to have access to the quadratic variations $[M^{\star,i}, M^{\star,j}]$, for all $(i,j) \in \{1,\ldots,v\}^2$. We simply write $[M^{\star,i}, M^{\star,j}]$ without specifying the underlying probability measure since these quantities are invariant under changes of measure, as we work with equivalent measures $($see, for instance, {\rm\cite[Theorem III.3.13]{jacod2003limit}}$)$.
\end{remark}
 
Before proving the result stated in \Cref{thm:DPP}, we introduce an intermediate step that will be useful for the proof. Our approach follows the methodology outlined in \cite[Lemma 2.10.1, Proposition 2.10.2 and Theorem 2.4.3]{hernandez2021general}. This intermediate result investigates the local behaviour of the value function of the game by leveraging the notion of $\varepsilon$-optimality of a sub-game--perfect equilibrium.  

\begin{lemma}\label{lemma:DPP_sum}
Fix an arbitrary $\varepsilon>0$ and two times $(t,\tilde{t}) \in [0,T] \times [t,T]$. We consider a partition $\Pi^\ell \coloneqq (t^{{\ell}}_k)_{k\in\{0,\ldots,n^\smalltext{\ell}\}}$ of the interval $[t,\tilde{t}]$ with mesh size smaller than $\ell \in (0,\ell_\varepsilon)$, and such that $t^\ell_0 = t$ and $t^\ell_{n^\smalltext{\ell}} = \tilde{t}$. Then, it holds that
\begin{align}\label{align:DPP_sum}
\widetilde{V}_t &\geq \esssup_{\alpha \in \widetilde{\cA}} \E^{\Q^{\smalltext{\alpha} \smalltext{,} \smalltext{t}}_{\smalltext{\cdot}}} \Bigg[ \widetilde{V}_{\tilde{t}} + \int_{t}^{\tilde{t}} \widetilde{f}_s\big(X^1_{\cdot \land s}, {\alpha}_s\big) \d s + \sum_{k=0}^{n^\smalltext{\ell}-1} \bigg( G\Big(\E^{\Q^{\smalltext{\alpha}\smalltext{,}\smalltext{t}^\tinytext{\ell}_\tinytext{k}}_\smalltext{\cdot}}\Big[\M^{\star}_{t^\smalltext{\ell}_{\smalltext{k}\smalltext{+}\smalltext{1}}}\Big]\Big) - G\Big(\M^{\star}_{t^\smalltext{\ell}_{\smalltext{k}\smalltext{+}\smalltext{1}}}\Big) \bigg) \Bigg] - n^\ell \varepsilon \ell, \; \P\text{\rm--a.s.}
\end{align}
\end{lemma}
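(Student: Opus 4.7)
The plan is to prove this lemma by an iterative telescoping argument that exploits the $\varepsilon$-optimality of $\alpha^\star$ at each partition point. Fix an arbitrary $\alpha \in \widetilde{\cA}$. Since the mesh of $\Pi^\ell$ is smaller than $\ell \in (0, \ell_\varepsilon)$, for every partition point $t_k^\ell$ the definition of $\ell_\varepsilon$ yields, for $\P$-a.e.\ $\omega'$,
\begin{equation*}
\widetilde V_{t_k^\ell}(\omega') = \widetilde J(t_k^\ell, \omega', \alpha^\star) \geq \widetilde J(t_k^\ell, \omega', \alpha \otimes_{t_{k+1}^\ell} \alpha^\star) - \varepsilon (t_{k+1}^\ell - t_k^\ell).
\end{equation*}
The first technical step is to rewrite the right-hand side by applying the tower property to the r.c.p.d.s at the intermediate time $t_{k+1}^\ell$. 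Since the composite control $\alpha \otimes_{t_{k+1}^\ell} \alpha^\star$ coincides with $\alpha^\star$ after $t_{k+1}^\ell$, the inner conditional expectation on the linear part reduces to
\begin{equation*}
\E^{\Q^{\alpha^\star, t_{k+1}^\ell}}\!\!\left[\int_{t_{k+1}^\ell}^T \widetilde f_s\, \mathrm{d}s + \widetilde g(X^1_{\cdot \wedge T})\right] = \widetilde V_{t_{k+1}^\ell} - G(M^\star_{t_{k+1}^\ell}),
\end{equation*}
which is the crucial algebraic identity allowing the non-linear $G$-contribution to be peeled off separately. In parallel, since $\Q^{\alpha \otimes_{t_{k+1}^\ell} \alpha^\star}$ and $\Q^\alpha$ agree on $\cF_{t_{k+1}^\ell}$ (their Radon–Nikodym densities only differ after $t_{k+1}^\ell$), the martingale property of $M^\star$ under $\Q^{\alpha^\star}$ yields $\E^{\Q^{\alpha \otimes_{t_{k+1}^\ell}\alpha^\star, t_k^\ell}_\cdot}[\widetilde\varphi(X^1_{\cdot \wedge T})] = \E^{\Q^{\alpha, t_k^\ell}_\cdot}[M^\star_{t_{k+1}^\ell}]$.

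Combining these computations transforms the local equilibrium inequality into
\begin{equation*}
\widetilde V_{t_k^\ell} \geq \E^{\Q^{\alpha, t_k^\ell}_\cdot}\!\left[\int_{t_k^\ell}^{t_{k+1}^\ell} \widetilde f_s\,\mathrm{d}s + \widetilde V_{t_{k+1}^\ell} - G(M^\star_{t_{k+1}^\ell})\right] + G\!\left(\E^{\Q^{\alpha, t_k^\ell}_\cdot}\!\left[M^\star_{t_{k+1}^\ell}\right]\right) - \varepsilon(t_{k+1}^\ell - t_k^\ell), \quad \P\text{-a.s.}
\end{equation*}
The plan is then to iterate this inequality backwards through the partition, starting from $k = n^\ell - 1$ and applying $\E^{\Q^{\alpha, t}_\cdot}$ together with the tower property at each step. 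The integrals on consecutive intervals concatenate to $\int_t^{\tilde t} \widetilde f_s\,\mathrm{d}s$, while the two $G$-terms at level $k$ group as $G(\E^{\Q^{\alpha, t_k^\ell}_\cdot}[M^\star_{t_{k+1}^\ell}]) - G(M^\star_{t_{k+1}^\ell})$; the former is $\cF_{t_k^\ell}$-measurable and therefore survives the outer conditional expectation without modification. Summing the local defects and using $\sum_{k=0}^{n^\ell-1}(t_{k+1}^\ell - t_k^\ell) \leq n^\ell \ell$ gives the total loss $n^\ell \varepsilon \ell$. Since the resulting inequality holds for any fixed $\alpha \in \widetilde{\cA}$, passing to the essential supremum concludes the proof.

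The main obstacle in executing this plan is not algebraic but measure-theoretic: one must carefully track $\P$-null sets across the iteration. The equilibrium condition at $t_k^\ell$ is a $\P$-almost-sure statement, and these null sets (finitely many, one per partition point) must be pushed through the successive integrations against $\Q^{\alpha, t}_\omega$. This is legitimate because $\Q^{\alpha, t}_\omega \ll \P$ for $\P$-a.e.\ $\omega$ by Girsanov's theorem combined with the r.c.p.d.\ identity $\E^\P[\Q^{\alpha, t}_\cdot(N)] = \Q^\alpha(N) = 0$ whenever $N$ is $\P$-null. A secondary subtlety is verifying that the concatenated control $\alpha \otimes_{t_{k+1}^\ell} \alpha^\star$ indeed lies in $\widetilde{\cA}$, which follows from the predictability of both components and the construction of $\otimes_t$; and ensuring that the boundedness of $\widetilde\varphi$ (Assumption B.1(i)) guarantees the $G$-terms and martingales $M^\star$ are well-defined and integrable at every step.
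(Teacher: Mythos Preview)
Your proposal is correct and follows essentially the same approach as the paper: both establish the local one-step inequality from the $\varepsilon$-equilibrium condition, peel off the non-linear $G$-term via the identity $\widetilde V_{t_{k+1}^\ell} - G(\M^\star_{t_{k+1}^\ell})$ for the linear part, and iterate along the partition using the tower property. Your version fixes $\alpha$ at the outset and takes the essential supremum only at the end, whereas the paper carries an $\esssup$ at each step and then lower-bounds by choosing the same $\alpha$ inside and out; these are equivalent, and your ordering is arguably cleaner.
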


\begin{proof}
We begin by proving the result in a simplified case where, instead of a general partition, we focus on only two time points. Specifically, let $\varepsilon>0$. The definition of the sub-game--perfect equilibrium $\alpha^\star \in \widetilde{\cA}$ implies that $\ell_\eps >0$. Accordingly, we fix $(\ell,t,\tilde{t}) \in (0,\ell_\varepsilon) \times [0,T] \times [t,t+\ell)$. Then, for some $\alpha \in \widetilde{\cA}$, it holds that
\begin{align*}
\widetilde{V}_t = \widetilde{J}(t,\cdot,\alpha^\star) 
	&\geq \widetilde{J}\big(t,\cdot,\alpha \otimes_{{\tilde{t}}} \alpha^\star\big) - \varepsilon \ell \\
	&= \E^{\Q^{\smalltext{\alpha} \smalltext{\otimes}_{\tinytext{\tilde{t}}} \smalltext{\alpha}^\tinytext{\star} \smalltext{,} \smalltext{t}}_\smalltext{\cdot}} \bigg[ \int_t^T \widetilde{f}_s\big(X^1_{\cdot \land s}, (\alpha \otimes_{{\tilde{t}}} \alpha^\star)_s\big) \d s + \widetilde{g}\big(X^1_{\cdot \land T}\big) \bigg] + G\Big(\E^{\Q^{\smalltext{\alpha} \smalltext{\otimes}_{\tinytext{\tilde{t}}} \smalltext{\alpha}^\tinytext{\star} \smalltext{,} \smalltext{t}}_\smalltext{\cdot}} \big[ \widetilde{\varphi}(X^1_{\cdot \land T}) \big]\Big) - \varepsilon \ell \\
	&= \E^{\Q^{\smalltext{\alpha} \smalltext{\otimes}_{\tinytext{\tilde{t}}} \smalltext{\alpha}^\tinytext{\star} \smalltext{,} \smalltext{t}}_\smalltext{\cdot}} \bigg[ \widetilde{V}_{\tilde{t}} + \int_t^{\tilde{t}} \widetilde{f}_s\big(X^1_{\cdot \land s}, \alpha_s\big) \d s + G\Big(\E^{\Q^{\smalltext{\alpha} \smalltext{\otimes}_{\tinytext{\tilde{t}}} \smalltext{\alpha}^\tinytext{\star} \smalltext{,} \smalltext{t}}_\smalltext{\cdot}} \big[ \widetilde{\varphi}(X^1_{\cdot \land T}) \big]\Big) -  G\Big(\E^{\Q^{\smalltext{\alpha}^\tinytext{\star} \smalltext{,} \smalltext{\tilde{t}}}_{\cdot}} \big[ \widetilde{\varphi}(X^1_{\cdot \land T}) \big]\Big) \bigg] - \varepsilon \ell \\
	&= \E^{\Q^{\smalltext{\alpha} \smalltext{\otimes}_{\tinytext{\tilde{t}}} \smalltext{\alpha}^\tinytext{\star} \smalltext{,} \smalltext{t}}_\smalltext{\cdot}} \bigg[ \widetilde{V}_{\tilde{t}} + \int_t^{\tilde{t}} \widetilde{f}_s\big(X^1_{\cdot \land s}, \alpha_s\big) \d s +  G\Big( \E^{\Q^{\smalltext{\alpha} \smalltext{,} \smalltext{t}}_\smalltext{\cdot}} \big[ \M^{\star}_{\tilde{t}} \big] \Big) - G\big(\M^{\star}_{\tilde{t}}\big) \bigg] - \varepsilon \ell \\
	&= \E^{\Q^{\smalltext{\alpha} \smalltext{,} \smalltext{t}}_\smalltext{\cdot}} \bigg[ \widetilde{V}_{\tilde{t}} + \int_t^{\tilde{t}} \widetilde{f}_s\big(X^1_{\cdot \land s}, \alpha_s\big) \d s + G\Big( \E^{\Q^{\smalltext{\alpha} \smalltext{,} \smalltext{t}}_\smalltext{\cdot}} \big[ \M^{\star}_{\tilde{t}} \big] \Big) - G\big(\M^{\star}_{\tilde{t}}\big) \bigg] - \varepsilon \ell, \; \P\text{\rm--a.s.}
\end{align*}
The arbitrariness of $\alpha \in \widetilde{\cA}$ implies that 
\begin{align*}
\widetilde{V}_t \geq \esssup_{\alpha \in \widetilde{\cA}} \E^{\Q^{\smalltext{\alpha} \smalltext{,} \smalltext{t}}_\smalltext{\cdot}} \bigg[ \widetilde{V}_{\tilde{t}} + \int_t^{\tilde{t}} \widetilde{f}_s\big(X^1_{\cdot \land s}, \alpha_s\big) \d s + G\Big( \E^{\Q^{\smalltext{\alpha} \smalltext{,} \smalltext{t}}_\smalltext{\cdot}} \big[ \M^{\star}_{\tilde{t}} \big] \Big) - G\big(\M^{\star}_{\tilde{t}}\big) \bigg] - \varepsilon \ell, \; \P\text{\rm--a.s.}
\end{align*}

\medskip
We now extend the previous result by considering a partition rather than just two points. Specifically, let $\Pi^\ell \coloneqq (t^{{\ell}}_k)_{k\in\{0,\ldots,n^\smalltext{\ell}\}}$ be a partition of $[t,\tilde{t}]$ with mesh size smaller than $\ell \in (0,\ell_\varepsilon)$, such that $t^\ell_0 = t$ and $t^\ell_{n^\smalltext{\ell}} = \tilde{t}$. Then, $\P$--a.s., it holds that
\begin{align*}
\widetilde{V}_t &\geq \esssup_{\alpha \in \widetilde{\cA}} \E^{\Q^{\smalltext{\alpha} \smalltext{,} \smalltext{t}^\tinytext{\ell}_\tinytext{0}}_{\smalltext{\cdot}}} \bigg[ \widetilde{V}_{t^{\smalltext{\ell}}_{\smalltext{1}}} + \int_{t^\smalltext{\ell}_\smalltext{0}}^{t^\smalltext{\ell}_\smalltext{1}} \widetilde{f}_s\big(X^1_{\cdot \land s}, \alpha_s\big) \d s + G\Big(\E^{\P^{\smalltext{\alpha}\smalltext{,}\smalltext{t}^\tinytext{\ell}_\tinytext{0}}_\smalltext{\cdot}} \big[ \M^{\star}_{t^\smalltext{\ell}_\smalltext{1}} \big] \Big) - G\big(\M^{\star}_{t^\smalltext{\ell}_\smalltext{1}}\big) \bigg] - \varepsilon \ell \\
	&\geq \esssup_{\alpha \in \widetilde{\cA}} \E^{\Q^{\smalltext{\alpha} \smalltext{,} \smalltext{t}^\tinytext{\ell}_\tinytext{0}}_{\smalltext{\cdot}}} \Bigg[ \esssup_{\tilde{\alpha} \in \widetilde{\cA}} \E^{\Q^{\smalltext{\tilde{\alpha}} \smalltext{,} \smalltext{t}^\tinytext{\ell}_\tinytext{1}}_{\smalltext{\cdot}}} \bigg[ \widetilde{V}_{t^{\smalltext{\ell}}_{\smalltext{2}}} + \int_{t^\smalltext{\ell}_\smalltext{1}}^{t^\smalltext{\ell}_\smalltext{2}} \widetilde{f}_s\big(X^1_{\cdot \land s}, \tilde{\alpha}_s\big) \d s + G\Big(\E^{\P^{\smalltext{\tilde{\alpha}}\smalltext{,}\smalltext{t}^\tinytext{\ell}_\tinytext{1}}_\smalltext{\cdot}} \big[ \M^{\star}_{t^\smalltext{\ell}_\smalltext{2}} \big]\Big) - G\big(\M^{\star}_{t^\smalltext{\ell}_\smalltext{2}}\big) \bigg] \\
	&\qquad \qquad \qquad \quad \; + \int_{t^\smalltext{\ell}_\smalltext{0}}^{t^\smalltext{\ell}_\smalltext{1}} \widetilde{f}_s\big(X^1_{\cdot \land s}, \alpha_s\big) \d s + G\Big(\E^{\P^{\smalltext{\alpha}\smalltext{,}\smalltext{t}^\tinytext{\ell}_\tinytext{0}}_\smalltext{\cdot}} \big[ \M^{\star}_{t^\smalltext{\ell}_\smalltext{1}} \big]\Big) - G\big(\M^{\star}_{t^\smalltext{\ell}_\smalltext{1}}\big) \Bigg] - 2 \varepsilon \ell \\
	&\geq \esssup_{\alpha \in \widetilde{\cA}} \E^{\Q^{\smalltext{\alpha} \smalltext{,} \smalltext{t}^\tinytext{\ell}_\tinytext{0}}_{\smalltext{\cdot}}} \Bigg[ \widetilde{V}_{t^{\smalltext{\ell}}_{\smalltext{1}}} + \int_{t^\smalltext{\ell}_\smalltext{0}}^{t^\smalltext{\ell}_\smalltext{2}} \widetilde{f}_s\big(X^1_{\cdot \land s}, {\alpha}_s\big) \d s + G\Big(\E^{\P^{\smalltext{{\alpha}}\smalltext{,}\smalltext{t}^\tinytext{\ell}_\tinytext{1}}_\smalltext{\cdot}} \big[ \M^{\star}_{t^\smalltext{\ell}_\smalltext{2}} \big] \Big) - G\big(\M^{\star}_{t^\smalltext{\ell}_\smalltext{2}}\big) + G\Big(\E^{\P^{\smalltext{\alpha}\smalltext{,}\smalltext{t}^\tinytext{\ell}_\tinytext{0}}_\smalltext{\cdot}} \big[ \M^{\star}_{t^\smalltext{\ell}_\smalltext{1}} \big] \Big) - G\big(\M^{\star}_{t^\smalltext{\ell}_\smalltext{1}}\big) \Bigg] - 2 \varepsilon \ell \\
	&\geq \esssup_{\alpha \in \widetilde{\cA}} \E^{\Q^{\smalltext{\alpha} \smalltext{,} \smalltext{t}}_{\smalltext{\cdot}}} \Bigg[ \widetilde{V}_{\tilde{t}} + \int_{t}^{\tilde{t}} \widetilde{f}_s\big(X^1_{\cdot \land s}, {\alpha}_s\big) \d s + \sum_{k=0}^{n^\smalltext{\ell}-1}  \bigg( G\Big(\E^{\Q^{\smalltext{\alpha}\smalltext{,}\smalltext{t}^\tinytext{\ell}_\tinytext{k}}_\smalltext{\cdot}} \big[ \M^{\star}_{t^\smalltext{\ell}_{\smalltext{k}\smalltext{+}\smalltext{1}}} \big] \Big) - G\big(\M^{\star}_{t^\smalltext{\ell}_{\smalltext{k}\smalltext{+}\smalltext{1}}}\big) \bigg) \Bigg] - n^\ell \varepsilon \ell.
\end{align*}
Here, the last inequality holds by a simple iteration over the countable index set $\{0,\ldots,n^\ell-1\}$.
\end{proof}

Having established the preliminary estimate for a fixed partition of $[t,\tilde{t}]$, we proceed to derive the dynamic programming principle by passing to the limit as the partition mesh size tends to zero.

\begin{proof}[Proof of \Cref{thm:DPP}]
We prove the result by verifying both inequalities separately, starting with the inequality in which the left-hand side is greater than or equal to the right-hand side. This follows by taking the limit in the inequality stated in \eqref{align:DPP_sum}, following the approach of \cite[Theorem 2.4.3]{hernandez2021general}. To make the argument precise, we recall that for any $\varepsilon > 0$, the definition of sub-game--perfect equilibrium guarantees the existence of a corresponding $\ell_\eps >0$. As in \Cref{lemma:DPP_sum} or equivalently in \Cref{align:DPP_sum}, we consider a partition $\Pi^\ell \coloneqq (t^{{\ell}}_k)_{k\in\{0,\ldots,n^\smalltext{\ell}\}}$ of the interval $[t,\tilde{t}]$ with mesh size smaller than $\ell \in (0,\ell_\varepsilon)$, and such that $t^\ell_0 = t$ and $t^\ell_{n^\smalltext{\ell}} = \tilde{t}$. This gives us
\begin{align}\label{align:sumDPPtoLimit}
\widetilde{V}_t &\geq \esssup_{\alpha \in \widetilde{\cA}} \E^{\Q^{\smalltext{\alpha} \smalltext{,} \smalltext{t}}_{\smalltext{\cdot}}} \Bigg[ \widetilde{V}_{\tilde{t}} + \int_{t}^{\tilde{t}} \widetilde{f}_s\big(X^1_{\cdot \land s}, {\alpha}_s\big) \d s + \sum_{k=0}^{n^\smalltext{\ell}-1}  \bigg( G\Big(\E^{\Q^{\smalltext{\alpha}\smalltext{,}\smalltext{t}^\tinytext{\ell}_\tinytext{k}}_\smalltext{\cdot}} \big[ \M^{\star}_{t^\smalltext{\ell}_{\smalltext{k}\smalltext{+}\smalltext{1}}} \big] \Big) - G\big(\M^{\star}_{t^\smalltext{\ell}_{\smalltext{k}\smalltext{+}\smalltext{1}}}\big) \bigg) \Bigg] - n^\ell \varepsilon \ell, \; \P\text{\rm--a.s.}
\end{align}

The next step is to consider the limits $\varepsilon \longrightarrow 0$ and $\ell \longrightarrow 0$. It is important to observe that taking the limit $\varepsilon \longrightarrow 0$ alone may not suffice to conclude the proof. This is due to the behaviour of the sequence $(\ell_\varepsilon)_{\varepsilon >0}$, which, although bounded and non-decreasing in $\varepsilon$ (as observed in \cite[Remark 2.7]{hernandez2023me}), is not guaranteed to converge to zero as $\varepsilon \longrightarrow 0$. This leads to two possible scenarii: either $\ell_\varepsilon \longrightarrow 0$, or $\ell_\varepsilon \longrightarrow \tilde{\ell}_0 >0$ as $\varepsilon \longrightarrow 0$. In the first case, we consider the partition $\Pi^\ell \coloneqq (t^{{\ell}}_k)_{k\in\{0,\ldots,n^\smalltext{\ell}\}}$ with $n_\ell \coloneqq \lceil (\tilde{t} - t) /\ell \rceil$. It follows that taking the limit $\varepsilon \longrightarrow 0$ is sufficient, since it also ensures that the mesh size of the partition becomes arbitrarily small due to $\ell_\varepsilon \longrightarrow 0$, and the error term $n^\ell \varepsilon \ell$ goes to zero. In the second case, however, taking only $\varepsilon \rightarrow 0$ is not enough to control the mesh size of the partition. To address this, we proceed as follows: we fix some $\ell < \tilde{\ell}_0$ at the beginning of the argument, ensuring that the chosen partition is independent of $\varepsilon$. We then let $\varepsilon \longrightarrow 0$, and only afterwards take the limit $\ell \longrightarrow 0$. In what follows, we therefore consider only the limit $\ell \longrightarrow 0$, keeping in mind that the limit of the essential supremum is certainly an upper bound for the essential supremum of the limit.

\medskip
In the computations that follow, we fix an admissible strategy $\alpha \in \widetilde{\cA}$ and a state $\omega \in \Omega$, although this will be left implicit. Moreover, all equalities are understood to hold pointwise in $\omega$, unless otherwise stated. We also recall that \Cref{assumpDPP}.\ref{boundedPhisMart} ensures that $\M^{\star}$ is a $(\widetilde{\F},\Q^{\alpha^{\smalltext{\star}}})$-martingale, and by \Cref{lemma:martingaleRepresentation}, it admits a $\P$-modification that is right-continuous and $\P\text{--a.s.}$ continuous, which we continue to denote by $\M^{\star}$ with a slight abuse of notation. Rather than analysing the sum of differences in \eqref{align:sumDPPtoLimit} directly, we decompose it into a sum of intermediate terms, each of which is more tractable to study in the limit. To this end, we define the increments
\begin{align*}
\Delta M^{\star,i}_{t^\smalltext{\ell}_{\smalltext{k}\smalltext{+}\smalltext{1}}} \coloneqq M^{\star,i}_{t^\smalltext{\ell}_{\smalltext{k}\smalltext{+}\smalltext{1}}} - M^{\star,i}_{t^\smalltext{\ell}_{\smalltext{k}}}, \; (i,k) \in \{1,\ldots, v\} \times \{0, \ldots, n^\ell-1\}.
\end{align*}
We rewrite the sum in \eqref{align:sumDPPtoLimit} as
\begin{align*}
\sum_{k=0}^{n^\smalltext{\ell}-1}  \bigg( G\Big(\E^{\Q^{\smalltext{\alpha}\smalltext{,}\smalltext{t}^\tinytext{\ell}_\tinytext{k}}_\smalltext{\cdot}} \big[ \M^{\star}_{t^\smalltext{\ell}_{\smalltext{k}\smalltext{+}\smalltext{1}}} \big] \Big) - G\big(\M^{\star}_{t^\smalltext{\ell}_{\smalltext{k}\smalltext{+}\smalltext{1}}}\big) \bigg) = J^1 + J^2 + J^3,
\end{align*}
where 
\begin{align*}
J^1 \coloneqq \sum_{k=0}^{n^\smalltext{\ell}-1} \bigg(& G\big(\M^{\star}_{t^\smalltext{\ell}_{\smalltext{k}}}\big) - G\big(\M^{\star}_{t^\smalltext{\ell}_{\smalltext{k}\smalltext{+}\smalltext{1}}}\big) + \sum_{i \in \{1,\ldots,v\}} \partial_{m^{\smalltext{i}}} G\big(\M^{\star}_{t^\smalltext{\ell}_{\smalltext{k}}}\big) \Delta M^{\star,i}_{t^\smalltext{\ell}_{\smalltext{k}\smalltext{+}\smalltext{1}}} + \frac{1}{2} \sum_{(i,j) \in \{1,\ldots,v\}^2} \partial^2_{m^{\smalltext{i}},m^{\smalltext{j}}} G\big(\M^{\star}_{t^\smalltext{\ell}_{\smalltext{k}}}\big) \Delta M^{\star,i}_{t^\smalltext{\ell}_{\smalltext{k}\smalltext{+}\smalltext{1}}} \Delta M^{\star,j}_{t^\smalltext{\ell}_{\smalltext{k}\smalltext{+}\smalltext{1}}} \bigg), \\
J^2 \coloneqq \sum_{k=0}^{n^\smalltext{\ell}-1} \bigg(& G\Big(\E^{\Q^{\smalltext{\alpha}\smalltext{,}\smalltext{t}^\tinytext{\ell}_\tinytext{k}}_\smalltext{\cdot}} \big[ \M^{\star}_{t^\smalltext{\ell}_{\smalltext{k}\smalltext{+}\smalltext{1}}} \big] \Big) - G\big(\M^{\star}_{t^\smalltext{\ell}_{\smalltext{k}}}\big) - \sum_{i \in \{1,\ldots,v\}} \partial_{m^{\smalltext{i}}} G\big(\M^{\star}_{t^\smalltext{\ell}_{\smalltext{k}}}\big) \Delta M^{\star,i}_{t^\smalltext{\ell}_{\smalltext{k}\smalltext{+}\smalltext{1}}} \bigg), \\
J^3 \coloneqq \sum_{k=0}^{n^\smalltext{\ell}-1} \bigg(& -\frac{1}{2} \sum_{(i,j) \in \{1,\ldots,v\}^2} \partial^2_{m^{\smalltext{i}},m^{\smalltext{j}}} G\big(\M^{\star}_{t^\smalltext{\ell}_{\smalltext{k}}}\big) \Delta M^{\star,i}_{t^\smalltext{\ell}_{\smalltext{k}\smalltext{+}\smalltext{1}}} \Delta M^{\star,j}_{t^\smalltext{\ell}_{\smalltext{k}\smalltext{+}\smalltext{1}}} \bigg).
\end{align*}

In order to study $J^1$, we introduce some functions $\vartheta^{k,i} : \Omega \longrightarrow [0,1]$, and define
\begin{align*}
M^{\vartheta,\star,i}_{t^\smalltext{\ell}_{\smalltext{k}},t^\smalltext{\ell}_{\smalltext{k}\smalltext{+}\smalltext{1}}} \coloneqq \vartheta^{k,i} M^{\star,i}_{t^\smalltext{\ell}_{\smalltext{k}\smalltext{+}\smalltext{1}}} + \big(1-\vartheta^{k,i}\big) M^{\star,i}_{t^\smalltext{\ell}_{\smalltext{k}}}, \; (i,k) \in \{1,\ldots,v\} \times \{0, \ldots, n^\ell-1\}.
\end{align*}
The second-order Taylor expansion yields
\begin{align*}
J^1 = \frac{1}{2} \sum_{k=0}^{n^\smalltext{\ell}-1} \sum_{(i,j) \in \{1,\ldots,v\}^2} \Big( \partial^2_{m^{\smalltext{i}},m^{\smalltext{j}}} G\big(\M^{\star}_{t^\smalltext{\ell}_{\smalltext{k}}}\big) - \partial^2_{m^{\smalltext{i}},m^{\smalltext{j}}} G\big(\M^{\vartheta,\star}_{t^\smalltext{\ell}_{\smalltext{k}},t^\smalltext{\ell}_{\smalltext{k}\smalltext{+}\smalltext{1}}}\big) \Big) \Delta M^{\star,i}_{t^\smalltext{\ell}_{\smalltext{k}\smalltext{+}\smalltext{1}}} \Delta M^{\star,j}_{t^\smalltext{\ell}_{\smalltext{k}\smalltext{+}\smalltext{1}}}.
\end{align*}
We notice we may choose $(\vartheta^{k,i})_{i \in \{1,\ldots,v\}}$ so that each term in the sum over $k \in \{0, \ldots, n^\ell-1\}$ is $\cF_{t^\smalltext{\ell}_{\smalltext{k}}}$-measurable. It holds that 
\begin{align*}
\Big| \E^{\Q^{\smalltext{\alpha} \smalltext{,} \smalltext{t}}_{\smalltext{\cdot}}} \big[ J^1 \big] \Big| &= \Bigg| \E^{\Q^{\smalltext{\alpha} \smalltext{,} \smalltext{t}}_{\smalltext{\cdot}}} \Bigg[ \frac{1}{2} \sum_{k=0}^{n^\smalltext{\ell}-1} \sum_{(i,j) \in \{1,\ldots,v\}^2} \Big( \partial^2_{m^{\smalltext{i}},m^{\smalltext{j}}} G\big(\M^{\star}_{t^\smalltext{\ell}_{\smalltext{k}}}\big) - \partial^2_{m^{\smalltext{i}},m^{\smalltext{j}}} G\big(\M^{\vartheta,\star}_{t^\smalltext{\ell}_{\smalltext{k}},t^\smalltext{\ell}_{\smalltext{k}\smalltext{+}\smalltext{1}}}\big) \Big) \Delta M^{\star,i}_{t^\smalltext{\ell}_{\smalltext{k}\smalltext{+}\smalltext{1}}} \Delta M^{\star,j}_{t^\smalltext{\ell}_{\smalltext{k}\smalltext{+}\smalltext{1}}} \Bigg] \Bigg| \\
	&\leq \ell_{\partial^\smalltext{2} G} \E^{\Q^{\smalltext{\alpha} \smalltext{,} \smalltext{t}}_{\smalltext{\cdot}}} \Bigg[ \max_{k \in \{ 0, \ldots, n^\smalltext{\ell}-1\}} \Bigg\{ \sum_{j =1}^v \big|\Delta M^{\star,j}_{t^\smalltext{\ell}_{\smalltext{k}\smalltext{+}\smalltext{1}}}\big| \Bigg\} \sum_{k=0}^{n^\smalltext{\ell}-1} \sum_{i =1}^v \big|\Delta M^{\star,i}_{t^\smalltext{\ell}_{\smalltext{k}\smalltext{+}\smalltext{1}}}\big|^2 \Bigg] \\
	&\leq \ell_{\partial^\smalltext{2} G} \Bigg( \E^{\Q^{\smalltext{\alpha} \smalltext{,} \smalltext{t}}_{\smalltext{\cdot}}} \Bigg[ \max_{k \in \{ 0, \ldots, n^\smalltext{\ell}-1\}} \Bigg\{ \sum_{j =1}^v \big|\Delta M^{\star,j}_{t^\smalltext{\ell}_{\smalltext{k}\smalltext{+}\smalltext{1}}}\big| \Bigg\}^2 \Bigg] \Bigg)^{\frac{1}{2}} \sum_{i =1}^v \Bigg( \E^{\Q^{\smalltext{\alpha} \smalltext{,} \smalltext{t}}_{\smalltext{\cdot}}} \Bigg[ \Bigg( \sum_{k=0}^{n^\smalltext{\ell}-1} \big|\Delta M^{\star,i}_{t^\smalltext{\ell}_{\smalltext{k}\smalltext{+}\smalltext{1}}}\big|^2 \Bigg)^2 \Bigg] \Bigg)^{\frac{1}{2}}, \; \P\text{\rm--a.s.}
\end{align*}
The first inequality follows from the Lipschitz-continuity of the functions $\partial^2_{m^{\smalltext{i}},m^{\smalltext{j}}} G$, for all $(i,j) \in \{1,\ldots,v\}^2$, while the last one is a direct consequence of the Cauchy-Schwarz inequality. To compute the limit as $\ell \rightarrow 0$, we fix an index $i \in \{1,\ldots,v\}$. We observe that \Cref{assumpDPP}.\ref{boundedPhisMart} implies that $\M^{\star}$ is a bounded $(\widetilde{\F},\Q^{\alpha^{\smalltext{\star}}})$-martingale, and we denote the bound by $c_\star>0$. Then, proceeding as in the proof of \citeauthor*{karatzas1991brownian} \cite[Lemma 1.5.9]{karatzas1991brownian}, we have
\begin{align*}
&\Bigg( \E^{\Q^{\smalltext{\alpha} \smalltext{,} \smalltext{t}}_{\smalltext{\cdot}}} \Bigg[ \max_{k \in \{ 0, \ldots, n^\smalltext{\ell}-1\}} \Bigg\{ \sum_{j =1}^v \big|\Delta M^{\star,j}_{t^\smalltext{\ell}_{\smalltext{k}\smalltext{+}\smalltext{1}}}\big| \Bigg\}^2 \Bigg] \Bigg)^{\frac{1}{2}} \Bigg( \E^{\Q^{\smalltext{\alpha} \smalltext{,} \smalltext{t}}_{\smalltext{\cdot}}} \Bigg[ \Bigg( \sum_{k=0}^{n^\smalltext{\ell}-1} \big|\Delta M^{\star,i}_{t^\smalltext{\ell}_{\smalltext{k}\smalltext{+}\smalltext{1}}}\big|^2 \Bigg)^2 \Bigg] \Bigg)^{\frac{1}{2}}\\
	&= \Bigg( \E^{\Q^{\smalltext{\alpha} \smalltext{,} \smalltext{t}}_{\smalltext{\cdot}}} \Bigg[ \max_{k \in \{ 0, \ldots, n^\smalltext{\ell}-1\}} \Bigg\{ \sum_{j =1}^v \big|\Delta M^{\star,j}_{t^\smalltext{\ell}_{\smalltext{k}\smalltext{+}\smalltext{1}}}\big| \Bigg\}^2 \Bigg] \Bigg)^{\frac{1}{2}} \Bigg( \E^{\Q^{\smalltext{\alpha} \smalltext{,} \smalltext{t}}_{\smalltext{\cdot}}} \Bigg[ \sum_{k=0}^{n^\smalltext{\ell}-1} \big|\Delta M^{\star,i}_{t^\smalltext{\ell}_{\smalltext{k}\smalltext{+}\smalltext{1}}}\big|^4 + 2 \sum_{k=0}^{n^\smalltext{\ell}-2} \sum_{\tilde{k}=k+1}^{n^\smalltext{\ell}-1} \big|\Delta M^{\star,i}_{t^\smalltext{\ell}_{\smalltext{k}\smalltext{+}\smalltext{1}}}\big|^2 \big|\Delta M^{\star,i}_{t^\smalltext{\ell}_{\smalltext{\tilde{k}}\smalltext{+}\smalltext{1}}}\big|^2 \Bigg] \Bigg)^{\frac{1}{2}} \\
	&\leq 2 \sqrt{2} c_\star^2 \Bigg( \E^{\Q^{\smalltext{\alpha} \smalltext{,} \smalltext{t}}_{\smalltext{\cdot}}} \Bigg[ \max_{k \in \{ 0, \ldots, n^\smalltext{\ell}-1\}} \Bigg\{ \sum_{j =1}^v \big|\Delta M^{\star,j}_{t^\smalltext{\ell}_{\smalltext{k}\smalltext{+}\smalltext{1}}}\big| \Bigg\}^2 \Bigg] \Bigg)^{\frac{1}{2}} \xrightarrow{\ell \rightarrow 0} 0, \; \P\text{\rm--a.s.}
\end{align*}
The limit follows from the dominated convergence theorem and the uniform continuity on $[0,T]$ of the paths of $\M^{\star}$, $\P$--a.s.

\medskip
We consider the term $J^2$. Analogously to the study of $J^1$, we introduce auxiliary functions $\vartheta^{k,i}: \Omega \longrightarrow [0,1]$, and we define
\begin{align*}
M^{\vartheta,\star,i}_{t^\smalltext{\ell}_{\smalltext{k}}} \coloneqq \vartheta^{k,i} \E^{\Q^{\smalltext{\alpha}\smalltext{,}\smalltext{t}^\tinytext{\ell}_\tinytext{k}}_\smalltext{\cdot}} \Big[ M^{\star,i}_{t^\smalltext{\ell}_{\smalltext{k}\smalltext{+}\smalltext{1}}} \Big] + (1-\vartheta^{k,i}) M^{\star,i}_{t^\smalltext{\ell}_{\smalltext{k}}}, \; (i,k) \in \{1, \ldots, v\} \times \{0, \ldots, n^\ell-1\}.
\end{align*}
A first-order Taylor expansion yields
\begin{align*}
\Big| \E^{\Q^{\smalltext{\alpha} \smalltext{,} \smalltext{t}}_{\smalltext{\cdot}}} \big[ J^2 \big] \Big| &= \Bigg| \E^{\Q^{\smalltext{\alpha}\smalltext{,}\smalltext{t}}_\smalltext{\cdot}} \Bigg[ \sum_{k=0}^{n^\smalltext{\ell}-1} \sum_{i=1}^v \bigg( \partial_{m^{\smalltext{i}}} G\big(\M^{\vartheta,\star}_{t^\smalltext{\ell}_{\smalltext{k}}}\big) \Big( \E^{\Q^{\smalltext{\alpha} \smalltext{,}\smalltext{t}^\tinytext{\ell}_\tinytext{k}}_\smalltext{\cdot}} \Big[ M^{\star,i}_{t^\smalltext{\ell}_{\smalltext{k}\smalltext{+}\smalltext{1}}} \Big] - M^{\star,i}_{t^\smalltext{\ell}_{\smalltext{k}}} \Big) - \partial_{m^{\smalltext{i}}} G\big(\M^{\star}_{t^\smalltext{\ell}_{\smalltext{k}}}\big) \Delta M^{\star,i}_{t^\smalltext{\ell}_{\smalltext{k}\smalltext{+}\smalltext{1}}} \bigg) \Bigg] \Bigg| \\
	&= \Bigg| \E^{\Q^{\smalltext{\alpha}\smalltext{,}\smalltext{t}}_\smalltext{\cdot}} \Bigg[ \sum_{k=0}^{n^\smalltext{\ell}-1} \sum_{i=1}^v \bigg( \big( \partial_{m^{\smalltext{i}}} G\big(\M^{\vartheta,\star}_{t^\smalltext{\ell}_{\smalltext{k}}}\big) - \partial_{m^{\smalltext{i}}} G\big(\M^{\star}_{t^\smalltext{\ell}_{\smalltext{k}}}\big) \Big) \Big( \E^{\Q^{\smalltext{\alpha} \smalltext{,}\smalltext{t}^\tinytext{\ell}_\tinytext{k}}_\smalltext{\cdot}} \Big[ M^{\star,i}_{t^\smalltext{\ell}_{\smalltext{k}\smalltext{+}\smalltext{1}}} \Big] - M^{\star,i}_{t^\smalltext{\ell}_{\smalltext{k}}} \Big)  \Bigg] \Bigg| \\
	&\leq \ell_{\partial G} \E^{\Q^{\smalltext{\alpha} \smalltext{,}\smalltext{t}}_\smalltext{\cdot}} \Bigg[ \sum_{k=0}^{n^\smalltext{\ell}-1} \sum_{i=1}^v \Big| \E^{\Q^{\smalltext{\alpha} \smalltext{,}\smalltext{t}^\tinytext{\ell}_\tinytext{k}}_\smalltext{\cdot}} \Big[ M^{\star,i}_{t^\smalltext{\ell}_{\smalltext{k}\smalltext{+}\smalltext{1}}} \Big] - M^{\star,i}_{t^\smalltext{\ell}_{\smalltext{k}}} \Big|^2 \Bigg] \xrightarrow{\ell \rightarrow 0} 0, \; \P\text{\rm--a.s.}
\end{align*}
The inequality follows from the assumption that the functions $\partial_{m^{\smalltext{i}}}G$ are Lipschitz-continuous, $i \in \{1,\ldots,v\}$, while the convergence directly follows from \Cref{assumpDPP}.\ref{modulusCondExpMart}.

\medskip
We turn to the remaining term, namely $J^3$. Arguing as in the case of $J^1$ following the approach of \cite[Lemma 1.5.9]{karatzas1991brownian}, we use the continuity of the functions $\partial^2_{m^{\smalltext{i}},m^{\smalltext{j}}} G$, for all $(i,j) \in \{1,\ldots,v\}^2$, together with the boundedness of the $(\widetilde{\F},\Q^{\alpha^{\smalltext{\star}}})$-martingale $\M^{\star}$, to conclude that there exist two constants $c_{\partial^\smalltext{2} G} >0$ and $c_\star>0$ such that
\begin{align*}
\Big| \E^{\Q^{\smalltext{\alpha} \smalltext{,} \smalltext{t}}_{\smalltext{\cdot}}} \big[ J^3 \big] \Big| &\leq c_{\partial^\smalltext{2} G} \E^{\Q^{\smalltext{\alpha} \smalltext{,}\smalltext{t}}_\smalltext{\cdot}} \Bigg[ \sum_{k=0}^{n^\smalltext{\ell}-1} \sum_{i=1}^v \big|\Delta M^{\star,i}_{t^\smalltext{\ell}_{\smalltext{k}\smalltext{+}\smalltext{1}}}\big|^2 \Bigg] \leq c_{\partial^\smalltext{2} G} c_\star v, \; \P\text{\rm--a.s.}
\end{align*}
Applying the dominated convergence theorem together with \cite[Lemma 1.5.8]{karatzas1991brownian}, we obtain that
\begin{align*}
\E^{\Q^{\smalltext{\alpha} \smalltext{,} \smalltext{t}}_{\smalltext{\omega}}} \big[ J^3 \big] \xrightarrow{\ell \rightarrow 0} \E^{\Q^{\smalltext{\alpha} \smalltext{,}\smalltext{t}}_\smalltext{\omega}} \Bigg[ - \frac{1}{2} \int_t^{\tilde{t}} \sum_{(i,j) \in \{1,\ldots,v\}^2} \partial^2_{m^{\smalltext{i}},m^{\smalltext{j}}} G\big(\M^{\star}_s\big) \d \big[M^{\star,i},M^{\star,j}\big]_s \Bigg], \; \P\text{\rm--a.s.}
\end{align*}

\medskip
To conclude the proof, it remains to verify that the right-hand side is greater than or equal to the left-hand side. To this end, we observe that for any $(t,\tilde{t}) \in [0,T] \times [t,T]$, It\^o's formula and the $(\widetilde{\F},\Q^{\alpha^{\smalltext{\star}}})$-martingale property of $\M^{\star}$ imply
\begin{align*}
\widetilde{V}_t \coloneqq \widetilde{J}(t,\cdot,\alpha^\star) 
& \coloneqq \E^{\Q^{\smalltext{\alpha}^{\tinytext{\star}}\smalltext{,}\smalltext{t}}_\smalltext{\cdot}} \bigg[ \int_t^T \widetilde{f}_s\big(X^1_{\cdot \land s}, \alpha^\star_s \big) \d s + \widetilde{g}\big(X^1_{\cdot \land T}\big) \bigg] + G\Big( \E^{\Q^{\smalltext{\alpha}^{\tinytext{\star}}\smalltext{,}\smalltext{t}}_\smalltext{\cdot}} \big[ \widetilde{\varphi}\big(X^1_{\cdot \land T}\big) \big] \Big) \\
& \; = \E^{\Q^{\smalltext{\alpha}^{\tinytext{\star}}\smalltext{,}\smalltext{t}}_\smalltext{\cdot}} \bigg[ \widetilde{V}_{\tilde{t}} + \int_t^{\tilde{t}} \widetilde{f}_s\big(X^1_{\cdot \land s}, \alpha^\star_s \big) \d s - G(\M^\star_{\tilde{t}}) + G(\M^\star_t)\bigg] \\
& \; = \E^{\Q^{\smalltext{\alpha}^{\tinytext{\star}}\smalltext{,}\smalltext{t}}_\smalltext{\cdot}} \Bigg[ \widetilde{V}_{\tilde{t}} + \int_t^{\tilde{t}} \widetilde{f}_s\big(X^1_{\cdot \land s}, \alpha^\star_s \big) \d s - \frac{1}{2} \int_t^{\tilde{t}} \sum_{(i,j) \in \{1,\ldots,v\}^2} \partial^2_{m^{\smalltext{i}},m^{\smalltext{j}}} G\big(\M^{\star}_s\big) \d \big[M^{\star,i},M^{\star,j}\big]_s \Bigg] \\
& \; \leq \esssup_{\alpha \in \widetilde{\cA}} \E^{\Q^{\smalltext{\alpha}\smalltext{,}\smalltext{t}}_{\smalltext{\cdot}}} \Bigg[ \widetilde{V}_{\tilde{t}} + \int_t^{\tilde{t}} \widetilde{f}_s\big(X^1_{\cdot \land s}, \alpha_s \big) \d s - \frac{1}{2} \int_t^{\tilde{t}} \sum_{(i,j) \in \{1,\ldots,v\}^2} \partial^2_{m^{\smalltext{i}},m^{\smalltext{j}}} G\big(\M^{\star}_s\big) \d \big[M^{\star,i},M^{\star,j}\big]_s \Bigg], \; \P\text{\rm--a.s.}
\end{align*}
Since $\M^{\star}$ is bounded by \Cref{assumpDPP}.\ref{boundedPhisMart}, and $\partial_{m^{\smalltext{i}}}G$ is continuous by \Cref{assumpDPP}.\ref{GLipcont}, $\int_0^\cdot \partial_{m^{\smalltext{i}}} G(\M^{\star}_t) \d M^{\star,i}_t$ is a $(\widetilde{\F},\Q^{\alpha^{\smalltext{\star}}})$-martingale for all $i \in \{1,\ldots,v\}$. This justifies the last equality and completes the proof.
\end{proof}

\section{Characterisation through BSDEs}\label{section:characterisation}

The extended dynamic programming principle proved in \Cref{section:DPP} naturally leads to a system of BSDEs. This system plays a central role: as we will show in this section, its well-posedness is both necessary and sufficient to characterise each sub-game--perfect Nash equilibrium and the associated value process for each player. 

\begin{proof}[Proof of \Cref{necessity_toBSDE}]
We begin by fixing a sub-game--perfect Nash equilibrium $\hat{\balpha}^N \in \mathcal{N}\!\mathcal{A}_{s,N}$, and consider a player index $i \in \{1,\ldots,N\}$. Under \Cref{assumpDPP_NplayerGame}.\ref{boundedPhisMart_NplayerGame}, the processes $M^{i,\star,N}$ and $N^{i,\star,N}$ are $(\F_N,\P^{\hat\balpha^{\smalltext{N}}})$--martingales and, consequently, admit $\P$-modifications that are right-continuous and $\P\text{--a.s.}$ continuous. As a result, the martingale representation property stated in \Cref{lemma:martingaleRepresentation} guarantees the existence of $Z^{i,m,\ell,\star,N}$ and $Z^{i,n,\ell,\star,N}$ in $\L^2_{\mathrm{loc}}(\F_N,\P^{\hat\balpha^{\smalltext{N}}})$, for $\ell \in \{1,\ldots,N\}$, such that
\begin{gather*}
M^{i,\star,N}_t = M^{i,\star,N}_0 + \int_0^t \sum_{\ell =1}^N Z^{i,m,\ell,\star,N}_s \cdot \d \big(W_s^{\hat\balpha^{\smalltext{N}},N}\big)^{\ell}, \; t \in [0,T], \; \P\text{\rm--a.s.}, \\
N^{i,\star,N}_t = N^{i,\star,N}_0 + \int_0^t \sum_{\ell =1}^N Z^{i,n,\ell,\star,N}_s \cdot \d \big(W_s^{\hat\balpha^{\smalltext{N}},N}\big)^{\ell}, \; t \in [0,T], \; \P\text{\rm--a.s.}
\end{gather*}
Equivalently, for any admissible strategy $\alpha \in \cA_N$, we can write
\begin{align*}
M^{i,\star,N}_t &= \varphi^i_1(X^i_{\cdot \land T}) + \int_t^T Z^{i,m,i,\star,N}_s \cdot \Big(b^i_s\big(X^i_{\cdot \land s},L^N\big(\X^N_{\cdot \land s},\hat\balpha^N_s\big),\hat\alpha^{i,N}_s\big) - b^i_s\big(X^i_{\cdot \land s},L^N\big(\X^N_{\cdot \land s},(\alpha \otimes_{i} \hat\balpha^{N,\smallertext{-}i})_s\big),\alpha_s\big)\Big) \d s \\
&\quad - \int_t^T \sum_{\ell =1}^N Z^{i,m,\ell,\star,N}_s \cdot \d \big(W^{{\alpha \otimes_\smalltext{i} \hat\balpha^{\smalltext{N}\smalltext{,}\smalltext{-}\smalltext{i}}},N}_s\big)^\ell, \; t \in [0,T], \; \P\text{\rm--a.s.}, \\
N^{i,\star,N}_t &= \varphi^i_2\big(L^N\big(\X^N_{\cdot \land T}\big)\big) + \int_t^T Z^{i,n,i,\star,N}_s \cdot \Big(b^i_s\big(X^i_{\cdot \land s},L^N\big(\X^N_{\cdot \land s},\hat\balpha^N_s\big),\hat\alpha^{i,N}_s\big) - b^i_s\big(X^i_{\cdot \land s},L^N\big(\X^N_{\cdot \land s},(\alpha \otimes_{i} \hat\balpha^{N,\smallertext{-}i})_s\big),\alpha_s\big)\Big) \d s \\
&\quad - \int_t^T \sum_{\ell =1}^N Z^{i,n,\ell,\star,N}_s \cdot \d \big(W^{{\alpha \otimes_\smalltext{i} \hat\balpha^{\smalltext{N}\smalltext{,}\smalltext{-}\smalltext{i}}},N}_s\big)^\ell, \; t \in [0,T], \; \P\text{\rm--a.s.}
\end{align*}
Hence, it follows directly that the random variables
\begin{align*}
\sup_{t \in [0,T]} \big| M^{i,\star,N}_t\big| \; \text{and} \; \sup_{t \in [0,T]} \big| N^{i,\star,N}_t \big|
\end{align*}
are bounded, due to \Cref{assumpDPP_NplayerGame}.\ref{boundedPhisMart_NplayerGame}. Furthermore, since each drift function $b^i$ is assumed to be bounded, we may apply, for instance, \citeauthor*{zhang2017backward} \cite[Theorem 7.2.1]{zhang2017backward} to conclude that 
\begin{align*}
\esssup_{\balpha \in \cA^\smalltext{N}_\smalltext{N}} \sup_{\tau \in \cT_{0,T}} \E^{\P^{\smalltext{\balpha}\smalltext{,}\smalltext{N}\smalltext{,}\smalltext{\tau}}_{\smalltext{\cdot}}} \Bigg[ \int_\tau^T \sum_{\ell =1}^N \Big( \big\|Z^{i,m,\ell,\star,N}_t\big\|^2 + \big\|Z^{i,n,\ell,\star,N}_t\big\|^2 \Big) \d t \Bigg] <+\infty.
\end{align*}

\medskip
Following an argument analogous to \cite[Proposition 2.6]{possamai2025non}, and thus making use of the extended dynamic programming principle in \Cref{thm:DPP_NplayerGame}, it can be shown that
\[
\hat\balpha^N_t \in \cO_{N}\big(t, \X^N_{\cdot \land t}, \Z^N_t, \M^{\star,N}_t, \N^{\star,N}_t, \Z^{m,\star,N}_t, \Z^{n,\star,N}_t\big),\; \d t \otimes \d \P\text{\rm--a.e.} \; (t,\omega) \in [0,T] \times \Omega,
\]
where, for each $i \in \{1,\ldots,N\}$, the pair $(Y^{i,N},\Z^{i,N})$ satisfies the BSDE
\begin{align*}
Y^{i,N}_t &= g^i\big(X^i_{\cdot \land T}, L^N\big(\X^N_{\cdot \land T}\big)\big) + G^i\big(\varphi^i_1(X^i_{\cdot \land T}),\varphi^i_2\big(L^N\big(\X^N_{\cdot \land T}\big)\big)\big) \\
&\quad+ \int_t^T f^i_s\big(X^i_{\cdot \land s},L^N(\X^N_{\cdot \land s},\hat\balpha^N_s),\hat\alpha^{i,N}_s\big) \d s - \int_t^T \partial^2_{m,n}G^i\big(M^{i,\star,N}_s,N^{i,\star,N}_s\big) \sum_{\ell =1}^N Z^{i,m,\ell,\star,N}_s \cdot Z^{i,n,\ell,\star,N}_s \d s \\
&\quad- \frac{1}{2} \int_t^T \partial^2_{m,m}G^i\big(M^{i,\star,N}_s,N^{i,\star,N}_s\big) \sum_{\ell =1}^N \big\|Z^{i,m,\ell,\star,N}_s\big\|^2 \d s - \frac{1}{2} \int_t^T \partial^2_{n,n}G^i\big(M^{i,\star,N}_s,N^{i,\star,N}_s\big) \sum_{\ell =1}^N \big\|Z^{i,n,\ell,\star,N}_s\big\|^2 \d s \\
\notag &\quad- \int_t^T \sum_{\ell =1}^N Z^{i,\ell,N}_s \cdot \d \big(W_s^{\hat\balpha^\smalltext{N},N}\big)^\ell, \; t \in [0,T], \; \P\text{\rm--a.s.}
\end{align*}
By \cite[Theorem 4.2]{briand2003p}, recalling that \Cref{assumpDPP_NplayerGame}.\ref{Glip_NplayerGame} and \Cref{assumption:regularityForNecessity} hold, together with the estimates for $(\M^{\star,N}, \N^{\star,N}, \Z^{m,\star,N}, \Z^{n,\star,N})$ derived above, there exists some $p \geq 1$ such that
\begin{align*}
\sup_{\balpha \in \cA^\smalltext{N}_\smalltext{N}} \E^{\P^{\smalltext{\balpha}}} \Bigg[ \sup_{t \in [0,T]} \big|Y^{i,N}_t\big|^p + \bigg(\int_0^T \sum_{\ell =1}^N \big\|{Z}_t^{i,\ell,N}\big\|^2 \d t\bigg)^\frac{p}{2} \Bigg] <+\infty.
\end{align*}
Moreover, using the definition of the probability measure $\P^{\hat\balpha^{\smalltext{N}},N}$, it is straightforward to verify that $V^{i,N}_t = Y^{i,N}_t$, $\P\text{\rm--a.s.}$, for any $t \in [0,T]$. 
\end{proof}

We now proceed to prove the sufficiency of the BSDE system.
\begin{proof}[Proof of \Cref{sufficiency_toBSDE}]
For any fixed index $i \in \{1,\ldots,N\}$, one immediately obtains
\begin{gather*}
M_t^{i,\star,N} = \E^{\P^{\smalltext{\hat\balpha}^\tinytext{N}}} \big[ \varphi^i_1(X^i_{\cdot \land T}) \big|\cF_{N,t} \big] = \E^{\P^{\smalltext{\hat\balpha}^\tinytext{N}\smalltext{,}\smalltext{N}\smalltext{,}\smalltext{t}}_\smalltext{\cdot}} \big[ \varphi^i_1(X^i_{\cdot \land T}) \big], \; \P\text{\rm--a.s.}, \; t \in [0,T], \\
N_t^{i,\star,N} = \E^{\P^{\smalltext{\hat\balpha}^\tinytext{N}}} \big[ \varphi^i_2\big(L^N\big(\X^N_{\cdot \land T}\big)\big) \big|\cF_{N,t} \big] = \E^{\P^{\smalltext{\hat\balpha}^\tinytext{N}\smalltext{,}\smalltext{N}\smalltext{,}\smalltext{t}}_\smalltext{\cdot}} \big[ \varphi^i_2\big(L^N\big(\X^N_{\cdot \land T}\big)\big) \big], \; \P\text{\rm--a.s.}, \; t \in [0,T].
\end{gather*}
On the other hand, It\^o's formula implies that
\begin{align*}
G^i\big(\varphi^i_1(X^i_{\cdot \land T}),\varphi^i_2\big(L^N\big(\X^N_{\cdot \land T}\big)\big)\big) &= G^i\big(M^{i,\star,N}_t,N^{i,\star,N}_t\big) + \int_t^T \partial_m G^i\big(M^{i,\star,N}_s,N^{i,\star,N}_s\big) \d M^{i,\star,N}_s \\
&\quad+ \int_t^T \partial_n G^i\big(M^{i,\star,N}_s,N^{i,\star,N}_s\big) \d N^{i,\star,N}_s \\
&\quad + \frac{1}{2} \int_t^T \partial^2_{m,m}G^i\big(M^{i,\star,N}_s,N^{i,\star,N}_s\big) \sum_{\ell =1}^N \big\|Z^{i,m,\ell,\star,N}_s\big\|^2 \d s \\
&\quad + \int_t^T \partial^2_{m,n}G^i\big(M^{i,\star,N}_s,N^{i,\star,N}_s\big) \sum_{\ell =1}^N Z^{i,m,\ell,\star,N}_s \cdot Z^{i,n,\ell,\star,N}_s \d s \\
&\quad + \frac{1}{2} \int_t^T \partial^2_{n,n}G^i\big(M^{i,\star,N}_s,N^{i,\star,N}_s\big) \sum_{\ell=1}^N \big\|Z^{i,n,\ell,\star,N}_s\big\|^ 2 \d s, \; t \in [0,T], \; \P\text{\rm--a.s.}
\end{align*}
Consequently, we can deduce that $J^i(t,\cdot,\hat\alpha^{i,N};\hat\balpha^{N,\smallertext{-}i}) = Y^{i,N}_t$, $\P\text{\rm--a.s.}$, for any $t \in [0,T]$, since 
\begin{align*}
Y^{i,N}_t &= \E^{\P^{\smalltext{\hat\balpha}^\tinytext{N}\smalltext{,}\smalltext{N}\smalltext{,}\smalltext{t}}_\smalltext{\cdot}} \Bigg[ g^i\big(X^i_{\cdot \land T}, L^N\big(\X^N_{\cdot \land T}\big)\big) + G^i\big(\varphi^i_1(X^i_{\cdot \land T}),\varphi^i_2\big(L^N\big(\X^N_{\cdot \land T}\big)\big)\big) + \int_t^T f^i_s\big( X^i_{\cdot \land s}, L^N\big(\X^N_{\cdot \land s},\hat\balpha^N_s\big), \hat\alpha^{i,N}_s\big) \d s \\
&\quad - \int_t^T  \partial^2_{m,n}G^i\big(M^{i,\star,N}_s,N^{i,\star,N}_s\big) \sum_{\ell =1}^N Z^{i,m,\ell,\star,N}_s \cdot Z^{i,n,\ell,\star,N}_s \d s  - \frac{1}{2} \int_t^T \bigg( \partial^2_{m,m}G^i\big(M^{i,\star,N}_s,N^{i,\star,N}_s\big) \sum_{\ell =1}^N \big\|Z^{i,m,\ell,\star,N}_s\big\|^2 \\
&\quad + \partial^2_{n,n}G^i\big(M^{i,\star,N}_s,N^{i,\star,N}_s\big) \sum_{\ell =1}^N  \big\|Z^{i,n,\ell,\star,N}_s\big\|^ 2 \bigg) \d s \Bigg] \\
&= \E^{\P^{\smalltext{\hat\balpha}^\tinytext{N}\smalltext{,}\smalltext{N}\smalltext{,}\smalltext{t}}_\smalltext{\cdot}} \bigg[ g^i\big(X^i_{\cdot \land T}, L^N\big(\X^N_{\cdot \land T}\big)\big) + \int_t^T f^i_s\big( X^i_{\cdot \land s}, L^N\big(\X^N_{\cdot \land s},\hat\balpha^N_s\big), \hat\alpha^{i,N}_s\big) \d s \bigg] + G^i\big(M^{i,\star,N}_t,N^{i,\star,N}_t\big) \\
&= \E^{\P^{\smalltext{\hat\balpha}^\tinytext{N}\smalltext{,}\smalltext{N}\smalltext{,}\smalltext{t}}_\smalltext{\cdot}} \bigg[ g^i\big(X^i_{\cdot \land T}, L^N\big(\X^N_{\cdot \land T}\big)\big) + \int_t^T f^i_s\big( X^i_{\cdot \land s}, L^N\big(\X^N_{\cdot \land s},\hat\balpha^N_s\big), \hat\alpha^{i,N}_s\big) \d s \bigg] \\
&\quad+ G^i\Big(\E^{\P^{\smalltext{\hat\balpha}^\tinytext{N}\smalltext{,}\smalltext{N}\smalltext{,}\smalltext{t}}_\smalltext{\cdot}} \big[ \varphi^i_1(X^i_{\cdot \land T}) \big],\E^{\P^{\smalltext{\hat\balpha}^\tinytext{N}\smalltext{,}\smalltext{N}\smalltext{,}\smalltext{t}}_\smalltext{\cdot}} \big[ \varphi^i_2\big(L^N\big(\X^N_{\cdot \land T}\big)\big) \big]\Big), \; \P\text{\rm--a.s.}, \; t \in [0,T],
\end{align*}
where the first equality holds because the stochastic integrals 
\[
\int_0^\cdot \partial_{m} G\big(M^{i,\star,N}_t,N^{i,\star,N}_t\big) \d M^{i,\star,N}_t\; \text{and}\; \int_0^\cdot \partial_{n} G\big(M^{i,\star,N}_t,N^{i,\star,N}_t\big) \d N^{i,\star,N}_t
\]
are $(\F_N,\P^{\hat\balpha^{\smalltext{N}}})$-martingales, as ensured by \Cref{assumpDPP_NplayerGame}.\ref{Glip_NplayerGame} together with the estimates for $(M^{i,\star,N},N^{i,\star,N})$ stated in \Cref{align:estimatesBSDEsupSuff}.

\medskip
To complete the proof, it remains to verify that the constructed strategy $\hat\balpha^N_t := \mathfrak{a}^N(t,\Z^N_t,\M^{\star,N}_t,\N^{\star,N}_t,\Z^{m,\star,N}_t,\Z^{n,\star,N}_t)$, for $t \in [0,T]$, is indeed a sub-game--perfect Nash equilibrium, i.e., $\hat\balpha^N \in \mathcal{NA}_{s,N}$. To this end, let $\varepsilon >0$ be fixed. We then select a player index $i \in \{1,\ldots, N\}$, along with an admissible strategy $\alpha \in \cA_N$, and consider some $\ell \in (0, \ell_\varepsilon)$, where $\ell_\varepsilon >0$ will be specified later. To avoid further complicating the notation, we define $\alpha^{i,t,\ell} \coloneqq \alpha \otimes_{t+\ell} \hat\alpha^{i,N}$, $t \in [0,T]$. We have
\begin{align*}
& J^i(t,\cdot,\hat\alpha^{i,N};\hat\balpha^{N,\smallertext{-}i}) - J^i(t,\cdot, \alpha^{i,t,\ell};\hat\balpha^{N,\smallertext{-}i}) \\
&= \E^{\P^{\smalltext{\hat\balpha}^{\tinytext{N}}\smalltext{,}\smalltext{N}\smalltext{,}\smalltext{t}}_{\smalltext{\cdot}}} \bigg[ g^i\big(X^i_{\cdot \land T}, L^N\big(\X^N_{\cdot \land T}\big)\big) + \int_t^T f_s\big( X^i_{\cdot \land s}, L^N\big(\X^N_{\cdot \land s},\hat\balpha^{N}_s\big), \hat\alpha^{i,N}_s\big) \d s \bigg] \\
\\&\quad+ G^i\Big(\E^{\P^{\smalltext{\hat\balpha}^\tinytext{N}\smalltext{,}\smalltext{N}\smalltext{,}\smalltext{t}}_\smalltext{\cdot}} \big[ \varphi^i_1(X^i_{\cdot \land T}) \big],\E^{\P^{\smalltext{\hat\balpha}^\tinytext{N}\smalltext{,}\smalltext{N}\smalltext{,}\smalltext{t}}_\smalltext{\cdot}} \big[ \varphi^i_2\big(L^N\big(\X^N_{\cdot \land T}\big)\big) \big]\Big) \\
	&\quad- \E^{\P^{\smalltext{\alpha}^{\tinytext{i}\tinytext{,}\tinytext{t}\tinytext{,}\tinytext{\ell}} \smalltext{\otimes}_{\tinytext{i}} \smalltext{\hat\balpha}^{\tinytext{N}\tinytext{,}\tinytext{-i}}\smalltext{,}\smalltext{N}\smalltext{,}\smalltext{t}}_{\smalltext{\cdot}}} \bigg[ g^i\big(X^i_{\cdot \land T}, L^N\big(\X^N_{\cdot \land T}\big)\big) + \int_t^T f_s\big( X^i_{\cdot \land s}, L^N\big(\X^N_{\cdot \land s},(\alpha^{i,t,\ell} \otimes_i \balpha^{N,\smallertext{-}i})_s\big), \alpha^{i,t,\ell}_s\big) \Big) \d s \bigg] \\
	&\quad- G^i\bigg( \E^{\P^{\smalltext{\alpha}^{\tinytext{i}\tinytext{,}\tinytext{t}\tinytext{,}\tinytext{\ell}} \smalltext{\otimes}_{\tinytext{i}} \smalltext{\hat\balpha}^{\tinytext{N}\tinytext{,}\tinytext{-i}}\smalltext{,}\smalltext{N}\smalltext{,}\smalltext{t}}_{\smalltext{\cdot}}} \Big[ \varphi_1^i(X^i_{\cdot \land T}) \Big], \E^{\P^{\smalltext{\alpha}^{\tinytext{i}\tinytext{,}\tinytext{t}\tinytext{,}\tinytext{\ell}} \smalltext{\otimes}_{\tinytext{i}} \smalltext{\hat\balpha}^{\tinytext{N}\tinytext{,}\tinytext{-i}}\smalltext{,}\smalltext{N}\smalltext{,}\smalltext{t}}_{\smalltext{\cdot}}} \Big[ \varphi^i_2\big(L^N(\X^N_{\cdot \land T}\big) \Big] \bigg) \\
&= \E^{\P^{\smalltext{\alpha}^{\tinytext{i}\tinytext{,}\tinytext{t}\tinytext{,}\tinytext{\ell}} \smalltext{\otimes}_{\tinytext{i}} \smalltext{\hat\balpha}^{\tinytext{N}\tinytext{,}\tinytext{-i}}\smalltext{,}\smalltext{N}\smalltext{,}\smalltext{t}}_{\smalltext{\cdot}}} \Bigg[ \int_t^T \Big( f_s\big( X^i_{\cdot \land s}, L^N\big(\X^N_{\cdot \land s},\hat\balpha^{N}_s\big), \hat\alpha^{i,N}_s\big) - f_s\big( X^i_{\cdot \land s}, L^N\big(\X^N_{\cdot \land s},(\alpha^{i,t,\ell} \otimes_i \balpha^{N,\smallertext{-}i})_s\big), \alpha^{i,t,\ell}_s\big) \Big) \d s \\
	&\quad+ \int_t^T Z^{i,i,N}_s \cdot \Big( b^i_s\big(X^i_{\cdot \land s},L^N\big(\X^N_{\cdot \land s},\hat\balpha^N_s\big),\hat\alpha^{i,N}_s\big) - b^i_s\big(X^i_{\cdot \land s}, L^N\big(\X^N_{\cdot \land s},(\alpha^{i,t,\ell} \otimes_i \hat\balpha^{N,\smallertext{-}i})_s\big), \alpha^{i,t,\ell}_s \big) \Big) \d s \\
	&\quad+ \int_t^T \sum_{j \in \{1,\ldots,N\}\setminus\{i\}} Z^{i,j,N}_s \cdot \Big( b^j_s\big(X^j_{\cdot \land s},L^N\big(\X^N_{\cdot \land s},\hat\balpha^N_s\big),\hat\alpha^{\ell,N}_s\big) - b^j_s\big(X^j_{\cdot \land s}, L^N\big(\X^N_{\cdot \land s},(\alpha^{i,t,\ell} \otimes_i \hat\balpha^{N,\smallertext{-}i})_s\big), \hat\alpha^{j,N}_s\big) \Big) \d s \bigg] \\
	&\quad+ G^i\big(\varphi^i_1(X^i_{\cdot \land T}),\varphi^i_2\big(L^N\big(\X^N_{\cdot \land T}\big)\big)\big) - \int_t^T \partial^2_{m,n} G^i\big(M^{i,\star,N}_s,N^{i,\star,N}_s\big) \sum_{\ell =1}^N Z^{i,m,\ell,\star,N}_s \cdot Z^{i,n,\ell,\star,N}_s \d s \\
	&\quad- \frac{1}{2} \int_t^T \bigg(\partial^2_{m,m} G^i\big(M^{i,\star,N}_s,N_s^{i,\star,N}\big) \sum_{\ell =1}^N \big\|Z^{i,m,\ell,\star,N}_s\big\|^2 + \partial^2_{n,n} G^i\big(M^{i,\star,N}_s,N^{i,\star,N}_s\big) \sum_{\ell =1}^N \big\|Z^{i,n,\ell,\star,N}_s\big\|^2 \bigg) \d s \Bigg] \\
	&\quad- G^i\bigg( \E^{\P^{\smalltext{\alpha}^{\tinytext{i}\tinytext{,}\tinytext{t}\tinytext{,}\tinytext{\ell}} \smalltext{\otimes}_{\tinytext{i}} \smalltext{\hat\balpha}^{\tinytext{N}\tinytext{,}\tinytext{-i}}\smalltext{,}\smalltext{N}\smalltext{,}\smalltext{t}}_{\smalltext{\cdot}}} \Big[ \varphi_1^i(X^i_{\cdot \land T}) \Big], \E^{\P^{\smalltext{\alpha}^{\tinytext{i}\tinytext{,}\tinytext{t}\tinytext{,}\tinytext{\ell}} \smalltext{\otimes}_{\tinytext{i}} \smalltext{\hat\balpha}^{\tinytext{N}\tinytext{,}\tinytext{-i}}\smalltext{,}\smalltext{N}\smalltext{,}\smalltext{t}}_{\smalltext{\cdot}}} \Big[ \varphi^i_2\big(L^N(\X^N_{\cdot \land T}\big) \Big] \bigg) \\
&\geq \E^{\P^{\smalltext{\alpha}^{\tinytext{i}\tinytext{,}\tinytext{t}\tinytext{,}\tinytext{\ell}} \smalltext{\otimes}_{\tinytext{i}} \smalltext{\hat\balpha}^{\tinytext{N}\tinytext{,}\tinytext{-i}}\smalltext{,}\smalltext{N}\smalltext{,}\smalltext{t}}_{\smalltext{\cdot}}} \Bigg[ G^i\big(\varphi^i_1(X^i_{\cdot \land T}),\varphi^i_2\big(L^N\big(\X^N_{\cdot \land T}\big)\big)\big) - \int_t^T \partial^2_{m,n} G^i\big(M^{i,\star,N}_s,N^{i,\star,N}_s\big) \sum_{\ell =1}^N Z^{i,m,\ell,\star,N}_s \cdot Z^{i,n,\ell,\star,N}_s \d s \\
	&\quad- \frac{1}{2} \int_t^T \bigg( \partial^2_{m,m} G^i\big(M^{i,\star,N}_s,N_s^{i,\star,N}\big) \sum_{\ell =1}^N \big\|Z^{i,m,\ell,\star,N}_s\big\|^2 + \partial^2_{n,n} G^i\big(M^{i,\star,N}_s,N^{i,\star,N}_s\big) \sum_{\ell =1}^N \big\|Z^{i,n,\ell,\star,N}_s\big\|^2 \bigg) \d s \Bigg] \\
	&\quad- G^i\bigg( \E^{\P^{\smalltext{\alpha}^{\tinytext{i}\tinytext{,}\tinytext{t}\tinytext{,}\tinytext{\ell}} \smalltext{\otimes}_{\tinytext{i}} \smalltext{\hat\balpha}^{\tinytext{N}\tinytext{,}\tinytext{-i}}\smalltext{,}\smalltext{N}\smalltext{,}\smalltext{t}}_{\smalltext{\cdot}}} \Big[ \varphi_1^i(X^i_{\cdot \land T}) \Big], \E^{\P^{\smalltext{\alpha}^{\tinytext{i}\tinytext{,}\tinytext{t}\tinytext{,}\tinytext{\ell}} \smalltext{\otimes}_{\tinytext{i}} \smalltext{\hat\balpha}^{\tinytext{N}\tinytext{,}\tinytext{-i}}\smalltext{,}\smalltext{N}\smalltext{,}\smalltext{t}}_{\smalltext{\cdot}}} \Big[ \varphi^i_2\big(L^N(\X^N_{\cdot \land T}\big) \Big] \bigg), \; \P\text{\rm--a.s.}, \; t \in [0,T],
\end{align*}
where the inequality follows from the property $\hat\balpha^N_t(\omega) \in \cO_{N}\big(t, \X^N_{\cdot \land t}(\omega), \Z^N_t(\omega), \M^{\star,N}_t(\omega), \N^{\star,N}_t(\omega), \Z^{m,\star,N}_t(\omega), \Z^{n,\star,N}_t(\omega)\big)$, for any $(t,\omega) \in [0,T] \times \Omega$. Since the estimates in \Cref{align:estimatesBSDEsupSuff} are satisfied, the processes
\begin{align*}
M_t^{i,\ell,N} \coloneqq \E^{\P^{\smalltext{\alpha}^{\tinytext{i}\tinytext{,}\tinytext{t}\tinytext{,}\tinytext{\ell}} \smalltext{\otimes}_{\tinytext{i}} \smalltext{\hat\balpha}^{\tinytext{N}\tinytext{,}\tinytext{-i}}\smalltext{,}\smalltext{t}}_{\smalltext{\cdot}}} \big[ \varphi^i_1(X^i_{\cdot \land T}) \big] \; \text{and} \; N_t^{i,\ell,N} \coloneqq \E^{\P^{\smalltext{\alpha}^{\tinytext{i}\tinytext{,}\tinytext{t}\tinytext{,}\tinytext{\ell}} \smalltext{\otimes}_{\tinytext{i}} \smalltext{\hat\balpha}^{\tinytext{N}\tinytext{,}\tinytext{-i}}\smalltext{,}\smalltext{t}}_{\smalltext{\cdot}}} \big[ \varphi^i_2\big(L^N\big(\X^N_{\cdot \land T}\big)\big) \big], \; t \in [0,T],
\end{align*}
admit $\P$-modifications that are right-continuous and $\P\text{--a.s.}$ continuous $(\F,\P^{\alpha^{i,\ell} \otimes_i \hat\balpha^{N,\smallertext{-}i}})$--martingales, which we continue to denote using the same notation. They admit the representations
\begin{gather*}
M^{i,\ell,N}_t = M^{i,\ell,N}_0 + \int_0^t \sum_{j = 1}^N  Z^{i,m,j,\ell,N}_s \cdot \d \big(W_s^{\alpha^{\smalltext{i}\smalltext{,}\smalltext{t}\smalltext{,}\smalltext{\ell}},N}\big)^j, \; t \in [0,T], \; \P\text{\rm--a.s.}, \\
N^{i,\ell,N}_t = N^{i,\ell,N}_0 + \int_0^t \sum_{j = 1}^N  Z^{i,n,j,\ell,N}_s \cdot \d \big(W_s^{\alpha^{\smalltext{i}\smalltext{,}\smalltext{t}\smalltext{,}\smalltext{\ell}},N}\big)^j, \; t \in [0,T], \; \P\text{\rm--a.s.},
\end{gather*}
for $Z^{i,m,j,\ell,N}$ and $Z^{i,n,j,\ell,N}$ in $\L^2_{\mathrm{loc}}(\F_N,\P^{\alpha^{i,\ell} \otimes_i \hat\balpha^{N,\smallertext{-}i}})$, $j \in \{1,\ldots,N\}$. Consequently, taking into account that $\alpha^{i,t,\ell}_s = \hat\alpha^{i,N}_s$, for any $s \in [t+\ell,T]$, It\^o's formula implies 
\begin{align*}
& \E^{\P^{\smalltext{\alpha}^{\tinytext{i}\tinytext{,}\tinytext{t}\tinytext{,}\tinytext{\ell}} \smalltext{\otimes}_{\tinytext{i}} \smalltext{\hat\balpha}^{\tinytext{N}\tinytext{,}\tinytext{-i}}\smalltext{,}\smalltext{N}\smalltext{,}\smalltext{t}}_{\smalltext{\cdot}}} \Bigg[ G^i\big(\varphi^i_1(X^i_{\cdot \land T}),\varphi^i_2\big(L^N\big(\X^N_{\cdot \land T}\big)\big)\big) - \int_t^T \partial^2_{m,n} G^i\big(M^{i,\star,N}_s,N^{i,\star,N}_s\big) \sum_{\ell=1}^N Z^{i,m,\ell,\star,N}_s \cdot Z^{i,n,\ell,\star,N}_s \d s \\
	&\quad- \frac{1}{2} \int_t^T \bigg( \partial^2_{m,m} G^i\big(M^{i,\star,N}_s,N_s^{i,\star,N}\big) \sum_{\ell =1}^N\big\|Z^{i,m,\ell,\star,N}_s\big\|^2 + \partial^2_{n,n} G^i\big(M^{i,\star,N}_s,N^{i,\star,N}_s\big) \sum_{\ell =1}^N \big\|Z^{i,n,\ell,\star,N}_s\big\|^2 \bigg) \d s \Bigg] \\
	&\quad- G^i\bigg( \E^{\P^{\smalltext{\alpha}^{\tinytext{i}\tinytext{,}\tinytext{t}\tinytext{,}\tinytext{\ell}} \smalltext{\otimes}_{\tinytext{i}} \smalltext{\hat\balpha}^{\tinytext{N}\tinytext{,}\tinytext{-i}}\smalltext{,}\smalltext{N}\smalltext{,}\smalltext{t}}_{\smalltext{\cdot}}} \Big[ \varphi_1^i(X^i_{\cdot \land T}) \Big], \E^{\P^{\smalltext{\alpha}^{\tinytext{i}\tinytext{,}\tinytext{t}\tinytext{,}\tinytext{\ell}} \smalltext{\otimes}_{\tinytext{i}} \smalltext{\hat\balpha}^{\tinytext{N}\tinytext{,}\tinytext{-i}}\smalltext{,}\smalltext{N}\smalltext{,}\smalltext{t}}_{\smalltext{\cdot}}} \Big[ \varphi^i_2\big(L^N(\X^N_{\cdot \land T}\big) \Big] \bigg) \\
&= \E^{\P^{\smalltext{\alpha}^{\tinytext{i}\tinytext{,}\tinytext{t}\tinytext{,}\tinytext{\ell}} \smalltext{\otimes}_{\tinytext{i}} \smalltext{\hat\balpha}^{\tinytext{N}\tinytext{,}\tinytext{-i}}\smalltext{,}\smalltext{N}\smalltext{,}\smalltext{t}}_{\smalltext{\cdot}}} \Bigg[ \int_t^{t+\ell} \bigg( \partial^2_{m,n} G^{i,\ell,N}_s \sum_{j =1}^N Z^{i,m,j,\ell,N}_s \cdot Z^{i,n,j,\ell,N}_s - \partial^2_{m,n} G^{i,\star,N}_s \sum_{j =1}^N Z_s^{i,m,j,\star,N} \cdot Z_s^{i,n,j,\star,N} \bigg) \d s \Bigg] \\
&\quad + \frac{1}{2} \E^{\P^{\smalltext{\alpha}^{\tinytext{i}\tinytext{,}\tinytext{t}\tinytext{,}\tinytext{\ell}} \smalltext{\otimes}_{\tinytext{i}} \smalltext{\hat\balpha}^{\tinytext{N}\tinytext{,}\tinytext{-i}}\smalltext{,}\smalltext{N}\smalltext{,}\smalltext{t}}_{\smalltext{\cdot}}} \Bigg[ \int_t^{t+\ell} \bigg( \partial^2_{m,m} G^{i,\ell,N}_s \sum_{j =1}^N \big\|Z_s^{i,m,j,\ell,N}\big\|^2 - \partial^2_{m,m} G^{i,\star,N}_s \sum_{j =1}^N \big\|Z_s^{i,m,j,\star,N}\big\|^2 \bigg) \d s \Bigg] \\
&\quad +\frac{1}{2} \E^{\P^{\smalltext{\alpha}^{\tinytext{i}\tinytext{,}\tinytext{t}\tinytext{,}\tinytext{\ell}} \smalltext{\otimes}_{\tinytext{i}} \smalltext{\hat\balpha}^{\tinytext{N}\tinytext{,}\tinytext{-i}}\smalltext{,}\smalltext{N}\smalltext{,}\smalltext{t}}_{\smalltext{\cdot}}} \Bigg[ \int_t^{t+\ell} \bigg( \partial^2_{n,n} G^{i,\ell,N}_s \sum_{j =1}^N \big\|Z_s^{i,n,j,\ell,N}\big\|^2 - \partial^2_{n,n} G^{i,\star,N}_s \sum_{j =1}^N \big\|Z_s^{i,n,j,\star,N}\big\|^2 \bigg) \d s \Bigg], \; \P\text{--a.s.}, \; t \in [0,T],
\end{align*}
where we have used the notation $\partial^2_{m,n} G^{i,\ell,N}_t \coloneqq \partial^2_{m,n} G^i(M^{i,\ell,N}_t)$ and $\partial^2_{m,n} G^{i,\star,N}_t \coloneqq \partial^2_{m,n} G^i(M^{i,\star,N}_t)$, for $t \in [0,T]$, with analogous notation used for the other derivatives. Consequently, by \Cref{assumpDPP_NplayerGame}.\ref{Glip_NplayerGame} and the estimates in \Cref{align:estimatesBSDEsupSuff}, we deduce the existence of a constant $c_{\partial^\smalltext{2} G} >0$ such that
\begin{align*}
& \Bigg| \E^{\P^{\smalltext{\alpha}^{\tinytext{i}\tinytext{,}\tinytext{t}\tinytext{,}\tinytext{\ell}} \smalltext{\otimes}_{\tinytext{i}} \smalltext{\hat\balpha}^{\tinytext{N}\tinytext{,}\tinytext{-i}}\smalltext{,}\smalltext{N}\smalltext{,}\smalltext{t}}_{\smalltext{\cdot}}} \Bigg[ G^i\big(\varphi^i_1(X^i_{\cdot \land T}),\varphi^i_2\big(L^N\big(\X^N_{\cdot \land T}\big)\big)\big) - \int_t^T \partial^2_{m,n} G^i\big(M^{i,\star,N}_s,N^{i,\star,N}_s\big) \sum_{\ell =1}^N Z^{i,m,\ell,\star,N}_s \cdot Z^{i,n,\ell,\star,N}_s \d s \\
	&\quad- \frac{1}{2} \int_t^T \bigg( \partial^2_{m,m} G^i\big(M^{i,\star,N}_s,N_s^{i,\star,N}\big) \sum_{\ell =1}^N \big\|Z^{i,m,\ell,\star,N}_s\big\|^2 + \partial^2_{n,n} G^i\big(M^{i,\star,N}_s,N^{i,\star,N}_s\big) \sum_{\ell =1}^N \big\|Z^{i,n,\ell,\star,N}_s\big\|^2 \bigg) \d s \Bigg] \\
	&\quad- G^i\bigg( \E^{\P^{\smalltext{\alpha}^{\tinytext{i}\tinytext{,}\tinytext{t}\tinytext{,}\tinytext{\ell}} \smalltext{\otimes}_{\tinytext{i}} \smalltext{\hat\balpha}^{\tinytext{N}\tinytext{,}\tinytext{-i}}\smalltext{,}\smalltext{N}\smalltext{,}\smalltext{t}}_{\smalltext{\cdot}}} \Big[ \varphi_1^i(X^i_{\cdot \land T}) \Big], \E^{\P^{\smalltext{\alpha}^{\tinytext{i}\tinytext{,}\tinytext{t}\tinytext{,}\tinytext{\ell}} \smalltext{\otimes}_{\tinytext{i}} \smalltext{\hat\balpha}^{\tinytext{N}\tinytext{,}\tinytext{-i}}\smalltext{,}\smalltext{N}\smalltext{,}\smalltext{t}}_{\smalltext{\cdot}}} \Big[ \varphi^i_2\big(L^N(\X^N_{\cdot \land T}\big) \Big] \bigg) \Bigg| \\
&\leq c_{\partial^\smalltext{2} G} \E^{\P^{\smalltext{\alpha}^{\tinytext{i}\tinytext{,}\tinytext{t}\tinytext{,}\tinytext{\ell}} \smalltext{\otimes}_{\tinytext{i}} \smalltext{\hat\balpha}^{\tinytext{N}\tinytext{,}\tinytext{-i}}\smalltext{,}\smalltext{N}\smalltext{,}\smalltext{t}}_{\smalltext{\cdot}}} \Bigg[ \int_t^{t+\ell} \sum_{j =1}^N \Big( \big\|Z_s^{i,m,j,\ell,N}\big\|^2 + \big\|Z_s^{i,m,j,\star,N}\big\|^2 + \big\|Z_s^{i,n,j,\ell,N}\big\|^2 + \big\|Z_s^{i,n,j,\star,N}\big\|^2 \Big) \d s \Bigg], \; \P\text{--a.s.}, \; t \in [0,T].
\end{align*}
This, together with the integrability of the processes $Z_s^{i,m,j,\ell,N}$, ${Z}^{i,m,j,\star,N}$, $Z_s^{i,n,j,\ell,N}$ and ${Z}^{i,n,j,\star,N}$, as ensured by the estimates in \Cref{align:estimatesBSDEsupSuff}, implies the existence of some $\ell_\varepsilon >0$ such that the absolute value is smaller than $\varepsilon \ell$. We conclude that for $(\ell,t,\alpha) \in (0,\ell_\varepsilon) \times [0,T] \times \cA_N$, 
\begin{align*}
J^i(t,\hat\alpha^{i,N};\hat\balpha^{N,\smallertext{-}i}) - J^i(t,\alpha^i_{t,\ell};\hat\balpha^{N,\smallertext{-}i}) \geq - \varepsilon \ell, \; \P\text{\rm--a.s.}
\end{align*}
\end{proof}

\section{Auxiliary results}\label{appendix:auxResult}

\begin{proof}[Proof of \Cref{lemma:momBoundAuxiliarySystem}]
We derive several bounds for the auxiliary system introduced in \Cref{align:intermediateSystemNplayerGame}, which plays a key role in the proof of \Cref{theorem:convergenceTheorem}. We first establish estimates for the forward component. Specifically, we notice that for any $p \geq 1$, any $i\in\{1,\dots,N\}$, any $t \in [u,T]$, and for $\P\text{\rm--a.e.} \; \omega\in\Omega$, the following holds:
\begin{align*}
&\E^{\P^{\smalltext{\hat\balpha}^\tinytext{N}\smalltext{,}\smalltext{N}\smalltext{,}\smalltext{u}}_\smalltext{\omega}} \Big[ \big\|\widetilde X^{i}_{\cdot \land t}\big\|^p_\infty \Big] \\
&= \E^{\P^{\smalltext{\hat\balpha}^\tinytext{N}\smalltext{,}\smalltext{N}\smalltext{,}\smalltext{u}}_\smalltext{\omega}} \bigg[ \sup_{s\in[u,t]} \bigg\|X^i_u(\omega) + \int_u^s \sigma_r(\widetilde X^{i}_{\cdot \land r}) b_r\big(\widetilde X^{i}_{\cdot \land r},L^N\big(\widetilde \X^N_{\cdot \land r},\widetilde\balpha^N_r\big),\widetilde\alpha^{i,N}_r\big) \d r + \int_u^t \sigma_r(\widetilde X^{i}_{\cdot \land r}) \d \big(W_r^{\hat\balpha^\smalltext{N},N,u,\omega}\big)^i\bigg\|^p \bigg] \\
&\leq 3^{p-1} \|X^i_u(\omega)\|^p + 3^{p-1} \E^{\P^{\smalltext{\hat\balpha}^\tinytext{N}\smalltext{,}\smalltext{N}\smalltext{,}\smalltext{u}}_\smalltext{\omega}} \bigg[ \sup_{s\in[u,t]} \bigg\| \int_u^s \sigma_r(\widetilde X^{i}_{\cdot \land r}) b_r\big(\widetilde X^{i}_{\cdot \land r},L^N\big(\widetilde \X^N_{\cdot \land r},\widetilde\balpha^N_r\big),\widetilde\alpha^{i,N}_r\big) \d r \bigg\| \bigg] \\
&\quad+ 3^{p-1}\E^{\P^{\smalltext{\hat\balpha}^\tinytext{N}\smalltext{,}\smalltext{N}\smalltext{,}\smalltext{u}}_\smalltext{\omega}} \bigg[ \sup_{s\in[u,t]} \bigg\| \int_u^s \sigma_r(\widetilde X^{i}_{\cdot \land r}) \d \big(W_r^{\hat\balpha^\smalltext{N},N,u,\omega}\big)^i\bigg\|^p \bigg].
\end{align*}

Given that the drift function $b$ is bounded, there exists a constant $c_b>0$ such that, together with \Cref{assumpConvThm}.\ref{lipSigma} and the consistency of the spectral norm with the Euclidean norm, it follows that
\begin{align*}
&\E^{\P^{\smalltext{\hat\balpha}^\tinytext{N}\smalltext{,}\smalltext{N}\smalltext{,}\smalltext{u}}_\smalltext{\omega}} \Big[ \big\|\widetilde X^{i}_{\cdot \land t}\big\|^p_\infty \Big] &\\
&\leq 3^{p-1} \|X^{i}_u(\omega)\|^p + 3^{p-1} c^p_b \ell^p_\sigma \E^{\P^{\smalltext{\hat\balpha}^\tinytext{N}\smalltext{,}\smalltext{N}\smalltext{,}\smalltext{u}}_\smalltext{\omega}} \bigg[\int_u^t \big( 1 + \big\|\widetilde X^{i}_{\cdot \land s}\big\|_\infty \big)^p \d s \bigg] + 3^{p-1} \E^{\P^{\smalltext{\hat\balpha}^\tinytext{N}\smalltext{,}\smalltext{N}\smalltext{,}\smalltext{u}}_\smalltext{\omega}} \Bigg[ \sup_{s \in [u,t]} \bigg\| \int_u^s \sigma_r(\widetilde X^{i}_{\cdot \land r}) \d \big(W_r^{\hat\balpha^\smalltext{N},N,u,\omega}\big)^i \bigg\|^p \Bigg] \\
&\leq 3^{p-1} \|X^{i}_u(\omega)\|^p + 3^{p-1} c^p_b \ell^p_\sigma \E^{\P^{\smalltext{\hat\balpha}^\tinytext{N}\smalltext{,}\smalltext{N}\smalltext{,}\smalltext{u}}_\smalltext{\omega}} \bigg[ \int_u^t \big( 1 + \big\|\widetilde X^{i}_{\cdot \land s}\big\|_\infty \big)^p \d s \bigg] + 3^{p-1} c_{p,{\smallertext{\rm BDG}}} \E^{\P^{\smalltext{\hat\balpha}^\tinytext{N}\smalltext{,}\smalltext{N}\smalltext{,}\smalltext{u}}_\smalltext{\omega}} \bigg[\int_u^t\big \|\sigma_s(\widetilde X^{i}_{\cdot \land s})\big\|^2 \d s \bigg] \\
&\leq 3^{p-1} \|X^{i}_u(\omega)\|^p + 3^{p-1} \big(c^p_b+c_{p,{\smallertext{\rm BDG}}}\big) \ell^p_\sigma \E^{\P^{\smalltext{\hat\balpha}^\tinytext{N}\smalltext{,}\smalltext{N}\smalltext{,}\smalltext{u}}_\smalltext{\omega}} \bigg[ \int_u^t \big( 1 + \big\|\widetilde X^{i}_{\cdot \land s}\big\|_\infty \big)^p \d s \bigg] \\
&\leq 3^{p-1} \|X^{i}_u(\omega)\|^p + 6^{p-1} \big(c^p_b+c_{p,{\smallertext{\rm BDG}}}\big) \ell^p_\sigma \bigg( t + \int_u^t \E^{\P^{\smalltext{\hat\balpha}^\tinytext{N}\smalltext{,}\smalltext{N}\smalltext{,}\smalltext{u}}_\smalltext{\omega}} \Big[ \big\|\widetilde X^{i}_{\cdot \land s}\big\|_\infty^p \Big] \d s \bigg), \; \P\text{\rm--a.e.} \; \omega\in\Omega,
\end{align*}
where the second inequality follows from the Burkholder--Davis--Gundy's inequality with constant $c_{p,{\smallertext{\rm BDG}}}$. Applying Gr\"onwall's inequality yields
\begin{align}\label{align:tildeX_boundP}
\E^{\P^{\smalltext{\hat\balpha}^\tinytext{N}\smalltext{,}\smalltext{N}\smalltext{,}\smalltext{u}}_\smalltext{\omega}} \Big[ \big\|\widetilde X^{i}_{\cdot \land t}\big\|_\infty^p \Big] \leq 3^{p-1} \Big( \|X^{i}_u(\omega)\|^p + 2^{p-1} \big( c^p_b + c_{p,{\smallertext{\rm BDG}}} \big) \ell^p_\sigma t \Big) \mathrm{e}^{6^{\smalltext{p}\smalltext{-}\smalltext{1}} ( c^\smalltext{p}_\smalltext{b} + c_{\smalltext{p}\smalltext{,}{\smallertext{\rm BDG}}} ) \ell^\smalltext{p}_\smalltext{\sigma} t}, \; \P\text{\rm--a.e.} \; \omega\in\Omega.
\end{align}
Similarly, for $\P\text{\rm--a.e.} \; \omega\in\Omega$,
\begin{align}\label{align:tildeSumX_boundP}
\notag \E^{\P^{\smalltext{\hat\balpha}^\tinytext{N}\smalltext{,}\smalltext{N}\smalltext{,}\smalltext{u}}_\smalltext{\omega}} \Bigg[\int_u^T\mathrm{e}^{\beta t}\sum_{\ell =1}^N\big\| \widetilde X^{\ell}_{\cdot\wedge t}\big\|^p_\infty\mathrm{d}t \Bigg] 
&\leq 3^{p-1} \int_u^T \mathrm{e}^{\beta t} \sum_{\ell =1}^N \bigg( \Big( \|X^{\ell}_u(\omega)\|^p + 2^{p-1} \big( c^p_b + c_{p,{\smallertext{\rm BDG}}} \big) \ell^p_\sigma t \Big) \mathrm{e}^{6^{\smalltext{p}\smalltext{-}\smalltext{1}} ( c^\smalltext{p}_\smalltext{b} + c_{\smalltext{p}\smalltext{,}{\smallertext{\rm BDG}}} ) \ell^\smalltext{p}_\sigma t} \bigg) \mathrm{d}t \\
&\leq 3^{p-1}T \mathrm{e}^{\beta T + 6 ( c^\smalltext{p}_\smalltext{b} + c_{\smalltext{p}\smalltext{,}{\smallertext{\rm BDG}}} ) \ell^\smalltext{p}_\smalltext{\sigma} T}\Bigg( \sum_{\ell =1}^N \big\|X^{\ell,N}_u(\omega)\big\|^p  + 2^{p-1} \big( c^p_b + c_{p,{\smallertext{\rm BDG}}} \big) \ell^p_\sigma  TN\Bigg).
\end{align}
For notational simplicity, let
\begin{gather*}
c^1_p\coloneqq 3^{p-1} \mathrm{e}^{6^{\smalltext{p}\smalltext{-}\smalltext{1}} ( c^\smalltext{p}_\smalltext{b} + c_{\smalltext{p}\smalltext{,}{\smallertext{\rm BDG}}} ) \ell^\smalltext{p}_\smalltext{\sigma} T},\; \bar{c}^1_p\coloneqq \mathrm{e}^{6^{\smalltext{p}\smalltext{-}\smalltext{1}} ( c^\smalltext{p}_\smalltext{b} + c_{\smalltext{p}\smalltext{,}{\smallertext{\rm BDG}}} ) \ell^\smalltext{p}_\smalltext{\sigma} T}6^{p-1} \big( c^p_b + c_{p,{\smallertext{\rm BDG}}} \big) \ell^p_\sigma T, \\
c^2_{p}\coloneqq 3^{p-1}T \mathrm{e}^{\beta T + 6 ( c^\smalltext{p}_\smalltext{b} + c_{\smalltext{p}\smalltext{,}{\smallertext{\rm BDG}}} ) \ell^\smalltext{p}_\smalltext{\sigma} T},\; \bar{c}^2_p\coloneqq c^2_p2^{p-1} \big( c^p_b + c_{p,{\smallertext{\rm BDG}}} \big) \ell^p_\sigma  T.
\end{gather*}

Since the dynamics of the forward component $X^{i}$, described in \eqref{align:systemNplayerGame_rcpd}, are analogous to those of $\widetilde X^{i}$, the same estimates apply, and in particular, for $\P\text{\rm--a.e.} \; \omega\in\Omega$, we have
\begin{align*}
\E^{\P^{\smalltext{\hat\balpha}^\tinytext{N}\smalltext{,}\smalltext{N}\smalltext{,}\smalltext{u}}_\smalltext{\omega}} \bigg[\int_u^T\mathrm{e}^{\beta t} \big\| X^{i}_{\cdot\wedge t}\big\|^2_\infty\mathrm{d}t \bigg] \leq c^2_{2} \|X^{i}_u(\omega)\|^2 + \bar{c}^2_{2}, \; \E^{\P^{\smalltext{\hat\balpha}^\tinytext{N}\smalltext{,}\smalltext{N}\smalltext{,}\smalltext{u}}_\smalltext{\omega}} \Bigg[\int_u^T\mathrm{e}^{\beta t}\sum_{\ell =1}^N\big\| X^{\ell}_{\cdot\wedge t}\big\|^2_\infty\mathrm{d}t \Bigg] \leq c^2_2 \sum_{\ell =1}^N \big\|X^{\ell,N}_u(\omega)\big\|^2  + \bar{c}^2_2 N.
\end{align*}

For the bounds on the $Z$-components of the martingale terms $\widetilde M^{i,\star,N}$, for $i \in \{1,\ldots,N\}$, and $N^{\star,N}$, we use the fact that the functions $\varphi_1$ and $\varphi_2$ are assumed to be bounded, as stated in \Cref{assumpConvThm}.\ref{previousAssum} or equivalently in \Cref{assumpDPP_NplayerGame}.\ref{boundedPhisMart_NplayerGame}. Following the computations carried out in \textbf{Step 1} of \Cref{subsubsection:fromNplayerGameToAuxiliary}, it follows that there exist two constants $c_{\varphi_\smalltext{1}}>0$ and $c_{\varphi_\smalltext{2}}>0$ such that, for $\P\text{\rm--a.e.} \; \omega\in\Omega$, we have
\begin{align}\label{align:estZnAuxiliary}
\notag& \E^{\P^{\smalltext{\hat\balpha}^\tinytext{N}\smalltext{,}\smalltext{N}\smalltext{,}\smalltext{u}}_\smalltext{\omega}} \Bigg[ \int_u^T \mathrm{e}^{\beta t} \sum_{\ell =1}^N \big\|\widetilde Z^{i,m,\ell,\star,N}_t\big\|^2 \d t \Bigg] \leq \E^{\P^{\smalltext{\hat\balpha}^\tinytext{N}\smalltext{,}\smalltext{N}\smalltext{,}\smalltext{u}}_\smalltext{\omega}} \Big[ \mathrm{e}^{\beta T} \big|\varphi_1(\widetilde X^{i}_{\cdot \land T}) \big|^2 \Big] \leq \mathrm{e}^{\beta T} c^2_{\varphi_\smalltext{1}}, \\
& \E^{\P^{\smalltext{\hat\balpha}^\tinytext{N}\smalltext{,}\smalltext{N}\smalltext{,}\smalltext{u}}_\smalltext{\omega}} \Bigg[ \int_u^T \mathrm{e}^{\beta t} \sum_{\ell =1}^N \big\|\widetilde Z^{n,\ell,\star,N}_t\big\|^2 \d t \Bigg] \leq \E^{\P^{\smalltext{\hat\balpha}^\tinytext{N}\smalltext{,}\smalltext{N}\smalltext{,}\smalltext{u}}_\smalltext{\omega}} \Big[ \mathrm{e}^{\beta T} \big| \varphi_2\big(L^N\big(\widetilde \X^N_{\cdot \land T}\big)\big) \big|^2 \Big] \leq \mathrm{e}^{\beta T} c^2_{\varphi_\smalltext{2}}.
\end{align}

Thus, for $\P\text{\rm--a.e.} \; \omega\in\Omega$
\begin{align*}
& \E^{\P^{\smalltext{\hat\balpha}^\tinytext{N}\smalltext{,}\smalltext{N}\smalltext{,}\smalltext{u}}_\smalltext{\omega}} \Bigg[ \int_u^T \mathrm{e}^{\beta t} \sum_{(i,\ell) \in \{1,\ldots,N\}^\smalltext{2}} \big\|\widetilde Z^{i,m,\ell,\star,N}_t\big\|^2 \d t \Bigg] \leq \mathrm{e}^{\beta T} c^2_{\varphi_\smalltext{1}} N.
\end{align*}

\medskip
Before proceeding with the computations for the bounds on $\widetilde{Z}^{i,\ell,N}$, with $(i,\ell) \in \{1,\ldots,N\}^2$, we first translate the growth conditions on the functions $f$ and $g$ stated in \Cref{assumpConvThm}.\ref{growth_f_gG}. We begin by noting that the compactness assumption on the set $A$, combined with the growth condition just recalled, ensures that there exists a constant $c_A>0$ such that
\begin{align*}
\Big|f_t\big(\widetilde X^{i}_{\cdot \land t},L^N\big(\widetilde \X^N_{\cdot \land t},\widetilde\balpha^N_t\big),\widetilde\alpha^{i,N}_t\big)\Big| \leq \ell_{f} \Bigg( 1 + \|\widetilde X^{i}_{\cdot \land t}\|^{\bar{p}}_\infty + \frac{1}{N} \sum_{\ell =1}^N \big\|\widetilde X^{\ell}_{\cdot \land s}\big\|^{\bar{p}}_\infty + c_A \Bigg).
\end{align*}
Meanwhile, the boundedness of $\varphi_1$ and $\varphi_2$ implies the existence of a constant $\ell_{g+G,\varphi_\smalltext{1},\varphi_\smalltext{2}} \geq \ell_g$ such that
\begin{align*}
\Big|g\big(\widetilde X^{i}_{\cdot \land T}, L^N\big(\widetilde \X^N_{\cdot \land T}\big)\big) + G\big(\varphi_1(\widetilde X^{i}_{\cdot \land T}),\varphi_2\big(L^N\big(\widetilde \X^N_{\cdot \land T}\big)\big)\big)\Big| \leq \ell_{g+G,\varphi_\smalltext{1},\varphi_\smalltext{2}} \Bigg( 1 + \big\|\widetilde X^{i}_{\cdot \land T}\big\|^{\bar{p}}_\infty + \frac{1}{N} \sum_{\ell \in \{1,\ldots, N\}} \big\|\widetilde X^{\ell}_{\cdot \land T}\big\|^{\bar{p}}_\infty \Bigg) .
\end{align*}

Let us define
\begin{align*}
c^\prime_{\smallertext{\rm{BMO}}_{\smalltext{[}\smalltext{u}\smalltext{,}\smalltext{T}\smalltext{]}}} &\coloneqq \frac{3c^2_{\partial^\smalltext{2}G}}{4} \big\|\widetilde M^{i,\star,N}\big\|^2_{\smallertext{\rm{BMO}}_{\smalltext{[}\smalltext{u}\smalltext{,}\smalltext{T}\smalltext{]}}} + \frac{c^2_{\partial^\smalltext{2}G}}{4} \big\|\widetilde N^{\star,N}\big\|^2_{\smallertext{\rm{BMO}}_{\smalltext{[}\smalltext{u}\smalltext{,}\smalltext{T}\smalltext{]}}}.
\end{align*}
Although the notation has been slightly abused for simplicity, it is clear that all the constant just introduced is uniformly bounded with respect to $N$. This follows from the boundedness of $\varphi^1$ and $\varphi^2$, as well as the boundedness of the drift function $b$. Then, arguing similarly to \Cref{eq:supDeltaY}, we obtain that
\begin{align*}
\notag&\big( 1 - \varepsilon_7 4c^2_{1,\smallertext{\rm BDG}} - \varepsilon_8 c^\prime_{\smallertext{\rm{BMO}}_{\smalltext{[}\smalltext{u}\smalltext{,}\smalltext{T}\smalltext{]}}} \big) \E^{\P^{\smalltext{\hat\balpha}^\tinytext{N}\smalltext{,}\smalltext{N}\smalltext{,}\smalltext{u}}_\smalltext{\omega}} \bigg[ \sup_{t \in [u,T]} \mathrm{e}^{\beta t} \big|\widetilde Y^{i,N}_t\big|^2 \bigg] \\
		\notag&\leq 3 \ell_{g+G,\varphi_\smalltext{1},\varphi_\smalltext{2}} \E^{\P^{\smalltext{\hat\balpha}^\tinytext{N}\smalltext{,}\smalltext{N}\smalltext{,}\smalltext{u}}_\smalltext{\omega}} \Bigg[ \mathrm{e}^{\beta T} \Bigg( 1 + \big\|\widetilde X^{i}_{\cdot \land T}\big\|^{2\bar{p}}_\infty + \frac{1}{N} \sum_{\ell =1}^N \big\|\widetilde X^{\ell}_{\cdot \land T}\big\|^{2\bar{p}}_\infty \Bigg) \Bigg] \\
&\quad+ \big( \ell_f^2(1+c_A)^2+ 2\ell^2_f - \beta \big) \E^{\P^{\smalltext{\hat\balpha}^\tinytext{N}\smalltext{,}\smalltext{N}\smalltext{,}\smalltext{u}}_\smalltext{\omega}} \bigg[ \int_u^T \mathrm{e}^{\beta t} \big|\widetilde Y^{i,N}_t\big|^2 \d t \bigg] + \E^{\P^{\smalltext{\hat\balpha}^\tinytext{N}\smalltext{,}\smalltext{N}\smalltext{,}\smalltext{u}}_\smalltext{\omega}} \Bigg[ \int_u^T \mathrm{e}^{\beta t} \Bigg(1+ \big\|\widetilde X^{i}_{\cdot \land t}\big\|^{2\bar{p}}_\infty + \frac{1}{N} \sum_{\ell =1}^N  \big\|\widetilde X^{\ell}_{\cdot \land t}\big\|^{2\bar{p}}_\infty \Bigg) \d t \Bigg] \\
\notag&\quad+ \E^{\P^{\smalltext{\hat\balpha}^\tinytext{N}\smalltext{,}\smalltext{N}\smalltext{,}\smalltext{u}}_\smalltext{\omega}} \Bigg[ \int_u^T \mathrm{e}^{\beta t} \sum_{\ell=1}^N \bigg( \frac{1}{\varepsilon_7} \big\|\widetilde Z^{i,\ell,N}_t\big\|^2 + \frac{1}{\varepsilon_8} \big\|\widetilde Z^{i,m,\ell,\star,N}_t\big\|^2 + \frac{3}{\varepsilon_8} \big\|\widetilde Z^{n,\ell,\star,N}_t\big\|^2 \bigg) \d t \Bigg], \; \P\text{\rm--a.e.} \; \omega\in\Omega,
\end{align*}
for some $\varepsilon_7>0$ and $\varepsilon_8>0$. Analogously to \Cref{eq:intDeltaZ}, for some $\varepsilon_9>0$, and for any $t \in [u,T]$, we have that
\begin{align*}
\notag&\E^{\P^{\smalltext{\hat\balpha}^\tinytext{N}\smalltext{,}\smalltext{N}\smalltext{,}\smalltext{u}}_\smalltext{\omega}} \Bigg[ \int_u^T \mathrm{e}^{\beta t} \sum_{\ell=1}^N \big\|\widetilde Z^{i,\ell,N}_t\big\|^2 \d t \Bigg] \\
	\notag&\leq 3 \ell_{g+G,\varphi_\smalltext{1},\varphi_\smalltext{2}} \E^{\P^{\smalltext{\hat\balpha}^\tinytext{N}\smalltext{,}\smalltext{N}\smalltext{,}\smalltext{u}}_\smalltext{\omega}} \Bigg[ \mathrm{e}^{\beta T} \Bigg( 1+\big\|\widetilde X^{i}_{\cdot \land T}\big\|^{2\bar{p}}_\infty + \frac{1}{N} \sum_{\ell =1}^N \big\|\widetilde X^{\ell}_{\cdot \land T}\big\|^{2\bar{p}}_\infty \Bigg) \Bigg] + \big( \ell_f^2(1+c_A)^2+ 2\ell^2_f - \beta \big) \E^{\P^{\smalltext{\hat\balpha}^\tinytext{N}\smalltext{,}\smalltext{N}\smalltext{,}\smalltext{u}}_\smalltext{\omega}} \bigg[ \int_u^T \mathrm{e}^{\beta t} \big|\widetilde Y^{i,N}_t\big|^2 \d t \bigg]\\
&\quad + \E^{\P^{\smalltext{\hat\balpha}^\tinytext{N}\smalltext{,}\smalltext{N}\smalltext{,}\smalltext{u}}_\smalltext{\omega}} \Bigg[ \int_u^T \mathrm{e}^{\beta t} \Bigg(1+ \big\|\widetilde X^{i}_{\cdot \land t}\big\|^{2\bar{p}}_\infty + \frac{1}{N} \sum_{\ell =1}^N \big\|\widetilde X^{\ell}_{\cdot \land t}\big\|^{2\bar{p}}_\infty \Bigg) \d t \Bigg] + \frac{1}{\varepsilon_9} \E^{\P^{\smalltext{\hat\balpha}^\tinytext{N}\smalltext{,}\smalltext{N}\smalltext{,}\smalltext{u}}_\smalltext{\omega}} \Bigg[ \int_u^T \mathrm{e}^{\beta t} \sum_{\ell =1}^N \big\|\widetilde Z^{i,m,\ell,\star,N}_t\big\|^2 \d t \Bigg] \\
\notag&\quad+ \frac{3}{\varepsilon_9} \E^{\P^{\smalltext{\hat\balpha}^\tinytext{N}\smalltext{,}\smalltext{N}\smalltext{,}\smalltext{u}}_\smalltext{\omega}} \Bigg[ \int_u^T \mathrm{e}^{\beta t} \sum_{\ell =1}^N \big\|\widetilde Z^{n,\ell,\star,N}_t\big\|^2 \d t \Bigg] + \varepsilon_9 c^\prime_{\smallertext{\rm{BMO}}_{\smalltext{[}\smalltext{u}\smalltext{,}\smalltext{T}\smalltext{]}}} \E^{\P^{\smalltext{\hat\balpha}^\tinytext{N}\smalltext{,}\smalltext{N}\smalltext{,}\smalltext{u}}_\smalltext{\omega}} \bigg[ \sup_{t \in [u,T]} \mathrm{e}^{\beta t} \big|\widetilde Y^{i,N}_t\big|^2 \bigg], \; \P\text{\rm--a.e.} \; \omega\in\Omega.
\end{align*}
Define
\[
c_{\eps_{\smalltext{7}\smalltext{,}\smalltext{8}\smalltext{,}\smalltext{9}}}\coloneqq \frac{ \varepsilon_9 c^\prime_{\smallertext{\rm{BMO}}_{\smalltext{[}\smalltext{u}\smalltext{,}\smalltext{T}\smalltext{]}}} }{1 - \varepsilon_7 4c^2_{1,\smallertext{\rm BDG}} - \varepsilon_8 c^\prime_{\smallertext{\rm{BMO}}_{\smalltext{[}\smalltext{u}\smalltext{,}\smalltext{T}\smalltext{]}}}}.
\]
By combining the previous two inequalities, we deduce
\begin{align*}
\E^{\P^{\smalltext{\hat\balpha}^\tinytext{N}\smalltext{,}\smalltext{N}\smalltext{,}\smalltext{u}}_\smalltext{\omega}} \Bigg[ \int_u^T \mathrm{e}^{\beta t} \sum_{\ell =1}^N \big\|\widetilde Z^{i,\ell,N}_t\big\|^2 \d t \Bigg] &\leq 3 \ell_{g+G,\varphi_\smalltext{1},\varphi_\smalltext{2}} \big( 1 + c_{\eps_{\smalltext{7}\smalltext{,}\smalltext{8}\smalltext{,}\smalltext{9}}} \big) \E^{\P^{\smalltext{\hat\balpha}^\tinytext{N}\smalltext{,}\smalltext{N}\smalltext{,}\smalltext{u}}_\smalltext{\omega}} \Bigg[ \mathrm{e}^{\beta T} \Bigg( 1 + \big\|\widetilde X^{i}_{\cdot \land T}\big\|^{2\bar{p}}_\infty + \frac{1}{N} \sum_{\ell=1}^N \big\|\widetilde X^{\ell}_{\cdot \land T}\big\|^{2\bar{p}}_\infty \Bigg) \Bigg] \\
&\quad+ \big( \ell_f^2(1+c_A)^2+ 2\ell^2_f - \beta \big) \big( 1 + c_{\eps_{\smalltext{7}\smalltext{,}\smalltext{8}\smalltext{,}\smalltext{9}}} \big) \E^{\P^{\smalltext{\hat\balpha}^\tinytext{N}\smalltext{,}\smalltext{N}\smalltext{,}\smalltext{u}}_\smalltext{\omega}} \bigg[ \int_u^T \mathrm{e}^{\beta t} \big|\widetilde Y^{i,N}_t\big|^2 \d t \bigg] \\
&\quad+ \big( 1+  c_{\eps_{\smalltext{7}\smalltext{,}\smalltext{8}\smalltext{,}\smalltext{9}}} \big) \E^{\P^{\smalltext{\hat\balpha}^\tinytext{N}\smalltext{,}\smalltext{N}\smalltext{,}\smalltext{u}}_\smalltext{\omega}} \Bigg[ \int_u^T \mathrm{e}^{\beta t} \Bigg(1+ \big\|\widetilde X^{i}_{\cdot \land t}\big\|^{2\bar{p}}_\infty + \frac{1}{N} \sum_{\ell =1}^N \big\|\widetilde X^{\ell}_{\cdot \land t}\big\|^{2\bar{p}}_\infty \Bigg) \d t \Bigg] \\
\notag&\quad+ \bigg( \frac{1}{\varepsilon_9} + \frac{ c_{\eps_{\smalltext{7}\smalltext{,}\smalltext{8}\smalltext{,}\smalltext{9}}} }{\eps_8}\bigg) \E^{\P^{\smalltext{\hat\balpha}^\tinytext{N}\smalltext{,}\smalltext{N}\smalltext{,}\smalltext{u}}_\smalltext{\omega}} \Bigg[ \int_u^T \mathrm{e}^{\beta t} \sum_{\ell =1}^N \big\|\widetilde Z^{i,m,\ell,\star,N}_t\big\|^2 \d t \Bigg] \\
&\quad+ 3 \bigg( \frac{1}{\varepsilon_9} + \frac{ c_{\eps_{\smalltext{7}\smalltext{,}\smalltext{8}\smalltext{,}\smalltext{9}}} }{\eps_8}\bigg) \E^{\P^{\smalltext{\hat\balpha}^\tinytext{N}\smalltext{,}\smalltext{N}\smalltext{,}\smalltext{u}}_\smalltext{\omega}} \Bigg[ \int_u^T \mathrm{e}^{\beta t} \sum_{\ell =1}^N \big\|\widetilde Z^{n,\ell,\star,N}_t\big\|^2 \d t \Bigg] \\
&\quad+\frac{ c_{\eps_{\smalltext{7}\smalltext{,}\smalltext{8}\smalltext{,}\smalltext{9}}} }{\eps_7}\E^{\P^{\smalltext{\hat\balpha}^\tinytext{N}\smalltext{,}\smalltext{N}\smalltext{,}\smalltext{u}}_\smalltext{\omega}} \Bigg[ \int_u^T \mathrm{e}^{\beta t} \sum_{\ell =1}^N \big\|\widetilde Z^{i,\ell,N}_t\big\|^2 \d t \Bigg], \; \P\text{\rm--a.e.} \; \omega\in\Omega.
\end{align*}
Under the condition that $\beta >  \ell_f^2(1+c_A)^2+ 2\ell^2_f $, it holds that
\begin{align}\label{eq:estim97}
\notag&\bigg( 1 - \frac{c_{\eps_{\smalltext{7}\smalltext{,}\smalltext{8}\smalltext{,}\smalltext{9}}}}{\eps_7}\bigg) \E^{\P^{\smalltext{\hat\balpha}^\tinytext{N}\smalltext{,}\smalltext{N}\smalltext{,}\smalltext{u}}_\smalltext{\omega}} \Bigg[ \int_u^T \mathrm{e}^{\beta t} \sum_{\ell =1}^N \big\|\widetilde Z^{i,\ell,N}_t\big\|^2 \d t \Bigg] \\
\notag&\leq 3 \ell_{g+G,\varphi_\smalltext{1},\varphi_\smalltext{2}} \big( 1 + c_{\eps_{\smalltext{7}\smalltext{,}\smalltext{8}\smalltext{,}\smalltext{9}}}\big) \E^{\P^{\smalltext{\hat\balpha}^\tinytext{N}\smalltext{,}\smalltext{N}\smalltext{,}\smalltext{u}}_\smalltext{\omega}} \Bigg[ \mathrm{e}^{\beta T} \bigg( 1 + \big\|\widetilde X^{i}_{\cdot \land T}\big\|^{2\bar{p}}_\infty + \frac{1}{N} \sum_{\ell =1}^N \big\|\widetilde X^{\ell}_{\cdot \land T}\big\|^{2\bar{p}}_\infty \bigg) \Bigg] \\
\notag&\quad+ \big( 1 +c_{\eps_{\smalltext{7}\smalltext{,}\smalltext{8}\smalltext{,}\smalltext{9}}} \big) \E^{\P^{\smalltext{\hat\balpha}^\tinytext{N}\smalltext{,}\smalltext{N}\smalltext{,}\smalltext{u}}_\smalltext{\omega}} \Bigg[ \int_u^T \mathrm{e}^{\beta t} \Bigg(1+ \big\|\widetilde X^{i}_{\cdot \land t}\big\|^{2\bar{p}}_\infty + \frac{1}{N} \sum_{\ell =1}^N \big\|\widetilde X^{\ell}_{\cdot \land t}\big\|^{2\bar{p}}_\infty  \Bigg) \d t \Bigg] \\
\notag&\quad+ \bigg( \frac{1}{\varepsilon_9} + \frac{c_{\eps_{\smalltext{7}\smalltext{,}\smalltext{8}\smalltext{,}\smalltext{9}}}}{\eps_8} \bigg) \E^{\P^{\smalltext{\hat\balpha}^\tinytext{N}\smalltext{,}\smalltext{N}\smalltext{,}\smalltext{u}}_\smalltext{\omega}} \Bigg[ \int_u^T \mathrm{e}^{\beta t} \sum_{\ell =1}^N \big\|\widetilde Z^{i,m,\ell,\star,N}_t\big\|^2 \d t \Bigg] \\
&\quad+ 3 \bigg( \frac{1}{\varepsilon_9} + \frac{c_{\eps_{\smalltext{7}\smalltext{,}\smalltext{8}\smalltext{,}\smalltext{9}}}}{\eps_8} \bigg) \E^{\P^{\smalltext{\hat\balpha}^\tinytext{N}\smalltext{,}\smalltext{N}\smalltext{,}\smalltext{u}}_\smalltext{\omega}} \Bigg[ \int_u^T \mathrm{e}^{\beta t} \sum_{\ell =1}^N \big\|\widetilde Z^{n,\ell,\star,N}_t\big\|^2 \d t \Bigg], \; \P\text{\rm--a.e.} \; \omega\in\Omega.
\end{align}

Substituting the estimates from \eqref{align:tildeX_boundP}, \eqref{align:tildeSumX_boundP} and \eqref{align:estZnAuxiliary} into \Cref{eq:estim97} leads to
\begin{align*}
& \bigg( 1 - \frac{c_{\eps_{\smalltext{7}\smalltext{,}\smalltext{8}\smalltext{,}\smalltext{9}}}}{\eps_7}\bigg) \E^{\P^{\smalltext{\hat\balpha}^\tinytext{N}\smalltext{,}\smalltext{N}\smalltext{,}\smalltext{u}}_\smalltext{\omega}} \Bigg[ \int_u^T \mathrm{e}^{\beta t} \sum_{\ell =1}^N \big\|\widetilde Z^{i,\ell,N}_t\big\|^2 \d t \Bigg] \\
&\leq 3\mathrm{e}^{\beta T} \ell_{g+G,\varphi_\smalltext{1},\varphi_\smalltext{2}} \big( 1 + c_{\eps_{\smalltext{7}\smalltext{,}\smalltext{8}\smalltext{,}\smalltext{9}}}\big)\Bigg( 1 + 2\bar{c}^1_{2\bar{p}}+c^1_{2\bar{p}} \|X^{i}_u(\omega)\|^{2\bar{p}}+ \frac{c^1_{2\bar{p}}}{N} \sum_{\ell=1}^N \| X^{\ell}_{u}(\omega)\|^{2\bar{p}} \Bigg) \\
&\quad+ \big( 1 +c_{\eps_{\smalltext{7}\smalltext{,}\smalltext{8}\smalltext{,}\smalltext{9}}} \big)\Bigg( 1+ 2\bar{c}^2_{2\bar{p}}+c^2_{2\bar{p}} \|X^{i}_u(\omega)\|^{2\bar{p}}+ \frac{c^2_{2\bar{p}}}{N} \sum_{\ell =1}^N \| X^{\ell}_{u}(\omega)\|^{2\bar{p}} \bigg)+ \mathrm{e}^{\beta T}\bigg( \frac{1}{\varepsilon_9} + \frac{c_{\eps_{\smalltext{7}\smalltext{,}\smalltext{8}\smalltext{,}\smalltext{9}}}}{\eps_8} \Bigg) \big(c^2_{\varphi_\smalltext{1}}+3c^2_{\varphi_\smalltext{2}}\big), \; \P\text{\rm--a.e.} \; \omega\in\Omega.
\end{align*}
If we define
\begin{gather*}
\bar{c}^1_{\eps_{{\smalltext{7}\smalltext{,}\smalltext{8}\smalltext{,}\smalltext{9}}}}\coloneqq\bigg( 1 - \frac{c_{\eps_{\smalltext{7}\smalltext{,}\smalltext{8}\smalltext{,}\smalltext{9}}}}{\eps_7}\bigg)^{-1}\bigg(3\mathrm{e}^{\beta T} \ell_{g+G,\varphi_\smalltext{1},\varphi_\smalltext{2}} \big( 1 + c_{\eps_{\smalltext{7}\smalltext{,}\smalltext{8}\smalltext{,}\smalltext{9}}}\big)\big( 1 + 2\bar{c}^1_{2\bar{p}}\big)+(1+2\bar{c}^2_{2\bar{p}}) \big( 1 +c_{\eps_{\smalltext{7}\smalltext{,}\smalltext{8}\smalltext{,}\smalltext{9}}} \big)+ \mathrm{e}^{\beta T}\bigg( \frac{1}{\varepsilon_9} + \frac{c_{\eps_{\smalltext{7}\smalltext{,}\smalltext{8}\smalltext{,}\smalltext{9}}}}{\eps_8} \bigg) \big(c^2_{\varphi_\smalltext{1}}+3c^2_{\varphi_\smalltext{2}}\big)\bigg),\\
\bar{c}^2_{\eps_{{\smalltext{7}\smalltext{,}\smalltext{8}\smalltext{,}\smalltext{9}}}}\coloneqq \bigg( 1 - \frac{c_{\eps_{\smalltext{7}\smalltext{,}\smalltext{8}\smalltext{,}\smalltext{9}}}}{\eps_7}\bigg)^{-1}\Big(3\mathrm{e}^{\beta T} \ell_{g+G,\varphi_\smalltext{1},\varphi_\smalltext{2}} \big( 1 + c_{\eps_{\smalltext{7}\smalltext{,}\smalltext{8}\smalltext{,}\smalltext{9}}}\big)c^1_{2\bar{p}}+ \big( 1 +c_{\eps_{\smalltext{7}\smalltext{,}\smalltext{8}\smalltext{,}\smalltext{9}}} \big)c^2_{2\bar{p}}\Big),
\end{gather*}
then
\begin{align*}
& \E^{\P^{\smalltext{\hat\balpha}^\tinytext{N}\smalltext{,}\smalltext{N}\smalltext{,}\smalltext{u}}_\smalltext{\omega}} \Bigg[ \int_u^T \mathrm{e}^{\beta t} \sum_{\ell =1}^N \big\|\widetilde Z^{i,\ell,N}_t\big\|^2 \d t \Bigg]\leq \bar{c}^1_{\eps_{{\smalltext{7}\smalltext{,}\smalltext{8}\smalltext{,}\smalltext{9}}}}+\bar{c}^2_{\eps_{{\smalltext{7}\smalltext{,}\smalltext{8}\smalltext{,}\smalltext{9}}}}\Bigg(\|X^i_u(\omega)\|^{2\bar{p}}+ \frac{1}{N} \sum_{\ell=1}^N \| X^{\ell}_{u}(\omega)\|^{2\bar{p}} \Bigg), \; \P\text{\rm--a.e.} \; \omega\in\Omega.
\end{align*}
\end{proof}

\end{appendix}

\end{document}